\theoremstyle{plain}
\tikzstyle directed=[postaction={decorate,decoration={markings,
    mark=at position .65 with {\arrow{latex}}}}]
\def\Im{\operatorname{Im}}
\def\dim{\operatorname{dim}}
\def\Int{\operatorname{Int}}
\def\Id{\operatorname{Id}}
\def\Cut{\operatorname{Cut}}
\def\Ind{\operatorname{Ind}}
\def\ev{\operatorname{ev}}
\def\deter{\operatorname{det}}
\newcommand\norm[1]{\left \vert #1 \right \vert}
\newcommand\ens[2]{\left \{ #1 \vert #2 \right \}}
\newcommand \Z{\mathbb{Z}}
\newcommand \R {\mathbb{R}}
\newcommand \N {\mathbb{N}}
\newcommand \C {\mathbb{C}}
\newcommand \D {\mathbb{D}}
\newcommand \HP {\mathbb{H}}
\newcommand \T {\mathbb{T}}
\newtheorem{Theo}{Theorem}
\newtheorem{Ex}{Example}
\newtheorem{prop}{Proposition}
\newtheorem{lemma}{Lemma}
\newtheorem{coro}{Corollary}
\newtheorem{defi}{Definition}
\newtheorem{rk}{Remark}
\author{Alexandre Perrier}
\email{perrier@dms.umontreal.ca}
\title[Structure of $J$-holomorphic disks ]{Structure of $J$-holomorphic disks with immersed Lagrangian boundary conditions}
\begin{document}

\begin{abstract}
We explain how to generalize Lazzarini's structural Theorem from \cite{La11} to the case of curves with boundary on a given Lagrangian immersion. As a consequence of this result, we show that we can compute Floer homology with time-independent almost complex structures. We also give some applications as well as topics for future work.  	
\end{abstract}

\maketitle

\section{Introduction}
	
\subsection{Setting}
\label{subsection:historicalcontext}
Let $(M,\omega)$ be a symplectic manifold and $J \in \mathcal{J}(M,\omega)$ be a compatible almost complex structure. It is well known that any $J$-holomorphic curve $u : \Sigma \to M$ with $\Sigma$ a closed Riemann surface factors through a simple curve (see \cite[Proposition 2.5.1]{McDS12}).

Let $L \subset M$ be an embedded Lagrangian submanifold and
	\[u : (\D, \partial \D) \to (M, L) \]
 be a $J$-holomorphic disk satisfying $u(\partial \D) \subset L $. In general, it is not true that such a map factors through a branched cover to a simple curve. However there are results of Kwon-Oh (\cite{Oh97},\cite{KO00}) and Lazzarini (\cite{La00}, \cite{La11}) about the structure of such disks. 

Moduli spaces of disks with Lagrangian boundaries appear in the definitions of several differential complexes associated to Lagrangian embeddings such as the pearl complex (due to Biran-Cornea \cite{BC07}, \cite{BC09}) or Lagrangian intersection Floer homology in the monotone case (due to Oh, \cite{Oh93}, \cite{Oh93-2}). The results of Lazzarini and Kwon-Oh are essential to study the generic regularity of such moduli spaces.

\subsection{Main theorem}
\label{subsection:maintheorem}
In this paper, we shall explain how to adapt Lazzarini's result (\cite{La11}) to disks with corners whose boundaries lie in the image of a Lagrangian immersion. In this section, we provide the basic definitions of the objects we will consider.

From now on, we fix a connected symplectic manifold $(M^{2n}, \omega)$ and a Lagrangian immersion $i : L^n \looparrowright M$, with $L$ a closed (not necessarily connected) manifold such that 
\begin{enumerate}
	\item $i$ does not have triple points,
	\item the double points of $i$ are transverse.	 
\end{enumerate}
If this is satisfied, we say that $i$ is \emph{generic}.

Let us denote by $R = \ens{(p,q) \in L \times L}{i(p) = i(q)} $ the set of ordered double points of $i$ and by $i(R)$ their images. The hypotheses on $i$ imply that this is a finite subset.

Moreover, we fix a (smooth) compatible almost complex structure $J \in \mathcal{J}(M, \omega)$. We now explain what we mean by an almost complex curve with corners and boundary on $L$.

\begin{defi}
\label{defi:Curvewithboundary}
Let $S$ be a compact Riemann surface with boundary $\partial S$. 

A \emph{$J$-holomorphic curve with corners and boundary on $L$} is a continuous map 
\[u : (S, \partial S) \to (M, i(L) )\] 
which satisfies the following assumptions.
\begin{enumerate}[label=(\roman*)]
	\item There are $x_1 , \ldots, x_N \in \partial S$ with
	\[ \forall 1 \leqslant k \leqslant N, u(x_k) \in i(R) .\]
	\item There is a continuous map $\gamma : \partial S \backslash \{x_1, \ldots, x_N \} \to L$ such that
	\[ u_{\vert \partial S \backslash \{ x_1, \ldots, x_N \}} = i \circ \gamma .\]
	\item The map $\gamma$ does not extend to a continuous map $\partial S \to L$.
	\item The map $u$ is a smooth $J$-holomorphic curve on $S \backslash \{ x_1, \ldots , x_N \}$. 
\end{enumerate}
\end{defi}

\begin{rk}
\label{rk:defoftopologicalcurves}
\begin{enumerate}
	\item Keeping the notations of Definition \ref{defi:Curvewithboundary}, we call $x_1, \ldots, x_N$ the \emph{corner points} of the curve.
	\item We also consider maps $u : S \to M$ which satisfy the hypotheses $(i), (ii), (iii)$ without $(iv)$. We call such a map a \emph{topological} curve with corners.
\end{enumerate} 
\end{rk}

For a $J$-holomorphic curve $u : (S, \partial S) \to (M, i(L))$ with corners and boundary on $L$, a point $z \in \Int(S)$ is an \emph{injective point} if it satisfies
\[ d u_z \neq 0 , u^{-1}(u(z)) = \{ z\}. \]
We say that such a curve is \emph{simple} if the set of its injective points is dense. 

We can now state the main theorem of this paper.

\begin{Theo}
\label{Theo:Maintheorem}
Let $u : (\D, \partial \D) \to (M,i(L))$ be a non-constant $J$-holomorphic disk with corners, boundary on $L$ and finite energy (meaning $ \int u^* \omega < + \infty$ ).

There are simple finite-energy $J$-holomorphic disks $v_1, \ldots, v_N$ with corners, boundary on $L$ and natural integers $m_1, \ldots, m_N \in \N$ such that
\begin{enumerate}[label=(\roman*)]
	\item $\Im(u) = \cup_{k = 1 \ldots N} \Im(v_k)$
	\item In $H_2 \left (M, i(L) \right )$ we have
		\[ [u] = \sum_{k = 1}^N m_k [v_k] .\]	
\end{enumerate}
\end{Theo}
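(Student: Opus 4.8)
The plan is to mimic Lazzarini's strategy from the embedded case, where the key object is a measurable foliation (or ``partition'') of the disk by the level sets of the holomorphic map, and to adapt it to the immersed setting where the extra difficulty is the presence of corner points. First I would recall that the singular set
\[
\Sigma_u = \{ z \in \Int(\D) \mid du_z = 0 \} \cup \{ z \in \Int(\D) \mid \#\, u^{-1}(u(z)) = \infty \}
\]
is, away from a discrete set, well-behaved: by the standard analysis of $J$-holomorphic curves (unique continuation, the fact that critical and self-intersection points are isolated unless the image coincides locally), the complement of $\Sigma_u$ is an open dense subset on which $u$ behaves like a covering onto its image. The corner points $x_1,\ldots,x_N$ must be treated as an additional finite ``boundary singular set'': near each $x_k$ one has two local branches of $i(L)$ meeting transversally, and I would use the generic hypotheses on $i$ (no triple points, transverse double points) to get a local normal form for $u$ near a corner, so that the corner contributes finitely many well-controlled local sheets.

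\smallskip

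Next I would construct the analogue of Lazzarini's \emph{review graph} / \emph{partition} of $\D$. The idea is to define an equivalence relation on $\D \setminus \Sigma_u$ by declaring $z \sim z'$ when they can be joined by a path along which $u$ is constant, or more precisely via Lazzarini's ``no-return, no-pass'' condition on the trajectories of the measured foliation defined by $\operatorname{Re}(u^*\beta)$ for a suitable local holomorphic coordinate $\beta$ downstairs. The closures of the equivalence classes give a finite collection of compact subsurfaces $D_1,\ldots,D_N \subset \D$, each carrying a restriction $u|_{D_k}$ that factors through a simple curve $v_k$. The crucial point in the immersed case is that the boundary $\partial D_k$ must be allowed to pass through preimages of corner points, so that the resulting $v_k$ are themselves curves \emph{with corners}: I would check that the combinatorics of the partition is compatible with the finite set $\{x_1,\ldots,x_N\}$ and $i(R)$, so that each $v_k$ genuinely satisfies conditions (i)--(iv) of Definition \ref{defi:Curvewithboundary}, including the non-extendability condition (iii).

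\smallskip

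With the partition in hand, statement (i), $\Im(u) = \bigcup_k \Im(v_k)$, follows because the $D_k$ cover $\D$ up to the measure-zero singular set and $u$ is continuous, so the images exhaust $\Im(u)$. For the homological statement (ii), I would show that each $u|_{D_k}$ is an $m_k$-fold branched multiple cover of the simple curve $v_k$ (the degree $m_k$ being constant on each class by the covering property away from $\Sigma_u$), whence $[u|_{D_k}] = m_k[v_k]$ in $H_2(M, i(L))$; summing over the partition and using that relative fundamental classes add along the decomposition of $\D$ gives $[u] = \sum_k m_k[v_k]$. Here one must be careful that the relative cycle structure is well defined despite the corners, which is where the map $\gamma$ and its jumps at the $x_k$ enter the bookkeeping.

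\smallskip

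I expect the main obstacle to be precisely the local and combinatorial analysis at the corner points. In Lazzarini's embedded setting the boundary condition is a smooth submanifold and the foliation extends continuously across $\partial\D$; here the boundary map $\gamma$ genuinely jumps between the two sheets of $L$ at each $x_k$, and one must show that the measured-foliation trajectories, the partition, and the branched-covering count all behave coherently through these jumps. Establishing a clean local model near a corner---using transversality of the double points to rule out pathological accumulation of the singular set at $x_k$---and verifying that the pieces $v_k$ inherit exactly $N$-type corner structure without creating spurious new corners or losing condition (iii), is the technical heart of the argument.
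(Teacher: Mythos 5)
There is a genuine gap, and it comes in two parts. First, the object you propose to cut along is not well defined as stated. For a non-constant curve, ``$z \sim z'$ when they can be joined by a path along which $u$ is constant'' is an empty relation, and the ``no-return, no-pass'' condition on trajectories of $\operatorname{Re}(u^*\beta)$ is not Lazzarini's construction (it conflates his method with a measured-foliation technique that plays no role here). What the paper, following \cite{La11}, actually cuts along is the \emph{frame} $\mathcal{W}(u) = \mathcal{R}_u^u(\partial \D)$, where $\mathcal{R}_u^u$ is the relation of \emph{local coincidence of images} (Definitions \ref{defi:coincidencerelation} and \ref{defi:frameofanholomorphiccurve}); the technical heart is proving that the completed frame is a $\mathcal{C}^1$-embedded graph (Proposition \ref{prop:theframeisagraph}), which at corner points requires the Robbin--Salamon asymptotic expansions of Proposition \ref{prop:asymptoticdevelopmentaroundacornerpoint} and the ``open and closed in a union of rays'' dichotomy. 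You correctly sense that the corner analysis is the technical heart, but the rest of the machinery --- openness and closedness of the relation, the path-lifting properties, and the factorization of a curve with $\mathcal{W}(u)=\partial S$ through the quotient $S/\mathcal{R}_u^u$ (Corollary \ref{coro: multiply covered disks}) --- all use the local-coincidence definition in an essential way and cannot be run on level sets or foliation trajectories.

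Second, and more seriously: nothing in your proposal explains why the simple pieces $v_k$ are \emph{disks}, which is part of the statement being proved and is needed for $[v_k]$ to make sense as a disk class in $H_2(M,i(L))$. Your subsurfaces $D_k$ --- in the paper, the connected components $\Omega$ of $\D \setminus \mathcal{W}(u)$ --- need not be simply connected, so the simple curve underlying $u|_{D_k}$ a priori lives on a surface of positive genus or with several boundary components. The paper resolves this with a separate, non-trivial dichotomy: if $\mathcal{W}(u)$ is not connected (equivalently, if some component fails to be simply connected), then there exists a simple $J$-holomorphic \emph{sphere} $v : \C P^1 \to M$ with $\Im(u) = \Im(v)$, and only by combining this with the uniqueness theorem for simple curves having equal images (Theorem \ref{Theo:Relatively simple frames implies simple curves are equal}) does one conclude that every piece of the decomposition is nevertheless a disk. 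This sphere phenomenon is a genuine geometric possibility --- it cannot be excluded by the genericity of the double points of $i$ --- and your covering-degree argument for item (ii) silently presupposes exactly the disk-with-branched-cover structure that this dichotomy provides. Without it, the proof does not close.
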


The proof of this is an adaptation of Lazzarini's proof to the case of immersed Lagrangians.
 
\subsection{Applications}
\label{subsection:Applications}
Assume that the complex dimension $n$ is greater than $3$. For a generic almost complex structure $J$, any finite-energy $J$-holmorphic disk with corners and boundary on $L$ is either simple or multiply covered. This follows from an adaptation of the proof of \cite[Proposition 5.15]{La11}.

\begin{coro}
\label{coro:Jholdisksaremultiplycovered}
Suppose $n \geqslant 3$. There is a second category subset $\mathcal{J}_{\text{reg}} (M, \omega, L) \subset \mathcal{J}(M, \omega)$ satisfying the following property.  

Let $J \in \mathcal{J}_{\text{reg}} (M, \omega)$ and $u : (\D, \partial \D) \to (M,i(L) )$ be a non-constant finite-energy $J$-holomorphic disk with corners and boundary on $L$. Then there exist 
\begin{enumerate}[label=(\roman*)]
	\item a holomorphic map $p : (\D, \partial \D) \to (\D, \partial \D)$ with branch points in $\Int(\D)$ (notice in particular that $p$ restricts to a cover $\partial \D \to \partial \D$).
	\item a simple $J$-holomorphic disk with corners and boundary on $L$, 
		\[ u' : (\D, \partial \D) \to (M, i(L) )\]	
\end{enumerate}
such that
\[ u = u' \circ p. \]
\end{coro}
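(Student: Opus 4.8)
The plan is to prove Corollary \ref{coro:Jholdisksaremultiplycovered} by combining the structural result of Theorem \ref{Theo:Maintheorem} with the genericity statement recalled at the start of Subsection \ref{subsection:Applications}, namely that for $J \in \mathcal{J}_{\text{reg}}$ every non-constant finite-energy $J$-holomorphic disk with corners is either simple or multiply covered. So I would begin by applying Theorem \ref{Theo:Maintheorem} to the given disk $u$, obtaining simple disks $v_1, \ldots, v_N$ and multiplicities $m_1, \ldots, m_N$ with $\Im(u) = \bigcup_k \Im(v_k)$ and $[u] = \sum_k m_k[v_k]$. The first step is then to argue that for a generic $J$ these pieces have \emph{disjoint} images, or more precisely that the images $\Im(v_k)$ cannot overlap on an open set: if two simple disks shared image on an open set they would coincide up to reparametrization, violating simplicity, and transverse crossings form a codimension-$\geqslant 2$ set in the domain that can be controlled for generic $J$ when $n \geqslant 3$. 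The upshot I want is that, because $u$ itself is \emph{connected} (its domain is $\D$), only one underlying simple disk can actually appear, i.e. $N = 1$ after discarding the genericity-forbidden configurations.

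Once I know $u$ factors set-theoretically through a single simple disk $u' := v_1$, the next step is to produce the holomorphic reparametrization $p$. Here I would follow the classical strategy used for closed curves (\cite[Proposition 2.5.1]{McDS12}) adapted to the disk. Away from the finitely many corner points, the branch points of $u'$, and the double-point fibers, the map $u$ lands in the image of the embedded portion of $u'$, so I can \emph{locally} lift $u$ through $u'$ to define $p = (u')^{-1} \circ u$ on the complement of this finite/discrete set. The hard technical input is to check that this locally defined $p$ is holomorphic (immediate where $u'$ is a local diffeomorphism onto its image, since $u$ and $u'$ are both $J$-holomorphic) and that it extends continuously, hence holomorphically by removable singularity, across the exceptional points; the finite-energy assumption together with the corner structure from Definition \ref{defi:Curvewithboundary} is exactly what guarantees no essential singularities appear. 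I then verify that $p$ maps $\partial \D$ to $\partial \D$ and is a proper holomorphic self-map of the disk, so that by the classification of such maps it is a Blaschke-type branched cover with branch points in $\Int(\D)$, restricting to a genuine covering on the boundary.

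The final step is bookkeeping: I must confirm $u = u' \circ p$ globally (not just on the dense open set where I constructed $p$), which follows by continuity since both sides are continuous and agree on a dense set, and confirm that $u'$ inherits the structure of a simple $J$-holomorphic disk with corners and boundary on $L$ in the sense of Definition \ref{defi:Curvewithboundary} — the corner points of $u'$ being the images under $p$ of the corner points of $u$ that are not smoothed out by the covering.

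I expect the main obstacle to be the middle step: rigorously establishing that the local lift $p$ extends holomorphically across the corner points where $u$ meets the double-point locus $i(R)$. Unlike the closed embedded case, here the boundary behavior at corners is genuinely singular, and I would need to use the local model of a $J$-holomorphic disk near a transverse double point (exploiting hypotheses (1)--(2) on the genericity of $i$) to show the lift does not develop a pole or branch the wrong way; this is precisely the place where Lazzarini's immersed analysis, rather than the purely closed-curve argument, is required.
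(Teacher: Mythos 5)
Your plan diverges from the paper's proof at exactly the point where all the real content lies, and the mechanism you propose there does not work. The paper does not first reduce to $N=1$ and then reconstruct $p$ by lifting; it shows that for generic $J$ the frame itself collapses, $\mathcal{W}(u) = \partial \D$, after which the factorization $u = u' \circ p$ comes for free from the quotient construction of Corollary \ref{coro: multiply covered disks} ($p$ is the quotient map onto $\D / \mathcal{R}_u^u$, a branched cover restricting to a genuine cover of the boundary, and Riemann--Hurwitz shows the quotient is a disk once the frame is connected). Your route to $N = 1$ fails as stated: connectedness of $\D$ is irrelevant, because the simple pieces produced by Theorem \ref{Theo:Maintheorem} are glued along images of frame edges, so their union is connected for \emph{any} $N$; and those frame edges are $1$-dimensional arcs in the $2$-dimensional domain, i.e.\ codimension $1$, not the ``codimension $\geqslant 2$ crossings'' your dimension heuristic would exclude. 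The missing idea -- which is the heart of the paper's argument and the only place $n \geqslant 3$ enters -- is a transversality statement for boundary evaluation maps: for generic $J$, two simple disks with corners whose images are not nested (and likewise a single simple disk compared with itself) have only \emph{finitely many} pairs of boundary points with equal images. This is proved by evaluating pairs of simple disks at $k$ boundary marked points, where the expected dimension $2n + \mu_{A_1} + \mu_{A_2} - \sum_i \Ind(p_i,q_i) - \sum_i \Ind(\tilde{p}_i,\tilde{q}_i) + (2-n)k$ becomes negative for large $k$ precisely when $n \geqslant 3$. A frame edge in $\Int(\D)$ would produce a $1$-parameter family of such boundary coincidences between pieces, contradicting this finiteness; that is how genericity forces $\mathcal{W}(u) = \partial \D$.

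Your second step has a further gap even if one grants $\Im(u) = \Im(u')$: the curve $u'$ is merely \emph{simple}, not an embedding, so $(u')^{-1} \circ u$ is not locally well defined near self-intersection points of $u'$, and patching the local lifts into a single-valued holomorphic $p$ requires exactly the control on the coincidence relation (transitivity of $\mathcal{R}_u^u$, the boundary biholomorphisms of Proposition \ref{prop : biholomorphisms at the boundary }, and triviality of the frame) that the paper's quotient construction is built to supply. In short, the set-theoretic image equality delivered by Theorem \ref{Theo:Maintheorem} is too weak a starting point: the factorization has to be extracted from the frame machinery, not reconstructed afterwards by removable-singularity arguments.
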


Recall that there are two morphisms $\omega : \pi_2(M,L) \to \R$ and $\mu : \pi_2(M, L) \to \Z$ induced respectively by the symplectic area and the Maslov class. A Lagrangian submanifold $L \subset M$ is \emph{monotone} if there is a $\lambda >0$ such that
\[ \omega = \lambda \mu . \]
Denote by $N_L$ the minimal Maslov number of a Lagrangian submanifold $L$. Consider two transverse Lagrangian submanifolds $L_1$ and $L_2$ satisfying $N_{L_1} \geqslant 3$ and $N_{L_2} \geqslant 3$. As a direct application of Corollary \ref{coro:Jholdisksaremultiplycovered}, we will see that for a generic \emph{time-independent} $J \in \mathcal{J}(M,\omega)$, there is a well-defined Floer complex between these two objects. This differs from the usual situation where one usually considers time-dependent almost complex structures to achieve transversality (see \cite{Oh93}, \cite{FHS95}). 

\subsection{Outline of the proof of the Main Theorem}
We prove the Main Theorem \ref{Theo:Maintheorem} in several steps which follow Lazzarini's approach. We will emphasize along the argument the differences with \cite{La11}. 

First, we define a set $\mathcal{W}(u) \subset \D$ called the \emph{frame} of the disk which contains $\partial \D$. This is roughly the set of points where $u$ "overlaps" with its boundary. We then prove that this is actually a $\mathcal{C}^1$-embedded graph. We do this by providing an asymptotic development of the $J$-holomorphic curve around its corners.

The simple or multiply covered pieces are found by cutting the curve along the graph $\mathcal{W}(u)$. More precisely, we pick for each connected component $\Omega$ of $\D \backslash \mathcal{W}(u)$ a holomorphic embedding $h_\Omega : (\D,\partial \D) \to (\Omega, \mathcal{W}(u))$. The curve $u \circ h_\Omega$ satisfies $\mathcal{W}(u \circ h_\Omega) = \partial \D$ and is therefore either simple or multiply covered. The pieces of the decomposition are the simple curves underlying $u \circ h_\Omega$ for $\Omega$ a connected component.

Notice that a connected component $\Omega$ of $\D \backslash \mathcal{W}(u)$ is not necessarily simply connected, so we cannot immediately conclude that $u \circ h_\Omega$ factors through a simple disk. It turns out that if such a component exists, there is a simple holomorphic sphere $v : \C P^1 \to M$ such that $u(\D) = v(\C P^1)$. From this, we conclude that each piece is a disk. Here the details do not differ much from Lazzarini's paper (\cite{La11}).

\subsection{Outline of the paper}
\label{subsection:Outlinepaper}
The first section of the paper explains how to adapt Lazzarini's proof (\cite{La00}, \cite{La11}) to finite-energy curves with boundary on a given Lagrangian immersion $i : L \looparrowright M$. The frame along which the curve is cut into multiply covered pieces is a graph. This is the main technical part of the argument.  Second, we explain how to get the decomposition from this. 

The second section of the paper gives the proof of Corollary \ref{coro:Jholdisksaremultiplycovered}. In a second subsection, we will explain why this implies that the Floer complex is well-defined for a generic time-independent almost complex structure. 

Lastly, we give some expected applications of the main theorem to a count of holomorphic curves with boundary on the surgery of two Lagrangian embeddings. This fits in a more general program of Biran-Cornea and is the subject of work in progress.  

\subsection{Acknowledgements:}
This work is part of the author's doctoral thesis at the University of Montreal under the direction of Octav Cornea. I thank him for his thoughtful advice. I also thank Egor Shelukhin for helpful dicussions, as well as Emily Campling and Dominique Rathel-Fournier for help with the exposition.

\section{The Frame of a $J$-holomorphic curve}
\label{section:Frameofacurve}
Fix $u_1 : (S_1, \partial S_1) \to (M, i(L) )$ and $u_2 : (S_2, \partial S_2) \to (M, i(L))$ two finite-energy $J$-holomorphic curves with corners and boundaries on $L$.

We define the set of "bad points" of $u_1$ with respect to $u_2$ : 
	\[ \mathcal{C} \left(u_1, u_2 \right ) := u_1^{-1} \left (\ens{z \in \Int(S_1)}{du_1 (z) = 0} \right ) \cup u_1^{-1} \left (\ens{z \in \Int(S_2)}{du_2(z) = 0} \right ) \cup u_1^{-1}(i(R)) . \]
The following definition is due to Lazzarini (\cite{La11}).

\begin{defi}
\label{defi:coincidencerelation}
Suppose $z_1 \in \Int(S_1) \backslash \mathcal{C}(u_1, u_2)$ and $z_2 \in \Int(S_2) \backslash \mathcal{C}(u_2, u_1)$. We say that $z_1 \mathcal{R}_{u_1}^{u_2} z_2 $ if and only if for any open neighborhoods $V_1 \ni z_1$ (resp. $V_2 \ni z_2$), there are open neighborhoods $\Omega_1 \ni z_1$ (resp. $\Omega_2 \ni z_2$) in $V_1$ (resp. $V_2$) such that
	\[ u_1(\Omega_1) = u_2(\Omega_2) . \]
	
Now if $z_1 \in S_1$ and $z_2 \in S_2$, we say that $z_1 \mathcal{R}_{u_1}^{u_2} z_2$ if and only if there are sequences $(z_1^\nu)_{\nu \geqslant 0 }$ (resp. $(z_2^\nu)_{\nu \geqslant 0}$) such that $z_1^\nu \to z_1$ (resp. $z_2^\nu \to z_2$) and 
	\[ \forall \nu \geqslant 0,  z_1^\nu \mathcal{R}_{u_1}^{u_2} z_2^\nu . \] 
	 	
\end{defi}

We now define the graph along which we will cut to get the simple pieces of the curve.

\begin{defi}
\label{defi:frameofanholomorphiccurve}
The \emph{frame} of $u_1$ with respect to $u_2$ is the set of points related to the boundary of $S_2$:
	\[ \mathcal{W}(u_1, u_2) := \mathcal{R}_{u_1}^{u_2} \left ( \partial S_2 \right ) .\]
The \emph{completed frame} of $u_1$ with respect to $u_2$ is the union of this with $\partial S_1$:
	\[ \overline{\mathcal{W}}(u_1, u_2) := \mathcal{R}_{u_1}^{u_2} \left ( \partial S_2 \right )\cup \partial S_1 .\]
\end{defi}

\begin{rk}
	If $u$ is a $J$-holomorphic curve then $\partial S \subset \mathcal{W}(u,u)$, so
	\[\overline{\mathcal{W}}(u,u) = \mathcal{W}(u,u). \]
	From now on, we will abbreviate $\mathcal{W}(u) := \mathcal{W}(u,u)$. 
\end{rk}

In this section, we shall prove that the completed frame $\overline{\mathcal{W}}(u_1, u_2)$ is a $\mathcal{C}^1$ embedded graph in $S_1$. This is however not the case for $\mathcal{W}(u_1, u_2)$. Along the way, we will prove important properties of the relation $\mathcal{R}_{u_1}^{u_2}$, always following Lazzarini's proof. 

\subsubsection{Examples of frames and the decomposition}
As explained in the introduction, the simple pieces of the curve are found among the connected components of $\D \backslash \mathcal{W}(u) $.

The decomposition may introduce corner points which do not appear in the original curve. This is shown in the example below.

\begin{figure}
\captionsetup{justification=centering,margin=2cm}

\begin{tikzpicture}[scale = 0.75]
\draw (10,0) rectangle (16,6);
\draw[blue] (10,3) -- (16,3);
\draw[blue] (10,1.5) to [out=0,in=250] (12,3) to [out=70, in = 180] (13.5, 5.5) to [out= 0, in = 90 ](14.5,4.5) to [out= 270, in = 0](13.5,3.7) to [out = 180, in = 270](13.1,4.14) to [out = 90, in = 180] (13.34, 4.4) to [out = 0, in = 270] (13.94, 4.68) to [out = 90, in = 0] (13.7, 4.88) to [out = 180, in = 90] (13.54, 4.6) to [out = 90, in = 90] (13.54, 3.60) to [out = 270, in = 110](13.62, 3) to [out = 290, in = 180](16,1.5);

\draw[blue,fill = gray!20] (12,3) to [out=70, in = 180] (13.5, 5.5) to [out= 0, in = 90 ](14.5,4.5) to [out= 270, in = 0](13.5,3.7) to [out = 180, in = 270](13.1,4.14) to [out = 90, in = 180] (13.34, 4.4) to [out = 0, in = 270] (13.94, 4.68) to [out = 90, in = 0] (13.7, 4.88) to [out = 180, in = 90] (13.54, 4.6) to [out = 90, in = 90] (13.54, 3.60) to [out = 270, in = 110](13.62, 3) to (12,3) ;

\draw (12,3) node[below right]{\tiny $x_1$};
\draw (13.55, 4.4) node[above left]{ \tiny $x_3$};
\draw (13.55, 3.7) node[below right]{ \tiny $x_2$};
\draw (13.62, 3.0) node[below left]{\tiny $x_4$};

\draw (3,3) circle (3);
\draw (0,3) node {$\bullet$};
\draw (6,3) node {$\bullet$};
\draw (1, 5.24) node {$\bullet$};
\draw (2, 5.83) node {$\bullet$};
\draw (5, 5.24) node {$\bullet$};
\draw (4, 5.83) node {$\bullet$};

\draw (1,5.24) to [out=-10, in = 255] (2, 5.83);
\draw (5,5.24) to [out = 180, in = 285] (4, 5.83);

\draw[->] (7,3) to [out = 45, in = 180](8,3.5) to [out = 0, in = 135](9,3);
\draw (8, 3.5) node[above] {$u$}; 
\end{tikzpicture}

\caption{The immersion $i$ (blue, on the right), the disk $u$ (shaded, on the right) and its frame $\mathcal{W}(u)$ on the left}

\label{Figure : immersion dans le tore}

\end{figure}
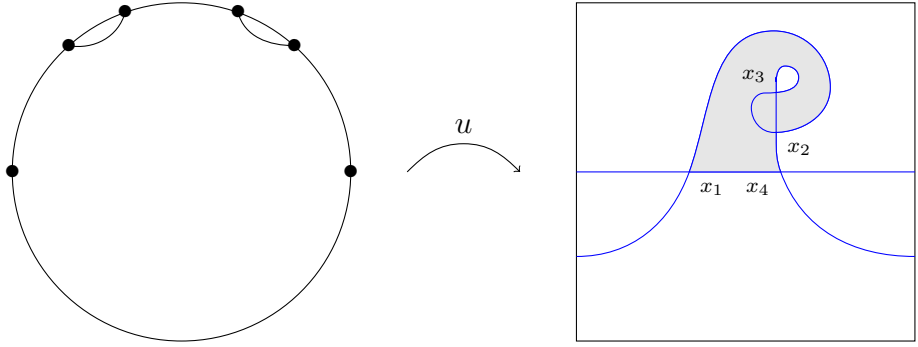

\begin{Ex}
Consider the 2-dimensional torus $\T^2 : = \R^2 / \Z^2$ equipped with the standard area form $d x \wedge dy$ and the standard complex structure.   

We let $i$ be the immersion of two copies of $S^1$ drawn in figure \ref{Figure : immersion dans le tore}. Moreover, we let $u$ be a $J$-holomorphic polygon with corners and boundary on $L$ whose image is represented in figure \ref{Figure : immersion dans le tore}. The parameterization of $u$ is chosen so that $u(-1) = x_1$ and $u(1) = x_4$. These are the only corner points of $u$.

The reader may check that the frame of $u$ is a graph with four vertices which map to the double points of $i$. The restriction of $u$ to each connected component of $\D \backslash \mathcal{W}(u)$ is a simple $J$-holomorphic curve with corners. Notice that each piece now has corners which map to $x_2$ and $x_3$. These corners did not appear in $u$.    	
\end{Ex}

Moreover, the frame need not be connected, as shown by the following example.

\begin{Ex}
Consider $\C P^1 = \C \cup \{ \infty \}$ equipped with its standard complex structure and let $L \subset \C$ be the ellipse with semi-major axis $\frac{5}{2}$ and semi-minor axis $\frac{3}{2}$. We consider a map with domain the disk of radius $2$
\[ u : \begin{array}{ccc} 
 	\D(0,2) & \to & \C \cup \{ \infty \} \\
 	z & \mapsto & z + \frac{1}{z}  	
 \end{array}. \]
We claim that the frame of $u$ is given by $\partial \D(0,2) \cup \partial \D\left (0,\frac{1}{2} \right )$. Notice first that \[ \mathcal{W}(u) \subset u^{-1}(L) = \partial \D(0,2) \cup \partial \D \left (0,\frac{1}{2} \right ) .\]
To prove the other inclusion, let $z  \in \partial \D \left (0,\frac{1}{2} \right)$ and $(\varepsilon_\nu)_{\nu \in \N}$ be a sequence of positive real numbers converging to $0$. Put $z_\nu = (1+\varepsilon_\nu)z$. Then the sequences $(z_\nu)$ and $\left ( \frac{1}{z_\nu} \right)$ satisfy $z_\nu \mathcal{R}_u^u \frac{1}{z_\nu} $ since $u(z_\nu) = u \left ( \frac{1}{z_\nu} \right)$ and $u'(z_\nu), u' \left (\frac{1}{z_\nu} \right)$ are non-zero. Hence $z \mathcal{R}_u^u \frac{1}{z} \in \partial \D$, so $z \in \mathcal{W}(u)$
\end{Ex}

\subsection{Local coordinates around double points of the immersion}
\label{subsection:localcoordinates}

\subsubsection{Some linear symplectic geometry}
\label{subsubsection:linearsymplectic}	  
Let $(V^{2n}, \omega)$ be a symplectic vector space of complex dimension $n$ and $J \in \mathcal{J}(V,\omega)$ be a compatible linear complex structure. Moreover, let $L_1$ and $L_2$ be transverse Lagrangian subspaces.
	  
Then there are reals $ 0 < \alpha_1 \leqslant \ldots \leqslant \alpha_n < \pi$ and a linear symplectic map $f : \left (V, \omega \right ) \to \left (\C^n, \omega_{\text{std}} \right )$ such that    
	  \[ f(L_1) = \R^n , f(L_2) = e^{i \alpha_1} \cdot \R \times \ldots e^{i\alpha_n} \cdot \R , f^* i = J .\]
The real numbers $\alpha_1, \ldots,  \alpha_n \in (0, \pi)$ do not depend on the choice of $f$ and are called the \emph{Kähler angles} of the pair $(L_1, L_2)$.

Moreover, if $\alpha \in (0, \pi)$, we define the vector subspace
	\[ V_\alpha := \ens{v \in \R^n}{e^{i \alpha} v \in e^{i \alpha_1} \R^n \times \ldots e^{i\alpha_n} \R^n} . \]
Notice that we have a direct sum decomposition
	\[ \R^n = \bigoplus_{\alpha \in \{\alpha_1, \ldots, \alpha_n\}} V_\alpha .\]
We let $\pi_\alpha : \R^n \to V_\alpha$ be the linear projection on $V_\alpha$ with respect to this decomposition. We will abuse notation slightly and call its complexification $\pi_\alpha : \C^n \to \C^n$ as well.
	
\subsubsection{A bit of vocabulary}
Let $(p,q) \in R$, since $d i_p$ (resp. $d i_q $) is an immersion, there is an open neighborhood $U_p \ni p$ (resp. $U_q \ni q$) such that $i_{\vert U_p}$ (resp. $ i_{\vert U_q} $) is an embedding. We call the submanifold $i(U_p)$ (resp. $i(U_q)$) the \emph{branch of $i$ at $p$} (resp. at $q$) and denote it by $L_p$ (resp. $L_q$).

In what follows, we will sometimes forget about $U_p$ and denote by $L_p$ the image of any neighborhood of $p$ on which $i$ is an embedding. 
	  
\subsubsection{Some local charts}
\label{subsubsection:localcharts}
We can now state

\begin{prop}
\label{prop:firstlocalchart}
	Let $(p,q) \in R$ and denote $x = i(p) = i(q)$.
	
	Then there are open neighborhoods $U$ of $0$ in $\C^n$, $V$ of $x$ in $M$, $U_p$ (resp. $U_q$) of $p$ (resp. $q$) in $L$ together with a smooth chart $\phi : U \to V$ satisfying the following properties.
	\begin{enumerate}[label=(\roman*)]
		\item Let $g_J : = \omega(\cdot, J \cdot)$ be the metric induced by the almost complex structure $J$, and $g_{\mathrm{std}}$ be the standard scalar product on $\C^n$. We have
		\[  \phi^* g_J(0) = g_{\mathrm{std}} ,  \phi^* J (0) = i . \]
		\item The chart maps the branches of $i$ at $x$ to linear subspaces :
		\[ \phi(U \cap \R^n) = i(U_p), \ \phi(U \cap e^{i \alpha_1} \cdot \R \times \ldots \times e^{i\alpha_n} \cdot \R) = i(U_q) . \]
		Here $\alpha_1 \leqslant \ldots \leqslant \alpha_n \in (0, \pi)$ are the Kähler angles of the pair $(T_x L_p , T_x L_q)$ with respect to the complex structure $J$.  	
	\end{enumerate}
\end{prop}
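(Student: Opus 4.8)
The plan is to reduce everything to the linear normal form of Section~\ref{subsubsection:linearsymplectic} and then straighten the two branches by a diffeomorphism tangent to the identity, so that the pointwise conditions in~(i) are preserved. First I would record the relevant transversality: since $i$ is generic, the double point $(p,q)$ is transverse, so the two branches $L_p = i(U_p)$ and $L_q = i(U_q)$ meet at $x$ with $T_x L_p \oplus T_x L_q = T_x M$, and both are Lagrangian because $i$ is a Lagrangian immersion. Applying the linear result to the symplectic vector space $(T_x M, \omega_x, J_x)$ with the transverse Lagrangians $(T_x L_p, T_x L_q)$ produces the K\"ahler angles $\alpha_1 \leqslant \cdots \leqslant \alpha_n$ and a linear symplectic isomorphism $f : T_x M \to \C^n$ with $f(T_x L_p) = \R^n$ and $f(T_x L_q) = e^{i\alpha_1}\R \times \cdots \times e^{i\alpha_n}\R$, intertwining $J_x$ and $i$. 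Because $J$ is compatible, $f$ is also an isometry from $(T_x M, g_J(x))$ to $(\C^n, g_{\mathrm{std}})$, so the pointwise data of~(i) is already encoded in $f$.

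Next I would produce an auxiliary chart $\psi : U_0 \to V_0$ around $x$ whose differential at $0$ is $f^{-1}$ --- for instance any chart followed by a linear correction, or the $g_J$-exponential map precomposed with $f^{-1}$. This guarantees $\psi^* g_J(0) = g_{\mathrm{std}}$ and $\psi^* J(0) = i$, and it turns the pulled-back branches $\widetilde L_p := \psi^{-1}(i(U_p))$ and $\widetilde L_q := \psi^{-1}(i(U_q))$ into two transverse $n$-dimensional submanifolds through $0 \in \C^n$, tangent respectively to $\R^n$ and $W := e^{i\alpha_1}\R \times \cdots \times e^{i\alpha_n}\R$. They are not yet straight, only tangent to the right subspaces at the origin.

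The remaining step is to \emph{straighten both branches at once} without disturbing the data at the origin. Writing points of $\C^n = \R^n \oplus W$ as $(s,t)$, near $0$ I can present $\widetilde L_p$ as a graph $\{(s, a(s))\}$ and $\widetilde L_q$ as a graph $\{(b(t), t)\}$, where $a$ and $b$ vanish to first order at $0$ (this is exactly the tangency just established). Then the map $G(s,t) = (s + b(t),\, t + a(s))$ satisfies $G(s,0) = (s, a(s)) \in \widetilde L_p$ and $G(0,t) = (b(t), t) \in \widetilde L_q$, so $G^{-1}$ carries $\widetilde L_p$ to $\R^n$ and $\widetilde L_q$ to $W$; moreover $dG_0 = \Id$. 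Setting $\phi := \psi \circ G$ on a small enough neighborhood $U$ of $0$ yields a chart with $d\phi_0 = f^{-1}$, hence with the pointwise identities $\phi^* g_J(0) = g_{\mathrm{std}}$ and $\phi^* J(0) = i$ of~(i), and with $\phi(U \cap \R^n) = i(U_p)$, $\phi(U \cap W) = i(U_q)$ as required by~(ii).

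I expect the only point genuinely needing care to be this simultaneous straightening: one must verify that $G$ is a bona fide local diffeomorphism (immediate from $dG_0 = \Id$) and shrink $U$, $V$, $U_p$, $U_q$ consistently, so that the two graph descriptions are valid on a common neighborhood and $i$ restricts to an embedding on each of $U_p, U_q$. Everything else --- the linear normal form and the matching of $J$ and $g_J$ at the single point $x$ --- is formal once the transversality of the double point has been invoked.
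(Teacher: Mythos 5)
Your proof is correct, and it reaches the chart by a genuinely different decomposition of the work than the paper. The paper proceeds in the opposite order: it first invokes, without proof, a smooth chart $\tilde{\phi}$ sending the two branches onto the fixed transverse linear subspaces $\R^n$ and $i \cdot \R^n$ (the standard simultaneous straightening of two transverse submanifolds), and then corrects by a single real-linear map $A$, chosen so that the differential of $\tilde{\phi} \circ A$ at the origin agrees with the inverse of the linear normal form $f$; this one linear composition simultaneously produces the pointwise identities of (i) and rotates the second boundary subspace into $e^{i\alpha_1}\cdot\R \times \cdots \times e^{i\alpha_n}\cdot\R$. You instead impose the origin data first, via a chart with differential $f^{-1}$ (for instance $\exp_x \circ f^{-1}$), and only then straighten the two branches, by the explicit graph map $G(s,t) = (s + b(t),\, t + a(s))$, which is tangent to the identity and hence leaves the conditions at $0$ untouched. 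The mathematical content is the same --- linear normal form at $T_xM$ plus a straightening diffeomorphism --- but your route is more self-contained: your map $G$ is precisely a proof of the straightening fact that the paper treats as known, whereas the paper's ordering outsources all the nonlinear work to that cited chart and retains only a linear-algebra verification. The points you flag as needing care (that $dG_0 = \Id$ makes $G$ a local diffeomorphism, and the consistent shrinking of $U$, $V$, $U_p$, $U_q$ so that the graph presentations hold and $i$ is an embedding on $U_p$, $U_q$) are exactly the right ones, and your argument handles them.
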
	  

\begin{proof}
	There is a smooth chart $\tilde{\phi} : U \subset \C^n \to V \subset M$ such that $\tilde{\phi}(\R^n \cap U) = L_p \cap V$ and $\tilde{\phi} (i \cdot \R^n \cap U) = L_q \cap V$.
	
	We now modify $\tilde{\phi}$ so that it satisfies the assertions of the proposition. For this pick an orthonormal basis (with respect to the metric $g_J$) $\mathcal{B} =(e_1, \ldots, e_n)$ of $T_x L_p$. We assume that, with respect to the complex coordinates given by $\mathcal{B}$, we have 
	\[T_x L_q = e^{i\alpha_1} \cdot \R \times \ldots \times e^{i\alpha_n} \cdot \R . \]
	
	We put $(f_1, \ldots, f_n) := (d \tilde{\phi}^{-1}(e_1), \ldots, d \tilde{\phi}^{-1}(e_n) )$. Pick a real linear isomorphism $A : \C^n \to \C^n $ such that the image of the canonical basis of $\R^{2n}$ is the basis $(f_1, \ldots, f_n)$. The sought-after local chart is $\tilde{\phi} \circ A$ (it is defined on a small enough ball).  
\end{proof}

We will modify this chart to get a more precise behavior along $\R^n$.

\begin{prop}
\label{prop:secondlocalchart}
	There is a smooth local chart $\phi : U \subset \C^n \to V \subset M$ such that
	\begin{enumerate}[label=(\roman*)]
		\item we have $\phi^*J_{\vert \R^n} = i$ and $\left (\phi^* g_J \right )_0 = g_{\mathrm{std}}$,
		\item the preimages of the branches at $x$ are linear subspaces
		\[ \phi^{-1}(L_p \cap V) = \R^n \cap U, \phi^{-1}(L_q \cap V) = \left ( e^{i \alpha_1} \cdot \R \times \ldots \times e^{i \alpha_n} \cdot \R \right ) \cap U . \]
	\end{enumerate}
\end{prop}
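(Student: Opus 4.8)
The plan is to obtain $\phi$ by post-composing the chart from Proposition \ref{prop:firstlocalchart} with a diffeomorphism of a neighborhood of $0$ in $\C^n$ that straightens $J$ along the real locus while preserving both branches. Write $\phi_0$ for the chart of Proposition \ref{prop:firstlocalchart}, set $\tilde{J} := \phi_0^* J$ and $W := e^{i\alpha_1}\cdot \R \times \ldots \times e^{i\alpha_n}\cdot \R$. Then $\tilde{J}$ is an almost complex structure near $0$ with $\tilde{J}_0 = i$ and $(\phi_0^* g_J)_0 = g_{\mathrm{std}}$, for which $\R^n = \phi_0^{-1}(L_p)$ and $W = \phi_0^{-1}(L_q)$ are totally real (they are preimages of Lagrangian branches and $J$ is $\omega$-compatible) and transverse: since each $\alpha_k \in (0,\pi)$, we have $\R^n \cap W = \{0\}$.

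First I would straighten $\tilde{J}$ along $\R^n$. For $x \in \R^n$, total reality of $\R^n$ forces the real-linear map $A_x(a+ib) := a + \tilde{J}_x b$ (with $a,b \in \R^n$) to be invertible and to intertwine $i$ with $\tilde{J}_x$; it is moreover the unique such map fixing $\R^n$ pointwise. The explicit diffeomorphism $\psi_1(a+ib) := a + \tilde{J}_a b$ then fixes $\R^n$ pointwise, has differential $A_x$ along $\R^n$, and satisfies $d(\psi_1)_0 = \Id$ (because $\tilde{J}_0 = i$), so it is a local diffeomorphism near $0$ with $\psi_1^* \tilde{J}|_{\R^n} = i$ and $(\psi_1^* \phi_0^* g_J)_0 = g_{\mathrm{std}}$.

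The remaining difficulty, and the main obstacle, is that $\psi_1$ does not preserve the second branch $W$. The submanifold $W_1 := \psi_1^{-1}(W)$ passes through $0$ with $T_0 W_1 = W$ (since $d(\psi_1)_0 = \Id$) and, as $\R^n \cap W = \{0\}$, meets $\R^n$ only at $0$. I would then construct a second diffeomorphism $\psi_2$ with trivial $1$-jet along $\R^n$ — that is, $\psi_2|_{\R^n} = \Id$ and $d(\psi_2)_x = \Id$ for all $x \in \R^n$ — such that $\psi_2(W) = W_1$. Writing $W_1$ as a graph over $W$ through a map vanishing to first order at $0$, such a $\psi_2$ is produced from a tubular neighborhood of $W$ together with a cutoff supported away from $\R^n \setminus \{0\}$; the transversality $\R^n \cap W = \{0\}$ is precisely what makes the cutoff compatible with the trivial jet condition. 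This extension and gluing step is the technical heart of the argument.

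Finally I would set $\phi := \phi_0 \circ \psi_1 \circ \psi_2$ and verify the conclusions. Because $\psi_2$ has trivial $1$-jet along $\R^n$, pulling back by $\psi_2$ changes no tensor value along $\R^n$, so $\phi^* J|_{\R^n} = \psi_1^* \tilde{J}|_{\R^n} = i$ and $(\phi^* g_J)_0 = g_{\mathrm{std}}$. The branch conditions follow from $\phi^{-1}(L_p) = \psi_2^{-1}\psi_1^{-1}(\R^n) = \R^n$ and $\phi^{-1}(L_q) = \psi_2^{-1}\psi_1^{-1}(W) = \psi_2^{-1}(W_1) = W$, after shrinking $U$ and $V$ as needed.
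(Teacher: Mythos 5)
Your overall route coincides with the paper's: start from the chart of Proposition \ref{prop:firstlocalchart}, straighten $J$ along $\R^n$ (your $\psi_1(a+ib)=a+\tilde{J}_a b$ is exactly the construction the paper invokes from \cite[Lemma 3.7]{La11}), and then repair the second branch by a diffeomorphism that is the identity to first order along $\R^n$. The first step is correct and complete. The gap is in the step you yourself flag as the technical heart: building $\psi_2$ from a tubular neighborhood of $W$ and a cutoff ``supported away from $\R^n\setminus\{0\}$'' cannot yield a \emph{smooth} map. Since $W$ and $\R^n$ both pass through the origin, any function equal to $1$ near $W\setminus\{0\}$ and to $0$ near $\R^n\setminus\{0\}$ has different limits along the two subspaces, so it is not even continuous at $0$; the best available is an angular (degree-zero homogeneous) cutoff $\chi$, whose $k$-th derivatives grow like $\norm{z}^{-k}$. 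Because the graph map $g$ vanishes only to first order at $0$ (so $g(w)=O(\norm{w}^2)$), the perturbation $\chi\cdot(g\circ\pi_W)$ is $C^1$ at the origin with vanishing $1$-jet, but its second and higher derivatives need not extend continuously there. Hence your $\phi$ is only a $C^1$ chart, while the proposition asserts a smooth one; this matters downstream, since the pull-back $\phi^*J$ would then be merely $C^0$, which is not enough regularity for the local analysis used later (e.g.\ Proposition \ref{prop: local behavior of a curve around an interior point} assumes a $C^1$ structure, and the asymptotics feeding into Proposition \ref{prop:localformaroundacornerpoint} need the chart to be smooth).

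The repair needs no cutoff at all, and it is what the paper does. Transversality of $W_1:=\psi_1^{-1}(W)$ to $\R^n$ at $0$ (you have $T_0W_1=W$ and $W\cap\R^n=\{0\}$) lets the implicit function theorem write $W_1$ as a graph in the $\R^n$-direction over the imaginary axis: $W_1=\ens{f(y)+iy}{y\in\R^n}$ with $f$ smooth, $f(0)=0$ and $df_0=\mathrm{diag}(\cot\alpha_1,\ldots,\cot\alpha_n)$. The global shear $\psi_2(x+iy):=x+f(y)-df_0\, y+iy$ is then smooth, fixes $\R^n$ pointwise, satisfies $d(\psi_2)_x=\Id$ for all $x\in\R^n$ (the shear amount $f(y)-df_0\,y$ depends only on $y$ and vanishes to first order at $y=0$), and carries $W=\ens{df_0\,y+iy}{y\in\R^n}$ onto $W_1$. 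Equivalently, in your language: use the splitting $\C^n=\R^n\oplus W$, write $W_1$ as a graph of $g:W\to\R^n$, and set $\psi_2(r+w)=r+g(w)+w$; the trivial $1$-jet along $\R^n$ is automatic and smoothness is manifest. With this replacement for your gluing step, the rest of your verification goes through verbatim.
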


\begin{proof}
		By the preceding Proposition \ref{prop:firstlocalchart}, one can assume that the two branches of the immersion are the given linear Lagrangian subspaces, that the almost complex structure $J$ satisfies $J(0) = \R^n$ and that the metric $g_J$ satisfies $g_J(0) = g_{\text{std}}$.
		
		Now choose $\psi : W \to U \subset \C^n$ a local chart such that $\psi^* J_{\vert \R^n} = i $ and $d \psi (0) = \Id$ (such a chart always exists, see the construction in \cite[lemma 3.7]{La11}). 
		
		Notice that $\psi^{-1}(L_q)$ is an embedded submanifold whose tangent space at $0$ is transverse to $\R^n$ (it is given by $e^{i \alpha_1} \cdot \R \times \ldots \times e^{i \alpha_n} \cdot \R$ ). Therefore the implicit function theorem implies that there is a smooth map $f : W \cap \R^n \to \R^n$ such that 
		\[ \psi^{-1}(L_q) = \ens{f(y)+ i y}{y \in \R^n}. \]
		
		Consider the map $\phi(x + iy) = f(y) - d f(0) \cdot y + x + iy $. Its differential is given by the matrix 
		\[ \begin{pmatrix} \Id & d f_y - d f_0 \\ 0 & \Id \end{pmatrix} ,\]
		so $d \phi_0 = \Id$ and $\phi$ is a local diffeomorphism.
		
		A small computation shows that $df_0$ is given by a diagonal matrix
		\[ \begin{pmatrix} \cot{\alpha_1} & & \\ & \ddots & \\ & & \cot{\alpha_n} \end{pmatrix}.\]
		
		Hence for $x = (x_1, \ldots , x_n)\in \R^n$, we have $\phi(x) = (f(0) + 0 +x , 0) \in \R^n$. Moreover, notice that 
		\[ df_0(x_1 \sin \alpha_1, \ldots, x_n \sin \alpha_n ) = (x_1 \cos \alpha_1, \ldots, x_n \cos \alpha_n), \]
		hence
		\[ \phi \left (x_1 e^{i\alpha_1}, \ldots, x_n e^{i \alpha_n} \right ) = f \left (x_1 \sin \alpha_1, \ldots, x_n \sin \alpha_n \right ) + i \left (x_1 \sin \alpha_1, \ldots , x_n \sin \alpha_n \right), \]
		
		so $\phi(\R^n) = e^{i\alpha_1} \cdot \R \times \ldots \times e^{i \alpha_n} \cdot \R$. 
		Moreover, for $x \in \R^n$, we have $d \phi_x = \Id$ so $\phi^*J_x = i$ and $\phi^*g_J (0) = g_{\text{std}}$.				
\end{proof}

\subsubsection{Behavior of a $J$-holomorphic curve around an interior point}

The asymptotic behavior of a $J$-holomorphic curve around an interior point can be described quite precisely. For instance, the following is proved in Lazzarini's paper \cite[Proposition 3.3]{La11}.

\begin{prop}
\label{prop: local behavior of a curve around an interior point}
Assume that $J : \D \to GL(2n,\R)$ is a $\mathcal{C}^1$ map such that $J^2 = - \Id$	and $J(0) = J_{\text{std}}$ is the standard complex structure. Let $u : S \to \C^n$ be a $J$-holomorphic curve with $u(0) = 0$. Then there are
\begin{enumerate}
	\item an integer $k \geqslant 1$,
	\item a $\mathcal{C}^1$-local chart $\phi : \Omega \to \D $ with $\Omega$ and open neighborhood of $0$ in $\D$ and $\phi(0) = 0$,
	\item a positive real number $\lambda_u > 0$ and a matrix $A \in U(n)$,
\end{enumerate}
such that 
\[ u \circ \phi(z) = \lambda_u A \left (z^k, U(z) \right ), \]
with $U(z) = O \left (z^{k+1} \right )$.
\end{prop}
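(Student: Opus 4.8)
The plan is to compare $u$ with a genuinely holomorphic map by means of the Carleman similarity principle, read off the vanishing order $k$ and the leading coefficient from that holomorphic comparison map, and finally manufacture the $\mathcal{C}^1$ chart by extracting a $k$-th root of the leading coordinate of $u$.

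First I would put the equation in standard form. Writing $z = s+it$, the curve satisfies $\partial_s u + J\,\partial_t u = 0$ with $J(0) = J_{\mathrm{std}}$. Choosing a $\mathcal{C}^1$ family $\Psi(z)$ of real-linear isomorphisms of $\C^n$ with $\Psi(0) = \Id$ and $\Psi(z)^{-1}J(z)\Psi(z) = J_{\mathrm{std}}$ (for instance a normalization of $\tfrac12(\Id - J(z)J_{\mathrm{std}})$, which is invertible near $0$), the map $v := \Psi^{-1}u$ solves a standard perturbed Cauchy--Riemann equation
\[ \bar\partial v + Cv = 0, \qquad C := \Psi^{-1}\!\left(\partial_s\Psi + J\,\partial_t\Psi\right), \]
with $C$ continuous and $v(0)=0$. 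The similarity principle then produces a holomorphic map $\sigma : \Omega \to \C^n$ with $\sigma(0)=0$ and an invertible $\Phi : \Omega \to GL(2n,\R)$ with $\Phi(0) \in GL(n,\C)$ such that $v = \Phi\sigma$ near $0$; absorbing $\Psi$ into $\Phi$ gives $u = \Phi\sigma$ with the same properties.

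Now I would extract the asymptotics. Let $k \geqslant 1$ be the order of vanishing of $\sigma$ at $0$ and $\sigma_k \neq 0$ its first nonzero Taylor coefficient, and write $\sigma(z) = z^k\tau(z)$ with $\tau$ holomorphic, $\tau(0) = \sigma_k$. Then $u(z) = z^k\,w(z)$ where $w := \Phi\tau$ satisfies $w(0) = \Phi(0)\sigma_k =: c \neq 0$. Set $\lambda := |c| > 0$ and pick $A \in U(n)$ with $Ae_1 = c/\lambda$, so that $A^{-1}w(z) =: (w_1(z),\dots,w_n(z))$ has $w_1(0) = \lambda$ and $w_j(0) = 0$ for $j \geqslant 2$. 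The first coordinate of $A^{-1}u$ is $g(z) = z^k w_1(z)$ with $w_1(0) \neq 0$, so near $0$ the principal $k$-th root defines $\chi(z) := z\,(w_1(z)/\lambda)^{1/k}$, which is $\mathcal{C}^1$ with $\chi(0) = 0$ and $d\chi_0 = \Id$; hence $\phi := \chi^{-1}$ is a $\mathcal{C}^1$ chart and, since $\chi(z)^k = g(z)/\lambda$, we get $g(\phi(z)) = \lambda z^k$ exactly. For $j \geqslant 2$ the factor $w_j$ vanishes at $0$, so $z^k w_j(z) = O(|z|^{k+1})$, and composing with $\phi$ (which satisfies $|\phi(z)| \asymp |z|$) yields $A^{-1}u\circ\phi(z) = \lambda\,(z^k, U(z))$ with $U(z) = O(z^{k+1})$. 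This is exactly the asserted normal form $u\circ\phi(z) = \lambda A\,(z^k, U(z))$.

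The technical heart, and the step I expect to cause the most trouble, is the regularity bookkeeping concealed in the similarity principle: to ensure that $\chi$ — and therefore the chart $\phi$ — is genuinely $\mathcal{C}^1$ rather than merely H\"older continuous, I need $w_1$ to be $\mathcal{C}^1$, which forces the similarity factor $\Phi$ to be $\mathcal{C}^1$. Proving that the hypothesis $J \in \mathcal{C}^1$ propagates to $\mathcal{C}^1$ regularity of $\Phi$ (via the mapping properties of the Cauchy transform applied to the continuous coefficient $C$, together with the corresponding elliptic regularity for $v$) is the delicate analytic input; once it is in place, the order computation and the $k$-th root construction above are routine.
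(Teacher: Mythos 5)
The paper does not actually prove this proposition---it quotes it from \cite[Proposition 3.3]{La11}---so the benchmark is Lazzarini's proof, which starts from the same similarity-principle reduction as yours but then deliberately avoids the step on which your argument rests. The gap is the $\mathcal{C}^1$ regularity of the similarity factor $\Phi$, and it is worse than your closing disclaimer suggests, in two ways. First, you need it twice, not once: besides the smoothness of the chart, your assertion that ``$w_j(0)=0$, so $z^k w_j(z) = O(|z|^{k+1})$'' is a non sequitur. Continuity of $w_j$ with $w_j(0)=0$ gives only $z^k w_j(z) = o(|z|^k)$, and the H\"older regularity that the similarity principle actually provides gives only $O(|z|^{k+\alpha})$ with $\alpha<1$; the claimed $O(|z|^{k+1})$ needs $w_j(z)=O(|z|)$, i.e.\ exactly the deferred Lipschitz/$\mathcal{C}^1$ control of $\Phi$ at $0$. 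Second, that deferred input cannot be obtained the way you propose, and is in fact false in general. The Carleman similarity principle produces $\Phi$ only after replacing the real-linear coefficient $C$ by a pointwise complex-linearization $\hat{C}$ (defined using $v(z)/\norm{v(z)}^2$), which is merely $L^\infty$ near the zero of $v$ no matter how smooth $J$ is, so $\Phi$ comes out $W^{1,p}$, hence H\"older, and nothing more; moreover, even for genuinely continuous coefficients the Cauchy transform does not map $\mathcal{C}^0$ into $\mathcal{C}^1$. Worse, no $\mathcal{C}^1$ factorization need exist at all: for the scalar model $\bar\partial v + \bar{v} = 0$ (constant, real-analytic coefficient; nonzero solutions vanishing at $0$ exist by an easy fixed-point argument), a factorization $v = \Phi\sigma$ with $\sigma$ holomorphic vanishing to order $m\geqslant 1$ and $\Phi$ continuous and invertible forces $\bar\partial \Phi = -\overline{\Phi}\,\overline{\sigma}/\sigma$ on $\{\sigma\neq 0\}$, whose modulus stays bounded below near $0$ while its argument oscillates like $e^{-2im\theta}$; so $\bar\partial\Phi$ has no limit at $0$ and $\Phi$ is never $\mathcal{C}^1$ there.

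The standard way out---the one behind \cite[Proposition 3.3]{La11}, resting on \cite[Lemma 2.2.3]{McDS12}---is never to ask $\Phi$ to be $\mathcal{C}^1$, but to establish the two expansions $u(z) = a z^k + O(|z|^{k+1})$ and $du(z) = k a z^{k-1}\,dz + O(|z|^k)$ directly and build the chart from those. One applies the similarity principle to $du$ (which solves a perturbed Cauchy--Riemann equation with bounded coefficients, obtained by differentiating the equation for $u$) to get the leading terms with $o(\cdot)$ errors, and then upgrades $o$ to $O$ by a bootstrap using only the Lipschitz bound on $J$: one has $\norm{\bar\partial u} = \frac{1}{2}\norm{(J_{\text{std}}-J(u))\,\partial_t u} \leqslant c\,\norm{u}\,\norm{du} = O(|z|^{2k-1})$, so writing $u$ as a holomorphic map plus the Cauchy--Green transform of $\bar\partial u$ and using potential estimates yields the $O(\cdot)$ versions. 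Granted these two expansions, your $k$-th root construction does work: with $h(z) := g(z)/(\lambda z^k)$ one computes $dh = O(1)$, hence $d\chi(z) = h(z)^{1/k}\,dz + O(|z|) \to dz$ as $z \to 0$, so $\chi$ is $\mathcal{C}^1$ with $d\chi_0 = \Id$; and the bound $U(z)=O(|z|^{k+1})$ now comes from the expansion of $u$ itself rather than from any smoothness of the similarity factor.
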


\subsubsection{Behavior of a $J$-holomorphic curve around a double point}
\label{subsubsection:chartatacornerpoint}
In this subsection, we describe, along the lines of \cite[section 3.2]{La11}, the local form of a curve around the corner points.
 
For this, let us fix $(p,q) \in R$ and put $x = i(p) = i(q)$. As usual, we call \[ 0< \alpha_1 \leqslant \ldots \leqslant \alpha_n < \pi\] the Kähler angles of the pair $(T_x L_p, T_x L_q)$ with respect to $J$.
We also choose a chart $\phi : U \to V$ such as the one given in Proposition \ref{prop:firstlocalchart}.  

\begin{prop}
\label{prop:asymptoticdevelopmentaroundacornerpoint}
	Let $\D^+ = \ens{x+iy}{\vert x+iy \vert < 1,  y \geqslant 0}$ be the unit upper half-disk and $\D^+_\R = \D^+ \cap \R$ be its real part.
	 
	Let $u : (\D^+,\D^+_\R) \to (M, i(L))$ be a non-constant $J$-holomorphic half-disk with boundary on $L$ and finite energy (i.e. $\int u^* \omega < + \infty$). Assume that the lift $\gamma_{[0,1)}$ (resp. $\gamma_{(-1,0]}$) of $u_{\vert [0,1)}$ (resp. $u_{\vert (-1,0]}$) to $L$ satisfies $\gamma_{[0,1)}(0) = p$ (resp. $\gamma_{[0,1)}(0) = q$ )\footnote{Geometrically this means that the curve has right boundary condition along the branch $L_p$ and left boundary condition along the branch $L_q$.}.
	
	Then there are integers $k \in \{ 1, \ldots, n\}$ and $m \geqslant 0$, together with a positive real number $\delta > 0$ and a vector $a_k \in V_{\alpha_k}$\footnote{see \ref{subsubsection:linearsymplectic} for the the definition of $V_\alpha$} such that
	\[ \phi^{-1} \circ u (z) = a_k z^{\frac{\alpha_k}{\pi} + m} + o \left (z^{\frac{\alpha_k}{\pi} + m + \delta} \right). \]
	Moreover, we have
	\[ d (\phi^{-1} \circ u) (z) = \left ( \frac{\alpha_k}{\pi}+ m \right ) a_k z^{\frac{\alpha_k}{\pi} + m-1} + o \left (z^{\frac{\alpha_k}{\pi} + m + \delta -1 } \right ) .\]
	Note that this implies that there are no critical points in a sufficiently small punctured neighborhood of a corner point.
\end{prop}

\begin{rk}
\label{rk:definition of the multiplicity}
We call the integer $m+1$ the \emph{multiplicity} of the curve $u$ at $0$.  	
\end{rk}

\begin{proof}
	This is an application of a theorem of Robbin and Salamon (\cite[Theorem B]{RS01}) on the asymptotics of a finite-energy $J$-holomorphic strip. 
	
	To see this, fix $r>0$, and define the strip-like end 
	\[\varepsilon_r : \begin{array}{ccc} S := [0,+\infty) \times [0,1] & \to & (\D^+,\D^+_\R) \\ (s,t) & \mapsto & -re^{-\pi(s+it)} \end{array}.\] 
	For $r \ll 1$ consider the map 
	\[ \tilde{u} := \phi^{-1} \circ u \circ \varepsilon_r : S \to \C^n .\]
	
	Then $\tilde{u}$ is pseudo-holomorphic with respect to the almost complex structure $\phi^* J$, has finite energy with respect to the metric $g_{\phi^*J}$ and satisfies the boundary condition 
	\[ \tilde{u} \left ( [0,+\infty) \times \{0\} \right ) \subset  e^{i \alpha_1} \cdot \R \times \ldots \times e^{i \alpha_n} \cdot \R,\ \tilde{u}([0,+\infty) \times \{1\}) \subset \R^n . \] 
	Moreover, since $u$ is continuous, for $r$ small enough the map $\phi^{-1} \circ u \circ \varepsilon_r$ has relatively compact image in $\C^n$. Hence by \cite[Theorem A]{RS01}, $\tilde{u}(s,\cdot)$ converges uniformly to $x$ as $s \to + \infty$ and its derivative $\partial_s u$ decays exponentially with respect to the usual $\mathcal{C}^{\infty}$ pseudo-distance.
	
	We can now apply \cite[Theorem B]{RS01}. There exist a $\lambda > 0$ and a map $v : [0,1] \to \C^n$ such that 
	\[ i \partial_t v = \lambda v ,\ v(0) \in e^{i \alpha_1} \cdot \R \times \ldots \times e^{i \alpha_n} \cdot \R,\ v(1) \in \R^n ,  \]
	and a $\delta > 0 $ such that 
	\[ u(s,t) = \exp_0 \left (- \frac{1}{\lambda} e^{-\lambda s} v(t) + w(s,t) \right ), \norm{w}_{\mathcal{C}^k} \leqslant c_k e^{-(\lambda + \delta)s} .\]
	A small computation shows that there exist an integer $m \geqslant 0$, an $\alpha_k$ and a vector $v_k \in V_{\alpha_k}$ such that 
	\[ \lambda = \alpha_k + m \pi, \ v(t) = e^{i\alpha_k} e^{-i(\alpha_k + m \pi)t} .\] 
	Now notice that if $z = -e^{-\pi(s+it)}$ , then $z^{\frac{\alpha_k}{\pi} + m } = e^{-  s \left (\alpha_k + m \pi  \right )}  e^{i(\alpha_k + m \pi)(1-t)}$. Hence,
	\[ u(z) = \exp_0 \left (-\frac{(-1)^m}{\lambda} e^{-(\alpha_k + m \pi)s} e^{i(\alpha_k + m \pi)(1-t)} \right ),  \]
	and so
	\[ u(z) = \exp_0 \left (-\frac{(-1)^m}{\lambda}  z^{\alpha_k + m \pi} + w(z) \right ). \]
	This gives the relevant estimate.
	
	The estimate on the derivative follows easily from the chain rule applied to $\phi \circ u \circ \varepsilon$.
\end{proof}

From now on, we will work locally in $M$ with the help of the chart given by Proposition \ref{prop:secondlocalchart}. Therefore we shall consider $J$-holomorphic curves with values in $\C^n$ equipped with an almost complex structure $J$ such that $J_{\lvert \R^n} = J_{\text{std}}$. We assume these curves have boundaries on the union of the branch $ L_p = \R^n$ and $L_q = e^{i \alpha_1} \cdot \R \times \ldots \times e^{i\alpha_n} \cdot \R$. We shall describe their behavior around the double point $0$.

\begin{prop}
\label{prop:localformaroundacornerpoint}
	Assume that $u : (\D^+, \D^+_\R) \to (\C^n, L_p \cup L_q)$ satisfies the hypothesis of Proposition \ref{prop:asymptoticdevelopmentaroundacornerpoint}. 
	
	Then there exist 
	
	\begin{enumerate}
		
	\item an open neighborhood $\Omega$ of $0$ in $\D^+$,
	\item a $\mathcal{C}^1$ chart 
	\[ \psi : (\Omega, \Omega \cap \R) \to (\D^+, \D^+_\R), \] 
	\item a linear isometry $A_u \in \mathcal{L}(\R^{\dim V_{\alpha_k}}, V_{\alpha_k} )$ and a $\lambda_u \in \R^+$ such that
	\[ \pi_{\alpha_k} \left (u \circ \psi(z) \right ) = \lambda_u A_u \left (z^{\frac{\alpha_k}{\pi} + m },\tilde{U}(z) \right )  \]
	with $\tilde{U}(z) = o \left (z^{\frac{\alpha_k}{\pi} + m + \delta} \right )$.
	\end{enumerate}
	Moreover, if 
	\[ U(z) = \sum_{\alpha \in \{\alpha_1, \ldots , \alpha_n\} \backslash \{ \alpha_k\}} \pi_\alpha \left (u \circ \psi (z) \right ), \]
	we have
	\[ U(z) = o\left (z^{\frac{\alpha_k}{\pi} + m + \delta} \right ), \ d U(z) = o\left (z^{\frac{\alpha_k}{\pi} + m + \delta - 1} \right ) . \]
	
\end{prop}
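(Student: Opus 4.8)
The plan is to prove this as the corner analogue of the interior statement in Proposition~\ref{prop: local behavior of a curve around an interior point}: starting from the asymptotic development of Proposition~\ref{prop:asymptoticdevelopmentaroundacornerpoint}, I will straighten the leading term of the $V_{\alpha_k}$-component of $u$ by a $\mathcal{C}^1$ reparametrization of the half-disk. Write $\beta = \frac{\alpha_k}{\pi} + m$, so that Proposition~\ref{prop:asymptoticdevelopmentaroundacornerpoint} gives
\[ \pi_{\alpha_k}(u(z)) = a_k z^\beta + o(z^{\beta + \delta}), \qquad d(\pi_{\alpha_k}\circ u)(z) = \beta a_k z^{\beta - 1} + o(z^{\beta + \delta - 1}), \]
with $a_k \in V_{\alpha_k}$ nonzero, while projecting onto $V_\alpha$ for $\alpha \neq \alpha_k$ kills the leading term and yields $\pi_\alpha(u(z)) = o(z^{\beta+\delta})$ together with $d(\pi_\alpha\circ u)(z) = o(z^{\beta+\delta-1})$.

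First I fix the target data: set $\lambda_u = \vert a_k \vert$ and choose an orthonormal basis $(b_1, \ldots, b_d)$ of $V_{\alpha_k}$ (with $d = \dim V_{\alpha_k}$) such that $b_1 = a_k/\lambda_u$; let $A_u \in \mathcal{L}(\R^d, V_{\alpha_k})$ be the isometry sending the canonical basis to $(b_1, \ldots, b_d)$, extended $\C$-linearly. Writing $\pi_{\alpha_k}(u(z)) = \sum_{j} c_j(z) b_j$, the leading term identifies $g := c_1$ as a $\C$-valued function with $g(z) = \lambda_u z^\beta + o(z^{\beta+\delta})$ and $c_j(z) = o(z^{\beta+\delta})$ for $j \geq 2$. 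I then define $h(z) = (g(z)/\lambda_u)^{1/\beta}$, choosing the branch of the power on the simply connected punctured half-disk for which $\arg h \approx \arg z$.

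The heart of the argument is that $h$ extends to a $\mathcal{C}^1$ chart of a neighborhood $\Omega'$ of $0$ in $\D^+$ with $dh_0 = \Id$. Away from $0$ this is clear, since there $g$ is $\mathcal{C}^1$ and nonvanishing; the real content is continuity of $dh$ at $0$, which I extract from the derivative estimate: since $(g/\lambda_u)^{1/\beta - 1} = z^{1-\beta}(1 + o(z^\delta))$ and $dg(z) = \lambda_u \beta z^{\beta-1} + o(z^{\beta+\delta-1})$, the chain rule gives $dh(z) = \Id + o(z^\delta)$. I must also check that $h$ preserves the real boundary: on the positive axis $g$ is real and positive, while on the negative axis the boundary condition forces $\pi_{\alpha_k}(u(z)) \in e^{i\alpha_k}V_{\alpha_k}$, so $\arg g = \alpha_k + m\pi$ and $\arg h = \frac{1}{\beta}(\alpha_k + m\pi) = \pi$; it is precisely the relation $\beta\pi = \alpha_k + m\pi$ that makes $h$ send $\D^+_\R$ into $\D^+_\R$. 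Setting $\psi = h^{-1}$ on $\Omega := h(\Omega')$ then produces a $\mathcal{C}^1$ chart $(\Omega, \Omega\cap\R) \to (\D^+, \D^+_\R)$ with $\psi(z) = z + o(z^{1+\delta})$ and $d\psi(z) = \Id + o(z^\delta)$.

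It remains to read off the conclusion. By construction $g = \lambda_u h^\beta$, so the $b_1$-component of $\pi_{\alpha_k}(u(\psi(z)))$ equals $g(\psi(z)) = \lambda_u h(\psi(z))^\beta = \lambda_u z^\beta$ exactly, while the components along $b_j$ for $j\geq 2$ are $c_j(\psi(z)) = o(\psi(z)^{\beta+\delta}) = o(z^{\beta+\delta})$; grouping these yields $\pi_{\alpha_k}(u\circ\psi(z)) = \lambda_u A_u(z^\beta, \tilde U(z))$ with $\tilde U(z) = o(z^{\beta+\delta})$. Finally, because $\psi$ is a $\mathcal{C}^1$ perturbation of the identity, the bounds $\pi_\alpha(u(z)) = o(z^{\beta+\delta})$ and $d(\pi_\alpha\circ u)(z) = o(z^{\beta+\delta-1})$ for $\alpha\neq\alpha_k$ pull back through $\psi$ (using $\psi(z) = z + o(z^{1+\delta})$ and the boundedness of $d\psi$) to give the claimed estimates on $U(z)$ and $dU(z)$. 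I expect the only genuine obstacle to be the $\mathcal{C}^1$-regularity of $h$ at the corner, in particular in the range $m = 0$ where $\beta < 1$ and $g$ itself fails to be differentiable at $0$; this is exactly where the derivative part of Proposition~\ref{prop:asymptoticdevelopmentaroundacornerpoint} is indispensable.
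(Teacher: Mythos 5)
Your proposal is correct and follows essentially the same route as the paper: your map $h(z) = (g(z)/\lambda_u)^{1/\beta}$ is literally the paper's reparametrization $\phi(z) = z\left(1+a(z)\right)^{1/\beta}$, and both arguments hinge on the derivative estimate of Proposition \ref{prop:asymptoticdevelopmentaroundacornerpoint} for $\mathcal{C}^1$-regularity at the corner and on the boundary conditions to see that the real line is preserved. The only step the paper spells out that you elide is how to invert at the corner point, which is a boundary point of $\D^+$ where the inverse function theorem does not directly apply: the paper extends $\phi$ across $\R$ by Schwarz reflection and uses a Jordan-curve argument to check that the upper half-disk lands in the upper half-plane, a routine completion given what you have established.
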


\begin{proof}
	Replacing $u$ by $\phi \circ u$ we can assume that $u$ has values in $\C^n$. Using Proposition \ref{prop:asymptoticdevelopmentaroundacornerpoint}, there are $k$ and $a_k \in V_{\alpha_k}$ such that 
	\[ u(z) = a_k z^{\frac{\alpha_k}{\pi} + m} +o \left (z^{\frac{\alpha_k}{\pi} + m + \delta} \right ). \]
	Choose an isometry $A \in \mathcal{L}(\R^n, V_{\alpha_k} )$ and a $\lambda_u > 0$ such that $A(1,0) = \lambda_u a_k$, then we have
	 \[ \pi_{\alpha_k} ( u(z) ) = \lambda_u z^{\frac{\alpha_k}{\pi} + m}A(1 + a(z), U_1(z)) \] 
	 with $a(z) \in \C$, $a(z) = o(z^\delta)$ and $U_1(z) = o(z^{\delta})$.
	 
	 Now if $r > 0$ is small enough, define $\phi$ on $\D(0,r)$ by 
	\[ \phi(z) = z \left ( 1 + a(z) \right )^{\frac{1}{\frac{\alpha_k}{\pi} + m}} . \]
	The map $\phi$ is $\mathcal{C}^1$ on $\D^+(0,r) \backslash \{ 0 \}$, and if $z \neq 0$ we have
	\[ \phi'(z) =  (1+a(z))^{\frac{1}{\frac{\alpha_k}{\pi} + m}} d z + \frac{z a'(z)}{\frac{\alpha_k}{\pi} + m}(1 + a(z))^{\frac{1}{\frac{\alpha_k}{\pi} + m}-1} .\]
	Therefore $\phi'(z) \to 1$ as $z \to 0$. Hence, $\phi$ extends to a $\mathcal{C}^1$ map on $\D^+(0,r)$.
	
	Now if $z \in \R_+$, we have $\pi_{\alpha_k}(u(z)) \in \R^{ \dim V_{\alpha_k}}$, so $\lambda_u z^{\frac{\alpha_k}{\pi}+m}(1 + a(z)) \in \R$ and $1+a(z) \in \R$. If $z \in \R_-$, since $\pi_{\alpha_k}(u(z)) \in e^{i\alpha_k} \cdot \R^{\dim V_{\alpha_k}}$ we similarly obtain $1+a(z) \in \R$.
	
	Since $a(z) \to 0$ as $z \to 0$, we can assume that for $z \in \D(0,r) \cap \R$ we have $1+a(z) \in \R^+$. Hence $\left ( 1 + a(z) \right )^{\frac{1}{\frac{\alpha_k}{\pi} + m}} \in \R$ and $\phi(z) \in \R$.  
	
	 We can now use the Schwarz reflection principle to see that $\phi$ extends to a map defined on $\D(0,r)$ with invertible differential at the origin. Therefore it admits a local inverse. We will now assume that $r>0$ is small enough so that $\phi$ is actually invertible.
	
	The image of $\D(0,r)$ by $\phi$ is an open subset of $\C$ with boundary a $\mathcal{C}^1$ simple closed curve. By the Jordan curve theorem, this image is cut by the real line $\R$ into two connected components. These are necessarily the images of the connected components of $\D(0,r) \backslash \R$ by $\phi$. We conclude that $\phi(\D^+(0,r))$ is a subset of $\HP$.
	
	Now 
	\[ \pi_{\alpha_k} ( u(z) ) = \lambda_u A \left( \phi(z)^{\frac{\alpha_k}{\pi}+m}, U_1(z) \right ),\]
	and so 
	\[ \pi_{\alpha_k} ( u(\phi^{-1}(z)) ) =  \lambda_u A \left( z^{\frac{\alpha_k}{\pi}+m}, U_1\circ \phi^{-1}(z) \right ) .\]
\end{proof}

Let us recall the analog of this (Proposition \ref{prop:localformaroundacornerpoint}) in the case of a curve with boundary along a single branch of the immersion. This is \cite[Lemma 3.5]{La11} and it is proved in the same manner as above (with a bit less trouble).

\begin{prop}
\label{prop:local form along a branch}
Assume that $u:(\D^+, \D^+_\R) \to (\C^n,L_p)$ is a finite-energy, $J$-holomorphic curve with $u(0) = 0$.

There are a matrix $A_u \in O_n(\R)$, a $\lambda_u >0$, a natural $m \in \N$ and $\psi$ a $\mathcal{C}^1$ local chart around $0$ such that
	\[ u \circ \phi (z) = \lambda_u A_u \left (z^m, U(z) \right), \]  
	with $U(z) = o \left (z^m \right )$ and $dU(z) = o \left (z^{m-1} \right )$.
\end{prop}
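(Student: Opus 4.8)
The plan is to run the argument of Propositions \ref{prop:asymptoticdevelopmentaroundacornerpoint} and \ref{prop:localformaroundacornerpoint} essentially verbatim, the simplification being that here the two boundary arcs of the associated strip both land on the \emph{same} branch $\R^n$. First I would introduce the strip-like end $\varepsilon_r(s,t) = -r e^{-\pi(s+it)} : [0,+\infty)\times[0,1] \to \D^+$ and set $\tilde u = u \circ \varepsilon_r$. Since $\varepsilon_r$ maps $\{t=0\}$ into $\R_-$ and $\{t=1\}$ into $\R_+$, and the entire boundary $\D^+_\R$ maps into $L_p = \R^n$, the strip $\tilde u$ is a finite-energy $J$-holomorphic strip whose two boundary components \emph{both} satisfy the Lagrangian boundary condition $\R^n$. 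As in Proposition \ref{prop:asymptoticdevelopmentaroundacornerpoint}, for $r$ small enough $\tilde u$ has relatively compact image, so \cite[Theorem A]{RS01} gives uniform convergence $\tilde u(s,\cdot)\to 0$ with exponentially decaying derivative, and \cite[Theorem B]{RS01} applies.

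Applying \cite[Theorem B]{RS01} yields $\lambda > 0$ and $v:[0,1]\to\C^n$ with $i\partial_t v = \lambda v$ and now $v(0),v(1)\in\R^n$. Since both endpoint conditions are the same real subspace, $v(t) = e^{-i\lambda t}v_0$ with $v_0\in\R^n$ forces $e^{-i\lambda}\in\R$, that is $\lambda = m\pi$ for an integer $m\geqslant 1$, and hence $v(t) = e^{-im\pi t}v_0$ with $v_0\in\R^n$. Transcribing into the variable $z = -e^{-\pi(s+it)}$ exactly as in that proof gives the development
\[ u(z) = a_m z^m + o\!\left(z^{m+\delta}\right), \qquad du(z) = m\,a_m z^{m-1} + o\!\left(z^{m+\delta-1}\right), \]
where crucially the leading coefficient $a_m\in\R^n$ is \emph{real}. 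This reality, in place of the subspace $V_{\alpha_k}$ that occurred at a genuine double point, is precisely what will produce a real orthogonal rather than a unitary normalizing matrix, and it is the source of the reduced difficulty advertised in the text.

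With the development in hand I would follow the normalization step of Proposition \ref{prop:localformaroundacornerpoint}. Because $a_m$ is real I may choose $A_u\in O_n(\R)$ and $\lambda_u>0$ with $A_u(1,0,\dots,0) = \lambda_u a_m$, so that $u(z) = \lambda_u z^m A_u\!\left(1+a(z), U_1(z)\right)$ with $a(z) = o(z^\delta)$ and $U_1(z) = o(z^\delta)$. Setting $\phi(z) = z(1+a(z))^{1/m}$, the computation of $\phi'$ shows $\phi'(z)\to 1$ as $z\to 0$, so $\phi$ extends to a $\mathcal{C}^1$ local diffeomorphism fixing the origin. Composing with $\phi^{-1}$ absorbs the factor $1+a(z)$ and leaves $u\circ\phi^{-1}(z) = \lambda_u A_u\!\left(z^m, U(z)\right)$ with $U := U_1\circ\phi^{-1} = o(z^m)$, the estimate $dU(z) = o(z^{m-1})$ following from the $\mathcal{C}^k$ bound on the remainder in \cite[Theorem B]{RS01} together with the chain rule, exactly as in Proposition \ref{prop:asymptoticdevelopmentaroundacornerpoint}. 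The chart $\psi$ of the statement is then $\psi := \phi^{-1}$.

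The only place that genuinely uses the boundary, and which I regard as the main (if mild) point to check, is that $\phi$ is a chart of the \emph{half}-disk: that it maps $\D^+_\R$ into $\R$ and $\D^+$ into $\HP$. For $z\in\D^+_\R$ the condition $u(z)\in\R^n$ forces $\lambda_u z^m(1+a(z))$ to be real, hence $1+a(z)\in\R^+$ for $z$ small, so $(1+a(z))^{1/m}\in\R$ and $\phi(z)\in\R$; the Schwarz reflection principle then extends $\phi$ across $\R$ with invertible differential at $0$, and the Jordan-curve argument used in Proposition \ref{prop:localformaroundacornerpoint} shows $\phi(\D^+(0,r))\subset\HP$. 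This yields the required $\psi:(\Omega,\Omega\cap\R)\to(\D^+,\D^+_\R)$ and finishes the proof, the only departures from Proposition \ref{prop:localformaroundacornerpoint} being the absence of the projections $\pi_\alpha$ and the replacement of $\alpha_k/\pi + m$ by the integer $m$. I would remark in passing that a direct reflection into the interior situation of Proposition \ref{prop: local behavior of a curve around an interior point} is tempting but less clean, since the reflected domain-dependent almost complex structure need not be $\mathcal{C}^1$ across $\R$ unless one also controls the normal derivative of $J$ along $\R^n$; the strip argument sidesteps this entirely.
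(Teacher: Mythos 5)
Your proof is correct and is exactly the argument the paper intends: the paper gives no details here, deferring to Lazzarini's Lemma 3.5 with the remark that it is proved ``in the same manner as above,'' i.e., as Propositions \ref{prop:asymptoticdevelopmentaroundacornerpoint} and \ref{prop:localformaroundacornerpoint}. Your execution --- Robbin--Salamon asymptotics for the strip with both boundary conditions on $\R^n$, forcing $\lambda = m\pi$ with $m \geqslant 1$ an integer and a \emph{real} leading coefficient $a_m \in \R^n$ (whence $A_u \in O_n(\R)$ instead of a unitary or $V_{\alpha_k}$-valued isometry), followed by the same reparameterization chart $\phi(z) = z\left(1+a(z)\right)^{1/m}$ with the Schwarz-reflection and Jordan-curve checks --- is precisely that adaptation, so it matches the paper's approach.
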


These two propositions allow us to give the local behavior of these curves when they have boundary conditions along $L_q$ rather than $L_p$.

For this let us introduce $D_{\alpha_1,\ldots , \alpha_n}$ the $n \times n$ diagonal matrix with successive entries $e^{i\alpha_1}, \ldots, e^{i \alpha_n}$.

\begin{prop}
\label{prop: Local form along the other branch}
Assume that $u:\left (\D^+, \D^+_\R \right ) \to \left (\C^n, L_q \right )$ is a finite-energy $J$-holomorphic curve with $u(0) = 0$.

There are a matrix $B_u \in O_n \left (\R \right )$, a $\lambda_u > 0 $ such that 
	\[ u(z) = \lambda_u D_{\alpha_1, \ldots , \alpha_n} B_u \left (z^m, U(z) \right ), \]
with $U(z) = o \left( z^m \right )$ and $dU(z) = o \left ( z^{m-1} \right )$. 	
\end{prop}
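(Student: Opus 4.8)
The plan is to deduce this from the single-branch statement of Proposition~\ref{prop:local form along a branch} by straightening the branch $L_q$ onto $\R^n$. Write $D := D_{\alpha_1,\ldots,\alpha_n}$, a $\C$-linear automorphism of $\C^n$ with $D(\R^n) = L_q$. Setting $v := D^{-1}\circ u$, the boundary condition becomes $v(\D^+_\R)\subset \R^n$ and $v(0)=0$. Since $D^{-1}$ is $\C$-linear it commutes with multiplication by $i$, so $v$ is $\tilde J$-holomorphic for $\tilde J := D^{-1}\,J\,D$; moreover, composing with the fixed linear isomorphism $D^{-1}$ does not affect finiteness of the energy. Finally $\tilde J(0) = D^{-1}J_{\text{std}}D = J_{\text{std}}$, again because $D$ commutes with $i$. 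Thus $v$ is a finite-energy $\tilde J$-holomorphic half-disk with boundary on the single branch $\R^n$ and $v(0)=0$.

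Granting that $v$ and $\tilde J$ meet the hypotheses of Proposition~\ref{prop:local form along a branch}, that result furnishes an integer $m$, a $\lambda_u>0$, a matrix $A_u\in O_n(\R)$ and a $\mathcal{C}^1$ chart $\psi$ with $v\circ\psi(z) = \lambda_u A_u\bigl(z^m, U(z)\bigr)$, where $U(z)=o(z^m)$ and $dU(z)=o(z^{m-1})$. Applying $D$ and using its linearity gives $u\circ\psi(z) = D\bigl(v\circ\psi(z)\bigr) = \lambda_u D A_u\bigl(z^m,U(z)\bigr)$, and since $D=D_{\alpha_1,\ldots,\alpha_n}$ we obtain the asserted form with $B_u := A_u\in O_n(\R)$ and the same error estimates, the latter being unchanged under the constant linear map $D$.

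The one step requiring genuine care is the passage ``granting the hypotheses'': the normalizing chart of Proposition~\ref{prop:secondlocalchart} makes $J$ standard along $L_p=\R^n$ but \emph{not} along $L_q$, so the straightened structure $\tilde J$ is only guaranteed to be standard at the origin and not along the new boundary $\R^n$, whereas Proposition~\ref{prop:local form along a branch} is stated under the standing normalization $J_{|\R^n}=J_{\text{std}}$. This is the main obstacle. I would resolve it by observing that $\R^n$ is totally real for $\tilde J$ along all of $\R^n$ -- an assertion equivalent to $L_q$ being $J$-totally real along itself, which holds because $J$ is $\omega$-compatible -- and by noting that the proof of Proposition~\ref{prop:local form along a branch}, resting on the Robbin--Salamon asymptotics used in Proposition~\ref{prop:asymptoticdevelopmentaroundacornerpoint}, only feeds on the value $\tilde J(0)=J_{\text{std}}$ at the puncture together with the linear totally real boundary condition. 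Alternatively, one first applies the flattening construction of \cite[Lemma~3.7]{La11} to make $\tilde J$ standard along $\R^n$ before invoking the branch lemma; the only bookkeeping is then to reabsorb the higher-order reparametrization it introduces into the chart $\psi$ and the remainder $U$.
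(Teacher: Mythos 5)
Your proposal is correct and takes essentially the same route as the paper: the paper's own proof likewise sets $v = D_{-\alpha_1,\ldots,-\alpha_n}\, u$, observes that $v$ is holomorphic for the conjugated structure $D_{-\alpha_1,\ldots,-\alpha_n}\, J\, D_{\alpha_1,\ldots,\alpha_n}$ with boundary on $\R^n$, and invokes Proposition~\ref{prop:local form along a branch}. If anything, you are more careful than the paper, whose proof says the conclusion follows ``immediately'' without addressing the normalization subtlety you identify (that the conjugated structure is standard only at the origin, not along all of $\R^n$); your resolution of that point is sound and consistent with what the cited asymptotic results actually require.
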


\begin{proof}
	This follows directly from Proposition \ref{prop:local form along a branch}. To see this, consider the curve $v = D_{-\alpha_1, \ldots, - \alpha_n} u $.
	Then $v$ satisfies the hypotheses of \ref{prop:local form along a branch} with the complex structure $D_{- \alpha_1, \ldots , - \alpha_n} J D_{\alpha_1, \ldots, \alpha_n}$. This immediately gives the conclusion.	
\end{proof}

The same trick allows us to give a local form around a corner point.

\begin{prop}
\label{prop: local form of a corner point along the other branch}
Assume that $u: \left ( \D^+, \D^+_\R \right ) \to \left (\R^n, L_p \cup L_q \right ) $ is a finite-energy $J$-holomorphic curve such that $u(0) =0$ and $u \left ( [0,1) \right ) \subset L_q$ and $u \left((-1,0] \right ) \subset L_p$ (this implies in particular that there is a corner point at $0$).	

Then there exist 
	
	\begin{enumerate}
		
	\item an open neighborhood $\Omega$ of $0$ in $\D^+$,  
	\item a $\mathcal{C}^1$ chart 
	\[ \psi : (\Omega, \Omega \cap \R) \to (\D^+, \D^+_\R), \] 
	\item an linear isometry $B_u \in \mathcal{L}(\R^{\dim{V_{\alpha_k}}}, V_{\alpha_k} )$ and a $\lambda_u \in \R^+$ such that
	\[ \pi_{\alpha_k} \left (u \circ \psi(z) \right ) = \lambda_u e^{i \alpha_k} B_u \left (z^{\frac{\pi - \alpha_k}{\pi} + m },\tilde{U}(z) \right )  \]
	with $\tilde{U}(z) = o \left (z^{\frac{\pi - \alpha_k}{\pi} + m + \delta} \right )$.
	\end{enumerate}
	Moreover, if 
	\[ U(z) = \sum_{\alpha \in \{\alpha_1, \ldots , \alpha_n\} \backslash \{ \alpha_k\}} \pi_\alpha \left (u \circ \psi (z) \right ) \]
	we have
	\[ U(z) = o\left (z^{\frac{\pi - \alpha_k}{\pi} + m + \delta} \right ), \ d U(z) = o\left (z^{\frac{\pi - \alpha_k}{\pi} + m + \delta - 1} \right ) . \]
\end{prop}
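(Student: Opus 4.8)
The plan is to run exactly the diagonal conjugation trick used in the proof of Proposition \ref{prop: Local form along the other branch}, now applied to the corner case. Set $v := D_{-\alpha_1,\ldots,-\alpha_n}\, u$. Since $D_{\pm\alpha_1,\ldots,\pm\alpha_n}$ is $\C$-linear (hence commutes with $J_{\text{std}}$) and unitary (hence a $g_{\text{std}}$-isometry), $v$ is a finite-energy holomorphic curve for the transported structure $J' = D_{-\alpha_1,\ldots,-\alpha_n}\, J\, D_{\alpha_1,\ldots,\alpha_n}$, which still satisfies $J'(0)=i$ and $g_{J'}(0)=g_{\text{std}}$. The whole point of the conjugation is that it exchanges the two branches: $D_{-\alpha_1,\ldots,-\alpha_n}(L_q) = \R^n$ and $D_{-\alpha_1,\ldots,-\alpha_n}(L_p) = e^{-i\alpha_1}\cdot\R\times\cdots\times e^{-i\alpha_n}\cdot\R$.

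First I would read off the boundary conditions of $v$. From $u([0,1))\subset L_q$ and $u((-1,0])\subset L_p$ one gets $v([0,1))\subset\R^n$ and $v((-1,0])\subset e^{-i\alpha_1}\R\times\cdots\times e^{-i\alpha_n}\R$. Thus $v$ has right boundary along $\R^n$ and left boundary along the tilted branch, which is precisely the orientation assumed in Proposition \ref{prop:asymptoticdevelopmentaroundacornerpoint} (and hence in Proposition \ref{prop:localformaroundacornerpoint}), with $\R^n$ now playing the role of $L_p$. The key elementary observation is that, as a real line, $e^{-i\alpha_k}\cdot\R = e^{i(\pi-\alpha_k)}\cdot\R$ with $\pi-\alpha_k\in(0,\pi)$; hence the K\"ahler angles of the transformed pair are the numbers $\pi-\alpha_k$ (their ordering reverses, but this only relabels indices). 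Crucially, $V_{\alpha_k}$ is the span of the coordinates $j$ with $\alpha_j=\alpha_k$, so it is unchanged by the relabelling, and the projection $\pi_{\alpha_k}$ commutes with every diagonal matrix, in particular with $D_{\pm\alpha_1,\ldots,\pm\alpha_n}$.

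I would then simply invoke Proposition \ref{prop:localformaroundacornerpoint} for $v$: it produces a neighborhood $\Omega$, a $\mathcal{C}^1$ chart $\psi$, an isometry $A_v\in\mathcal{L}(\R^{\dim V_{\alpha_k}},V_{\alpha_k})$ and a $\lambda_u>0$ with $\pi_{\alpha_k}(v\circ\psi(z)) = \lambda_u A_v(z^{\frac{\pi-\alpha_k}{\pi}+m},\tilde U(z))$, the exponent being $\frac{\pi-\alpha_k}{\pi}+m$ because the relevant K\"ahler angle is now $\pi-\alpha_k$; the error estimates for $\tilde U$ and for $U=\sum_{\alpha\neq\alpha_k}\pi_\alpha(v\circ\psi)$ come along for free. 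Transporting back via $u=D_{\alpha_1,\ldots,\alpha_n} v$ and using that $D_{\alpha_1,\ldots,\alpha_n}$ restricts to multiplication by $e^{i\alpha_k}$ on $V_{\alpha_k}$ and to a unitary on each $V_\alpha$ (so it preserves the $o(\cdot)$ bounds), I obtain $\pi_{\alpha_k}(u\circ\psi(z)) = \lambda_u e^{i\alpha_k} B_u(z^{\frac{\pi-\alpha_k}{\pi}+m},\tilde U(z))$ with $B_u:=A_v$, which is exactly the claimed form.

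There is essentially no analytic content left to prove: all the hard work lives in Propositions \ref{prop:asymptoticdevelopmentaroundacornerpoint} and \ref{prop:localformaroundacornerpoint}. The only thing requiring care, and the place I would be most careful, is the bookkeeping — checking that the conjugation genuinely sends the $L_q$-right / $L_p$-left configuration of $u$ to the $L_p$-right / $L_q$-left configuration demanded by Proposition \ref{prop:localformaroundacornerpoint}, that the K\"ahler angles become $\pi-\alpha_k$, and that $\pi_{\alpha_k}$ commutes with $D_{\pm\alpha_1,\ldots,\pm\alpha_n}$ so the projected estimates transfer verbatim. One minor point worth stating explicitly is that, if one wants to match the hypotheses of Proposition \ref{prop:asymptoticdevelopmentaroundacornerpoint} on the nose, one may first compose with a chart as in Proposition \ref{prop:firstlocalchart} for the transformed pair before applying the asymptotic development.
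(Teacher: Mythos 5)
Your proposal is correct and follows essentially the same route as the paper: the paper's proof also sets $v = D_{-\alpha_1,\ldots,-\alpha_n}u$, observes that the boundary conditions become $v([0,1))\subset\R^n$ and $v((-1,0])\subset e^{-i\alpha_1}\cdot\R\times\ldots\times e^{-i\alpha_n}\cdot\R$ with K\"ahler angles $\pi-\alpha_1,\ldots,\pi-\alpha_n$, applies Proposition \ref{prop:localformaroundacornerpoint} to $v$, and transports back using $\pi_{\alpha_k}(D_{-\alpha_1,\ldots,-\alpha_n}u)=e^{-i\alpha_k}\pi_{\alpha_k}(u)$. Your write-up is in fact slightly more careful than the paper's (spelling out the conjugated almost complex structure, the isometry property, and the commutation of $\pi_{\alpha_k}$ with the diagonal matrices), but the underlying argument is identical.
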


\begin{proof}
	As before, consider the curve $v = D_{-\alpha_1, \ldots, -\alpha_n} u$. It satisfies the boundary condition $v \left ( [0,1) \right ) \subset \R^n$ and $v \left ( (-1,0] \right ) \subset e^{-i \alpha_1} \cdot \R \times \ldots \times e^{-i \alpha_n} \cdot \R$. Notice that the Kähler angles of the second boundary condition are given by $\pi-\alpha_1, \ldots , \pi - \alpha_n$. 
	
	We now apply Proposition \ref{prop:localformaroundacornerpoint} to obtain an $\alpha_k \in (0,\pi)$, a $\lambda_u > 0$, a linear map $B_u$ and a local $\mathcal{C^1}$ diffeomorphism $\psi$ such that 
		\[ \pi_{\alpha_k} \left ( v( \psi(z)) \right ) = \lambda_u B_u \left ( z^{\frac{\pi - \alpha_k}{\pi} +m }, U(z) \right ) \]
		with $U(z) = o (^{\frac{\pi - \alpha_k}{\pi} +m })$ and $\delta >0$. 
	Notice that $\pi_{\alpha_k} (D_{-\alpha_1, \ldots, -\alpha_n} u)= e^{-i \alpha_k} \pi_{\alpha_k}(u)$, so 
	\[ \pi_{\alpha_k} \left (u \circ \psi(z) \right ) = \lambda_u e^{i \alpha_k} B_u \left (z^{\frac{\pi - \alpha_k}{\pi} + m },\tilde{U}(z) \right ). \]				
\end{proof}

\subsection{The relative frame of the curve is a graph}
\label{subsection:relativeframe=graph}

In this subsection, we will explain how to adapt the argument of \cite{La11} to show that given two finite-energy $J$-holomorphic curves with boundary on $L$, their relative frame is a $\mathcal{C}^1$-embedded graph. Recall that $u_1$ and $u_2$ are two finite-energy $J$-holomorphic curves with corners and boundary on $L$.

First let us state \cite[Lemma 3.10]{La11}. The proof adapts without difficulty to our context.

\begin{prop}
\label{prop:Theframerelationisopen}
	Suppose that $p_1 : S_1 \times S_2 \to S_1$ is the projection onto the first factor. Then if $V_1 \subset S_1$ and $V_2 \subset S_2$, the map $p_1 : (V_1 \times V_2) \cap \mathcal{R}_{u_1}^{u_2} \to V_1$ is open if
	\begin{enumerate}
		\item either $V_2 \subset \Int(S_2)$ is open,
		\item or $V_1$ is an open set such that $V_1 \cap \mathcal{W}(u_1, u_2) \subset \partial S_1$.	
	\end{enumerate}

\end{prop}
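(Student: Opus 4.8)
The plan is to exploit the fact that openness is a local property of the source: a continuous map is open if and only if, for every point and every neighbourhood of it, the image is a neighbourhood of the image point. Thus I fix a pair $(z_1,z_2)\in (V_1\times V_2)\cap \mathcal{R}_{u_1}^{u_2}$ and a product neighbourhood $W_1\times W_2$ with $W_i\subset V_i$, and I aim to show that $p_1\big((W_1\times W_2)\cap \mathcal{R}_{u_1}^{u_2}\big)$ contains a full neighbourhood of $z_1$ in $V_1$.

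First I would treat the generic case where $z_1$ and $z_2$ are interior points lying outside the bad sets $\mathcal{C}(u_1,u_2)$ and $\mathcal{C}(u_2,u_1)$. There both $du_1$ and $du_2$ are nonzero, so each $u_i$ is an immersion, hence a local embedding, near $z_i$; after shrinking to neighbourhoods $\Omega_i$ on which $u_i$ is an embedding and on which $(z_1,z_2)$ realises the defining overlap $u_1(\Omega_1)=u_2(\Omega_2)$, the map $\Phi:=(u_2|_{\Omega_2})^{-1}\circ u_1|_{\Omega_1}$ is a biholomorphism with $u_1=u_2\circ\Phi$. Because both maps are injective on these sets, a pair $(z_1',z_2')\in\Omega_1\times\Omega_2$ lies in $\mathcal{R}_{u_1}^{u_2}$ precisely when $u_1(z_1')=u_2(z_2')$, i.e. $z_2'=\Phi(z_1')$; shrinking so that $\Omega_1\times\Omega_2$ consists only of good points, the relation coincides there with the graph of $\Phi$. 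Hence $p_1$ restricts to a homeomorphism of this graph onto $\Omega_1$, and openness is immediate at such points.

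The real work is at the remaining points, and here I would feed in the local normal forms established above. Since $\mathcal{R}_{u_1}^{u_2}$ is by definition the sequential closure of its restriction to good interior pairs, any $(z_1,z_2)$ is a limit of good pairs, but $z_1$ or $z_2$ may itself be a critical point, a corner, or a boundary point, where the graph picture degenerates. Using Proposition \ref{prop: local behavior of a curve around an interior point} at an interior point, and Propositions \ref{prop:localformaroundacornerpoint} and \ref{prop:local form along a branch} at a corner or a boundary point, each $u_i$ is, after a $\mathcal{C}^1$ reparametrisation and a linear isometry, a power map $w\mapsto w^{k}$ (integer interior multiplicity) or $w\mapsto w^{\alpha_k/\pi+m}$ (fractional corner exponent). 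These model maps are open and finite-to-one, so the condition $u_1=u_2$ on images identifies the relation near $(z_1,z_2)$ with a finite union of graphs of local homeomorphisms $w_1\mapsto \zeta\, w_1$ matching the sheets of the two branched covers. Each such graph projects homeomorphically onto a neighbourhood of $z_1$, so their union covers a full neighbourhood and $p_1$ is again open. Crucially, passing to the normal form replaces the merely sequential approximation by a definite, nondegenerate local model, which is exactly what lets me conclude that $z_1$ itself---not just nearby good points---is interior to the image.

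Finally I would explain the role of the two hypotheses, which serve to keep at least one side in a regime the normal forms can handle. Under assumption (1), $z_2\in V_2\subset\Int(S_2)$ is always interior, so $u_2$ carries the clean interior branched-cover model of Proposition \ref{prop: local behavior of a curve around an interior point} and never a corner; the matching argument of the previous paragraph then applies whatever the nature of $z_1$. Under assumption (2), the hypothesis $V_1\cap\mathcal{W}(u_1,u_2)\subset\partial S_1$ means, recalling $\mathcal{W}(u_1,u_2)=\mathcal{R}_{u_1}^{u_2}(\partial S_2)$, that no interior point of $V_1$ is related to $\partial S_2$; hence for $z_1\in \Int(S_1)\cap V_1$ every partner $z_2$ is forced to be interior, again supplying the interior model, while for $z_1\in\partial S_1$ the single-branch and corner forms of Propositions \ref{prop:local form along a branch} and \ref{prop:localformaroundacornerpoint} apply on the $u_1$ side. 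I expect the main obstacle to be precisely this bookkeeping at the singular pairs: verifying that the fractional-exponent corner model on the $u_1$ side and the integer-power model on the $u_2$ side are compatible enough that the coincidence relation is still a finite union of graphs with open projection, and checking that shrinking the neighbourhoods does not lose the covering of $z_1$. The interior--interior matching is routine; it is the corner and boundary configurations, controlled by hypotheses (1) and (2), that require care.
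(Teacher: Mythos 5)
Your generic case (interior, non-critical pairs, where the relation is locally the graph of the biholomorphism $(u_2)^{-1}\circ u_1$) agrees with the corresponding step of the paper's proof, but your treatment of the singular pairs contains a genuine gap, and it is exactly the point on which the whole proposition turns. At a pair $(z_1,z_2)$ where $z_1$ or $z_2$ is a critical point or a corner, the normal forms of Propositions \ref{prop: local behavior of a curve around an interior point}, \ref{prop:local form along a branch}, \ref{prop:localformaroundacornerpoint} and \ref{prop: local form of a corner point along the other branch} describe each curve \emph{separately}; they say nothing about the two image sets coinciding near the common value. Your sentence ``these model maps are open and finite-to-one, so the condition $u_1=u_2$ on images identifies the relation near $(z_1,z_2)$ with a finite union of graphs of local homeomorphisms'' is therefore not a deduction: all that is available at a singular pair is that $(z_1,z_2)$ is a \emph{limit} of related good pairs, i.e.\ that the images of $u_1$ and $u_2$ overlap on open sets accumulating at $(z_1,z_2)$. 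Whether this overlap propagates to a full punctured neighbourhood of $z_1$ --- equivalently, whether the relation really is a union of graphs over a whole neighbourhood of $z_1$ rather than, a priori, only over some smaller set accumulating at $z_1$ --- is precisely the statement to be proved, and it does not follow from the two local models being branched covers of compatible types.

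The paper closes this gap with an argument your proposal does not contain: a connectedness argument on $\Omega=\mathcal{R}_{u_1}^{u_2}(V_2)\cap\left(\Int(V_1)\setminus\{0\}\right)$, where $V_1$ is a small (half-)disk centered at $z_1=0$. One shows $\Omega$ is nonempty (from the defining sequences), open (this is your generic case), and \emph{closed} in the connected set $\Int(V_1)\setminus\{0\}$, whence $\Omega$ is all of it, and one concludes by taking closures, using that $\mathcal{R}_{u_1}^{u_2}$ is a closed relation. The closedness step is where the normal forms are actually used in the paper, and for a different purpose than yours: they yield growth estimates of the shape $\norm{u_1(\lambda z)}\leqslant C_1\lambda^{c_1}\norm{z}^{c_1}$ and $\norm{u_2(z)}\geqslant C_2\norm{z}^{c_2}$ (with each $c_i$ either an integer $k$ or an exponent $\frac{\alpha}{\pi}+m$), which show that after the rescaling $z\mapsto\lambda z$ with $\lambda$ small, every partner $z_2'$ of a point of $V_1$ satisfies $\norm{z_2'}\leqslant\frac{1}{2}$; this compactness is what prevents partners from escaping $V_2$, so that limits of related pairs stay in $V_1\times V_2$ and closedness of $\mathcal{R}_{u_1}^{u_2}$ applies. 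Your proposal never addresses why the partners of points near $z_1$ remain inside $W_2$ at all, so even the inclusion you need, namely that $p_1\left((W_1\times W_2)\cap\mathcal{R}_{u_1}^{u_2}\right)$ contains a neighbourhood of $z_1$, is unsupported at the singular pairs. To repair the proof you would need to add both ingredients: the rescaling estimates providing compactness, and the open--closed--connectedness argument replacing the asserted ``finite union of graphs'' structure.
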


\begin{proof}
		Let $q_1 \mathcal{R}^{u_2}_{u_1} q_2$ and let $V_i \ni q_i$ be two open neighborhoods satisfying $V_2 \subset \Int(S_2)$ and $V_1 \cap \mathcal{W}(u_1, u_2) \subset \partial S_1$. Assume that the $V_i$ are open half-disks or open disks and that $V_1 \backslash \{ 0 \} \cap \mathcal{C}(u_1, u_2) = \emptyset$.
		
		Up to reparameterization by $z \to u_1(\lambda z)$ with $\lambda > 0 $ small enough, we can assume that if $(z_1, z_2) \in V_1 \times V_2$ is such that $u_1(z_1) = u_2(z_2)$, then $\norm{z_2} \leqslant \frac{1}{2}$. 
		
		First, assume that $z_1$ is a corner point and $z_2$ is not. There are constants such that
		\[ \norm{u_1 (\lambda z)} \leqslant C_1 \lambda^{\frac{\alpha_i}{\pi} + m } \norm{z}^{\frac{\alpha}{\pi} + m }, \  \norm{u_2(z)} \geqslant C_2 \norm{z}^{k} .\]
		So if $u_2(z_2) = u_1(\lambda z_1)$, we have 
		\[ C_2 \norm{z}^{k_2} \leqslant C_1 \norm{\lambda}^{\frac{\alpha_i}{\pi} + m } \norm{z_1}^{\frac{\alpha_i}{\pi} + m}. \]
		Hence 
		\[ \norm{z_2} \leqslant \left ( \frac{C_1}{C_2} \lambda^{\frac{\alpha_i}{\pi} + m } \right )^{\frac{1}{k_2}} . \]
		The right term goes to zero as $\lambda \to 0^+$. Therefore, the result is true for $\lambda$ small enough.
		
		Second, assume that $z_2$ is a corner point and $z_1$ isn't. Then there are constants such that 
		\[ \norm{u_1(\lambda z)} \leqslant C_1 \lambda^k \norm{z}^k , \ \norm{u_2(z)} \geqslant C_2 \norm{z}^{\frac{\alpha}{\pi} + m } .\]
		So if $u_2(z_2) = u_1(\lambda z_1)$, we have 
		 \[ C_2 \norm{z_2}^{\frac{\alpha}{\pi} + m } \leqslant \norm{\lambda}^k \norm{z_1}^k , \]
		hence
		\[ \norm{z_2} \leqslant \left( \frac{C_1}{C_2} \lambda^k \right)^{\frac{1}{\frac{\alpha}{\pi}+ m}}. \]
		The right term goes to zero as $\lambda \to 0^+$. Therefore the result is true for $\lambda > 0$ small enough.

		Last assume that both $z_1$ and $z_2$ are corner points. Then there are constants such that
		\[ \norm{u_1 (\lambda z)} \leqslant C_1 \lambda^{\frac{\alpha_1}{\pi} + m_1 } \norm{z}^{\frac{\alpha_1}{\pi} + m_1 }, \  \norm{u_2(z)} \geqslant C_2 \norm{z}^{\frac{\alpha_2}{\pi} + m_2 } .\] 
		So if $u_2(z_2) = u_1(\lambda z_1)$, we have 
		 \[ C_2 \norm{z_2}^{\frac{\alpha_2}{\pi} + m_2 } \leqslant \norm{\lambda}^{\frac{\alpha_1}{\pi} + m_1} \norm{z_1}^{\frac{\alpha_1}{\pi} + m_1 }. \]
		Hence
		\[ \norm{z_2} \leqslant \left( \frac{C_1}{C_2} \lambda^{\frac{\alpha_1}{\pi} + m_1 } \right)^{\frac{1}{\frac{\alpha_2}{\pi}+ m_2}}. \]
		The right term goes to zero as $\lambda \to 0^+$. Therefore the result is true for $\lambda > 0$ small enough. 
				
		Now, let $\Omega = \mathcal{R}^{u_2}_{u_1} (V_2) \cap \left (\Int(V_1) \backslash \{ 0 \} \right ) \subset \Int(V_1 \backslash \{ 0 \}) \cup \partial S_1 $.
		\begin{itemize}
			\item We have that $\Omega \neq \emptyset$. 	Indeed, there are sequences $(q_{1,\nu})$ and $(q_{2, \nu})$ with values in $\Int(S_1) \backslash \mathcal{C}(u_1,u_2) $ and $\Int(S_2) \backslash \mathcal{C}(u_2,u_1)$ such that $q_{1,\nu} \to q_1$, $q_{2,\nu} \to q_2$ and $q_{1, \nu} \neq q_1$. Now for $\nu$ large enough, $q_{1, \nu} \in V_1 \backslash \{ q_1 \}$ and $q_{2, \nu} \in V_2$.
			\item The set $\Omega$ is open in $\Int(V_1)$ : if $z_1 \in \Omega$, then $z_1 \notin \mathcal{C}(u_1,u_2)$. Let $z_2 \in V_2$ be such that $z_1 \mathcal{R}^{u_2}_{u_1} z_2$. Then $z_2 \in \Int(V_2)$ since if $z_2 \in \partial S_2$ we have $z_1 \in \partial S_2$ which is a contradiction. Moreover, $d u_1 (z_1) \neq 0$ and $d u_2 (z_2) \neq 0$. So the restrictions of the two curves to small enough open neighborhoods of $z_1$ and $z_2$ are reparameterizations of each other. 
			\item The set $\Omega$ is closed in $\Int(V_1) \backslash \{0\}$ since if $z_{1,\nu} \to z \in \Int(V_1) \backslash \{ 0 \}$, there is $z_{2,\nu} \in V_2$ such that $z_{1,\nu} \mathcal{R}^{u_2}_{u_1} z_{2, \nu}$. One can assume that the sequence $(z_{2, \nu})$ converges to $z_2$. Since $\norm{z_2} \leqslant \frac{1}{2}$, we get $z_2 \in V_2 $.
		\end{itemize}
	Hence $\Omega = \Int(V_1) \backslash \{ 0 \}$ and the result follows by taking the closure of this in $V_1$ and $V_2$ since $\mathcal{R}_{u_1}^{u_2}$ is closed.				
\end{proof}

Let us also recall a characterization of simple curves with corners and boundary on $L$.

\begin{prop}
\label{prop:Characterizationofsimplecurves}
	Let $u : (S, \partial S) \to (M, L)$ be a finite-energy $J$-holomorphic curve with boundary in $L$. The curve $u$ is simple if and only if $\mathcal{R}^u_u$ is the trivial relation.	
\end{prop}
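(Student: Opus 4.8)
The plan is to prove the two implications separately; the direction ``simple $\Rightarrow$ trivial'' is a direct consequence of the definitions, while ``trivial $\Rightarrow$ simple'' is where the work lies. For the first direction, suppose $z_1\,\mathcal{R}^u_u\,z_2$ with $z_1\neq z_2$ and assume first that $z_1,z_2\in\Int(S)\setminus\mathcal{C}(u,u)$. I choose disjoint neighbourhoods $V_i\ni z_i$; the first clause of Definition \ref{defi:coincidencerelation} then gives smaller $\Omega_i\subset V_i$ with $u(\Omega_1)=u(\Omega_2)$. Since $u$ is simple I may pick an injective point $z^\ast\in\Omega_1$, and then $u(z^\ast)\in u(\Omega_2)$ produces $w^\ast\in\Omega_2$ with $u(w^\ast)=u(z^\ast)$; injectivity of $z^\ast$ forces $w^\ast=z^\ast\in\Omega_1\cap\Omega_2=\emptyset$, a contradiction. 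The general case reduces to this one via the sequential clause of the definition: writing $z_i=\lim z_i^\nu$ with $z_1^\nu\,\mathcal{R}^u_u\,z_2^\nu$ and $z_i^\nu\in\Int(S)\setminus\mathcal{C}(u,u)$, the previous argument gives $z_1^\nu=z_2^\nu$, hence $z_1=z_2$. Thus $\mathcal{R}^u_u$ is the diagonal.

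For ``trivial $\Rightarrow$ simple'' I argue by contraposition: assume the injective points are not dense, so that there is a nonempty open $W\subset\Int(S)$ meeting no injective point, and I aim to exhibit a pair $z\neq w$ with $z\,\mathcal{R}^u_u\,w$. Three structural facts are needed. First, $\mathcal{C}(u,u)$ is closed, discrete and nowhere dense: critical points of a non-constant $J$-holomorphic curve are isolated by Proposition \ref{prop: local behavior of a curve around an interior point}, $i(R)$ is finite, and the fibres of $u$ are finite, so I may shrink $W$ to arrange $du\neq0$ on $W$. Second, $Z:=u^{-1}(i(L))\cap\Int(S)$ is closed and nowhere dense, because on an open set where $du\neq0$ the map $u$ is a $J$-holomorphic immersion whose image carries $J$-complex tangent lines, whereas $T(i(L))$ is totally real and contains no such line; hence $u$ cannot send an open set into $i(L)$. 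Third, $u^{-1}(u(\mathcal{C}(u,u)))$ is countable. Removing these meagre sets, the comeagre subset $W'':=W\setminus\big(\mathcal{C}(u,u)\cup Z\cup u^{-1}(u(\mathcal{C}(u,u)))\big)$ is dense in $W$, and for $z\in W''$ every point of the fibre $u^{-1}(u(z))$ lies in $\Int(S)\setminus\mathcal{C}(u,u)$, since $u(z)\notin i(L)\supset u(\partial S)$ and $u(z)\notin u(\mathcal{C}(u,u))$.

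Now I fix $z_0\in W''$ with fibre $\{z_0,w_1,\dots,w_r\}$, $r\geq1$, and choose pairwise disjoint embedded neighbourhoods $V_0\ni z_0$ and $W_j\ni w_j$ inside $\Int(S)\setminus\mathcal{C}(u,u)$. A compactness argument, using that $u(z_0)\notin i(L)\cup u(\mathcal{C}(u,u))$ and that $u$ is locally injective at the non-critical point $z_0$, shows that after shrinking $V_0$ every partner of every $z\in V_0$ lies in $\bigcup_j W_j$. If $z_0\,\mathcal{R}^u_u\,w_j$ holds for some $j$, the proof is finished since $z_0\neq w_j$. Otherwise the local images $u(V_0)$ and $u(W_j)$ never coincide, so by the positivity and finiteness of intersections of distinct $J$-holomorphic curves the coincidence set $\{(z,w)\in V_0\times W_j:u(z)=u(w)\}$ is finite. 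Taking the union over the finitely many $j$, only finitely many $z\in V_0$ can have a partner at all; but every point of the uncountable comeagre set $W''\cap V_0$ is non-injective with $du\neq0$, hence has a partner, a contradiction. Therefore $z_0\,\mathcal{R}^u_u\,w_j$ with $z_0\neq w_j$ must occur, so $\mathcal{R}^u_u$ is nontrivial.

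The main obstacle is this reverse implication, and within it the two ``smallness'' inputs: that $u^{-1}(i(L))$ has empty interior in $\Int(S)$, which lets me discard partners lying on $\partial S$, and the local finiteness of intersections of two distinct $J$-holomorphic branches, which confines non-overlapping partners to a discrete set. The first is a clean totally-real argument, while the second is the standard local intersection theory of pseudo-holomorphic curves, invoked at the immersed points $z_0$ and $w_j$. The delicate bookkeeping is organising these facts into a Baire-category contradiction, in particular ruling out partners that escape to the boundary or to $\mathcal{C}(u,u)$ while keeping $W''$ comeagre.
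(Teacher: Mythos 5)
Your overall strategy is sound and, at its core, coincides with the paper's: the easy direction plays density of injective points against disjoint neighborhoods with equal images, and the hard direction rests on the local dichotomy for pseudo-holomorphic curves (either two branches have locally coinciding images, i.e.\ the relation $\mathcal{R}_u^u$ holds, or their intersections are isolated), together with the smallness of the bad sets $\mathcal{C}(u,u)$ and $u^{-1}(i(L))$. The paper runs this directly and very briefly: assuming $\mathcal{R}_u^u$ trivial, any accumulating sequence of coincidence pairs at a point of $\mathcal{N}=\{z\in \Int(S)\setminus\mathcal{C}(u,u):\ \# u^{-1}(u(z))\geqslant 2\}$ would, by Lazzarini's accumulation result, force a nontrivial relation, so $\mathcal{N}$ is discrete and injective points are dense. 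You run the contrapositive with a Baire-category count; that difference is organizational rather than conceptual, and your supporting facts (the totally-real argument for $u^{-1}(i(L))$ being nowhere dense, finiteness of fibers away from $i(L)$, countability of preimages of critical values) are correct and are implicitly used by the paper as well.

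There is, however, one step that as written is a non sequitur: ``Otherwise the local images $u(V_0)$ and $u(W_j)$ never coincide, so \dots the coincidence set $\{(z,w)\in V_0\times W_j:\ u(z)=u(w)\}$ is finite.'' The failure of $z_0\,\mathcal{R}_u^u\,w_j$ only controls the germs of the images at the specific pair $(z_0,w_j)$; it says nothing about other pairs $(z,w)\in V_0\times W_j$ with $u(z)=u(w)$, and if the images coincide locally at such a pair, the coincidence set is not discrete there and finiteness fails. The repair is already contained in your own argument: any such pair satisfies $z\,\mathcal{R}_u^u\,w$ with $z\neq w$ (since $V_0\cap W_j=\emptyset$), which is exactly the nontrivial relation you want to exhibit. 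So the dichotomy must be applied at \emph{every} coincidence pair, not only at $(z_0,w_j)$: either some pair in $V_0\times\bigcup_j W_j$ is related, and you are done, or none is, in which case every coincidence pair is isolated and the coincidence set is discrete, hence countable --- not necessarily finite, since it may accumulate on the boundary of $V_0\times W_j$, but countability is all your cardinality contradiction needs. With this correction the proof closes, and it is then in substance the same argument as the paper's, which invokes precisely this ``accumulation of coincidence pairs implies the relation'' statement from Lazzarini at points of $\Int(S)\setminus\mathcal{C}(u,u)$.
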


\begin{proof}
	If $\mathcal{R}^u_u$	 is non-trivial, it is easy to show that $u$ is not simple : see \cite[Corollary 3.16]{La11}.
	
	Assume that $\mathcal{R}^u_u = \Delta$ and let $\mathcal{N} = \ens{z \in \Int(S) \backslash \mathcal{C}(u,u)}{\# u^{-1}(u(z)) \geqslant 2}$. Suppose that $z_{1,\nu} \to z_1 \in \mathcal{N}$ and $u(z_{1,\nu}) = u(z_{2,\nu})$ with $z_{2,\nu} \to z_2 \in S$ and $z_{1, \nu} \neq z_{2, \nu}$. Then since $z_1 \notin \mathcal{C}_{u,u}$, we have that $u_1(z_1) \notin i(R)$, hence by \cite{La11}, $z_1 \mathcal{R}_u^u z_2$ and so $z_1 = z_2$. This is a contradiction since $d u (z_1) \neq 0$ and $u$ is locally injective around $z_1$. 
\end{proof}

We will now explain how to prove that $\mathcal{W}(u_1,u_2)$ is a $\mathcal{C}^1$ embedded graph. The proof is still an adaptation of \cite{La00} and \cite{La11} with special care given to corner points.

Let us fix $z_1 \in S_1$ and $z_2 \in \partial S_2$ such that $z_1 \mathcal{R}_{u_1}^{u_2} z_2$. We will show that the desired result holds locally around $z_1$.
 
 There are several cases to consider depending on the type of the points $z_1$ and $z_2$. The proofs of all of these follow a variation of the same scheme (and are therefore quite interchangeable). Namely
 	\begin{enumerate}
 		\item For $i = 1,2$ we find an expression of $u_i$ around $z_i$ of the type 
 		\[ u_i(z) = \lambda_i A_i(1,0) z^{c_i} + A_i(0, U_i(z)) \]
 		with $c_i$ a positive real number and $U_i(z) = o(z^{c_i})$ (for this we apply one of the Propositions \ref{prop: Local form along the other branch},\ref{prop:local form along a branch}, \ref{prop: local form of a corner point along the other branch}, \ref{prop:localformaroundacornerpoint} according to the type of $z_i$ ).
 		\item Since there are sequences $z_{1, \nu} \to z_2$ and $z_{2,\nu} \to z_2$	 which satisfy $u_1(z_{1,\nu}) = u_2(z_{2,\nu})$, we deduce that $A_1(1,0)$ and $A_2(1,0)$ are dependent over $\C$ and lie in the complexification of $V_{\alpha_p}$ for some $p$.
 		\item We then use the complexification of the standard scalar product to conclude that $\mathcal{W}(u_1,u_2)$ is included in a union of rays.
 	\end{enumerate}
 	
\begin{lemma}
\label{lemm: Cas 1}
	Assume that $z_1 \in \partial S_1$ and that $z_1$ and $z_2$ are not corner points.
	
	Moreover, we suppose that $u_1(z_1) = u_2(z_2) = i(p) = i(q)$ is a double point, that $u_1$ has boundary condition along the branch $L_p$ around $z_1$ and $u_2$ has boundary condition along the branch $L_q$ around $z_2$.
	
	Then there is an open neighborhood $\Omega$ of $z_1$ such that $\mathcal{W}(u_1,u_2) \cap \Omega$ is a $\mathcal{C}^1$-embedded graph in $\Omega$.
	
\end{lemma}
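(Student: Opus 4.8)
The plan is to follow, step by step, the three-part scheme laid out just before the statement, specializing each step to this configuration. First I would pass to the local model of Proposition \ref{prop:secondlocalchart}, so that near $x = i(p) = i(q)$ the target is $\C^n$ with $L_p = \R^n$ and $L_q = e^{i\alpha_1}\cdot\R\times\cdots\times e^{i\alpha_n}\cdot\R$, both curves taking values near $0$. Since $z_1$ is a non-corner boundary point with boundary condition along $L_p$, Proposition \ref{prop:local form along a branch} supplies a $\mathcal{C}^1$ chart $\psi_1$ around $z_1$, an integer $m_1$, a $\lambda_1 > 0$ and $A_1 \in O_n(\R)$ with
\[ u_1\circ\psi_1(\zeta) = \lambda_1 A_1\left(\zeta^{m_1}, U_1(\zeta)\right), \quad U_1(\zeta) = o\left(\zeta^{m_1}\right). \]
Likewise, as $z_2$ is a non-corner boundary point on $L_q$, Proposition \ref{prop: Local form along the other branch} gives $\psi_2, m_2, \lambda_2, B_2 \in O_n(\R)$ with $u_2\circ\psi_2(\omega) = \lambda_2 D_{\alpha_1,\ldots,\alpha_n}B_2(\omega^{m_2}, U_2(\omega))$. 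Abbreviate $D_\alpha := D_{\alpha_1,\ldots,\alpha_n}$ and write $v_1 := A_1 e_1$, $v_2 := B_2 e_1 \in \R^n$ for the real unit leading directions, $e_1 = (1,0,\ldots,0)$.

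Next, the relation $z_1\mathcal{R}_{u_1}^{u_2}z_2$ yields sequences of interior injective points $\zeta_\nu\to 0$, $\omega_\nu\to 0$ in the respective charts with $u_1\circ\psi_1(\zeta_\nu) = u_2\circ\psi_2(\omega_\nu)$. Comparing leading terms and passing to a subsequence forces $v_1 = \mu\, D_\alpha v_2$ for some $\mu \in \C^\ast$. Reading this coordinatewise, and using that $v_1,v_2$ are real while $D_\alpha$ multiplies the $j$-th coordinate by $e^{i\alpha_j}$, every nonzero coordinate must share a single K\"ahler angle $\alpha_p$; hence $v_1, v_2 \in V_{\alpha_p}$ (see \ref{subsubsection:linearsymplectic}), and since $D_\alpha = e^{i\alpha_p}\Id$ on $V_{\alpha_p}$, in fact $v_1 = \pm v_2$. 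This is the immersed analogue of Lazzarini's ``the leading directions are $\C$-dependent and lie in a common $V_{\alpha_p}$''.

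For the third step I would use the $\C$-bilinear extension $\langle a,b\rangle = \sum_j a_j b_j$ of the standard product to reduce the overlap to a scalar condition. As $A_1 \in O_n(\R)$ preserves this form, $g_1(\zeta) := \langle u_1\psi_1(\zeta), u_1\psi_1(\zeta)\rangle = \lambda_1^2\zeta^{2m_1}(1 + o(1))$, while the computation on $V_{\alpha_p}$ gives $g_2(\omega) := \langle u_2\psi_2(\omega), u_2\psi_2(\omega)\rangle = \lambda_2^2 e^{2i\alpha_p}\omega^{2m_2}(1 + o(1))$. A point $\zeta$ lies in $\mathcal{W}(u_1,u_2)$ exactly when its image agrees with that of some $\omega$ with $\psi_2(\omega)\in\partial S_2$, i.e.\ with $\omega$ real; then $g_2(\omega)$ has argument $2\alpha_p + o(1)$, so $g_1(\zeta) = g_2(\omega)$ forces $2m_1\arg\zeta \equiv 2\alpha_p \pmod{2\pi}$ up to an error tending to $0$. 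Thus, in the $\psi_1$-chart, the frame is confined to neighborhoods of the finitely many rays $\theta_k = (\alpha_p + k\pi)/m_1$, $k = 0,\ldots,m_1-1$, all lying strictly inside $(0,\pi)$; in particular $\partial S_1$ contributes only the point $0$, since a real $\zeta$ would require $u_1\psi_1(\zeta) = u_2\psi_2(\omega) \in L_p \cap L_q = \{0\}$.

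The main obstacle is upgrading this leading-order confinement to the exact statement that $\mathcal{W}(u_1,u_2)\cap\Omega$ is a $\mathcal{C}^1$-embedded graph. I would show each ray is genuinely realized and cut out cleanly: along $\theta_k$ the equation $\arg g_1(\zeta) = 2\alpha_p$ is a single real equation whose $\theta$-derivative is non-vanishing (because $g_1(\zeta)/\zeta^{2m_1}\to\lambda_1^2 \neq 0$), so the implicit function theorem produces a $\mathcal{C}^1$ arc tangent to $\theta_k$; the magnitude equation $\lambda_1^2|\zeta|^{2m_1} = \lambda_2^2|\omega|^{2m_2}(1+o(1))$ then determines $|\omega|$, and inverting $u_2$ near the corresponding injective point (using that there are no critical points in a punctured neighborhood) produces the required real $\omega$, showing the arc lies in the frame. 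Openness of the projection, Proposition \ref{prop:Theframerelationisopen}, propagates membership from the sequences to the whole arc and precludes any further points. Finally, since $\psi_1$ is a $\mathcal{C}^1$ chart, the union of these $m_1$ arcs meeting at $0$ is carried to a $\mathcal{C}^1$-embedded graph in a neighborhood $\Omega$ of $z_1$ with $0$ as its only vertex. The delicate points, exactly as in \cite{La11}, are the uniform control of the $o(\cdot)$ remainders so that the $\arg$-equation stays regular and the inversion of $u_2$ remains valid up to the boundary.
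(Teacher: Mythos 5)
Your steps (1) and (2) --- the reduction to the local model of Propositions \ref{prop:secondlocalchart}, \ref{prop:local form along a branch}, \ref{prop: Local form along the other branch}, and the argument that the leading directions satisfy $A_1(1,0) = \pm B_2(1,0) \in V_{\alpha_p}$ --- coincide with the paper's proof. The gap is in step (3). The paper does not pair $u_1$ with itself: it pairs with the \emph{constant} vector $A_1(1,0)$. Since $A_1 \in O_n(\R)$ preserves the complexified bilinear form, $\langle u_1\circ\psi_1(\zeta), A_1(1,0)\rangle = \lambda_1\zeta^{m_1}$ \emph{exactly} --- the remainder $U_1$ is annihilated by orthogonality --- and for a frame point one has $u_1\circ\psi_1(\zeta) \in L_p\cup L_q$, so, $A_1(1,0)$ being a real vector in $V_{\alpha_p}$, this pairing lies exactly in $\R\cup e^{i\alpha_p}\cdot\R$. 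Hence the frame is contained, with no error term, in a finite union $A$ of straight rays through $0$; since $\mathcal{W}(u_1,u_2)$ is closed, and is open in $A\setminus\{0\}$ by the argument of \cite[Theorem 3.18]{La11} (using $\mathcal{C}(u_1,u_2)\cap\Omega_1\subset\{0\}$), it is a possibly empty union of connected components of $A\setminus\{0\}$, hence a $\mathcal{C}^1$ graph. Your self-pairing $g_1(\zeta)=\langle u_1\psi_1(\zeta),u_1\psi_1(\zeta)\rangle$ destroys this exactness (the error $\langle U_1,U_1\rangle$ survives, and on the $u_2$ side so do the cross terms carrying the other angles $e^{2i\alpha_j}$), which is precisely why you are left with confinement only to $o(1)$-neighborhoods of the rays.

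Your proposed upgrade does not close this gap, for two reasons. First, a frame point satisfies only $\arg g_1(\zeta) = 2\alpha_p + o(1)$, not $\arg g_1(\zeta) = 2\alpha_p$; so the frame is not contained in the arcs produced by your implicit function theorem, and ``precludes any further points'' is unsupported. Second, and more seriously, the claim that each arc \emph{lies in} the frame is both unprovable by your method and false in general. Matching the single complex scalar $g_1(\zeta) = g_2(\omega)$ cannot force the equality $u_1\psi_1(\zeta) = u_2\psi_2(\omega)$ in $\C^n$: one cannot ``invert $u_2$'' at a target point not already known to lie in its image, and for $n\geqslant 2$ two curves through $0$ in $\C^n$ with matching scalar invariants need not meet again at all. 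Indeed the frame need not contain any ray: as the paper stresses, $\mathcal{W}(u_1,u_2)\setminus\{0\}$ is a \emph{possibly empty} union of components of $A\setminus\{0\}$, and $z_1$ may even be an isolated point of the frame, which is allowed by Proposition \ref{prop:theframeisagraph} since $z_1 \in u_1^{-1}(i(R))\subset\mathcal{C}(u_1,u_2)$. So the equality ``frame $=$ union of arcs'' that your conclusion requires fails in both inclusions; the correct statement, and the one the paper proves, is only that the frame is a closed and open subset of an exact union of rays.
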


\begin{proof}
	Using Propositions \ref{prop:secondlocalchart}, \ref{prop:local form along a branch}, and \ref{prop: Local form along the other branch}, we can assume that
	\begin{enumerate}
		\item $u_1$ and $u_2$ have values in $\C^n$,
		\item $L_p$ is given by $\R^n$ and $L_q$ is given by $e^{i \alpha_1} \cdot \R \times \ldots \times e^{i\alpha_n} \cdot \R$,
		\item there are local $\mathcal{C}^1$ diffeomorphisms $\psi_1$ and $\psi_2$ around $z_1$ and $z_2$ respectively with images $\Omega_1$ and $\Omega _2$ such that
		\begin{eqnarray*}
			u_1 \left (\psi_1(z) \right ) & = & \lambda_1 A \left (z^k, U(z) \right ) \\
			u_2 \left ( \psi_2(z) \right ) & = & \lambda_2 D_{\alpha_1, \ldots, \alpha_n} B \left (z^m , \tilde{U}(z) \right). \\	
		\end{eqnarray*}	
	\end{enumerate}
Replacing $\Omega_1$ and $\Omega_2$ by smaller neighborhoods if necessary, we can assume that $\mathcal{C}(u_1,u_2) \cap \Omega_1 \subset \{ 0 \}$.

We claim that there is a complex $\mu \in \C \setminus \{ 0 \}$ such that $\mu A (1,0) = D_{\alpha_1, \ldots, \alpha_n} B (1,0)$ and that there is an $\alpha_k$ such that $A(1,0) \in V_{\alpha_k}$.

Since $z_1 \mathcal{R}_{u_1}^{u_2} z_2$, there are sequences $(z_{1, \nu})$ and $(z_{2, \nu})$ of points distinct from $z_1$ and $z_2$ such that $z_{1, \nu} \to z_1$ and $z_{2,\nu} \to z_2$ and $u_1(z_{1, \nu}) = u_2(z_{2, \nu})$. There are $\mu \in \C$ and $v \in \R^{n-1}$ such that $ D_{\alpha_1, \ldots, \alpha_n} B (1,0)= \mu A(1,0) + A(0,v)$.

If by contradiction $ \mu = 0$, from the equality
	\[ \lambda_1 z_{1, \nu}^k A(1,0) + \lambda_1 A(0, U(z_{1,\nu})) = \lambda_2 z_{2,\nu}^m A(0,v) + \lambda_2 D_{\alpha_1, \ldots, \alpha_n} B(0,\tilde{U}) ,\]
	we get $z_{1, \nu}^{k} = o (z_{2, \nu}^m)$. Therefore, we would have $u_1(z_{1,\nu}) = o(z_{2,\nu}^m) = o(u_2(z_{2,\nu^m}))$. This is of course a contradiction.
	
From this, we deduce $\lambda_1 z_{1, \nu}^k \sim \mu \lambda_2 z_{2, \nu}^m$. Denote by $\pi: \R^n \to \R^n$ the real orthogonal projection onto $A(1,0)^{\perp}$. Since $A$ is orthogonal, we get
\[ o(z_{1, \nu}^k) = \pi(u_1(z_{1,\nu})) = \pi(u_2(z_{2,\nu})) = z_{2, \nu}^m \pi(D_{\alpha_1, \ldots, \alpha_n} B (1,0) +o(z_{2,\nu}) ).\]
Hence, $\pi(D_{\alpha_1, \ldots, \alpha_n} B (1,0)) = 0$.

Moreover, we have $\mu A(1,0) = D_{\alpha_1, \ldots , \alpha_n}B(1,0) \in e^{i \alpha_1} \cdot \R \times \ldots \times e^{i \alpha_n} \cdot \R$ and $A(1,0) \in \R^n$, so $A(1,0) \in V_{\alpha_p}$ for some $p$. We conclude that $\mu$ has argument $\alpha_k$ mod $ \pi$.    
 
Assume that $z \in \Omega_1 \cap \mathcal{W}(u_1,u_2)$. Then $u(z) \in L_q$. Denoting by $\langle \cdot, \cdot \rangle$ the complexification of the usual scalar product on $\R^n$, we have
\[ \lambda_1 z^k = \langle u_1(z), A(1,0) \rangle. \]
Since $u_1(z) \in L_q$ and $A(1,0) \in V_{\alpha_k}$, we have $\langle u_1(z), A(1,0) \rangle \in e^{i\alpha_p} \cdot \R$, so $z^k \in e^{i \alpha_p} \cdot \R$.

We conclude that $\mathcal{W} (u_1,u_2) \subset A$ where $A$ is the union of rays given by  
\[ A : = \left (\bigcup_{q} e^{i \frac{\alpha_p}{k} + i \frac{2 \pi q}{p}} \cdot \R_+ \right ) \cup \left ( \bigcup_{q} e^{i \frac{\alpha_p}{k} + i  \frac{(2q+1) \pi}{p} } \cdot \R_+ \right ). \]

 We claim that the frame $\mathcal{W}(u_1,u_2) \backslash \{0\}$ is a (possibly empty) union of connected components of $A \backslash \{ 0 \}$. We prove this by showing that it is an open and closed subset of $A \backslash\{ 0 \} $.
 
 Notice that $\mathcal{W}(u_1,u_2) = \mathcal{R}(\partial S_2)$ is closed, since $\mathcal{R}_{u_1}^{u_2}$ and $\partial S_2$ are both closed. We conclude that $\mathcal{W}(u_1,u_2) \backslash \{0\}$ is closed in $A \backslash \{ 0 \}$. 
 
 Since $\Omega_2 \cap \mathcal{C}(u_1,u_2) \subset \{0\}$ any point of $\mathcal{W}(u_1,u_2) \backslash \{ 0 \}$ is not in $\mathcal{C}(u_1,u_2)$. Therefore, we can apply the proof of \cite[Theorem 3.18]{La11} to conclude that $\mathcal{W}(u_1,u_2)\backslash \{ 0 \}$ is open in $A \backslash \{ 0 \}$.
 \end{proof}
 
\begin{lemma}
\label{lemm: Cas 2}
	Assume that $z_1 \in \partial S_1$ and that $z_1$ is not a corner point but $z_2$ is.
	
	Moreover we suppose that $u_1(z_1) = u_2(z_2) = i(p) = i(q)$ and that $u_1$ has boundary condition along the branch $L_p$ around $z_1$.
	
	Then there is an open neighborhood $\Omega$ of $z_1$ such that $\mathcal{W}(u_1,u_2) \cap \Omega$ is a $\mathcal{C}^1$-embedded graph in $\Omega$.	
\end{lemma}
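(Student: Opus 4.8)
The plan is to follow verbatim the three-step scheme of Lemma \ref{lemm: Cas 1}, the only novelties being that the local model for $u_2$ now comes from a corner-point expansion rather than a single-branch expansion, and that near the corner $z_2$ the boundary $\partial S_2$ covers \emph{both} branches $L_p$ and $L_q$.

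First I would normalize the local picture with Proposition \ref{prop:secondlocalchart}, so that $u_1$ and $u_2$ take values in $\C^n$ with $L_p = \R^n$ and $L_q = e^{i\alpha_1}\cdot\R \times \ldots \times e^{i\alpha_n}\cdot\R$, and shrink $\Omega_1$ so that $\mathcal{C}(u_1,u_2)\cap\Omega_1 \subset \{0\}$. Since $z_1$ is not a corner point and $u_1$ has boundary condition along $L_p$, Proposition \ref{prop:local form along a branch} furnishes a $\mathcal{C}^1$ chart $\psi_1$ with
\[ u_1(\psi_1(z)) = \lambda_1 A(z^k, U(z)), \quad A \in O_n(\R),\ U(z) = o(z^k). \]
Since $z_2$ is a corner point, I would apply Proposition \ref{prop:localformaroundacornerpoint} or Proposition \ref{prop: local form of a corner point along the other branch}, according to which branch of $i$ lies on each side of $z_2$, to obtain a chart $\psi_2$ and an index $\alpha_k$ with
\[ \pi_{\alpha_k}(u_2(\psi_2(z))) = \lambda_2\, c\, B(z^{c_2}, \tilde U(z)), \]
where $c \in \{1,\, e^{i\alpha_k}\}$, $B$ is an isometry onto $V_{\alpha_k}$, $c_2 \in \{\tfrac{\alpha_k}{\pi}+m,\ \tfrac{\pi-\alpha_k}{\pi}+m\}$, and the remaining projections $\pi_\alpha(u_2(\psi_2(z)))$ are $o(z^{c_2})$. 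In particular the leading direction of $u_2$ lies in $c\cdot V_{\alpha_k}$.

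Next, from the defining sequences $z_{1,\nu}\to z_1$, $z_{2,\nu}\to z_2$ with $u_1(z_{1,\nu}) = u_2(z_{2,\nu})$, I would compare leading terms exactly as in Lemma \ref{lemm: Cas 1}. Writing the leading coefficient of $u_2$ in the orthonormal frame given by $A$ as $\mu A(1,0) + A(0,v)$ with $\mu \in \C$, $v \in \C^{n-1}$, I would rule out $\mu = 0$ by an order-of-vanishing argument: otherwise $z_{1,\nu}^k = o(z_{2,\nu}^{c_2})$ forces $u_1(z_{1,\nu}) = o(u_2(z_{2,\nu}))$, a contradiction. Projecting onto $A(1,0)^{\perp}$ then shows that the leading direction of $u_2$ is a complex multiple of $A(1,0)$. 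Since $A(1,0) \in \R^n$ while this direction lies in $c\cdot V_{\alpha_k}$, it follows that $A(1,0) \in V_{\alpha_k}$ and that $\mu$ has argument $0$ or $\alpha_k$ modulo $\pi$.

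Finally, for $z \in \Omega_1 \cap \mathcal{W}(u_1,u_2)$ the point $u_1(z)$ lies on the image of $\partial S_2$ near $z_2$, hence on $L_p$ or on $L_q$. Using $\lambda_1 z^k = \langle u_1(z), A(1,0)\rangle$ with the complexified scalar product and $A(1,0) \in V_{\alpha_k}$, in the first case $z^k \in \R$ and in the second $z^k \in e^{i\alpha_k}\cdot\R$; either way $z$ is confined to a finite union of rays $A \subset \C$. I would then argue, as in the proof of \cite[Theorem 3.18]{La11}, that $\mathcal{W}(u_1,u_2)\setminus\{0\}$ is open and closed in $A\setminus\{0\}$, hence a union of its connected components, which is a $\mathcal{C}^1$-embedded graph. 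The main obstacle is the bookkeeping around the corner at $z_2$: one must split according to the two orientations of the corner and to the two branches covered by $\partial S_2$, and verify that the corner expansion — which only controls the $\pi_{\alpha_k}$-projection, the other components being of strictly lower order — still pins the leading direction of $u_2$ inside $V_{\alpha_k}$, so that the scalar-product computation of the last step goes through unchanged.
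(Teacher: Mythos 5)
Your proposal is correct and follows essentially the same route as the paper: normalize with Propositions \ref{prop:secondlocalchart}, \ref{prop:local form along a branch} and the corner expansions (\ref{prop:localformaroundacornerpoint} or \ref{prop: local form of a corner point along the other branch}, which the paper handles as its ``First Case'' and ``Second Case''), compare leading coefficients along the defining sequences to exclude $\mu = 0$ and, after projecting onto the orthogonal complement, identify the leading directions so that $A_1(1,0) \in V_{\alpha_k}$, then use the complexified scalar product to trap $\mathcal{W}(u_1,u_2)$ in a finite union of rays and conclude by the open/closed argument of \cite[Theorem 3.18]{La11}. The only (immaterial) differences are that you decompose the leading coefficient of $u_2$ in the frame of $A_1$ while the paper decomposes $\pi_{\alpha_k}(A_1(1,0))$ in the frame of $A_2$, and that you treat the two corner orientations in unified notation rather than as two separate cases.
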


\begin{proof}
The curve $u_2$ can have two different types of boundary conditions. Accordingly, we will consider two different cases.

\paragraph{\bf{First Case:}} By the Propositions \ref{prop:secondlocalchart}, \ref{prop:localformaroundacornerpoint}, and \ref{prop:local form along a branch} we can assume that 
\begin{enumerate}
	\item the maps $u_1$ and $u_2$ have values in $\C^n$,
	\item the branch $L_p$ is given by $\R^n$ and $L_q$ is given by $e^{i \alpha_1} \cdot \R \times \ldots \times e^{i\alpha_n} \cdot \R$,
	\item there are local $\mathcal{C}^1$ diffeomorphisms $\psi_1$ and $\psi_2$ around $z_1$ and $z_2$ respectively with images $\Omega_1$ and $\Omega _2$ such that
	\[ u_1 \left (\psi_1(z) \right )  =  \lambda_1 A_1 \left (z^p, U_1(z) \right )  \]	
	with $U_1(z) = o(z^p)$,  
	\[ \pi_{\alpha_k} \left (u_2 \circ \psi_2 (z) \right ) = \lambda_2 A_2 \left ( z^{\frac{\alpha_k}{\pi}+m}, \tilde{U}_2(z) \right ) \]
	with $\tilde{U}_2(z) = o \left ( z^{\frac{\alpha_k}{\pi}+m + \delta} \right)$	 and 
	\[ \sum_{\alpha \in \{\alpha_1, \ldots , \alpha_n\} \backslash \{ \alpha_k\}} \pi_\alpha \left (u_2 \circ \psi_2 (z) \right ) = o \left ( z^{\frac{\alpha_k}{\pi}+m + \delta} \right) . \]
\end{enumerate}
Moreover, we can assume that $\mathcal{C}\left (u_1,u_2 \right ) \cap \Omega_1 \subset \{ 0 \}$.

We claim that $A_1(1,0) \in V_{\alpha_k}$ and that there is a $\mu \neq 0$ such that $A_1(1,0) = \mu A_2(1,0)$.

Since $z_1 \mathcal{R}_{u_1}^{u_2} z_2$, there are two sequences $z_{1,\nu} \neq z_1$ and $z_{2,\nu} \neq z_2$ such that $z_{1,\nu} \to z_1$, $z_{2,\nu} \to z_2$ and $u_1(z_{1,\nu}) = u_2(z_{2,\nu})$. Moreover,	 we let $\mu \in \R$ and $v$ be a real vector such that $\pi_{\alpha_k} (A_1(1,0)) = \mu A_2(1,0) + A_2(0,v) $.
If, by contradiction, $\mu = 0$, from the equality
	\[ \lambda_1 z_{1,\nu}^p \pi_{\alpha_k} A_1(1,0) + \lambda_1 \pi_{\alpha_k} A_1(0,U_1(z_{1,\nu})) = \lambda_2 A_2 (z_{2,\nu}^{\frac{\alpha_k}{\pi}+m}, \tilde{U}_2(z_{2,\nu})), \]
we get
	\[ \lambda_1 z_{1,\nu}^p A_2(0,v) + \lambda_1 \pi_{\alpha_k} A_1(0,U_1(z_{1,\nu})) = \lambda_2 A_2 (z_{2,\nu}^{\frac{\alpha_k}{\pi}+m}, \tilde{U}_2(z_{2,\nu})), 	  \]
so $z_{1,\nu}^p = o \left (z_{2,\nu}^{\frac{\alpha_k}{\pi}+m} \right)$. Hence, $u_1(z_{1,\nu}) = o \left (z_{2,\nu}^{\frac{\alpha_k}{\pi}+m} \right ) = o(u_2(z_{2,\nu}))$, which is a contradiction.

In particular, we can deduce that $\mu \lambda_1 z_{1,\nu}^p \sim \lambda_2 z_{2,\nu}^{\frac{\alpha_k}{\pi} + m}$. Denote by $\pi : V_{\alpha_k} \to V_{\alpha_k}$ the (real) orthogonal projection onto $A_2(1,0)^\perp$. Since $A_2$ is orthogonal, we get
\[ \lambda_1 z_{1,\nu}^p A_2(0,v) + \lambda_1 \pi \circ \pi_{\alpha_k} A_1 \left (0,U_1 \left (z_{1,\nu} \right ) \right ) = \lambda_2 \left(0,\tilde{U}_2 \left (z_{2,\nu} \right ) \right).\]
Hence, if $v \neq 0$, we have $z_{1,\nu}^p = o \left (z_{2,\nu} ^{\frac{\alpha_k}{\pi}+m+\delta} \right)$, which is a contradiction.

Assume that $z \in \Omega_1 \cap \mathcal{W}(u_1,u_2)$, then we have $u_1(z) \in \R^n \cup e^{i\alpha_1} \cdot \R \times \ldots \times e^{i\alpha_n} \cdot \R$. Hence, from 
\[ \langle A_1(1,0) , u_1(z) \rangle = z^p , \]
and the fact that $A_1(1,0) \in V_{\alpha_k}$, we deduce that $z^p \in \R \cup e^{i\alpha_k} \cdot \R$.

So $z \in A$ where $A$ is the union of arcs given by
\[ A = \bigcup_{q = 0}^p e^{i \frac{q \pi}{p}} \cdot \R_+ \cup \bigcup_{q = 0}^{E \left (p- \frac{\alpha_k}{\pi} \right )} e^{i \frac{\alpha_k + q \pi}{p}} \cdot \R_+ .  \]

Now we show that the frame $\left ( \mathcal{W}(u_1,u_2) \cap \Omega_1 \right ) \backslash \{0 \}$ is a (possibly empty) union of connected components of $A \backslash \{0 \}$.

Indeed it is closed in $A \backslash \{ 0 \}$ and $\mathcal{W}(u_1,u_2) = \mathcal{R} \left ( \partial S_2 \right )$ is closed.

 Since $\Omega_2 \cap \mathcal{C}(u_1,u_2) \subset \{0\}$ any point of $\mathcal{W}(u_1,u_2) \backslash \{ 0 \}$ is not in $\mathcal{C}(u_1,u_2)$. Therefore, we can apply the proof of \cite[Theorem 3.18]{La11} to conclude that $\mathcal{W}(u_1,u_2)\backslash \{ 0 \}$ is open in $A \backslash \{ 0 \}$.
 
\paragraph{\bf{Second Case:}} This is practically the same as the first case. We explain the differences. This time Proposition \ref{prop: local form of a corner point along the other branch} implies that we can assume 
\[ \pi_{\alpha_k} (u_2 \circ \psi_2(z)) = \lambda_2 e^{i \alpha_k} A_2 \left (z^{\frac{\pi - \alpha_k}{\pi} + m}, U_2(z) \right ) \]
with $U_2(z) = o \left (z^{\frac{\pi - \alpha_k}{\pi} + m \delta} \right )$ and 
\[ \sum_{\alpha \in \{\alpha_1, \ldots, \alpha_n\} \backslash \{\alpha_k \} } \pi_\alpha (u_2 \circ \psi_2(z)) = o \left ( z^{\frac{\pi - \alpha_k}{\pi} + m + \delta}\right). \]

The same argument as in case $1$ shows that $A_1(1,0) \in V_{\alpha_k}$ and that there is a real $\mu \neq 0$ such that $A_1(1,0) = \mu A_2(1,0)$.

Assume that $z \in \Omega_1 \cap \mathcal{W}(u_1,u_2)$. Then from $u_1(z) \in \R^n \cup e^{i \alpha_1} \cdot \R \times \ldots \times e^{i \alpha_n} \cdot \R $ and $\langle A_1(1,0), u_1(z) \rangle = z^p$ we deduce $z^p \in \R \cup e^{i \alpha_k} \R$. Hence $z \in A$.

Now the proof is the same as in the first case.

\end{proof}

\begin{lemma}
\label{lemm: Cas 3}
	Assume that $z_1 \in \partial S_1$ and that $z_1$ is a corner point but $z_2$ is not.
	
	Moreover, we suppose that $u_2$ has boundary condition along the branch $L_p$ around $z_2$.
	
	Then there is an open neighborhood $\Omega$ of $z_1$ such that $\mathcal{W}(u_1,u_2) \cap \Omega$ is a $\mathcal{C}^1$-embedded graph in $\Omega$.	
\end{lemma}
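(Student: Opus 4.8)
The plan is to run the same three-step scheme as in Lemma \ref{lemm: Cas 2}, but with the roles of the two curves interchanged: here it is $u_1$ (whose domain carries the frame) that has the corner at $z_1$, so the non-integer exponent will now sit on the $u_1$ side. First I would use Proposition \ref{prop:secondlocalchart} to straighten the two branches to $L_p = \R^n$ and $L_q = e^{i\alpha_1}\cdot\R\times\ldots\times e^{i\alpha_n}\cdot\R$ and view both maps in $\C^n$. Since $z_1$ is a corner point of $u_1$, I would apply Proposition \ref{prop:localformaroundacornerpoint} to obtain a $\mathcal{C}^1$ chart $\psi_1$, an index $k$, an isometry $A_1 \in \mathcal{L}(\R^{\dim V_{\alpha_k}}, V_{\alpha_k})$ and $\lambda_1 > 0$ with $\pi_{\alpha_k}(u_1\circ\psi_1(z)) = \lambda_1 A_1(z^{\frac{\alpha_k}{\pi}+m}, \tilde U_1(z))$, the complementary projections being $o(z^{\frac{\alpha_k}{\pi}+m+\delta})$; for $u_2$, a non-corner point with boundary along $L_p$, Proposition \ref{prop:local form along a branch} gives $u_2\circ\psi_2(z) = \lambda_2 A_2(z^\ell, U_2(z))$ with $A_2 \in O_n(\R)$ and an integer $\ell$. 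Depending on whether $u_1$ meets $L_p$ on its right or its left at $z_1$ there are two sub-cases; in the second I would instead invoke Proposition \ref{prop: local form of a corner point along the other branch}, which replaces the exponent by $\frac{\pi-\alpha_k}{\pi}+m$ and introduces a factor $e^{i\alpha_k}$. The two are handled identically, so I would detail only the first.

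Next, using that $z_1 \mathcal{R}_{u_1}^{u_2} z_2$, I would pick coincidence sequences $z_{1,\nu}\to z_1$, $z_{2,\nu}\to z_2$ with $u_1(z_{1,\nu}) = u_2(z_{2,\nu})$ and match leading terms after projecting onto $V_{\alpha_k}$. Writing $\pi_{\alpha_k}(A_2(1,0)) = \mu A_1(1,0) + A_1(0,v)$ with $\mu\in\R$ and $v$ real, the case $\mu = 0$ would force $z_{1,\nu}^{\frac{\alpha_k}{\pi}+m} = o(z_{2,\nu}^\ell)$, hence $u_1(z_{1,\nu}) = o(u_2(z_{2,\nu}))$, which is absurd; so $\mu \neq 0$ and $\lambda_1 z_{1,\nu}^{\frac{\alpha_k}{\pi}+m}\sim\mu\lambda_2 z_{2,\nu}^\ell$, and projecting onto the orthogonal complement of $A_1(1,0)$ in $V_{\alpha_k}$ then forces $v = 0$. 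Note that, in contrast with Lemma \ref{lemm: Cas 2}, the inclusion $A_1(1,0) \in V_{\alpha_k}$ is now automatic because $u_1$ carries the corner; the matching step serves only to pin down $\mu \neq 0$ and the relevant angle $\alpha_k$.

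For the ray structure I would take $z \in \Omega_1\cap\mathcal{W}(u_1,u_2)$. The decisive simplification here is that $z_2$ is \emph{not} a corner: the related boundary point of $S_2$ stays near $z_2$ (after the reparameterization used in the proof of Proposition \ref{prop:Theframerelationisopen}) and is sent by $u_2$ into the single branch $L_p$, so $u_1(z) \in \R^n$ and only one family of rays occurs, not two as in Lemma \ref{lemm: Cas 2}. Pairing with $A_1(1,0) \in V_{\alpha_k}$ under the complexification of the standard scalar product and using that $A_1$ is orthogonal gives $\langle u_1(z), A_1(1,0)\rangle = \lambda_1 z^{\frac{\alpha_k}{\pi}+m}$; since $u_1(z)$ and $A_1(1,0)$ are real vectors this is real, whence $z^{\frac{\alpha_k}{\pi}+m}\in\R$, confining $z$ to a finite union of rays $A \subset \D^+$ (in the second sub-case the factor $e^{i\alpha_k}$ only rotates these rays). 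I would finish exactly as in Lemmas \ref{lemm: Cas 1} and \ref{lemm: Cas 2}: $\mathcal{W}(u_1,u_2)\setminus\{0\}$ is closed in $A\setminus\{0\}$ since $\mathcal{R}_{u_1}^{u_2}$ and $\partial S_2$ are closed, and open there by the argument of \cite[Theorem 3.18]{La11} (no point of $\mathcal{W}(u_1,u_2)\setminus\{0\}$ lies in $\mathcal{C}(u_1,u_2)$), so it is a union of connected components of $A\setminus\{0\}$ and pulls back through $\psi_1$ to a $\mathcal{C}^1$-embedded graph near $z_1$.

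The main obstacle I anticipate is the bookkeeping around the fractional exponent $\frac{\alpha_k}{\pi}+m$, which now governs the rays \emph{in the domain} $S_1$ itself: I must check that the real-locus condition $z^{\frac{\alpha_k}{\pi}+m}\in\R$ really cuts out a finite union of genuine $\mathcal{C}^1$ rays in the half-disk (the fractional power being single-valued on $\D^+$), that the $o(\cdot)$ error terms do not spoil the inclusion into $A$, and---in the second sub-case---that the $e^{i\alpha_k}$ phase coming from Proposition \ref{prop: local form of a corner point along the other branch} is carried correctly through the scalar-product computation so that one still lands on a clean real-locus condition.
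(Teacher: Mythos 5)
Your proposal is correct and is essentially identical to the paper's own argument: the paper proves this lemma with the single line ``Exchanging the roles of $u_1$ and $u_2$, the proof is the same as in Lemma \ref{lemm: Cas 2},'' and your write-up is precisely that role-exchanged argument spelled out, with the corner asymptotics (Propositions \ref{prop:localformaroundacornerpoint} and \ref{prop: local form of a corner point along the other branch}) now applied to $u_1$ and the non-corner form (Proposition \ref{prop:local form along a branch}) to $u_2$. The only caveat is your claim that frame points near $z_1$ land in the single branch $L_p$ (hence only one family of rays), which rests on a somewhat informal localization of the related boundary point near $z_2$; but this is harmless, since even without it one still has $u_1(z) \in L_p \cup L_q$, which confines the frame to a finite union of rays (two families, exactly as in Lemma \ref{lemm: Cas 2}), and the open/closed argument then concludes in the same way.
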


\begin{proof}
Exchanging the roles of $u_1$ and $u_2$, the proof is the same as in Lemma \ref{lemm: Cas 2}.
\end{proof}

\begin{lemma}
\label{lemm: Cas 4}
Assume that $z_1 \in \partial S_1$ and that both $z_1$ and $z_2$ are corner points.
	
Moreover, we suppose that $u_1$ and $u_2$ have boundary condition along $L_p$ followed by $L_q$ around $z_1$ and $z_2$ respectively.
	
Then there is an open neighborhood $\Omega$ of $z_1$ such that $\mathcal{W}(u_1,u_2) \cap \Omega$ is a $\mathcal{C}^1$-embedded graph in $\Omega$.		
\end{lemma}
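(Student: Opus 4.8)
The plan is to run, verbatim, the three-step scheme used in Lemmas \ref{lemm: Cas 2} and \ref{lemm: Cas 3}, the only difference being that both curves now carry a corner, so I invoke the corner asymptotics of Proposition \ref{prop:localformaroundacornerpoint} for \emph{both} $u_1$ and $u_2$ and both leading exponents become fractional. First I would reduce, via Propositions \ref{prop:secondlocalchart} and \ref{prop:localformaroundacornerpoint}, to the case where $u_1,u_2$ take values in $\C^n$, the branches are $L_p = \R^n$ and $L_q = e^{i\alpha_1}\cdot\R \times \ldots \times e^{i\alpha_n}\cdot\R$, and there are $\mathcal{C}^1$ charts $\psi_1,\psi_2$ around $z_1,z_2$ with
\[ \pi_{\alpha_{k_i}}(u_i \circ \psi_i(z)) = \lambda_i A_i \left(z^{c_i}, \tilde U_i(z) \right), \quad c_i := \tfrac{\alpha_{k_i}}{\pi} + m_i, \]
where $A_i \in \mathcal{L}(\R^{\dim V_{\alpha_{k_i}}}, V_{\alpha_{k_i}})$ is an isometry, $\tilde U_i(z) = o(z^{c_i+\delta})$, and $\sum_{\alpha \neq \alpha_{k_i}}\pi_\alpha(u_i\circ\psi_i(z)) = o(z^{c_i+\delta})$; shrinking the charts I arrange $\mathcal{C}(u_1,u_2)\cap\Omega_1 \subset \{0\}$. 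At this point I would record once and for all that, in these coordinates, $V_\alpha$ is exactly the coordinate subspace spanned by the $e_j$ with $\alpha_j = \alpha$, so the decomposition $\R^n = \bigoplus_\alpha V_\alpha$ is \emph{orthogonal}; this is what makes projection onto a single summand compatible with the $\C$-bilinear extension $\langle\cdot,\cdot\rangle$ of the standard scalar product.

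The key new point, which I expect to be the main obstacle, is to show that the two leading terms lie in the \emph{same} Kähler-angle summand, i.e. $\alpha_{k_1} = \alpha_{k_2} =: \alpha_p$. Using coincidence sequences $z_{1,\nu}\to z_1$, $z_{2,\nu}\to z_2$ with $u_1(z_{1,\nu}) = u_2(z_{2,\nu})$, I argue by contradiction: if $\alpha_{k_1}\neq\alpha_{k_2}$, projecting the equality onto $V_{\alpha_{k_1}}$ kills the leading term of $u_2$ and leaves $\lambda_1 z_{1,\nu}^{c_1} A_1(1,0) + o(z_{1,\nu}^{c_1+\delta}) = o(z_{2,\nu}^{c_2+\delta})$, whence $\norm{z_{1,\nu}}^{c_1} = o(\norm{z_{2,\nu}}^{c_2+\delta})$; projecting onto $V_{\alpha_{k_2}}$ gives symmetrically $\norm{z_{2,\nu}}^{c_2} = o(\norm{z_{1,\nu}}^{c_1+\delta})$. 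Feeding one into the other (the compound exponent strictly exceeds $c_1$ since $\delta>0$) forces $\norm{z_{1,\nu}}^{c_1} = o(\norm{z_{1,\nu}}^{c_1})$, a contradiction. Unlike in Lemmas \ref{lemm: Cas 2} and \ref{lemm: Cas 3}, here neither curve has a full orthogonal leading matrix to anchor the comparison, so this order-counting step is genuinely needed.

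Once $\alpha_{k_1} = \alpha_{k_2} = \alpha_p$, I would mimic the second half of Lemma \ref{lemm: Cas 2}. Writing $A_1(1,0) = \mu A_2(1,0) + A_2(0,v)$ with $\mu\in\R$ and $v$ real, pairing the $V_{\alpha_p}$-projected coincidence equality with $A_2(1,0)$ gives $\mu\lambda_1 z_{1,\nu}^{c_1} + o(z_{1,\nu}^{c_1+\delta}) = \lambda_2 z_{2,\nu}^{c_2} + o(z_{2,\nu}^{c_2+\delta})$, so $\mu = 0$ would again produce two incompatible order estimates; hence $\mu\neq 0$ and $\mu\lambda_1 z_{1,\nu}^{c_1}\sim\lambda_2 z_{2,\nu}^{c_2}$. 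Projecting onto $A_2(1,0)^\perp$ inside $V_{\alpha_p}$ then yields $\lambda_1 z_{1,\nu}^{c_1} A_2(0,v) = o(z_{1,\nu}^{c_1})$, forcing $v = 0$, so that $A_1(1,0) = \mu A_2(1,0)\in V_{\alpha_p}$.

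Finally, for $z\in\Omega_1\cap\mathcal{W}(u_1,u_2)$ one has $u_1(z)\in i(L) = L_p\cup L_q$, since points of $\mathcal{W}(u_1,u_2) = \mathcal{R}_{u_1}^{u_2}(\partial S_2)$ are limits of points mapping into $u_2(\partial S_2)\subset i(L)$. Because the $V_\alpha$ are orthogonal and $A_1$ is an isometry, $\langle u_1(z), A_1(1,0)\rangle = \lambda_1 z^{c_1}$; and as $A_1(1,0)\in V_{\alpha_p}$ while $u_1(z)\in\R^n \cup (e^{i\alpha_1}\cdot\R\times\ldots\times e^{i\alpha_n}\cdot\R)$, this pairing lies in $\R \cup e^{i\alpha_p}\cdot\R$, so $z^{c_1}\in\R\cup e^{i\alpha_p}\cdot\R$. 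This confines $z$ to a finite union of rays $A$ through $0$. As in the earlier lemmas, $\mathcal{W}(u_1,u_2)\backslash\{0\}$ is closed in $A\backslash\{0\}$ (since $\mathcal{R}_{u_1}^{u_2}$ and $\partial S_2$ are closed) and, avoiding $\mathcal{C}(u_1,u_2)$, open in $A\backslash\{0\}$ by the argument of \cite[Theorem 3.18]{La11}; hence it is a union of connected components of $A\backslash\{0\}$, that is, a $\mathcal{C}^1$-embedded graph through $z_1$ after applying $\psi_1$.
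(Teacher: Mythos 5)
Your proposal is correct and follows essentially the same route as the paper's own proof: reduction to the normal forms of Propositions \ref{prop:secondlocalchart} and \ref{prop:localformaroundacornerpoint} for both corners, the order-counting contradiction to force $\alpha_{k_1} = \alpha_{k_2}$, the pairing and projection onto $A_2(1,0)^\perp$ to get $A_1(1,0) = \mu A_2(1,0)$ with $\mu \neq 0$, confinement of the frame to a finite union of rays, and the closed/open argument via \cite[Theorem 3.18]{La11}. The only differences are expository (you make explicit the orthogonality of the decomposition $\R^n = \bigoplus_\alpha V_\alpha$ and the chaining of the two $o$-estimates, which the paper leaves implicit).
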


\begin{proof}
We can assume by Propositions \ref{prop:secondlocalchart} and \ref{prop:localformaroundacornerpoint}, that
\begin{enumerate}
	\item the maps $u_1$ and $u_2$ have values in $\C^n$,
	\item the branch $L_p$ is given by $\R^n$ and $L_q$ is given by $e^{i \alpha_1} \cdot \R \times \ldots \times e^{i \alpha_n} \cdot \R$,
	\item there are local $\mathcal{C}^1$ diffeomorphisms $\psi_1$ and $\psi_2$ around $z_1$ and $z_2$ respectively with images $\Omega_1$ and $\Omega _2$ such that
	\[ \pi_{\alpha_k}(u_1(z)) = \lambda_1 A_1 \left (z^{\frac{\alpha_k}{\pi}+m}, U_1(z) \right ) \]
	with $U_1(z) = o \left (z^{\frac{\alpha_k}{\pi} + m + \delta } \right )$ and
	\[ \sum_{ \alpha \in \{ \alpha_1, \ldots, \alpha_n \} \backslash \{ \alpha_k \} } \pi_\alpha(u_1(z)) = o \left (z^{\frac{\alpha_k}{\pi} + m + \delta } \right ) . \]
	
	Moreover, 
	\[ \pi_{\alpha_l}(u_2(z)) = \lambda_2 A_2 \left (z^{\frac{\alpha_l}{\pi}+p}, U_2(z) \right )  \]
	with $U_2(z) = o \left (z^{\frac{\alpha_l}{\pi} + p + \delta } \right)$ and
	\[ \sum_{ \alpha \in \{ \alpha_1, \ldots, \alpha_n \} \backslash \{ \alpha_l \} } \pi_\alpha(u_2(z)) = o \left (z^{\frac{\alpha_l}{\pi} + p + \delta } \right ) . \]
	
\end{enumerate}

Furthermore, there are two non-zero sequences $(z_{1,\nu})$ and $(z_{2,\nu})$ which converge to $0$ such that $u_1(z_{1,\nu}) = u_2(z_{2,\nu})$. 

First we can easily see that $\alpha_k = \alpha_p$. Assume the opposite. Then
\[  o \left (z_{2,\nu}^{\frac{\alpha_l}{\pi}+p +\delta} \right ) = \pi_{\alpha_k} (u_2(z_{2,\nu})) = \pi_{\alpha_k} (u_1(z_{1,\nu})) = \lambda_1 A_1 \left (z_{1,\nu}^{\frac{\alpha_k}{\pi}+m }, U_1(z_{1,\nu}) \right) . \]
So $z_{1,\nu}^{\frac{\alpha_k}{\pi}+m} = o \left (z_{2,\nu}^{\frac{\alpha_l}{\pi}+p +\delta} \right )$. Exchanging the roles of $\alpha_k$ and $\alpha_l$, we get that $z_{2,\nu}^{\frac{\alpha_l}{\pi}+p} = o \left (z_{1,\nu}^{\frac{\alpha_k}{\pi}+m+\delta} \right )$, a contradiction.

As usual, we claim that there is $\mu \in \R \backslash \{ 0 \}$ such that $A_1(1,0) = \mu A_2(1,0)$.

Let $\mu \in \R$ and $v$ be a vector such that $A_1(1,0) = \mu A_2(1,0) + A_2(0,v) $. Assume by contradiction that $\mu = 0$. Then since
\[ \lambda_1 z_{1,\nu}^{\frac{\alpha_k}{\pi}+m} A_2(0,v) + \lambda_1 A_1(0,U_1(z_{1,\nu})) = \lambda_2 A_2 \left (z_{2,\nu}^{\frac{\alpha_l}{\pi}+p}, U_2(z_{2,\nu}) \right ), \]
and since $A_2$ is an isometry, we get $z_{1,\nu}^{\frac{\alpha_k}{\pi}+m } = o \left (z_{2,\nu}^{\frac{\alpha_l}{\pi}+p} \right)$. Hence, $u_1(z_{1,\nu}) = o(u_2(z_{2,\nu}))$, a contradiction.

In particular $\mu \lambda_1 z_{1,\nu}^{\frac{\alpha_k}{\pi}+m} \sim \lambda_2 z_{2,\nu}^{\frac{\alpha_l}{\pi}+p}$. Moreover, applying the (complexified) orthogonal projection onto $A_2(1,0)^\perp$, we get
\[  \lambda_1 z_{1,\nu}^{\frac{\alpha_k}{\pi}+m} A_2(0,v) + \lambda_1 A_1(0,U_1(z_{1,\nu})) = o \left (z_{2,\nu}^{\frac{\alpha_l}{\pi}+p + \delta} \right ) . \]
This implies $A_2(0,v) = 0$.

Now assume that $z \in \mathcal{W}(u_1,u_2)$. Then $u_1(z) \in \R^n \cup e^{i\alpha_1} \cdot \R \times \ldots \times e^{i\alpha_n} \cdot \R$. As usual, we take the scalar product of $\pi_{\alpha_k} u_1(z)$ with $A_1 (1,0)$ to obtain that $z^{\frac{\alpha_k}{\pi}+m} \in \R \cup e^{i \alpha_k} \cdot \R$. Therefore $z \in A$, where $A$ is the set
\[ A := \left ( \bigcup_{q = 0 }^m e^{i \frac{\alpha_k + q \pi}{\frac{\alpha_k}{\pi} + m }} \cdot \R_+ \right ) \cup \left ( \bigcup_{q=0}^{E( \frac{\alpha_k}{\pi} + m )}  e^{i \frac{q \pi}{\frac{\alpha_k}{\pi} + m}} \cdot \R_+ \right ) . \]

We now show that the frame $ ( \mathcal{W}(u_1,u_2) \cap \Omega_1 ) \backslash \{0\}$ is a (possibly empty) union of connected components of $A \backslash \{ 0 \}$.

It is closed in $A \backslash \{ 0 \}$ since $\mathcal{W}(u_1,u_2)$ is closed.

 Since $\Omega_2 \cap \mathcal{C}(u_1,u_2) \subset \{0\}$ any point of $\mathcal{W}(u_1,u_2) \backslash \{ 0 \}$ is not in $\mathcal{C}(u_1,u_2)$. Therefore, we can apply the proof of \cite[Theorem 3.18]{La11} to conclude that $\mathcal{W}(u_1,u_2)\backslash \{ 0 \}$ is open in $A \backslash \{ 0 \}$.
\end{proof}

\begin{lemma}
\label{lemm: Cas 5}
Assume that $z_1 \in \Int(S_1)$ and that $z_2$ is a corner point.

Then there is an open neighborhood $\Omega$ of $z_1$ such that $\mathcal{W}(u_1,u_2) \cap \Omega$ is a $\mathcal{C}^1$-embedded graph in $\Omega$.	
\end{lemma}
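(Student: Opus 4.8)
The plan is to follow the same three-step scheme as in Lemmas \ref{lemm: Cas 1}--\ref{lemm: Cas 4}, the only genuine novelty being that $z_1$ is now an interior point, so that the leading coefficient of $u_1$ is a complex unit vector rather than a real one. First, since $z_1 \mathcal{R}_{u_1}^{u_2} z_2$ with $z_2$ a corner point, we have $u_1(z_1) = u_2(z_2) = i(p) = i(q) =: x$, a double point. Using Proposition \ref{prop:secondlocalchart} I reduce to $u_1, u_2$ valued in $\C^n$ with branches $L_p = \R^n$ and $L_q = e^{i\alpha_1}\cdot\R\times\cdots\times e^{i\alpha_n}\cdot\R$. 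Proposition \ref{prop: local behavior of a curve around an interior point} gives a $\mathcal{C}^1$ chart $\psi_1$ around $z_1$ with $u_1\circ\psi_1(z) = \lambda_1 A(z^k, U(z))$, $A \in U(n)$, $U(z) = O(z^{k+1})$, while Proposition \ref{prop:localformaroundacornerpoint} gives $\psi_2$ around $z_2$ with $\pi_{\alpha_k}(u_2\circ\psi_2(z)) = \lambda_2 A_2(z^{c_2}, \tilde U(z))$, $A_2(1,0) \in V_{\alpha_k}$, and $\sum_{\alpha\neq\alpha_k}\pi_\alpha(u_2\circ\psi_2(z)) = o(z^{c_2+\delta})$, where $c_2 = \frac{\alpha_k}{\pi}+m$ (the other boundary condition for $u_2$ gives a second subcase handled via Proposition \ref{prop: local form of a corner point along the other branch}, exactly as in Lemma \ref{lemm: Cas 2}). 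I then shrink $\Omega_1$ so that $\mathcal{C}(u_1,u_2)\cap\Omega_1\subset\{0\}$.

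The key claim is that $A(1,0) = e^{i\beta}A_2(1,0)$ for some $\beta \in \R$; that is, the leading coefficient of $u_1$, a priori complex, is forced to lie in $(V_{\alpha_k})_{\C}$ and to be a unit multiple of the \emph{real} vector $A_2(1,0)$. To prove this I take the sequences $z_{1,\nu}\to 0$, $z_{2,\nu}\to 0$ furnished by $z_1\mathcal{R}_{u_1}^{u_2}z_2$, with $u_1(z_{1,\nu}) = u_2(z_{2,\nu})$, and project the equality onto each $(V_\alpha)_{\C}$, using that the full leading term of $u_2\circ\psi_2$ is $\lambda_2 z^{c_2}A_2(1,0)$ with all other contributions $o(z^{c_2+\delta})$. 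Comparing leading orders first rules out $\pi_{\alpha_k}(A(1,0)) = 0$, since otherwise the surviving powers would force $z_{1,\nu}^k = o(z_{1,\nu}^{k+1})$. Writing $\pi_{\alpha_k}(A(1,0)) = \mu A_2(1,0) + v$ with $v \perp A_2(1,0)$, the component along $A_2(1,0)$ gives $z_{1,\nu}^k \sim c\, z_{2,\nu}^{c_2}$ and $\mu \neq 0$, while the complexified orthogonal projection onto $A_2(1,0)^{\perp}$ forces $v = 0$ and $\pi_\alpha(A(1,0)) = 0$ for $\alpha\neq\alpha_k$, exactly as in Lemmas \ref{lemm: Cas 1}--\ref{lemm: Cas 4}. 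Since $A(1,0)$ and $A_2(1,0)$ are unit vectors, $|\mu| = 1$ and the claim follows.

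With the claim in hand I extract the ray structure. Let $z \in \Omega_1\cap\mathcal{W}(u_1,u_2)$; then $u_1(z) \in i(L) = \R^n \cup D_{\alpha_1,\ldots,\alpha_n}\R^n$. Because $A\in U(n)$ I must use the Hermitian product $\langle\cdot,\cdot\rangle_H$ rather than the complexified bilinear form of the boundary cases: since the columns of $A$ are Hermitian-orthonormal,
\[ \lambda_1 z^k = \langle u_1(z), A(1,0)\rangle_H = e^{-i\beta}\langle u_1(z), A_2(1,0)\rangle_H, \]
and as $A_2(1,0)$ is \emph{real} this last pairing equals the complexified bilinear pairing. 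If $u_1(z)\in\R^n$ it is real, so $z^k \in e^{-i\beta}\R$; if $u_1(z) = D_{\alpha_1,\ldots,\alpha_n}w$ with $w\in\R^n$, then since $A_2(1,0)\in V_{\alpha_k}$ is supported on the indices $j$ with $\alpha_j = \alpha_k$, the pairing lies in $e^{i\alpha_k}\R$, so $z^k\in e^{i(\alpha_k-\beta)}\R$. Hence $\mathcal{W}(u_1,u_2)\cap\Omega_1 \subset A$, where $A$ is the union of rays $\{z : z^k \in e^{-i\beta}\R\}\cup\{z : z^k\in e^{i(\alpha_k-\beta)}\R\}$.

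Finally I argue, exactly as in the previous lemmas, that $(\mathcal{W}(u_1,u_2)\cap\Omega_1)\setminus\{0\}$ is a union of connected components of $A\setminus\{0\}$: it is closed in $A\setminus\{0\}$ because $\mathcal{W}(u_1,u_2) = \mathcal{R}_{u_1}^{u_2}(\partial S_2)$ is closed, and open in $A\setminus\{0\}$ by the argument of \cite[Theorem 3.18]{La11}, valid since $\Omega_1\cap\mathcal{C}(u_1,u_2)\subset\{0\}$. Together with the vertex $0 = z_1$ this exhibits $\mathcal{W}(u_1,u_2)$ near $z_1$ as a $\mathcal{C}^1$-embedded graph. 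The main obstacle is precisely the passage from a real to a complex leading coefficient for $u_1$: one must check that the interior curve's leading direction is nonetheless pinned to the real Lagrangian direction $A_2(1,0)$, which is what allows the Hermitian pairing to collapse onto the real computation underlying Lemmas \ref{lemm: Cas 1}--\ref{lemm: Cas 4}.
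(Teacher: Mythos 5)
Your proof is correct and follows essentially the same route as the paper's: the same local normal forms (Propositions \ref{prop:secondlocalchart}, \ref{prop: local behavior of a curve around an interior point}, \ref{prop:localformaroundacornerpoint}), the same projection estimates pinning the interior leading coefficient $A(1,0)$ to a nonzero complex (unit) multiple of the real vector $A_2(1,0) \in V_{\alpha_k}$, the same Hermitian-pairing computation giving $z^k \in e^{-i\beta}\cdot\R \cup e^{i(\alpha_k - \beta)}\cdot\R$, and the same closed/open argument via \cite[Theorem 3.18]{La11}. The only differences are organizational and harmless: you rule out $\pi_{\alpha_k}(A(1,0)) = 0$ first rather than showing $\pi_{\alpha}(A(1,0)) = 0$ for $\alpha \neq \alpha_k$ directly, and you explicitly flag the second boundary-ordering subcase for the corner $z_2$, which the paper treats implicitly.
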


\begin{proof}
Using Propositions \ref{prop:secondlocalchart}, \ref{prop:localformaroundacornerpoint} and \ref{prop: local behavior of a curve around an interior point}, we can assume that
\begin{enumerate}
	\item the maps $u_1$ and $u_2$ have values in $\C^n$,
	\item the branch $L_p$ is given by $\R^n$ and the branch $L_q$ is given by $e^{i\alpha_1} \cdot \R \times \ldots \times e^{i\alpha_n} \cdot \R$,
	\item there are local $\mathcal{C}^1$-diffeomorphisms $\psi_1$ and $\psi_2$ around $z_1$ and $z_2$ respectively with images $\Omega_1$ and $\Omega_2$ such that 
	\[ u_1( \psi_1 (z) ) = \lambda_1 A_1 \left (z^k,U_1(z) \right ), \]
	with $U_1(z) = O \left (z^{k+1} \right )$ and 
	\[ \pi_{\alpha_l} \circ u_2( \psi_2(z)) = \lambda_2 A_2 \left (z^{\frac{\alpha_l}{\pi}+m}, U_2(z) \right ), \]
	with $U_2(z) = o \left ( z^{\frac{\alpha_l}{\pi}+m+\delta} \right )$ and
	\[ \sum_{\alpha \in \{ \alpha_1, \ldots , \alpha_n \} \backslash \{ \alpha_l \}} \pi_\alpha \circ u_2( \psi_2(z)) =  	o \left ( z^{\frac{\alpha_l}{\pi}+m+\delta} \right ) . \]
\end{enumerate}
Replacing $\Omega_1$ and $\Omega_2$ by smaller neighborhoods if necessary, we can assume that $\mathcal{C}(u_1,u_2) \cap \Omega_1 \subset \{ 0 \}$.

Moreover, since $z_1 \mathcal{R}_{u_1}^{u_2} z_2$, there are non-zero sequences $(z_{1,\nu})$ and $(z_{2,\nu})$ converging to $0$ such that $u_1(\psi_1(z_{1,\nu})) = u_2(\psi_2(z_{2,\nu}))$.

First, we easily see that for $p \neq l$, $\pi_{\alpha_p} A_1(1,0) = 0$. Indeed, assume by contradiction that there is $p$ such that $\pi_{\alpha_p} A_1(1,0) \neq 0$. Apply the projection $\pi_{\alpha_p}$ to the equality above to get
\[ o \left (z_{2,\nu}^{\frac{\alpha_l}{\pi}+m} \right ) = \lambda_1 z_{1,\nu}^k \pi_{\alpha_p} A_1(1,0) +o \left (z_{1,\nu}^k \right) . \]
So necessarily $z_{1,\nu}^k = o \left (z_{2,\nu}^{\frac{\alpha_l}{\pi}+m} \right ) $ and $u_2(z_{2,\nu}) = u_1(z_{1,\nu}) = o \left (z_{2,\nu}^{\frac{\alpha_l}{\pi}+m} \right )$, a contradiction.

Now, we claim that there is a $\lambda \in \C \backslash \{0 \}$ such that $A_1(1,0) = \lambda A_2(1,0)$. To see this, let $\lambda \in \C$ and $v$ be a complex vector such that
\[ \pi_{\alpha_l} (A_1(1,0)) = \lambda A_2(1,0) + A_2(0,v) . \]
If $\lambda = 0$, we apply the complexification of the real scalar product with $A_2(1,0)$ to get
\[ o \left ( z_{1,\nu}^k \right ) = \lambda_2 z_{2,\nu}^{\frac{\alpha_l}{\pi}+m} + o \left (z_{2,\nu}^{\frac{\alpha_l}{\pi}+m} \right ). \]
so $z_{2,\nu}^{\frac{\alpha_l}{\pi}+m} = o \left (z_{1,\nu}^k \right )$ and $u_2(z_{2,\nu}) = o(u_1(z_{1,\nu}))$, a contradiction.

In particular, we have $\lambda \lambda_1 z_{1,\nu}^k \sim \lambda_2 z_{2,\nu}^{\frac{\alpha_l}{\pi}+m}$. 

Assume that $\pi$ is the real orthogonal projection onto $A_2(1,0)^\perp$. We apply its complexification to get
\[ z_1^k \lambda \lambda_1 A_2(0,v) = o\left ( z_{2,\nu}^{\frac{\alpha_k}{\pi}+m} \right ). \]
This shows that $A_2(0,v) = 0 $. 

Now assume that $z \in \mathcal{W}(u_1,u_2) \cap \Omega_1$, so $u_1(z) \in \left (e^{i\alpha_1} \cdot \R \times \ldots e^{i\alpha_n} \cdot \R \right ) \cup \R^n $. Denote by $h_{\text{std}}$ the standard hermitian scalar product (which is complex linear in the first variable). Then, since $A_1 \in U(n)$,
\begin{eqnarray*}
	 h_{\text{std}} \left (u_1(z), A_1(1,0) \right ) & = &  h_{\text{std}} \left (\lambda_1 z^k A_1(1,0) + A_1 \left (0,U_1(z) \right ) , A_1(1,0) \right ) \\
	 & = & \lambda_1 z^k .         
\end{eqnarray*}
Recall that $\lambda^{-1} A_1(1,0) \in V_{\alpha_l}$. For $v \in e^{i\alpha_1} \cdot \R \times \ldots e^{i \alpha_n} \cdot \R$, we have $h_{\text{std}} \left (v,\frac{A_1(1,0)}{\lambda} \right ) \in e^{i\alpha_l} \cdot \R$. For $v \in \R^n$, $h_{\text{std}} \left (v,\frac{A_1(1,0)}{\lambda} \right ) \in \R$.

In the end, we conclude that $z^k \in \overline{\lambda} \cdot\R  \cup \overline{\lambda} e^{i\alpha_l} \cdot \R$. Call $\theta_\lambda \in [0,\pi] $ an argument of $\overline{\lambda}$ (resp. $- \overline{\lambda}$) if $Im(\overline{\lambda}) > 0$ (resp. $Im(\overline{\lambda}) < 0$). Then $z \in A$ where $A$ is a union of half-rays with extremities at $0$,
\[ A := \bigcup_{p=0}^{2k} e^{i \frac{p\pi}{k}} \cdot \R_+ \cup \bigcup_{p = E \left (-\frac{\theta_\lambda + \alpha_l}{\pi} \right )}^{E \left (2k - \frac{\theta_\lambda + \alpha_l}{\pi} \right )}  e^{ i  \frac{p\pi + \theta_\lambda + \alpha_l}{k}} \cdot \R_+ . \]
Now we show that the frame $\left ( \mathcal{W}(u_1,u_2) \cap \Omega_1 \right ) \backslash \{0 \}$ is a (possibly empty) union of connected components of $A \backslash \{0 \}$.

Indeed it is closed in $A \backslash \{ 0 \}$ since $\mathcal{W}(u_1,u_2) = \mathcal{R}_{u_1}^{u_2} \left ( \partial S_2 \right )$ is closed.

 Since $\Omega_2 \cap \mathcal{C}(u_1,u_2) \subset \{0\}$ any point of $\mathcal{W}(u_1,u_2) \backslash \{ 0 \}$ is not in $\mathcal{C}(u_1,u_2)$. Therefore, we can apply the proof of \cite[Theorem 3.18]{La11} to conclude that $\mathcal{W}(u_1,u_2)\backslash \{ 0 \}$ is open in $A \backslash \{ 0 \}$.
\end{proof}

With the cases already proved in \cite{La11}, we readily conclude that the following proposition holds.

\begin{prop}
\label{prop:theframeisagraph}
	Let $\mathcal{D}(u_1, u_2)$ be the set of isolated points of $\mathcal{W}(u_1,u_2)$. The following is true
	\begin{enumerate}[label=(\roman*)]
		\item $\mathcal{D}(u_1,u_2) \subset \mathcal{C}(u_1,u_2) \cap \partial S_1$,
		\item $\mathcal{W}(u_1,u_2) \backslash \mathcal{D}(u_1,u_2)$ is a $\mathcal{C}^1$-embedded graph in $S_1$, its vertices are in $\mathcal{C}(u_1,u_2)$.
		\item $\overline{\mathcal{W}} (u_1,u_2)$ is a $\mathcal{C}^1$-embedded graph.	
	\end{enumerate}

\end{prop}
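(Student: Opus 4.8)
The plan is to glue the local descriptions obtained in Lemmas \ref{lemm: Cas 1}--\ref{lemm: Cas 5} (and the remaining, purely embedded, cases of \cite{La11}) into a global graph, the key structural input being that the bad set $\mathcal{C}(u_1,u_2)$ is finite. I would begin by recording this finiteness: by the local normal forms of Propositions \ref{prop: local behavior of a curve around an interior point}, \ref{prop:asymptoticdevelopmentaroundacornerpoint} and \ref{prop:local form along a branch}, a non-constant finite-energy $J$-holomorphic curve has isolated critical points and discrete fibres. Hence the critical values of $u_1$ and $u_2$ are finite sets; together with the finite set $i(R)$, their preimage under $u_1$, namely $\mathcal{C}(u_1,u_2)$, is a discrete, thus finite, subset of the compact surface $S_1$.

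Next I would set up the local picture at each frame point. At a point $z_1 \in \mathcal{W}(u_1,u_2)\setminus\mathcal{C}(u_1,u_2)$ the map $u_1$ is an immersion and every related $z_2\in\partial S_2$ is regular, so \cite[Theorem 3.18]{La11} gives that $\mathcal{W}(u_1,u_2)$ is a single $\mathcal{C}^1$-embedded arc near $z_1$; these arcs will be the edges. At a point $z_1\in\mathcal{C}(u_1,u_2)$ the set $Z_2(z_1):=\{z_2\in\partial S_2 : z_1\mathcal{R}_{u_1}^{u_2}z_2\}$ lies in the finite fibre $u_2^{-1}(u_1(z_1))\cap\partial S_2$, hence is finite; for each $z_2\in Z_2(z_1)$ the relevant statement among Lemmas \ref{lemm: Cas 1}--\ref{lemm: Cas 5} (or its embedded analogue in \cite{La11}) presents the corresponding part of the frame as a union of connected components of an explicit union of rays $A$, i.e. as finitely many $\mathcal{C}^1$ arcs issuing from $z_1$. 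Taking the union over the finite set $Z_2(z_1)$, the frame near $z_1$ is a finite union of $\mathcal{C}^1$ arcs with common endpoint $z_1$, so $z_1$ is either a vertex or an isolated point.

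With these uniform local models, the global statements follow from compactness. The frame $\mathcal{W}(u_1,u_2)=\mathcal{R}_{u_1}^{u_2}(\partial S_2)$ is closed and, by the above, locally a finite $\mathcal{C}^1$ graph at every one of its points; local finiteness on the compact $S_1$ yields finitely many edges and vertices, proving that $\mathcal{W}(u_1,u_2)\setminus\mathcal{D}(u_1,u_2)$ is a $\mathcal{C}^1$-embedded graph with vertex set inside $\mathcal{C}(u_1,u_2)$, which is (ii). For (i), a regular point carries a genuine arc and is therefore never isolated, so $\mathcal{D}(u_1,u_2)\subset\mathcal{C}(u_1,u_2)$; to exclude interior isolated points I would argue that an interior frame point related to a boundary point $z_2$ cannot have empty local frame, since the one-dimensional boundary $\partial S_2$ near $z_2$ pulls back, through the local normal form of $u_1$, to a nonempty family of arcs accumulating at $z_1$, the required propagation being furnished by the openness of $p_1$ in Proposition \ref{prop:Theframerelationisopen}. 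Finally, for (iii), $\mathcal{D}(u_1,u_2)\subset\partial S_1$ gives $\overline{\mathcal{W}}(u_1,u_2)=(\mathcal{W}(u_1,u_2)\setminus\mathcal{D}(u_1,u_2))\cup\partial S_1$, and adjoining the smooth one-manifold $\partial S_1$ simply turns the finitely many landing points of interior edges and the boundary bad points into vertices, producing a $\mathcal{C}^1$-embedded graph.

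I expect the delicate point to be the exclusion of interior isolated points in (i). The local lemmas only assert that the frame is a union of connected components of a union of rays, and do not by themselves rule out the empty union; making precise that the relation to the one-dimensional boundary $\partial S_2$ actually produces arcs near an interior bad point, rather than merely the seed point itself, is where Proposition \ref{prop:Theframerelationisopen} and the normal forms must be combined carefully. The remaining bookkeeping, namely finiteness of edges and vertices and the absorption of $\partial S_1$ in (iii), is routine once the local models and this non-degeneracy are in hand.
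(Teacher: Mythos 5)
Your overall architecture coincides with the paper's: (ii) is obtained by gluing the local models of Lemmas \ref{lemm: Cas 1}--\ref{lemm: Cas 5} together with Lazzarini's embedded cases, and (iii) is deduced from (i) and (ii) by adjoining $\partial S_1$. The genuine problem lies in the step you yourself single out as delicate: ruling out isolated \emph{interior} frame points. Your appeal to ``the openness of $p_1$'' in Proposition \ref{prop:Theframerelationisopen} does not work there. First, neither hypothesis of that proposition is satisfied in the relevant configuration $z_1 \in \Int(S_1)$, $z_2 \in \partial S_2$: hypothesis (1) requires $V_2 \subset \Int(S_2)$, which is impossible for a neighborhood of the boundary point $z_2$, and hypothesis (2) requires $V_1 \cap \mathcal{W}(u_1,u_2) \subset \partial S_1$, which is impossible since $z_1$ itself is an interior frame point. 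Second, even if $p_1$ were open, openness of the projection onto $S_1$ only produces points near $z_1$ related to \emph{some} points of $V_2$; those partners may lie in $\Int(S_2)$, and such points are not thereby in $\mathcal{W}(u_1,u_2) = \mathcal{R}_{u_1}^{u_2}(\partial S_2)$. So, as stated, the argument produces no new frame point near $z_1$.

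The correct propagation goes in the opposite direction, and this is what the paper does: apply Proposition \ref{prop:Theframerelationisopen} with the roles of $(u_1,S_1)$ and $(u_2,S_2)$ exchanged, i.e.\ use that the projection $(V_{1,\nu} \times V_2) \cap \mathcal{R}_{u_1}^{u_2} \to V_2$ onto the \emph{second} factor is open, which holds by hypothesis (1) precisely because the neighborhoods $V_{1,\nu}$ of the interior point $z_1$ may be chosen inside $\Int(S_1)$. The image is then an open neighborhood of $z_2$ in $S_2$, hence contains points $z_{2,\nu} \in \partial S_2 \setminus \{z_2\}$, each related to some $z_{1,\nu} \in V_{1,\nu}$; these $z_{1,\nu}$ are genuine frame points. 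One must still rule out $z_{1,\nu} = z_1$, a point you do not address: this follows by first shrinking $V_2$ so that $V_2 \cap u_2^{-1}\left(u_2(z_2)\right) = \{z_2\}$, for then $z_{1,\nu} = z_1$ would force $z_{2,\nu} \in u_2^{-1}\left(u_2(z_2)\right) \cap V_2 = \{z_2\}$, a contradiction. With this replacement your proof of (i), and hence of the whole proposition, goes through; the rest of your argument (finiteness of $\mathcal{C}(u_1,u_2)$, the gluing of the local lemmas for (ii), and the bookkeeping for (iii)) is sound and follows the paper's own proof.
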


\begin{proof}
First, note that \textit{(iii)} follows immediately from \textit{(i)} and \textit{(ii)}. We shall see that \text{(ii)} follows quite easily form the lemmas proved above.

Let $z \in \mathcal{W}(u_1,u_2) \cap \Int(S_1)$ so there is $z_2 \in \partial S_2$ such that $z \mathcal{R}_{u_1}^{u_2}z_2$. Assume first that $z_2$ is not a corner point. It follows from the proof of \cite[Theorem 3.18]{La11} that $\mathcal{W}(u_1,u_2)$ is a $\mathcal{C}^1$-graph around $z$. The same results holds if $z_2$ is a corner point, this is the content of Lemma \ref{lemm: Cas 5}.

We prove, using that the frame relation is open in some cases (Proposition \ref{prop:Theframerelationisopen}), that $z$ is not isolated in $\mathcal{W}(u_1,u_2)$. To see this, pick an open neighborhood $V_2$ of $z_2$ such that $V_2 \cap u_2^{-1}(z_2) = \{ z_2 \}$ and a decreasing sequence of neighborhoods $V_{1,\nu} \subset \Int(S_1)$ such that $ \{ z \} = \cap_\nu V_{1,\nu}$. For $\nu \in \N$, the projection $ \left ( V_{1,\nu} \times V_2 \right ) \cap \mathcal{R}_{u_1}^{u_2} \to V_2$ is open (since $V_{1,\nu} \subset \Int(S_1) $). Hence, there are $z_{2,\nu} \neq z_2 \in \partial S_2$ and $z_{1,\nu} \in V_{1,\nu}$ with $z_{1,\nu} \mathcal{R}_{u_1}{u_2} z_{2,\nu}$. Necessarily $z_{1,\nu} \neq z_1$ (otherwise $u_1(z_1) = u_2(z_{2,\nu})$ which would yield $z_2 \in u_2^{-1}(u_2(z_2))$). We conclude that $z$ is an accumulation point of $\mathcal{W}(u_1,u_2)$.

Now assume that $z \in \mathcal{W}(u_1,u_2) \cap \partial S_1$ and and that $z \notin \mathcal{C}(u_1,u_2)$. Pick $z_2$ such that $z \mathcal{R}_{u_1}^{u_2} z_2$. In particular $z_2$ is not a corner point, $du_1(z) \neq 0$ and $du_2(z_2) \neq 0$. We can apply \cite[Proposition 3.13]{La11} : there are open neighborhoods $\omega_1$ and $\omega_2$ of $z$ and $z_2$ respectively such that $\phi(\omega_2 \cap \partial S_2) = \omega_1 \cap \partial S_1$ and $z \mathcal{R}_{u_1}^{u_2} z' $ if and only if $z = \phi(z')$. Therefore $\omega_1 \cap \mathcal{W}(u_1,u_2) = \partial S_1 \cap \omega_1$ : the frame is a local $\mathcal{C}^1$-graph around $z$ and $z$ is not isolated in $\mathcal{W}(u_1,u_2)$.

Assume that $z$ is not a corner point, then Lemma \ref{lemm: Cas 2} if $z_2$ is a corner point, and Lemma \ref{lemm: Cas 1} if $z_2$ is not, show that $\mathcal{W}(u_1,u_2)$ is a graph around $z$.

If $z$ is a corner point, Lemma \ref{lemm: Cas 3} if $z_2$ is not a corner point, and Lemma \ref{lemm: Cas 4} if $z_2$ is, show again that $\mathcal{W}(u_1,u_2)$ is a graph around $z$.
\end{proof}

\subsection{Frame and simple curves}
\subsubsection{Lifts of curves}
Two simple curves which have the same images are reparameterizations of each other through a biholomorphism (\cite[Section 4]{La11}). In this section, we shall recall the statement of these results as well as provide proofs when needed.

The relation that we just defined is not quite transitive. However, there still are a few cases where transitivity holds. Let us consider three $J$-holomorphic curves with boundary in $L$, $u_i : (S_i, \partial S_i) \to (M, i(L) )$ with $i = 1 \ldots 3$.

\begin{prop}[Proposition 4.1, \cite{La11}]
\label{prop:transitivityoftherelation}
If $(z_1, z_2, z_3) \in S_1 \times S_2 \times S_3 $ satisfy $z_1 \mathcal{R}_{u_1}^{u_2} z_2$ and $z_2 \mathcal{R}_{u_2}^{u_3} z_3$, and one of the following holds
	\begin{enumerate}
		\item $z_1 \in \Int(S_1)$ or $z_3 \in \Int(S_3)$,
		\item $z_2 \in \partial S_2$ and there is a neighborhood $\omega_2 \subset S_2$ of $z_2$ such that $\mathcal{W}(u_1, u_2) \cap \omega_2 \subset \partial S_2$ or $\mathcal{W}(u_2, u_3) \cap \omega_2 \subset \partial S_2$,
	\end{enumerate}

then $z_1 \mathcal{R}_{u_1}^{u_3} z_3$. 	
\end{prop}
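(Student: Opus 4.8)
The plan is to reduce everything to the transitivity of $\mathcal{R}$ restricted to \emph{good} points (interior, non-bad), where it holds on the nose, and then to use the openness of the projection (Proposition \ref{prop:Theframerelationisopen}) to transport the good points accumulating at the middle point $z_2$ from one branch of the chain to the other. I would first record two elementary facts. The relation is symmetric, $z \mathcal{R}_{u_1}^{u_2} w$ if and only if $w \mathcal{R}_{u_2}^{u_1} z$, directly from Definition \ref{defi:coincidencerelation}. And its restriction to interior non-bad points is transitive: if $a \mathcal{R}_{u_1}^{u_2} b$ and $b \mathcal{R}_{u_2}^{u_3} c$ with $a, b, c \notin \mathcal{C}$, then near these points $u_1, u_2, u_3$ are embeddings with coinciding image germs, so $u_1 = u_2 \circ \phi_{12}$ and $u_2 = u_3 \circ \phi_{23}$ for biholomorphisms fixing the marked points, whence $u_1 = u_3 \circ (\phi_{23} \circ \phi_{12})$ and $a \mathcal{R}_{u_1}^{u_3} c$.

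Next I would unwind the hypotheses through the limit clause of Definition \ref{defi:coincidencerelation}: there are good pairs $a^\nu \mathcal{R}_{u_1}^{u_2} b^\nu$ with $(a^\nu, b^\nu) \to (z_1, z_2)$ and good pairs $c^\mu \mathcal{R}_{u_2}^{u_3} d^\mu$ with $(c^\mu, d^\mu) \to (z_2, z_3)$. The entire difficulty is that $b^\nu$ and $c^\mu$ both limit onto $z_2$ yet are a priori unmatched; the openness of an appropriate projection is what lets me bridge them.

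In case (1), say $z_1 \in \Int(S_1)$, the projection $p_1 : \mathcal{R}_{u_2}^{u_1} \to S_2$ is open near $(z_2, z_1)$ by the first clause of Proposition \ref{prop:Theframerelationisopen} (take the $S_1$-neighborhood inside $\Int(S_1)$). Thus a neighborhood of $z_2$ lies in its image, so each $c^\mu$ has a partner; a shrinking-neighborhood diagonal argument produces $f^\mu \to z_1$ with $c^\mu \mathcal{R}_{u_2}^{u_1} f^\mu$, that is $f^\mu \mathcal{R}_{u_1}^{u_2} c^\mu$. Chaining with $c^\mu \mathcal{R}_{u_2}^{u_3} d^\mu$ through the good transitivity gives $f^\mu \mathcal{R}_{u_1}^{u_3} d^\mu$, and the limit $(f^\mu, d^\mu) \to (z_1, z_3)$ yields $z_1 \mathcal{R}_{u_1}^{u_3} z_3$; the subcase $z_3 \in \Int(S_3)$ is symmetric, sliding the $c^\mu$ toward $z_3$ instead. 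In case (2), with $z_2 \in \partial S_2$, I would feed the frame hypothesis into the second clause of Proposition \ref{prop:Theframerelationisopen}: if $\mathcal{W}(u_2, u_3) \cap \omega_2 \subset \partial S_2$ then $p_1 : \mathcal{R}_{u_2}^{u_3} \to S_2$ is open near $(z_2, z_3)$, so I can slide the $b^\nu$ to partners $e^\nu \to z_3$ and chain $a^\nu \mathcal{R}_{u_1}^{u_2} b^\nu$ with $b^\nu \mathcal{R}_{u_2}^{u_3} e^\nu$; the alternative frame condition is treated identically from the $u_1$-side.

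The main obstacle is precisely this matching at $z_2$: one must check both that the point produced by openness can be made to converge to the correct endpoint and, more delicately, that the transported pair is again \emph{good}, so that the good-transitivity step applies. The latter is a bookkeeping matter about the bad sets $\mathcal{C}(u_2, u_1)$ and $\mathcal{C}(u_2, u_3)$: both are discrete in $S_2$, hence the good points for one pair are generic for the other, and one may extract the sliding sequences so as to avoid both bad sets and keep every intermediate relation good. With this in hand, the symmetry, the good transitivity, and the openness combine to close the argument.
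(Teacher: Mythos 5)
The paper itself gives no proof of this proposition: it is quoted from \cite{La11} (Proposition 4.1), so your proposal can only be compared with Lazzarini's argument. In architecture it does match that argument: symmetry of the relation, germ-level transitivity at interior non-bad points, the openness of the projection (Proposition \ref{prop:Theframerelationisopen}) to match the two sequences accumulating at the middle point $z_2$, and the limit clause of Definition \ref{defi:coincidencerelation} to conclude. Your germ-transitivity step is correct as stated, including the unspoken bookkeeping: $a \notin \mathcal{C}(u_1,u_3)$ and $c \notin \mathcal{C}(u_3,u_1)$ do follow from $a \notin \mathcal{C}(u_1,u_2)$ and $c \notin \mathcal{C}(u_3,u_2)$, because all the relevant critical-value and double-point conditions are conditions on the common image point $u_1(a)=u_2(b)=u_3(c)$. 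Your reading of condition (2) (with $\mathcal{W}(u_2,u_1)$, resp. $\mathcal{W}(u_2,u_3)$, as subsets of $S_2$) is also the only one consistent with the second clause of Proposition \ref{prop:Theframerelationisopen}.

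There is, however, a genuine gap at the pivotal matching step. The relation appearing in Proposition \ref{prop:Theframerelationisopen} is the \emph{extended} relation (limits of germ-related pairs), so the partner $f^\mu$ that openness produces for $c^\mu$ satisfies $c^\mu \mathcal{R}_{u_2}^{u_1} f^\mu$ only in the extended sense. To chain this with $c^\mu \mathcal{R}_{u_2}^{u_3} d^\mu$ via your germ-level transitivity you must know that $(c^\mu, f^\mu)$ is \emph{germ}-related, and this is not delivered by choosing sequences that avoid the bad sets: goodness of the two points makes the germ relation meaningful, but does not show that it holds. The missing ingredient is the identity-theorem lemma of \cite{La11}: if two interior non-bad points have equal images and are limits of germ-related pairs, then, by unique continuation applied to the two embedded local images, they are themselves germ-related. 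This is a real analytic statement (Aronszajn-type unique continuation), not bookkeeping about discrete sets; note that this paper itself invokes it silently, both in the "openness" step of the proof of Proposition \ref{prop:Theframerelationisopen} (where an extended-related pair of non-critical points is asserted to be a local reparameterization) and in the proof of Proposition \ref{prop:Characterizationofsimplecurves}. Alternatively, one can bypass unique continuation by a re-matching argument: germ pairs are dense in the extended relation and come in open families (through the local biholomorphism realizing the germ relation), so $(c^\mu, f^\mu)$ may be replaced by a nearby germ pair $(c', f')$ with $c'$ still inside the coincidence neighborhood of $c^\mu \mathcal{R}_{u_2}^{u_3} d^\mu$, after which one chains $f'$, $c'$, $\phi^\mu_{23}(c')$ and passes to the limit. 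Once either version of this lemma is supplied, your argument closes in both cases.
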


Moreover, it turns out that the relation $\mathcal{R}$ has the lifting property with respect to the projection on the second factor.

\begin{prop}[Lemma 4.3, \cite{La11}]
\label{prop:lift1}
	Let $z_1 \in S_1 \backslash \left ( \mathcal{W}(u_1, u_2) \cup \mathcal{C}(u_1,u_2) \right )$ and $z_2 \in S_2$ such that $z_1 \mathcal{R}_{u_1}^{u_2} z_2$.
	
	Assume that $\gamma_1 : [0,1] \to S_1$ is a continuous map such that $\gamma_1(0) = z_1$ and for $t \in [0,1[$, $\gamma_1(t) \notin \mathcal{W}(u_1,u_2) \cup \mathcal{C}(u_1,u_2)$.
	
	There exists a unique continuous map $\gamma_2 : [0,1] \to S_2$ such that $\gamma_2(0) = z_2$ and for $t \in [0,1]$, $\gamma_1(t) \mathcal{R}_{u_1}^{u_2} \gamma_2(t)$. 
\end{prop}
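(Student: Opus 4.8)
The plan is to construct $\gamma_2$ by continuation along $\gamma_1$, exploiting the fact that over the \emph{good} locus $S_1 \setminus \left( \mathcal{W}(u_1,u_2)\cup\mathcal{C}(u_1,u_2) \right)$ the relation $\mathcal{R}_{u_1}^{u_2}$ is, fiberwise, a finite disjoint union of graphs of local biholomorphisms. Concretely, suppose $t\in[0,1[$ and $w\in S_2$ satisfy $\gamma_1(t)\,\mathcal{R}_{u_1}^{u_2}\, w$. Since $\gamma_1(t)\notin\mathcal{W}(u_1,u_2)=\mathcal{R}_{u_1}^{u_2}(\partial S_2)$ we must have $w\in\Int(S_2)$, and since $\gamma_1(t)\notin\mathcal{C}(u_1,u_2)$ the image $u_1(\gamma_1(t))$ avoids $i(R)$ and the critical values of $u_1,u_2$, so that $du_1(\gamma_1(t))\neq0$ and $du_2(w)\neq0$. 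The interior clause of Definition \ref{defi:coincidencerelation} then provides neighborhoods on which $u_1$ and $u_2$ have the same image; since both are immersions there with injective points, there is a unique germ of biholomorphism $\phi_w$ with $\phi_w(\gamma_1(t))=w$ and $u_2\circ\phi_w=u_1$ near $\gamma_1(t)$ (the local normal form already used in \cite{La11}). In particular any lift automatically satisfies $u_2\circ\gamma_2=u_1\circ\gamma_1$.

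First I would record that the fibers of $u_2$ are finite: for a nonconstant finite-energy $J$-holomorphic curve the preimage $u_2^{-1}(x)$ is discrete in the interior by the similarity principle and, near the boundary and the corner points, by the local injectivity contained in the asymptotic developments of Propositions \ref{prop:asymptoticdevelopmentaroundacornerpoint} and \ref{prop:local form along a branch}; combined with compactness of $S_2$ this makes $u_2^{-1}(x)$ finite. Hence, for a good point $\gamma_1(t)$ the set of $w$ with $\gamma_1(t)\,\mathcal{R}_{u_1}^{u_2}\,w$ is finite, the germs $\phi_w$ have pairwise disjoint images, and each sheet depends continuously on its base point. This yields local existence and uniqueness of the continuation: given a partial lift on $[0,s)$ with $s<1$ and a value $\gamma_2(s)$, the germ $\phi_{\gamma_2(s)}$ extends it continuously and uniquely to $[0,s+\varepsilon)$.

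Next I would let $s^\ast$ be the supremum of those $s\in[0,1]$ for which a continuous lift on $[0,s)$ with the required property exists. The place where I expect the real work is to show that $\gamma_2(t)$ genuinely \emph{converges} as $t\to (s^\ast)^-$, rather than oscillating among several preimages. For this I would use that the cluster set (the set of subsequential limits) of the continuous path $\gamma_2|_{[0,s^\ast)}$ is a connected compact subset of $S_2$; since $u_2\circ\gamma_2=u_1\circ\gamma_1$ on $[0,s^\ast)$, every such limit $w$ satisfies $u_2(w)=u_1(\gamma_1(s^\ast))$, so the cluster set is contained in the finite set $u_2^{-1}\!\left(u_1(\gamma_1(s^\ast))\right)$. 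A connected subset of a finite set is a single point, so $\gamma_2(t)$ converges to a well-defined $\gamma_2(s^\ast)$, and closedness of $\mathcal{R}_{u_1}^{u_2}$ gives $\gamma_1(s^\ast)\,\mathcal{R}_{u_1}^{u_2}\,\gamma_2(s^\ast)$. If $s^\ast<1$ then $\gamma_1(s^\ast)$ is still a good point, so the local step of the previous paragraph continues the lift past $s^\ast$, contradicting maximality; hence $s^\ast=1$, and the very same convergence argument at $t=1$ (where $\gamma_1(1)$ may lie in $\mathcal{W}\cup\mathcal{C}$, yet $u_2^{-1}(u_1(\gamma_1(1)))$ is still finite) produces the value $\gamma_2(1)$.

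Finally, uniqueness follows from the sheet separation. If $\gamma_2,\gamma_2'$ are two lifts, the set $\{t:\gamma_2(t)=\gamma_2'(t)\}$ is closed by continuity, contains $0$, and is open in $[0,1[$: near a good point $\gamma_1(t_0)$ two lifts agreeing at $t_0$ lie in the same disjoint sheet $\phi_w$ for $t$ near $t_0$ and hence coincide. Thus they agree on $[0,1[$, and by continuity at $t=1$ as well. The main obstacle is therefore concentrated entirely in the convergence-at-the-limit step, which the connectedness of the cluster set, together with the finiteness of the fibers of $u_2$, resolves cleanly and uniformly at interior limit points and at the endpoint.
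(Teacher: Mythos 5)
The paper gives no proof of Proposition \ref{prop:lift1} (it defers to \cite[Lemma 4.3]{La11}), so your attempt has to be judged against Lazzarini's argument, whose open--closed continuation scheme you essentially reproduce: finiteness of the fibers of $u_2$, local sheet structure, closedness of $\mathcal{R}_{u_1}^{u_2}$, and a convergence argument at the supremum. Your cluster-set argument for convergence (connected cluster set inside a finite fiber) is clean and correct, and your uniqueness argument is fine. But there is a genuine gap in the local \emph{existence} step. Your germ $\phi_w$ is produced by ``the interior clause of Definition \ref{defi:coincidencerelation}'', which applies only when $\gamma_1(t)\in\Int(S_1)$, and nothing in the hypotheses confines the path to the interior: $\mathcal{W}(u_1,u_2)=\mathcal{R}_{u_1}^{u_2}(\partial S_2)$ does \emph{not} contain $\partial S_1$ in general (only the completed frame $\overline{\mathcal{W}}(u_1,u_2)$ does), so $\gamma_1$ may pass through points of $\partial S_1\setminus\left(\mathcal{W}(u_1,u_2)\cup\mathcal{C}(u_1,u_2)\right)$, and such a point can perfectly well be related to a point $w\in\Int(S_2)$ --- this is exactly the configuration exploited in Proposition \ref{prop:lift2}, occurring whenever the boundary of $u_1$ crosses the interior image of $u_2$.

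At such a boundary point the relation is defined only as a limit of good interior pairs, so equality of local images --- hence the existence of a germ $\phi_w$ with $u_2\circ\phi_w=u_1$ on a half-disk neighborhood --- is not given by the definition; it needs a separate argument (unique continuation of $u_1$ into the embedded local image of $u_2$, or an appeal to the boundary asymptotics). This is precisely what case (2) of Proposition \ref{prop:Theframerelationisopen} (Lazzarini's Lemma 3.10) supplies: taking $V_1$ a neighborhood of $\gamma_1(t)$ disjoint from the closed set $\mathcal{W}(u_1,u_2)$, the openness of the projection $p_1$ guarantees that all points of $S_1$ near $\gamma_1(t)$ are related to points of any prescribed small $V_2\ni w$, which is the input your continuation needs; that proposition is proved in the paper via the asymptotic expansions near boundary and corner points, and is not a formal consequence of the definition. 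Your uniqueness argument does survive at boundary points, since it only uses that related points have equal images together with local injectivity of $u_2$ at $w$; the gap is confined to existence. If you invoke Proposition \ref{prop:Theframerelationisopen}(2) at these points (or restrict to the situation $u_1=u_2$ of the paper's applications, where $\partial S_1\subset\mathcal{W}(u_1,u_2)$ forces the path into $\Int(S_1)$ for $t<1$), the proof closes up.
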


\begin{prop}[Lemma 4.4, \cite{La11}]
\label{prop:lift2}
	Let $\gamma_1 : [0.1] \to \overline{\mathcal{W}} (u_1,u_2)$ be a continuous path such that $\gamma_1(t) \notin \mathcal{C}(u_1,u_2)$ for $0 < t < 1 $. If $z_1 = \gamma_1(0)$ and $z_1 \mathcal{R}_{u_1}^{u_2} z_2$ with $z_2 \in \Int(S_2)$. 
	
	There is $\gamma_2 : [0,1] \to \mathcal{W}(u_2 ; u_1, u_2)$ such that $\gamma_2(0) = z_2$ and $\gamma_1(t) \mathcal{R}_{u_1}^{u_2} \gamma_2(t)$ for $0 \leqslant t \leqslant 1$.
	
	Moreover, if $\gamma_1([0,1]) \subset \mathcal{W}(u_1,u_2)$, then $\gamma_2([0,1]) \subset \mathcal{W}(u_2,u_2)$. Otherwise $\gamma_2([0,1]) \subset \mathcal{W}(u_2,u_1)$.
\end{prop}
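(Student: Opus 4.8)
The plan is to build $\gamma_2$ by continuation along the frame graph, using that $\gamma_1$ meets the bad set $\mathcal{C}(u_1,u_2)$ only at its endpoints. Let $T\subset[0,1]$ be the set of $\tau$ for which there is a continuous $\gamma_2:[0,\tau]\to S_2$ with $\gamma_2(0)=z_2$ and $\gamma_1(s)\,\mathcal{R}_{u_1}^{u_2}\,\gamma_2(s)$ for all $s\in[0,\tau]$; then $0\in T$, and the argument below shows that the lift extends locally near any $\tau\in T\cap[0,1)$ and that $\sup T$ is attained, giving $T=[0,1]$. For the local extension, fix $\tau\in T\cap[0,1)$ and put $w=\gamma_2(\tau)$. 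As $\gamma_1(\tau)\notin\mathcal{C}(u_1,u_2)$, the common value $x:=u_1(\gamma_1(\tau))=u_2(w)$ is neither a double point nor a critical value of $u_1$ or $u_2$; this forces $du_1(\gamma_1(\tau))\neq0$, $du_2(w)\neq0$ and $w\notin u_2^{-1}(i(R))$, so $u_1$ and $u_2$ are embeddings with equal image on small neighborhoods of $\gamma_1(\tau)$ and $w$. The composite $h:=u_2^{-1}\circ u_1$ is then a local $\mathcal{C}^1$-identification, $h\circ\gamma_1$ extends the lift past $\tau$, and the same $h$ shows that a continuous lift with prescribed value at $\tau$ is unique.

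For the closedness needed to reach $\sup T$, set $\tau=\sup T$ and choose $\tau_n\nearrow\tau$. By compactness of $S_2$ I may assume $\gamma_2(\tau_n)\to w^\ast$, and since $\mathcal{R}_{u_1}^{u_2}$ is closed (Definition \ref{defi:coincidencerelation}) I get $\gamma_1(\tau)\,\mathcal{R}_{u_1}^{u_2}\,w^\ast$. If $\tau<1$ the local uniqueness above forces the whole family to converge to $w^\ast$, so $\gamma_2(\tau):=w^\ast$ yields a continuous lift on $[0,\tau]$, and the local extension then pushes it further, contradicting $\tau=\sup T$; if $\tau=1$ the same limit defines $\gamma_2(1):=w^\ast$ and finishes the construction.

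The membership of $\gamma_2$ in the frame of $u_2$ is then a pointwise consequence of the transitivity Proposition \ref{prop:transitivityoftherelation}, using that $\mathcal{R}$ is symmetric, so that $\gamma_2(t)\,\mathcal{R}_{u_2}^{u_1}\,\gamma_1(t)$. When $\gamma_1([0,1])\subset\mathcal{W}(u_1,u_2)$ we have $\gamma_1(t)\,\mathcal{R}_{u_1}^{u_2}\,b(t)$ for some $b(t)\in\partial S_2$, and transitivity gives $\gamma_2(t)\,\mathcal{R}_{u_2}^{u_2}\,b(t)$, i.e. $\gamma_2(t)\in\mathcal{W}(u_2,u_2)$; otherwise $\gamma_1(t)\in\partial S_1$ gives $\gamma_1(t)\,\mathcal{R}_{u_1}^{u_1}\,b'(t)$ with $b'(t)\in\partial S_1$, whence $\gamma_2(t)\in\mathcal{W}(u_2,u_1)$. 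Verifying the hypotheses of Proposition \ref{prop:transitivityoftherelation} requires distinguishing whether $\gamma_1$ runs in $\Int(S_1)$—so that the local identifications keep the lift interior and condition (1) applies—or along $\partial S_1$, where condition (2) applies.

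I expect the main obstacle to be the endpoint $t=1$, where $\gamma_1(1)$ may be a vertex of the frame graph, hence a corner or critical point lying in $\mathcal{C}(u_1,u_2)$. There the local diffeomorphism argument breaks down, the relation may branch, and the convergence of $\gamma_2(\tau_n)$ to a single limit is no longer automatic. Controlling the lift as $t\to1$ will require the asymptotic local forms near corner points (Propositions \ref{prop:localformaroundacornerpoint} and \ref{prop: local form of a corner point along the other branch}) together with the $\mathcal{C}^1$-graph structure of Proposition \ref{prop:theframeisagraph}, in order to show that the finitely many edges emanating from the vertex lift compatibly and that $w^\ast$ is well defined.
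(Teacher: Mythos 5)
The paper itself does not reprove this statement; it is quoted as \cite[Lemma 4.4]{La11} and the proof is deferred to Lazzarini, so your attempt has to be judged on its own merits. Its overall skeleton (continuation along the path, closedness of $\mathcal{R}_{u_1}^{u_2}$ at the supremum, transitivity for the ``moreover'' part) is the right one, but there are two genuine gaps, and you have misplaced the difficulty.

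First, your continuation cannot get started. You claim the local extension works ``near any $\tau\in T\cap[0,1)$'', but your extension argument needs $\gamma_1(\tau)\notin\mathcal{C}(u_1,u_2)$, and the hypotheses only guarantee this for $0<t<1$: the initial point $z_1=\gamma_1(0)$ is allowed to lie in $\mathcal{C}(u_1,u_2)$, and in every application in the paper it does (it is a vertex of the frame graph, a corner point, or a cutpoint --- recall Proposition \ref{prop:theframeisagraph}(ii) places the vertices inside $\mathcal{C}(u_1,u_2)$). As written, your argument only shows $T\supset\{0\}$. The missing ingredient is Proposition \ref{prop:Theframerelationisopen}: since $z_2\in\Int(S_2)$, one may take an open $V_2\subset\Int(S_2)$ around $z_2$ and use openness of the projection $p_1:(V_1\times V_2)\cap\mathcal{R}_{u_1}^{u_2}\to V_1$ to produce, for all small $t>0$, points of $V_2$ related to $\gamma_1(t)$, and then combine this with the local structure at the non-bad points $\gamma_1(t)$, $t>0$, to obtain a continuous lift on $[0,\delta]$. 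This is precisely where the hypothesis $z_2\in\Int(S_2)$ enters; your proof never uses that hypothesis nor that proposition, which is a sign the start-up problem has been skipped.

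Second, your local identification $h=u_2^{-1}\circ u_1$ silently assumes $\gamma_2(\tau)\in\Int(S_2)$. If the lift reaches a point $w\in\partial S_2$ --- which the statement explicitly permits, since the target $\mathcal{W}(u_2;u_1,u_2)$ contains $\partial S_2\subset\mathcal{W}(u_2,u_2)$, and nothing in your continuation prevents it --- then there is no pair of neighborhoods of $\gamma_1(\tau)$ and $w$ with equal images (an embedded image of a full disk cannot coincide with that of a half-disk), and indeed the relation at such a pair is defined only as a limit of interior related pairs (Definition \ref{defi:coincidencerelation}). So both the extension and the uniqueness step fail there; handling this case requires the boundary analysis of the relation (the statement \cite[Proposition 3.13]{La11} invoked in the proof of Proposition \ref{prop:theframeisagraph}), not a local diffeomorphism. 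Relatedly, your closing remark locates the hard point at $t=1$; in fact that endpoint is the easy one (closedness of $\mathcal{R}_{u_1}^{u_2}$ plus discreteness of the set of points related to $\gamma_1(1-\epsilon)$, exactly as you argue), whereas the genuine difficulties are at $t=0$ and at times when the lift meets $\partial S_2$. Finally, for the ``moreover'' part, checking hypothesis (2) of Proposition \ref{prop:transitivityoftherelation} requires a neighborhood of $\gamma_1(t)$ in which the relevant frame lies in $\partial S_1$; this needs an argument (it is not automatic from $\gamma_1(t)\in\partial S_1$), which you flag but do not supply.
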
 

\begin{prop}[Lemma 4.5, \cite{La11}]
	Assume $\partial S_2$ is connected. If $C$ is a connected component of $\mathcal{W}(u_1,u_2)$ with $C \subset \Int(S_1)$, then for $z \in \partial S_2$ there is $w \in C$ such that $w \mathcal{R}_{u_1}^{u_2} z$.
\end{prop}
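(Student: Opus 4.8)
The plan is to fix the component $C$ and study the set of boundary points of $S_2$ that are reached from $C$ through the relation, namely
\[ B := \ens{z \in \partial S_2}{\exists\, w \in C,\ w \mathcal{R}_{u_1}^{u_2} z} . \]
Since $\partial S_2$ is connected, it suffices to prove that $B$ is non-empty, closed and open in $\partial S_2$; the conclusion $B = \partial S_2$ is then precisely the statement. Non-emptiness is immediate: by Definition \ref{defi:frameofanholomorphiccurve} we have $C \subset \mathcal{W}(u_1,u_2) = \mathcal{R}_{u_1}^{u_2}(\partial S_2)$, so any $w_0 \in C$ is related to some $z_0 \in \partial S_2$, whence $z_0 \in B$.

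For closedness I would first note that $C$ is compact: the frame $\mathcal{W}(u_1,u_2)$ is closed in the compact surface $S_1$ (both $\mathcal{R}_{u_1}^{u_2}$ and $\partial S_2$ are closed), and a connected component of a closed set is closed, hence compact. Now if $z_\nu \in B$ with $z_\nu \to z \in \partial S_2$, choose $w_\nu \in C$ with $w_\nu \mathcal{R}_{u_1}^{u_2} z_\nu$; by compactness of $C$ pass to a subsequence with $w_\nu \to w \in C$, and since $\mathcal{R}_{u_1}^{u_2}$ is closed we get $w \mathcal{R}_{u_1}^{u_2} z$, i.e. $z \in B$.

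The heart of the matter is openness. Let $z \in B$ and $w \in C$ with $w \mathcal{R}_{u_1}^{u_2} z$. Because $\mathcal{R}_{u_1}^{u_2}$ is the transpose of $\mathcal{R}_{u_2}^{u_1}$ (the coincidence condition $u_1(\Omega_1) = u_2(\Omega_2)$ is symmetric, and this passes to the closure), I would apply Proposition \ref{prop:Theframerelationisopen} to the pair $(u_2,u_1)$ to obtain that the projection onto the $S_2$-factor
\[ p_2 : (V_1 \times V_2) \cap \mathcal{R}_{u_1}^{u_2} \to V_2 \]
is open as soon as $V_1 \subset \Int(S_1)$ is open. Here I choose $V_1$ a small open neighborhood of $w$ in $\Int(S_1)$ — legitimate since $C \subset \Int(S_1)$ — and $V_2$ a neighborhood of $z$ in $S_2$. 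The image $p_2\big((V_1 \times V_2) \cap \mathcal{R}_{u_1}^{u_2}\big)$ is then an open subset of $V_2$ containing $z = p_2(w,z)$. For every $z'$ in this image that lies on $\partial S_2$ there is $w' \in V_1$ with $w' \mathcal{R}_{u_1}^{u_2} z'$; as $z' \in \partial S_2$ this forces $w' \in \mathcal{W}(u_1,u_2)$, and I then only have to make sure $w'$ lands in the component $C$ itself to conclude $z' \in B$.

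Securing $w' \in C$ is the main obstacle, and it is where the graph structure enters. Since $w \in C \subset \Int(S_1)$, the point $w$ is not isolated (isolated points lie in $\partial S_1$ by Proposition \ref{prop:theframeisagraph}(i)), so near $w$ the frame is a $\mathcal{C}^1$-embedded graph by Proposition \ref{prop:theframeisagraph}(ii). Shrinking $V_1$ to a small ball around $w$ inside $\Int(S_1)$, the intersection $V_1 \cap \mathcal{W}(u_1,u_2)$ is a single arc or a star of arcs meeting at $w$; in either case it is connected and contains $w$, hence is contained in $C$. Thus $w' \in V_1 \cap \mathcal{W}(u_1,u_2) \subset C$, so $z' \in B$, and intersecting the open image with $\partial S_2$ yields an open neighborhood of $z$ in $\partial S_2$ contained in $B$. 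The two points requiring care are the symmetric reformulation of Proposition \ref{prop:Theframerelationisopen} (projecting onto the second factor rather than the first) and the local-connectedness claim for the frame around an interior point; both are short given the results already established.
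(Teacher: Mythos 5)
Your argument is correct, but there is nothing in the paper to compare it against: the paper states this result purely as a citation of \cite[Lemma 4.5]{La11} and supplies no proof of its own (it only promises proofs ``when needed''). Your reconstruction is the natural one given the tools the paper does establish, and each step checks out. The set $B$ of boundary points of $S_2$ related to $C$ is nonempty by the definition of $\mathcal{W}(u_1,u_2) = \mathcal{R}_{u_1}^{u_2}(\partial S_2)$, and closed because $C$ is compact (a component of the closed set $\mathcal{W}(u_1,u_2)$ in the compact surface $S_1$) and $\mathcal{R}_{u_1}^{u_2}$ is closed. The two delicate points are exactly the ones you flag, and both are sound: first, the relation is symmetric by Definition \ref{defi:coincidencerelation} (the coincidence condition $u_1(\Omega_1)=u_2(\Omega_2)$ and the closure construction are symmetric in the two factors), so Proposition \ref{prop:Theframerelationisopen} applied to the pair $(u_2,u_1)$ with its condition (1) does give openness of the projection onto the $S_2$-factor once $V_1 \subset \Int(S_1)$ is open; second, since $w \in C \subset \Int(S_1)$ it avoids $\mathcal{D}(u_1,u_2) \subset \partial S_1$, so by Proposition \ref{prop:theframeisagraph} the frame is, near $w$, a locally finite $\mathcal{C}^1$-embedded graph, and a small enough ball meets it only in the (connected) star of $w$, forcing $V_1 \cap \mathcal{W}(u_1,u_2) \subset C$. (Equivalently, local connectedness of the graph makes $C$ open in $\mathcal{W}(u_1,u_2)$, which is all you need.) With that, every point of the open image $p_2\bigl((V_1 \times V_2)\cap \mathcal{R}_{u_1}^{u_2}\bigr)$ lying on $\partial S_2$ belongs to $B$, so $B$ is open, and connectedness of $\partial S_2$ finishes the proof. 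This connectedness-plus-open-projection scheme is, as far as one can tell, also the shape of Lazzarini's original argument, so your proposal can stand in for the missing proof as written.
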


We also introduce the following notion.

\begin{defi}
	The $J$-holomorphic curve $u : (S, \partial S) \to (M, i(L))$ is \emph{properly bordered} if $\partial S$ is open in $\mathcal{W}(u,u)$.
\end{defi}

\subsubsection{Simple curves are determined by their frames}
	\begin{defi}
		Two $J$-holomorphic curves with boundaries on $L$ defined on connected surfaces $u_1$ and $u_2$ have relatively simple frames if $\mathcal{R}_{u_1}^{u_2} \neq \emptyset$, $\mathcal{W}(u_1,u_2) \subset \partial S_1$ and $\mathcal{W}(u_2,u_1) \subset S_2$. 
	\end{defi}
	
The argument of \cite[section 4]{La11} holds without modifications to yield

\begin{Theo}[Theorem 4.13, \cite{La11}]
\label{Theo:Relatively simple frames implies simple curves are equal}
	If $u_1$ and $u_2$ are two simple curves with boundary in $L$, the following assertions are equivalent
	\begin{enumerate}
		\item $u_1(S_1) = u_2(S_2)$ and $u_1(\partial S_1) = u_2(\partial S_2)$,
		\item $\mathcal{W}(u_1, u_2) = \partial S_1$ and $\mathcal{W}(u_2,u_1) = \partial S_2$,
		\item $u_1$ and $u_2$ have relatively simple frames,
		\item $\mathcal{R}_{u_1}^{u_2} \neq \emptyset$ and $u_1(\partial S_1) = u_2(\partial S_2)$,
		\item There exist biholomorphism $\phi_{12} : (S_2, \partial S_2) \to (S_1, \partial S_1)$ such that 
		\[ u_2 = u_1 \circ \phi_{12}. \] 	
	\end{enumerate}
\end{Theo}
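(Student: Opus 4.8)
The plan is to prove the cyclic chain $(5)\Rightarrow(1)\Rightarrow(4)\Rightarrow(3)\Rightarrow(2)\Rightarrow(5)$, which renders all five statements equivalent. The first implication is immediate: if $u_2 = u_1\circ\phi_{12}$ with $\phi_{12}$ a biholomorphism of pairs, then applying $\phi_{12}$ to $S_2$ and to $\partial S_2$ gives $u_2(S_2)=u_1(S_1)$ and $u_2(\partial S_2)=u_1(\partial S_1)$, which is $(1)$. For $(1)\Rightarrow(4)$ the equality of boundary images is already part of $(1)$, so only $\mathcal{R}_{u_1}^{u_2}\neq\emptyset$ must be produced. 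I would pick an injective interior point $z_1$ of $u_1$ with $u_1(z_1)\notin i(R)$, which is possible since injective points are dense and $i(R)$ is finite. As $u_1(z_1)\in u_2(S_2)$ and the injective points of $u_2$ are dense, one finds an injective $z_2\in\Int(S_2)$ with $u_2(z_2)=u_1(z_1)$; near these two injective points both curves are embeddings onto the same germ of surface, so $u_1(\Omega_1)=u_2(\Omega_2)$ for suitable neighborhoods, giving $z_1\mathcal{R}_{u_1}^{u_2}z_2$.

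For $(4)\Rightarrow(3)$ I would show that no point of $\mathcal{W}(u_1,u_2)$ lies in $\Int(S_1)$. Suppose $w\in\mathcal{W}(u_1,u_2)\cap\Int(S_1)$ is related to some $z_2\in\partial S_2$. Combining the path-lifting property (Proposition \ref{prop:lift2}), the interior transitivity of the relation (Proposition \ref{prop:transitivityoftherelation}), and the characterization of simple curves by triviality of their self-relation (Proposition \ref{prop:Characterizationofsimplecurves}), one extracts an interior self-coincidence of $u_1$ contradicting its simplicity; the argument runs symmetrically for $u_2$. Hence $\mathcal{W}(u_1,u_2)\subset\partial S_1$ and $\mathcal{W}(u_2,u_1)\subset\partial S_2$, which with $\mathcal{R}_{u_1}^{u_2}\neq\emptyset$ is exactly $(3)$. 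The implication $(3)\Rightarrow(2)$ is then a connectedness argument: since $\mathcal{W}(u_1,u_2)\subset\partial S_1$, Proposition \ref{prop:Theframerelationisopen}(2) shows the set of points of $\partial S_1$ related to $\partial S_2$ is open in $\partial S_1$, and it is closed because $\mathcal{R}_{u_1}^{u_2}$ and $\partial S_2$ are closed. As $\mathcal{R}_{u_1}^{u_2}\neq\emptyset$, path-lifting (Proposition \ref{prop:lift1}) propagates the relation along $\partial S_1$, so by connectedness this open-closed set exhausts $\partial S_1$, giving $\mathcal{W}(u_1,u_2)=\partial S_1$ and symmetrically $\mathcal{W}(u_2,u_1)=\partial S_2$.

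The heart of the proof is $(2)\Rightarrow(5)$. On $\Int(S_1)\setminus\mathcal{C}(u_1,u_2)$ every point lies off the frame $\partial S_1=\mathcal{W}(u_1,u_2)$, and I would use path-lifting (Proposition \ref{prop:lift1}) from a boundary point where the correspondence is already known to define a map $\psi:z_1\mapsto z_2$ with $z_1\mathcal{R}_{u_1}^{u_2}z_2$. Near injective points the relation is a unique local biholomorphism satisfying $u_1=u_2\circ\psi$, and uniqueness of such holomorphic germs makes the lift independent of the chosen path; connectedness of $S_1$ then yields a globally well-defined holomorphic $\psi$, whose inverse is built identically from $\mathcal{W}(u_2,u_1)=\partial S_2$. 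Setting $\phi_{12}=\psi^{-1}$ gives $u_2=u_1\circ\phi_{12}$. It remains to extend $\psi$ across the frame: over branch and critical vertices this is the standard removability/$\mathcal{C}^1$-extension argument of Lazzarini, while over corner vertices—the genuinely new case—I would invoke the asymptotic developments of Propositions \ref{prop:localformaroundacornerpoint} and \ref{prop: local form of a corner point along the other branch}.

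The main obstacle is precisely this globalization in the presence of corners. First one must rule out monodromy: continuing the local reparameterizations around the complement of the graph $\overline{\mathcal{W}}(u_1,u_2)$, which need not be simply connected, and showing the germ returns to itself—this follows from the rigidity of holomorphic germs, but must be organized carefully. Second, at a corner vertex the local models are the fractional powers $z^{\frac{\alpha_k}{\pi}+m}$ rather than integer powers, so I must check that the two curves share the same dominant $V_{\alpha_k}$-direction and the same exponent $\tfrac{\alpha_k}{\pi}+m$ and respect the matching of the branch labels $L_p,L_q$. In the $\mathcal{C}^1$-charts furnished by those propositions the leading coefficients are forced to be proportional, so $\psi$ becomes a genuine local biholomorphism fixing the corner and hence extends continuously over the vertices and the boundary, yielding the desired biholomorphism of pairs.
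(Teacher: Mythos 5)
Your cyclic chain $(5)\Rightarrow(1)\Rightarrow(4)\Rightarrow(3)\Rightarrow(2)\Rightarrow(5)$ is sound except at one link, and that link is fatal: the implication $(4)\Rightarrow(3)$. The mechanism you describe there --- combining Proposition \ref{prop:lift2}, Proposition \ref{prop:transitivityoftherelation} and Proposition \ref{prop:Characterizationofsimplecurves} to ``extract an interior self-coincidence of $u_1$'' --- never invokes the hypothesis $u_1(\partial S_1)=u_2(\partial S_2)$, and without that hypothesis the conclusion you are deriving is false. Concretely, take $M=\C$ with its standard structure, let $i$ be the embedding of two disjoint circles $\ens{z \in \C}{\norm{z}=1}$ and $\ens{z \in \C}{\norm{z}=2}$, and let $u_1$, $u_2$ be the inclusions of the closed disks of radius $2$ and $1$ respectively. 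Both curves are simple, $\mathcal{R}_{u_1}^{u_2}\neq\emptyset$, and yet $\mathcal{W}(u_1,u_2)=\ens{z \in \C}{\norm{z}=1}\subset\Int(S_1)$ is an interior circle: lifting paths of $\partial S_2$ merely traces out this circle (one is in the ``otherwise'' case of Proposition \ref{prop:lift2}, since $\mathcal{W}(u_2,u_1)=\emptyset$ here), transitivity produces only trivial self-relations, and no contradiction with simplicity appears --- nor should it, since $(3)$ genuinely fails in this example. So any correct proof of $(4)\Rightarrow(3)$ must make essential use of the equality of boundary images to exclude exactly such configurations; converting a set-theoretic equality of images into a frame inclusion is the hard content of Lazzarini's Section 4, and it is the step your sketch compresses into one unsupported sentence. (Note that the paper itself gives no proof at all: it only asserts that the argument of \cite[Section 4]{La11} adapts without modification, so this crux is also the point where the adaptation to corners would actually have to be checked.)

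Two secondary points are repairable. In $(3)\Rightarrow(2)$ you invoke connectedness of $\partial S_1$, which is not granted ($S_1$ is connected, its boundary need not be); instead, for an arbitrary $w\in\partial S_1$ lift a path from an interior related pair to $w$ via Proposition \ref{prop:lift1}, and use $\mathcal{W}(u_2,u_1)\subset\partial S_2$ to see that the endpoint of the lift must lie in $\partial S_2$, so that every boundary point of $S_1$ belongs to $\mathcal{W}(u_1,u_2)$ with no open--closed argument needed. In $(2)\Rightarrow(5)$, monodromy of the analytic continuation is not killed by ``rigidity of holomorphic germs'' --- a multiply covered disk has nontrivial monodromy despite germ rigidity --- but by simplicity: two continuations $\psi$, $\psi'$ of the local reparameterization around a loop satisfy $u_2\circ\psi=u_2\circ\psi'=u_1$, hence $\psi(z)\,\mathcal{R}_{u_2}^{u_2}\,\psi'(z)$ for $z$ near the base point, and triviality of $\mathcal{R}_{u_2}^{u_2}$ (Proposition \ref{prop:Characterizationofsimplecurves}) forces $\psi=\psi'$.
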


\subsection{Factorizations of $J$-holomorphic disks}
	\subsubsection{Factorization of curves}

\begin{prop}
\label{prop : biholomorphisms at the boundary }
	Let $u :(S, \partial S) \to (M, i(L))$ be a non-constant finite-energy $J$-holomorphic curve with corners and boundary on $L$. We suppose that $\mathcal{W}(u)$.
	
	Let $\{ z_1, \ldots , z_N \} \subset \partial S$ such that $\mathcal{R}_u^u \{z_1\} = \{ z_1, \ldots , z_N\} $. 
	
	There are simply connected open sets $\Omega_i \ni z_i$ for $i = 1, \ldots , N$ such that $\Omega_i \cap \mathcal{C}(u,u) \subset \{ z_i \}$ and applications $\psi_{ij} :  \overline{\Omega_i} \to \overline{\Omega_j} $ such that
	\begin{enumerate}
		\item $\psi_{ij}$ is the unique biholomorphism such that $u \circ \psi_{ij} = u$,
		\item If $(z,w) \in \overline{\Omega_i} \times \overline{\Omega_j}$, we have $z \mathcal{R}_u^u w$ if and only if $w = \psi_{ij}(z)$.	
	\end{enumerate}	
\end{prop}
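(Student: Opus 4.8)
The plan is to build each $\psi_{ij}$ first on the punctured neighborhood $\overline{\Omega_i}\setminus\{z_i\}$, where every point is an injective point and the relation $\mathcal{R}_u^u$ is realized by a genuine holomorphic reparametrization, and then to extend it across the distinguished point $z_i$ by a reflection-and-removable-singularity argument. First I would fix the neighborhoods: since $\mathcal{C}(u,u)$ is discrete and $\mathcal{R}_u^u\{z_1\}=\{z_1,\ldots,z_N\}$ is finite, I can choose pairwise disjoint, simply connected half-disk neighborhoods $\Omega_i\ni z_i$ with $\overline{\Omega_i}\cap\mathcal{C}(u,u)\subseteq\{z_i\}$, all mapping into a single chart of the form provided by Proposition \ref{prop:secondlocalchart}. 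Shrinking further, and using that Proposition \ref{prop:asymptoticdevelopmentaroundacornerpoint} rules out critical points in a punctured neighborhood of $z_i$, I arrange that $u|_{\Omega_i}$ is simple and injective on $\overline{\Omega_i}\setminus\{z_i\}$, the only self-coincidence being the possible ramification at $z_i$.

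Next I would construct $\psi_{ij}$ on $\overline{\Omega_i}\setminus\{z_i\}$. For an injective point $z$, the relation $z\,\mathcal{R}_u^u\,w$ with $w\in\overline{\Omega_j}$ determines $w$ uniquely, because $u$ is a local immersion near both points and is injective on $\overline{\Omega_j}\setminus\{z_j\}$. To produce such a $w$ for every $z$ and make it vary continuously, I would start from the relation $z_i\,\mathcal{R}_u^u\,z_j$ and propagate it along paths using the lifting Propositions \ref{prop:lift1} and \ref{prop:lift2}; here the simple-connectedness of $\Omega_i$ is exactly what guarantees that the lift is free of monodromy, hence single-valued. This yields a continuous map $\psi_{ij}:\overline{\Omega_i}\setminus\{z_i\}\to\overline{\Omega_j}$ with $z\,\mathcal{R}_u^u\,\psi_{ij}(z)$ and, by Proposition \ref{prop:lift2}, sending $\partial S\cap\Omega_i$ into $\partial S\cap\Omega_j$. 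On a neighborhood of each injective point one has $u=u\circ\psi_{ij}$ with both sides $J$-holomorphic immersions, so $\psi_{ij}$ is holomorphic for the standard complex structures on the two half-disks, and holomorphic up to the boundary at the non-corner boundary points.

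The heart of the argument is the extension across $z_i$. The map $\psi_{ij}$ is bounded, since its image lies in $\overline{\Omega_j}$, and holomorphic on the punctured half-neighborhood, and it is real on the real axis in the half-disk model because it preserves the $L$-boundary. Hence the Schwarz reflection principle extends it to a bounded holomorphic map on a punctured full disk, and Riemann's removable-singularity theorem removes the puncture. To see that the extension is a \emph{biholomorphism} with $\psi_{ij}(z_i)=z_j$, rather than merely holomorphic, I would invoke the normal forms at $z_i$ and $z_j$: the computations underlying Lemmas \ref{lemm: Cas 1}--\ref{lemm: Cas 4} (together with Proposition \ref{prop:localformaroundacornerpoint}) show that the two leading exponents coincide and the leading coefficients are proportional, which forces $\psi_{ij}$ to match the ramification orders and thus to have nonvanishing derivative at $z_i$ after reflection. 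Running the symmetric construction produces $\psi_{ji}$, and $\psi_{ji}\circ\psi_{ij}$ realizes $z\,\mathcal{R}_u^u\,z$, so by uniqueness of the lift it is the identity; therefore $\psi_{ij}$ is biholomorphic.

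Finally, the stated properties follow formally. Property (i) holds because $u\circ\psi_{ij}=u$ on the dense set of injective points and both sides are continuous; uniqueness of such a biholomorphism comes from the fact that two candidates agree on injective points, where $u$ is locally injective, hence everywhere. For property (ii), the implication $w=\psi_{ij}(z)\Rightarrow z\,\mathcal{R}_u^u\,w$ is built into the construction, while the converse follows on injective points from the uniqueness of the $\mathcal{R}_u^u$-partner in $\overline{\Omega_j}$ and extends to $z=z_i$ using $\mathcal{R}_u^u\{z_i\}\cap\overline{\Omega_j}=\{z_j\}$ together with the closedness of $\mathcal{R}_u^u$. I expect the main obstacle to be precisely the extension step: guaranteeing boundedness and the correct real-boundary behavior so that reflection applies, and then upgrading the removable-singularity extension from holomorphic to biholomorphic by matching the ramification orders coming from the local normal forms. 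The monodromy-free lifting, which is the reason simple-connectedness of $\Omega_i$ is assumed, is the secondary delicate point.
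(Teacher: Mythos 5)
Your skeleton matches the paper's proof (path lifting made single-valued by simple connectedness, holomorphic extension across $z_i$ by Schwarz reflection plus a removable singularity, inverse from the symmetric construction and uniqueness of lifts), but two of your load-bearing claims are false or vacuous in exactly the cases this proposition exists to handle. First, you cannot "arrange that $u|_{\Omega_i}$ is injective on $\overline{\Omega_i}\setminus\{z_i\}$". The hypotheses explicitly allow $z_i\in\mathcal{C}(u,u)$, i.e.\ $z_i$ may be a boundary ramification point, with local model $z\mapsto \lambda A\left(z^k,U(z)\right)$, $k\geqslant 2$ (Proposition \ref{prop:local form along a branch}); then every punctured neighborhood of $z_i$ contains distinct points with the same image. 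A concrete realization: $u(z)=\frac{1}{2}\left(z+z^{-1}\right)$ on $(\D,\partial\D)\to(\C P^1,\R P^1)$ has $\mathcal{W}(u)=\partial\D$, a boundary critical point at $z_1=1$ with $\mathcal{R}_u^u\{z_1\}=\{z_1\}$, and satisfies $u(e^{i\theta})=u(e^{-i\theta})$ on every neighborhood of $z_1$. (The proposition survives because the \emph{relation} is still trivial there -- the germs of images at $e^{\pm i\theta}$ lie on opposite sides of $\R P^1$ -- but your pointwise-injectivity premise, and hence your argument that the $\mathcal{R}_u^u$-partner in $\overline{\Omega_j}$ is unique, does not.)

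Second, and more seriously, your proof of the uniqueness assertion in (1) ("two candidates agree on injective points \ldots hence everywhere") is empty in the main case. Injective points require $u^{-1}(u(z))=\{z\}$ globally; when $u$ is multiply covered -- which is precisely the situation the hypothesis $\mathcal{W}(u)=\partial S$ is designed to capture, cf.\ Corollary \ref{coro: multiply covered disks} -- the set of injective points is empty, so there is no set on which to compare candidates. This is not repairable by passing to "locally injective" points: local injectivity does not pin down the partner inside $\Omega_j$. The paper supplies the missing idea, quoted from \cite[Proposition 5.9]{La00}: a biholomorphism $\psi$ of $\Omega_i\cong\D$ with $u\circ\psi=u$ has a fixed point; a boundary fixed point forces $\psi^n\to z_0$ and $u$ constant; an interior fixed point makes $\psi$ a rotation, and finite order would force $u$ to factor through $z\mapsto z^k$, contradicting that $u$ has no critical point in $\Omega_i\setminus\{z_i\}$. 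You need this (or an equivalent rigidity argument). A smaller gap of the same nature: with arbitrary half-disk neighborhoods, Proposition \ref{prop:lift1} produces a lift in $S$, and nothing forces its endpoint to lie in $\Omega_j$, so your $\psi_{ij}$ is not known to map into $\overline{\Omega_j}$. The paper's choice of $\Omega_i$ as connected components of $\left\{\norm{u}<\alpha\right\}$ for a regular value $\alpha$ of $\norm{u}$ is not cosmetic: it guarantees $\norm{u(\gamma_2(t))}=\norm{u(\gamma(t))}<\alpha$, trapping the lifted path in $\Omega_j$, and simultaneously makes $\Omega_i$ simply connected.
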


\begin{proof}
For $i= 1 \ldots N $, choose $V_i$ a neighborhood of $z_i$ and $\mathcal{C}^1$ charts such that, in these charts,
	\[ u(z) = a_i (z-z_i)^{k_i} + o(\norm{z-z_i}^{k_i})\]
	if $z_i$ is not a corner point, or
	\[ u(z) = a_i(z-z_i)^{\frac{\alpha_i}{\pi} + m_i} + o(\norm{z-z_i}^{\frac{\alpha_i}{\pi} + m_i}) \]
if $z_i$ is.

Moreover, we can assume that $d \norm{u}_z$ is non-zero on $V_i$. We choose an $\alpha \in$ such that $0 < \alpha < \inf_{\partial(\cup V_i) \backslash \partial \D}$. Then $\alpha$ is a regular value of $\norm{u}$. Denote by $\Omega_i$ the connected component of $\D^+ \backslash \norm{u}^{-1} \{ \alpha \}$ such that $\Omega_i \ni  z_i$.
Since
\[ d \norm{u} (z - z_i) = \norm{a_i}^2 \left (\frac{\alpha_i}{\pi} +m_i \right ) + o(1) \]
is positive for $z$ close enough to $z_i$, we can assume (choosing $\alpha$ smaller if necessary) that $\Omega_i$ is simply connected. It implies that it is biholomorphic to a disk. Furthermore, its boundary is the union of an embedded arc in $\partial S$ and an embedded arc in the interior.

Let us show that the $\psi_{ij}$ exist. Choose $\tilde{z_1} \in \Omega_1 \cap \Int(S)$ and $\tilde{z_2} \in \Omega_2 \cap \Int(S)$. We build $\psi_{12} : \Omega_1 \backslash \partial \D \to \Omega_2 \backslash \partial \D $.

For this, choose $z \in \Omega_1 \backslash \partial \D $. There is a continuous path $\gamma : [0,1] \to \Omega_1 \backslash \partial \D$ from $\tilde{z}_1$ to $z$. This path lifts to a unique continuous $\gamma_2 : [0,1] \to S$ such that $\gamma_2(0) = \tilde{z}_2$ and $\gamma(t) \mathcal{R}_{u_1}^{u_2} \gamma_2(t)$. Notice that since $\mathcal{W}(u) =  \partial S$, we have $\gamma_2(t) \notin \partial S$. Moreover, since $\norm{u(\gamma(t))} < \alpha$ for all $t \in [0,1]$ and $u(\gamma_2(t))= u(\gamma(t)) $, we get $\gamma_2(t) \in \Omega_2 \backslash S$. We put $\psi_{12}(z) = \gamma_2(1) $.

It remains to see that $\psi_{12}(z)$ does not depend on the choice of $\gamma$. For this, suppose that there is a homotopy $H : [0,1] \times [0,1] \to \Omega_1 \backslash \partial \D $ such that $H(0, \cdot) = \gamma$. By the same argument as before, there is a unique lift $\tilde{H} : [0,1] \times [0,1] \to \Omega_2 \backslash \partial \D$ such that $H(s,t) \mathcal{R}_u^u \tilde{H}(s,t)$. It is easy to see, using that there is no critical points of $u$ in $\Omega_2 \backslash \partial S$, that $H$ is actually smooth. Thus the existence of $\psi_{12}$ is proved. 

Recall that there are no critical points of $u$ in $\Omega_2 \backslash \{ z_2 \}$. It is easy to see, using the same argument, that $\psi_{12}$ extends to a holomorphic map $\Omega_1 \backslash \{ z_1 \} \to \Omega_2 \backslash \{ z_2 \}$. In a local chart for $S$, this is a bounded holomorphic map from $\D^+_\R \backslash \{ 0 \}$ to $\D^+_\R \backslash \{ 0 \}$ sending the real line to the real line. Hence, it extends to a holomorphic map $\Omega_1 \to \Omega_2 $.

To see that this is a biholomorphism, notice that the same argument allows us to build a holomorphic map $\psi_{21} : \Omega_2 \to \Omega_1$ such that $\psi_{21}(\tilde{z}_2) = \tilde{z}_1$. Now $\Phi = \psi_{12} \circ \pi_{21} : \Omega_1 \to \Omega_1$ satisfies $u \circ \Phi = u$ and $\Phi(\tilde{z}_1) = \tilde{z}_1 $. Hence by the unicity of lifts of paths, it is the identity. Exchanging the order of the composition, we get $\psi_{12} \circ \psi_{21} = \Id$.

The unicity follows from a beautiful argument given in \cite[Proposition 5.9]{La00}. Assume that $\psi$ is a biholomorphism $\Omega_1 \to \Omega_1$ such that $u \circ \psi  = u $. Since $\Omega_1$ is simply connected, we can assume that it is actually equal to $\D$ by the Riemann mapping theorem. Then $\psi$ has a fixed point, say $z_0$. If $z_0 \in \partial \D$, then $\psi^n(z) \to z_0$ for all $z$, so $u$ is constant. Hence $z_0 \in \mathring{\D}$ and we can assume $z_0 = 0$ so $\psi(z) = \zeta z$ for some $\zeta \in \partial \D$. Either $\zeta$ has infinite order, in which case $u$ is constant, or it has finite order so $u$ factors through $z \mapsto z^k$. But $u$ is an immersion on $\D$, so $k = 1$. 
\end{proof}

From this, we immediately get the following corollary.

\begin{coro}
\label{coro: charts at the boundary of the quotient}
	If $\mathcal{W}(u)= \partial S$ and $z_1, \ldots, z_N \in \partial S$ are such that $\mathcal{R}_{u}^u \{z_1 \} = \{ z_1, \ldots , z_n \}$. There are holomorphic charts $h_i : (\D^+, \D^+_\R) \to (S, \partial S)$ with $h_i(0) = z_i$ such that $h_i(z) = h_j(z')$ if and only if $z \mathcal{R}_u^u z'$.	
\end{coro}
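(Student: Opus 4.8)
The plan is to deduce the corollary directly from Proposition \ref{prop : biholomorphisms at the boundary }, which already supplies the simply connected neighborhoods $\Omega_i \ni z_i$ together with the biholomorphisms $\psi_{ij} : \overline{\Omega_i} \to \overline{\Omega_j}$ characterizing the relation $\mathcal{R}_u^u$ on $\overline{\Omega_i} \times \overline{\Omega_j}$. The only gap between that proposition and the corollary is cosmetic: the proposition hands us abstract simply connected domains $\Omega_i$ with a distinguished boundary arc lying in $\partial S$, whereas the corollary asks for uniform holomorphic charts $h_i : (\D^+, \D^+_\R) \to (S, \partial S)$ with $h_i(0) = z_i$. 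So the essence of the argument is to produce, for each $i$, a biholomorphism from the standard half-disk onto $\Omega_i$ matching up the boundary structure, and then to transport the relation $\mathcal{R}_u^u$ through these charts via the $\psi_{ij}$.

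First I would recall from the proof of Proposition \ref{prop : biholomorphisms at the boundary } that each $\Omega_i$ is simply connected, biholomorphic to a disk, and that its boundary decomposes as an embedded arc in $\partial S$ together with an embedded arc in the interior, meeting transversally at the two endpoints (this is exactly the regular level-set picture from $\|u\|^{-1}\{\alpha\}$). Such a domain is biholomorphic, as a bordered Riemann surface with its distinguished boundary arc, to $(\D^+, \D^+_{\R})$: I would invoke the Riemann mapping theorem for the interior together with the Schwarz reflection principle (or, equivalently, the Carath\'eodory extension theorem since the boundary arcs are $\mathcal{C}^1$ embedded) to obtain a biholomorphism $h_i : \D^+ \to \Omega_i$ sending $\D^+_{\R}$ onto the boundary arc $\Omega_i \cap \partial S$ and $0$ to $z_i$. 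The freedom in choosing $h_i$ (an automorphism of $\D^+$) is enough to normalize $h_i(0) = z_i$.

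Next I would fix the chart $h_1$ at the base point $z_1$ arbitrarily, and then \emph{define} the remaining charts by setting $h_j := \psi_{1j} \circ h_1$ for $j = 2, \ldots, N$. Since $\psi_{1j} : \overline{\Omega_1} \to \overline{\Omega_j}$ is a biholomorphism carrying the boundary structure of $\Omega_1$ to that of $\Omega_j$ and satisfies $\psi_{1j}(z_1) = z_j$ by construction, each $h_j$ is a holomorphic chart $(\D^+, \D^+_{\R}) \to (S, \partial S)$ with $h_j(0) = z_j$. The compatibility statement then follows from property (2) of the proposition: for $z, z' \in \D^+$ we have $h_i(z) \mathcal{R}_u^u h_j(z')$ if and only if $h_j(z') = \psi_{ij}(h_i(z))$, where $\psi_{ij} = \psi_{1j} \circ \psi_{1i}^{-1}$; but $h_j(z') = \psi_{1j}(h_1(w'))$ and $h_i(z) = \psi_{1i}(h_1(w))$ for the corresponding points $h_1(w), h_1(w')$, so the relation reduces to $w = w'$, i.e. $z = z'$. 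Thus $h_i(z) \mathcal{R}_u^u h_j(z')$ holds precisely when $z \mathcal{R}_u^u z'$ in the sense transported through the charts, which is the asserted equivalence.

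The only point requiring genuine care — and the step I expect to be the main obstacle — is the boundary regularity needed to extend the interior Riemann maps $h_i$ continuously (indeed $\mathcal{C}^1$) up to $\D^+_{\R}$ and to match them with the boundary arcs, so that the final charts are well-defined maps of pairs $(\D^+, \D^+_{\R}) \to (S, \partial S)$ rather than merely maps of the open half-disks. This is where I would lean on the $\mathcal{C}^1$ nature of the frame established in Proposition \ref{prop:theframeisagraph} and on the asymptotic developments near corner and boundary points, exactly as in the proof of Proposition \ref{prop : biholomorphisms at the boundary }, where the analogous extension across $\partial S$ is handled by Schwarz reflection for a bounded holomorphic map sending the real line to the real line. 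Everything else is a formal consequence of the proposition, so the corollary follows immediately, as the text asserts.
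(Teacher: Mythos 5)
Your proposal is correct and takes essentially the same route as the paper, which offers no separate argument and simply declares the corollary immediate from Proposition \ref{prop : biholomorphisms at the boundary }. The details you supply -- uniformizing each $\Omega_i$ by a boundary-normalized Riemann map and defining $h_j := \psi_{1j} \circ h_1$ so that property (2) of the proposition yields the compatibility of the charts -- are exactly what the paper leaves implicit.
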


\begin{coro}
\label{coro: multiply covered disks}
Assume that $\mathcal{W}(u) = \partial S$, then there is a simple, finite-energy $J$-holomorphic curve $v :(S', \partial S') \to (M, i(L) )$ and a finite branched cover $p: (S, \partial S) \to (S', \partial S') $ which restricts to an actual cover $p : \partial S \to \partial S'$ such that
	\[ u = v \circ p . \] 	
\end{coro}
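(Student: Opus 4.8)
We must show that if $\mathcal{W}(u) = \partial S$, then $u$ factors as $v \circ p$ where $v$ is simple and $p$ is a finite branched cover restricting to an honest cover on the boundary. The plan is to build the quotient surface $S'$ directly as the quotient of $S$ by the equivalence relation $\mathcal{R}_u^u$, using the local holomorphic charts provided by Corollary \ref{coro: charts at the boundary of the quotient} (at boundary points) together with the interior charts from Proposition \ref{prop : biholomorphisms at the boundary } and the local structure of $u$ near its critical and corner points.

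\emph{First}, I would verify that $\mathcal{R}_u^u$ is an equivalence relation on $S$ whose quotient is Hausdorff and carries a natural Riemann surface structure making the projection $p : S \to S' = S/\mathcal{R}_u^u$ holomorphic. Reflexivity and symmetry are immediate from Definition \ref{defi:coincidencerelation}; transitivity is the delicate point, but Proposition \ref{prop:transitivityoftherelation} gives exactly the cases we need — in particular, since $\mathcal{W}(u) = \partial S$, any boundary point $z_2$ satisfies $\mathcal{W}(u,u) \cap \omega_2 \subset \partial S$ in a neighborhood, so case (2) applies and transitivity holds throughout. The finiteness of each equivalence class $\mathcal{R}_u^u\{z\} = \{z_1, \ldots, z_N\}$ follows because $u$ has finite energy and no triple points, so fibers are finite away from $\mathcal{C}(u,u)$; this is what lets Corollary \ref{coro: charts at the boundary of the quotient} produce the finitely many compatible charts $h_i$.

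\emph{Second}, I would endow $S'$ with a complex structure. Away from the image of $\mathcal{C}(u,u)$, the relation identifies local pieces of $u$ that are reparameterizations of one another (by the injective-point/open-relation machinery), so the charts $h_i$ descend to genuine biholomorphism charts on $S'$; near a critical or corner point, the local models of Proposition \ref{prop: local behavior of a curve around an interior point} (interior) and Proposition \ref{prop:asymptoticdevelopmentaroundacornerpoint} / Proposition \ref{prop:localformaroundacornerpoint} (corners) show that $u$ looks like $z \mapsto z^k$ (or the fractional-power corner analog), so $p$ is locally a branched cover and $S'$ still inherits a chart after passing to the quotient. Once $S'$ is a Riemann surface with boundary and $p$ is holomorphic with isolated branch points in $\Int S$ (the corner exponents force $p$ to restrict to an unbranched cover $\partial S \to \partial S'$), the induced map $v : S' \to M$ with $u = v \circ p$ is automatically $J$-holomorphic with corners and boundary on $L$. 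Its finite energy is inherited from that of $u$ since energy is multiplicative under branched covers. That $v$ is \emph{simple} follows from Proposition \ref{prop:Characterizationofsimplecurves}: by construction $\mathcal{R}_v^v$ is trivial, because we have quotiented out precisely the nontrivial coincidences of $u$.

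\textbf{The main obstacle.} The hard part is establishing that the quotient $S'$ is a \emph{manifold} (that the topology is Hausdorff and second countable and that the charts glue consistently across the branch/corner points), rather than merely a topological quotient. This is where all the earlier analytic work pays off: the asymptotic developments control exactly how many local sheets come together at each singular point and with what angular identification, so that the fractional exponents $\frac{\alpha_k}{\pi} + m$ assemble coherently into honest local branched-cover charts $z \mapsto z^k$ on the quotient. I would treat the generic (interior, non-critical) points first via Corollary \ref{coro: charts at the boundary of the quotient}, then handle the finitely many branch and corner points one at a time, checking the compatibility of the descended charts. The verification that $p$ restricts to a genuine (unbranched) covering of the boundary is the other point requiring care, and it follows from the observation, already noted in Proposition \ref{prop:asymptoticdevelopmentaroundacornerpoint}, that there are no critical points in a punctured neighborhood of a corner on the boundary.
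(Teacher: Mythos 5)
Your proposal follows essentially the same route as the paper's proof: quotient $S$ by $\mathcal{R}_u^u$ (with transitivity supplied by Proposition \ref{prop:transitivityoftherelation}, case (2), exactly as you argue), build the complex structure on $S'$ from the boundary charts of Corollary \ref{coro: charts at the boundary of the quotient} together with the standard interior construction, descend $u$ to $v$ with $u = v \circ p$, obtain finite energy from multiplicativity of energy under the degree of $p$, and conclude simplicity via Proposition \ref{prop:Characterizationofsimplecurves}. The only point where you are lighter than the paper is the assertion that $\mathcal{R}_v^v$ is trivial ``by construction'': the paper actually verifies this by taking a coincidence $z \mathcal{R}_v^v z'$, approximating by sequences away from $\mathcal{C}(v)$, lifting them through $p$ (a local embedding away from branch points) to coincidences of $u$, and passing to the limit to get $p(\tilde{z}) = p(\tilde{z}')$ --- a short but genuinely needed check rather than a tautology.
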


\begin{proof}
The relation $\mathcal{R}_u^u$ is transitive by Proposition \ref{prop:transitivityoftherelation}. Therefore, the quotient $S' = S / \mathcal{R}_u^u$ is well-defined.

It remains to define holomorphic charts on $S'$ such that the quotient map $p : S \to S'$ is holomorphic. 

Let $z' \in S'$, let 
	\[ p^{-1}(z) = \{ z_1, \ldots , z_N \} \subset \partial S \]
be the preimages of $z'$. We consider the biholomorphisms $h_1, \ldots, h_N$ given by Corollary \ref{coro: charts at the boundary of the quotient}. Then the restriction of $p$ to each $\Omega_i$ is a bijection. Remark that, up to taking smaller neighborhoods, the $\Omega_i$ can be assumed relatively compact. Therefore, we can assume that $p$ restricted to each $\Omega_i$ is a homeomorphism onto its image. The chart around $z'$ is given by $h_i \circ p_{\vert \Omega_i}^{-1}$.

If the preimages of $z'$ are contained in $\Int(S)$, the charts are constructed in \cite[Proposition 2.5.1]{McDS12}.

Notice that this immediately implies that $p_{\vert \partial S} : \partial S \to \partial S'$ is a finite cover.

The map $u$ goes through the quotient to induce a holomorphic map $v : (S', \partial S') \to (M, i(L))$ which is simple. Call $E = \{ y_1, \ldots , y_m \}$ the corner points of $u$ and let $\gamma : \partial S \backslash E \to L $ be the boundary condition of $u$. Corollary \ref{coro: charts at the boundary of the quotient} immediately implies that if $z \in \partial S$ is not a corner point and $z'$ is such that $z \mathcal{R}_u^u z'$, then $z'$ is not a corner point. We deduce that in a neighborhood of $z$, we have $\gamma \circ h_{z} = \gamma \circ h_{z'}$. Hence, $\gamma$ can be quotiented out to give a continuous map $\gamma' : \partial S' \backslash p(E)$ which satisfies $i \circ \gamma' = v_{\vert S' \backslash p(E)}$.

It is now immediate from the definition of $\gamma'$ that each point of $p(E)$ is a corner point.

Since $p$ is a branched cover of finite degree, say $d \geqslant 1$, we have
	\[ \int u^{*} \omega = d \int v^{*} \omega ,\]
	so $\int v^{*} \omega < + \infty$.
	
The fact that $v$ is simple is an easy consequence of the definition of $S'$. If $z \mathcal{R}_v^v z'$, there are two sequences $z_\nu \to z$ and $z_\nu' \to z' $ such that $z_\nu \notin \mathcal{C}(v)$, $z_\nu' \notin \mathcal{C}(v)$ and $z_\nu \mathcal{R}_v^v z_\nu'$. Now pick two sequences of lifts of these $\tilde{z}_\nu, \tilde{z}_\nu'$ which converge (up to a subsequence) to two points say $\tilde{z}$ and $\tilde{z}'$. Notice that $p$ is a branched cover, hence a local embedding outside the critical points. With this in mind, $z_\nu \mathcal{R}^v_v z_\nu'$ implies $\tilde{z}_\nu \mathcal{R}_u^u \tilde{z}_\nu'$. This implies in turn $\tilde{z} \mathcal{R}_u^u \tilde{z}'$. So by definition $z = p(\tilde{z}) = p(\tilde{z}')$. Hence, the relation $\mathcal{R}_v^v$ is trivial. Now apply Proposition \ref{prop:Characterizationofsimplecurves}.
\end{proof}

We can conclude that any curve can be decomposed into simple pieces.

\begin{Theo}
\label{Theo:decomposition générale en courbes simples}
	Let $u : (S, \partial S) \to (M, i(L))$ be a finite-energy $J$-holomorphic curve with boundary in $L$. There are finite-energy simple $J$-holomorphic curves with corners $v_i : (S_i, \partial S_i) \to (M, i(L)) $ for $i = 1 \ldots N$ such that 
	\[ \Im(u) = \bigcup_{i= 1}^ {N} \Im(v_i) \] 
	
	and there are integers $m_1, \ldots , m_N \geqslant 1$ such that
	\[ [u] = \sum_{i=1}^N m_i [v_i] \text{ in } H_2(M,i(L)).\] 
\end{Theo}

\begin{proof}
The proof of the first point proceeds as in Lazzarini's paper. For each connected component $\Omega$ of $S \backslash \mathcal{W}(u)$, choose a complex embedding $h_\Omega : (S_\Omega, \partial S_\Omega) \to (\Omega, \partial \Omega)$ (\cite[Lemma 2.6]{La11}) and consider the map $u \circ h_\Omega$.
	
We have 
	\[ E(u \circ h_\Omega) \leqslant E(u_{\vert \Omega}) \leqslant E(u) < + \infty .\]
The set of the preimages of double points $u^{-1} \left (i(R) \right)$ is finite, hence by \cite[Lemma 2.4]{La11}, the set $\left (u \circ h_\Omega \right )^{-1}(i(R)) $ is also finite. Therefore $u \circ h_\Omega$ has a finite number of corner points.
	
Now we claim that $\mathcal{W} \left ( u \circ h_\Omega \right ) = \partial S_\Omega$. To see this, let $z \in \mathcal{W} \left (u \circ h_\Omega \right)$. There is $z' \in \partial S_\Omega$ such that $z' \mathcal{R}_{u \circ h_\Omega}^{u \circ h_\Omega} z$.	 From this, it follows that $h_\Omega(z) \mathcal{R}_u^u h_\Omega(z')$. If $h_\omega(z') \in \Int(S)$, from $h_\Omega(z') \in \mathcal{W}(u)$ it follows that $h_\Omega(z) \in \mathcal{W}(u)$ by transitivity. Hence, $z \in \partial S_\Omega$. Therefore, $\mathcal{W} \left ( u \circ h_\Omega \right ) \subset \partial S_\Omega$ and there is equality since the other inclusion holds by definition.
	
By Corollary \ref{coro: multiply covered disks}, there is a Riemann surface with boundary $S_\Omega'$, a map $p_\Omega: S_\Omega \to S_\Omega '$ and a simple curve $v_\Omega : S_\Omega' \to (M,i(L))$ such that 
		\[ u \circ h_\Omega = v_\Omega \circ p_\Omega .\]		
Moreover, we see immediately that 
	\[ \Im (u) = \bigcup_{\Omega} \Im(v_\Omega) , \]
where the union is taken over the set of connected components of $\D \backslash \mathcal{W}(u)$. 	
\end{proof}

Now the conclusion of the main Theorem \ref{Theo:Maintheorem} follows immediately from the following proposition.

\begin{prop}
\label{prop:les pièces de la décomposition sont des disques}
Assume that $u : (\D, \partial \D) \to (M, L)$ is a finite-energy $J$-holomorphic disk with corners and boundary on $L$. Keeping the notations of the proof of Theorem \ref{Theo:decomposition générale en courbes simples}, each of the surfaces $S_\Omega'$ is biholomorphic to a disk. 	
\end{prop}

Given what we have already shown, the proof of Proposition \ref{prop:les pièces de la décomposition sont des disques} does not differ much from the proof of the corresponding proposition in \cite[Proposition 5.5]{La11}. For the convenience of the reader, we shall recall the proof in the next subsection.
 
\subsubsection{Connectedness of the frame and holomorphic spheres}

The main result is the following proposition.
\begin{prop}
\label{prop: frame pas connexe implique sphère}
Assume that $u : (\D, \partial \D) \to (M,L)$ is a finite-energy $J$-holomorphic disk with corners with $\mathcal{W}(u)$ not connected. There is a simple $J$-holomorphic sphere $v : \C P^1 \to M $ such that $\Im(u) = \Im(v)$ and $\mathcal{R}^u_v \neq \emptyset$. 	
\end{prop}

\begin{proof}[Proof of Proposition \ref{prop:les pièces de la décomposition sont des disques} assuming \ref{prop: frame pas connexe implique sphère} ]
First, assume that $\mathcal{W}(u)$ is connected. Let $\Omega$ be a connected component of $\D \backslash \mathcal{W}(u)$. It is simply connected, hence $S_\Omega$ is a disk. Keeping the notation of \ref{Theo:decomposition générale en courbes simples} in mind, let $g_{\Omega}'$ be the genus of $S_\Omega'$. The Riemann-Hurwitz formula applied to the cover $p_\Omega$ yields :
\[ 1 = \deg \left (p_\Omega \right ) \left (1- g_\Omega' \right ) - m  \]
with $m \geqslant 0$ an integer. From $m + 1 > 0$ and $1 - g_\Omega' \leqslant 1$, we deduce $g_\Omega' = 0$.

Now assume that $\mathcal{W}(u)$ is not connected. Therefore, there is a simple $J$-holomorphic curve $v : \C P^1 \to M$	such that $\Im(u) = \Im(v)$ and $\mathcal{R}^u_v \left ( \C P^1 \right ) = \D$. Notice that if $z \in \D$, $z_1 \in \C P^1$ and $z_2 \in \C P^1$ are such that $z \mathcal{R}^u_v z_1$ and $z \mathcal{R}^u_v z_2$, then $z_1 = z_2$ (in other words every element of $\D$ lifts to a unique point in $\C P^1$). Indeed by transitivity (Proposition \ref{prop:transitivityoftherelation}), we get $z_1 \mathcal{R}_v^v z_2$ and since $v$ is simple $z_1 = z_2$ (see Proposition \ref{prop:Characterizationofsimplecurves}). 

The points of $\mathcal{C}(u,v)$ are isolated. Therefore, Proposition \ref{prop:lift2} implies that the boundary of $u$ lifts to a continuous curve $\gamma : \partial \D \to \C P^1$ such that $u(z) \mathcal{R} \gamma(z)$ for $z \in \partial D$ and whose image is $\mathcal{W}(v,u)$. Hence $\mathcal{W}(v,u)$ is connected, so each connected component $\Omega$ of $\C P^1 \backslash \mathcal{W}(v,u)$ is simply connected and gives rise to a simple $J$-holomorphic disk $v_{\vert \Omega} : \Omega \to M$.

Consider $\Omega$ a connected component of $\D \backslash \mathcal{W}(u)$, and $S_\Omega$, $S_\Omega'$ as in the proof of \ref{Theo:decomposition générale en courbes simples}. If $z$ is in the interior of $S_\Omega'$, there is a point $\tilde{z} \in \C P^1$ such that $z \mathcal{R}_{v_\Omega'}^v \tilde{z}$. Let $\tilde{\Omega}$ be the connected component of $\C P^1 \backslash \mathcal{W}(v,u)$ containing $\tilde{z}$.

 Then one checks that $\mathcal{R}(\partial \tilde{\Omega} ) = \partial \Omega$ and $\mathcal{R}(\partial \Omega) = \partial \tilde{\Omega}$, so $v_\Omega'$ is a $J$-holomorphic disk by Theorem \ref{Theo:Relatively simple frames implies simple curves are equal}.
 \end{proof}

We will give the proof of Propositon \ref{prop: frame pas connexe implique sphère} at the end of the next subsection after some preliminary results.

 \subsubsection{Cutpoints and holomorphic spheres}
 
 Here we will state some results whose proofs are given in \cite{La11}. An exception is point $(2)$ of Proposition \ref{prop:premièrepropdescutpoints} which is specific to our own situation. Nevertheless, for the convenience of the reader, we shall sum up the main arguments.
 
 First, we will need to define the notion of \emph{cutpoint}, those are the points at the boundary where the disk closes on itself.
 
 \begin{defi}
 \label{defi:cutpoint}
 Let $u : (S, \partial S) \to (M,i(L))$ be a finite-energy $J$-holomorphic curve with corners and $z \in \partial S$.
 
 The point $z$ is a \emph{cutpoint} if there is a complex embedding $h : (\D^+, \partial \D^+) \to (S,\partial S)$ with $h(0) = z$ and a $J$-holomorphic disk such that	 $0$ is a dead-end of $\mathcal{W}(v, u \circ h)$.
 
 We denote by $\Cut(u) \subset \partial S$ the set of cutpoints of $u$.
 \end{defi}
 
Here are some properties of the set of cutpoints.

 \begin{prop}
 \label{prop:premièrepropdescutpoints}
 Let $u : (S, \partial S) \to (M,i(L))$ be a finite-energy $J$-holomorphic curve with corners.
 	\begin{enumerate}
 		\item If $\Cut(u) = \emptyset$, then $\mathcal{W}(u)$ has no dead-ends.
 		\item If $z \in \Cut(u)$, then $z$ is not a corner point.
 		\item If $z \in \Cut(u)$, there is a neighborhood $\omega$ of $z$ in $\partial S$ and a continuous involution $\sigma: \omega \to \omega$ such that $\sigma(z) = z$ and $z \mathcal{R}_u^u \sigma(z)$ for $z \in \omega$.	
 	\end{enumerate}

\end{prop}

\begin{proof}
 The proof of $(1)$ is clear. Assume $z_0$ is a dead-end, then there is a point $z \in \partial S$ such that $z_0 \mathcal{R}_u^u z_0$. Choose an embedding $\phi : \D \to S$ such that $\phi(0) =  z_0$ and $\phi^{-1}(\mathcal{W}(u))$ is an embedded Jordan arc. By definition $\mathcal{W}(u \circ \phi, u ) = \phi^{-1}(\mathcal{W}(u))$, so $0$ is a dead end of $\mathcal{W}(u \circ \phi, u )$. Now choose an embedding $h :(\D^+, \partial \D^+) \to (S, \partial S)$ with $h(0) = z$. Then $\mathcal{W}(u \circ \phi, u \circ h) \subset \mathcal{W}(u \circ \phi,u)$ and the former is open in the latter by Proposition \ref{prop:Theframerelationisopen}. Hence $0$ is a dead-end of $\mathcal{W}(u \circ \phi, u \circ h )$ and $z \in \Cut(u)$. 

Let $z$ be a cutpoint. Assume by contradiction that $z$ is a corner point mapping to $x = i(p) = i(1)$. There are a disk $v : \D \to M$ and an embedding $h : (\D^+, \partial \D^+) \to (S,\partial S)$ with $h(0) = z$ such that $0 \mathcal{R}_v^{u \circ h} z$ and $0$ is a dead end of $W(v,u \circ h)$. Without loss of generality, we can assume that $u\circ h \left (\R_+ \right ) \subset L_p$ and $u \circ h \left (\R_- \right ) \subset L_q$. The paths $\gamma_+ : [0,1) \to \D^+$ and $\gamma_- : (-1,0] \to \D^+$ defined by $\gamma_{\pm}(t) = t$ lift to continuous paths $\tilde{\gamma}_{\pm}$ with values in $\mathcal{W}(v, u \circ h)$ such that $\tilde{\gamma}_{\pm}(t) \mathcal{R}_v^{u \circ h} \gamma_{\pm}(t) $. Since the image of $\gamma_{\pm}$ is not contained in $\mathcal{C}(u \circ h, v)$, the paths $\tilde{\gamma}_{\pm}$ are not constant. Hence, since the frame is locally path-connected, there is a small neighborhood $\omega$ of $0$ in $\mathcal{W}(v, u \circ h)$ such that $v(\omega) \subset L_p$ and $v(\omega) \subset L_q$, so $v(\omega) \subset \{0 \}$. This implies that $v$ is constant. This contradiction proves $(2)$.

As before, assume that $z \in \Cut(u)$ and keep the notations of the proof of $(2)$. By Proposition \ref{prop:local form along a branch} and $(2)$, one can assume that $h$ is such that in a suitable local chart $u \circ h(z) = A(z^k, U(z))$ with $U(z) = o(z^k)$. We conclude that the paths $\tilde{\gamma}_{\pm}$ are embeddings with values in $\mathcal{W}(v, u \circ h)$ which is one-dimensional. Hence $v \circ \gamma_+(t) = v \circ \gamma_-(t)$ (and $k$ is even). The involution $\sigma$ maps $h \left ( \gamma_+(t) \right )$ to $h \left (\gamma_-(t) \right )$.	

\end{proof} 

The next proposition gives a sufficient condition for a holomorphic disk to be a sphere.

\begin{prop}
\label{prop:premiere condition sous laquelle une courbe est une sphere}
Let $u$ be a $J$-holomorphic disk. Assume $\mathcal{W}(u)$ is open in $\partial S$. If there is a $J$-holomorphic disk $v: \D \to M$ such that $\mathcal{W}(v,u)$ is an embedded Jordan curve, then there is a simple $J$-holomorphic sphere $w : \C P^1 \to M$ such that
	\[ \Im(u) = \Im(w) , \]
	and $\mathcal{R}_u^w (\C P^1) \neq \emptyset$.
\end{prop}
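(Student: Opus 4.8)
The plan is to use the Jordan curve $\Gamma := \mathcal{W}(v,u)$ to cut out a disk on which $v$ reproduces $u$, and then to close that disk into a sphere by reflecting across $i(L)$. First I would record the elementary topology: being an embedded Jordan curve, $\Gamma$ lies in $\Int(\D)$ and, by the Jordan curve theorem, bounds a unique open disk $\Delta$ with $\overline{\Delta} = \Delta \cup \Gamma \subset \Int(\D)$. Since every $z \in \Gamma$ satisfies $z \mathcal{R}_v^u z'$ for some $z' \in \partial S$, and related points have equal images, we get $v(\Gamma) \subset u(\partial S) \subset i(L)$; thus $v|_{\overline{\Delta}}$ is a finite-energy $J$-holomorphic disk with boundary on $L$ and, by construction, $\mathcal{W}(v|_{\overline{\Delta}}) = \Gamma = \partial \Delta$.

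Next I would identify the image of this inner disk with $\Im(u)$. The hypothesis that $u$ is properly bordered (that $\partial S$ is open in $\mathcal{W}(u)$) lets me run the lifting lemmas (Propositions \ref{prop:lift1} and \ref{prop:lift2}) together with transitivity of $\mathcal{R}$ (Proposition \ref{prop:transitivityoftherelation}): an interior point of $S$ is joined to $\partial S$ by a path avoiding the bad set, whose image lifts through $v$ into $\overline{\Delta}$, and conversely each point of $\Delta$ relates to a point of $S$. This shows $v|_{\overline{\Delta}}$ and $u$ have relatively simple frames, so by Theorem \ref{Theo:Relatively simple frames implies simple curves are equal} their underlying simple curves coincide and $\Im(v|_{\overline{\Delta}}) = \Im(u)$, with $\mathcal{R}_u^{v} \neq \emptyset$.

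Then I would build the sphere. Because $v$ is holomorphic on a full neighborhood of $\overline{\Delta}$ and $v(\Gamma)$ lies in the totally real submanifold $i(L)$, the Schwarz reflection principle applies: the germ of $v$ on the outer side of $\Gamma$ is forced to be the reflection of $v|_{\Delta}$ across $i(L)$, and by uniqueness of holomorphic continuation it coincides with the restriction of $v$ to a collar of $\Gamma$ in $\D \setminus \overline{\Delta}$. Doubling $\overline{\Delta}$ along $\Gamma$ therefore produces a genuine $J$-holomorphic sphere $w_0 : \C P^1 \to M$, and $\mathcal{R}_u^{w_0} \neq \emptyset$ is then immediate. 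The image equality $\Im(w_0) = \Im(u)$ is where I expect the real work to lie: a priori the reflected half of $w_0$ covers the mirror of $\Im(u)$ across $i(L)$, so I must show this mirror is already contained in $\Im(u)$. I would do this by lifting a path that crosses $\Gamma$ from $\Delta$ to the outer side: its $\mathcal{R}_v^u$-lift into $S$ cannot cross $\partial S$ and must fold back along the boundary, and this fold is exactly the cutpoint involution $\sigma$ of Proposition \ref{prop:premi�repropdescutpoints}. Hence the outer image is $u(\sigma(\cdot)) \subset \Im(u)$, giving $\Im(w_0) = \Im(u)$.

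Finally I would replace $w_0$ by a simple sphere with the same image: by the classical factorization of closed $J$-holomorphic curves \cite[Proposition 2.5.1]{McDS12}, $w_0 = w \circ \varphi$ for a simple sphere $w : \C P^1 \to M$ and a branched cover $\varphi$, so that $\Im(w) = \Im(w_0) = \Im(u)$ and $\mathcal{R}_u^{w}(\C P^1) \neq \emptyset$, as required. The main obstacle, as noted, is verifying that reflecting across $i(L)$ does not enlarge the image — equivalently, that the Lagrangian boundary of $u$ is a cutpoint locus along which $\Im(u)$ is two-sided — and this is precisely what the lift-and-fold argument with the involution $\sigma$ supplies.
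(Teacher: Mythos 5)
Your construction of the sphere hinges on the Schwarz reflection principle across $i(L)$, and this is precisely the step that fails. For a general compatible almost complex structure $J$ there is no antiholomorphic involution of $(M,J)$ fixing $i(L)$, and a $J$-holomorphic curve with totally real boundary values cannot in general be reflected or doubled: reflection arguments require $J$ to be integrable (and the totally real submanifold real-analytic) near $i(L)$, which is not assumed anywhere in the paper --- $J$ is an arbitrary smooth compatible structure fixed at the outset. Consequently the assertion that ``the germ of $v$ on the outer side of $\Gamma$ is forced to be the reflection of $v|_{\Delta}$ across $i(L)$'' has no meaning in this setting, and the doubled map $w_0$ is simply not defined: on the conjugate copy of $\overline{\Delta}$ you have no candidate $J$-holomorphic map to assign. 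The subsequent lift-and-fold argument for $\Im(w_0) = \Im(u)$ is then moot, since it patches a construction that does not exist. (Indeed, if disks with Lagrangian boundary could be doubled into spheres, the whole structure theorem of this paper would reduce to the closed case of \cite[Proposition 2.5.1]{McDS12}; the failure of such doubling is the very reason Lazzarini-type arguments are needed.)

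The paper's proof avoids any reflection in the target by working entirely on the domain of $u$. Since the Jordan arc $\mathcal{W}(v,u)$ has an extremity $z_0$, the boundary of $u$ runs along $v(\mathcal{W}(v,u))$ and doubles back on itself: one produces two distinct boundary paths $\gamma_{\pm} \subset \partial \D$ lifting one and the same path $\tilde{\gamma} \subset \mathcal{W}(v,u)$, so that $\gamma_+(t) \, \mathcal{R}_u^u \, \gamma_-(t)$ for all $t$. The sphere is then the quotient $S = \D / \!\sim$ of the domain of $u$ with $\gamma_+(t) \sim \gamma_-(t)$; the complex structure near the identified boundary points is supplied by the local biholomorphism charts of Proposition \ref{prop : biholomorphisms at the boundary }, and holomorphicity of the induced map is verified by comparing with $v$ in those charts. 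Because the sphere map is induced by $u$ on a quotient of its own domain, the equality $\Im(w) = \Im(u)$ is automatic, and no mirror-image issue ever arises. Your folding involution is in fact the same mechanism as the paper's identification $\gamma_+(t) \sim \gamma_-(t)$, but it must be used as the gluing that \emph{defines} the sphere from the domain of $u$, not as a correction to a reflected double of $v|_{\overline{\Delta}}$.
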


\begin{proof}
	The idea of the proof is that the boundary of the disk $u$ closes itself on the image of $\mathcal{W}(v,u)$ by $v$.

First, one can assume without loss of generality that $v$ is a simple disk.

Let $z_0$ be an extremity of $\mathcal{W}(v,u)$ and choose a point $z \in \partial \D$ such that $z_0 \mathcal{R}_v^u z$ (in particular $z_0$ is a cutpoint of $u$). One can prove as in the preceding proposition that there are two distinct paths $\gamma_{\pm} : \R_+ \to \partial \D$, and a path $\tilde{\gamma}: \R_+ \to  \mathcal{W}(v,u)$ satisfying
\begin{enumerate}
	\item $\gamma_\pm(0) = z$,
	\item $\tilde{\gamma} (0) = z_0$,
	\item $\gamma_\pm(t) \mathcal{R}_u^v \tilde{\gamma}(t) $. 	
\end{enumerate}
	
Let $N > 0$ be the first number such that $\gamma_+(t) = \gamma_-(t)$. The surface $ S:= \D / \sim $ with $\gamma_+(t) \sim \gamma_=(t)$ for $t \in [0,N]$ is a topological sphere. The map $u$ factors through the quotient projection $\pi : \D \to S$ to give a map $w : S \to M $. It remains to see that $S$ admits a structure of Riemann surface such that $\pi$ is holomorphic.

To construct charts, consider a point $\gamma_+(t_0) \mathcal{R}_u^u \gamma_-(t_0)$ with $t_0 \in (0,N)$ 	which is not a cutpoint. The proof of Proposition \ref{prop : biholomorphisms at the boundary } shows that there are local charts $h_{\pm} : (\D^+, \partial \D^+)  \to (\D, \partial \D) $ and $\tilde{h}_\pm (\D^+, \partial \D+) \to (\D, \mathcal{W}(v,u))$ such that $h_\pm (z_1) \mathcal{R}_u^v \tilde{h}_\pm (z_2)$ if and only if $z_1 = z_2$. Since $v$ is simple, there is a unique map $\phi_-$ such that $\tilde{h}_+(t) = \tilde{h}_-(\phi_- (t))$. The surface $\D^+ \sqcup \D^+ / \sim$ where $t \sim \phi_-(t)$ has a structure of a Riemann surface with the charts given by the union of the maps $\tilde{h}_+$ and $\tilde{h}_-$. The chart for the surface $S$ is then given by $h_+ \sqcup h_-$. The map $w$ restricted to this chart is holomorphic since equal to the restriction of $v$ to the images of $\tilde{h}_+$ and $\tilde{h}_-$.

If we consider a point $\gamma_+(t_0)$ which is a cutpoint, one can check that $\tilde{\gamma}(t_0)$ is an endpoint of $\mathcal{W}(v,u)$.
\end{proof}

\begin{prop}
Let $u : (\D, \partial \D) \to (M,L)$ be a finite $J$-holomorphic disk such that $\partial \D$ is open in $\mathcal{W}(u)$. Moreover, assume that $\mathcal{W}(u)$ is not connected and that $u$ is not a $J$-holomorphic sphere.

Pick $z_0,z_1 \in \Cut(u)$ such that $z_0 \mathcal{R}_u^u z_1$ and let $\gamma : [0,1] \to \partial \D$ be an embedded path with $\gamma(0) = z_0$ and $\gamma(1) = z_1$.

There are $0 < t_0 < t_1 < 1$ such that $\gamma(t_i) \in \mathcal{C}(u,u) \backslash \Cut(u)$ with $i \in \{ 0,1 \}$. 
\end{prop}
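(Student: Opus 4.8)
The plan is to argue by contradiction, tracking how the coincidence relation $\mathcal{R}_u^u$ lifts the path $\gamma$ and recording, for each $t$, whether $\gamma(t)$ is paired with a boundary point or with an interior point of the frame. The two desired parameters $t_0,t_1$ will be the places where this alternative switches.

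First I would lift $\gamma$. Write $\gamma'$ for the complementary boundary arc, so that $\gamma\cup\gamma'=\partial\D$ and $\gamma\cap\gamma'=\{z_0,z_1\}$. At the cutpoint $z_0$, point (3) of the cutpoint proposition gives an orientation-reversing involution $\sigma_0$ fixing $z_0$ and exchanging its two local sides, with $\gamma(t)\mathcal{R}_u^u\sigma_0(\gamma(t))$ for small $t$; since $\sigma_0$ reverses orientation, $\sigma_0(\gamma(t))$ lies on $\gamma'$. Using the path-lifting property in the frame (Proposition \ref{prop:lift2}) together with the openness of the relation (Proposition \ref{prop:Theframerelationisopen}), this lift $\beta$, with $\gamma(t)\mathcal{R}_u^u\beta(t)$, extends continuously along $\gamma$ as long as $\gamma(t)\notin\mathcal{C}(u,u)$, and whenever $\gamma$ meets a cutpoint the involution there allows me to continue $\beta$ across it. Near the other endpoint, the involution $\sigma_1$ at $z_1$ shows that the relation again pairs $\gamma(t)$ with a point of $\gamma'$.

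Next I would set up the dichotomy I call the \emph{regime} at $t$: either $\gamma(t)$ is $\mathcal{R}_u^u$-related only to boundary points, or it is related to an interior point. Because $\partial\D$ is open in $\mathcal{W}(u)$, any interior frame point related to $\gamma(t)$ lies on a connected component of $\mathcal{W}(u)$ disjoint from $\partial\D$, and such components exist precisely because $\mathcal{W}(u)$ is disconnected; this is where that hypothesis enters. The local input is that on each open edge of $\partial\D\setminus\mathcal{C}(u,u)$ the relation is a $\mathcal{C}^1$ pairing (from the normal forms of Lemmas \ref{lemm: Cas 1}--\ref{lemm: Cas 5} and the description of $\mathcal{W}$ as a union of components of an explicit union of rays), so the regime is constant along each edge and can change only at a point of $\mathcal{C}(u,u)$. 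Moreover at a cutpoint the local fold encoded by $\sigma$ keeps $\gamma(t)$ paired with the complementary boundary arc on both sides, so the regime is unchanged there; hence every regime change must occur at a point of $\mathcal{C}(u,u)\setminus\Cut(u)$.

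Finally I would count. By the previous paragraph the regime is of boundary type near both $t=0$ and $t=1$. If there were no regime change, $\beta$ would sweep monotonically over all of $\gamma'$, yielding a global orientation-reversing involution of $\partial\D$ under which every boundary point is paired with a boundary point; then $\mathcal{W}(u)=\mathcal{R}_u^u(\partial\D)=\partial\D$ is connected, a contradiction (and if an interior excursion did occur, Proposition \ref{prop:premiere condition sous laquelle une courbe est une sphere} would realise $u$ as a sphere, which is excluded). If there were exactly one regime change, the regimes at $t=0$ and $t=1$ would differ, contradicting that both are of boundary type. Hence at least two regime changes occur, and the corresponding parameters $0<t_0<t_1<1$ satisfy $\gamma(t_i)\in\mathcal{C}(u,u)\setminus\Cut(u)$. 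The main obstacle I anticipate is making the lift $\beta$ and the regime rigorous globally: proving the regime is locally constant on edges and is not altered across a cutpoint, showing that in the no-change case $\beta$ genuinely covers the whole arc $\gamma'$, and carefully handling the boundary critical points concealed among the cutpoints (where $du$ vanishes) so that the continuation via the involution is valid and the orientation bookkeeping behind the parity count is correct.
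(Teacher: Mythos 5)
Your proof has a fatal gap: the ``regime'' dichotomy on which the whole count rests is degenerate under the standing hypotheses. Since $\partial \D$ is assumed open in $\mathcal{W}(u)$, and it is automatically closed there, $\partial \D$ is a connected component of $\mathcal{W}(u)$; disconnectedness then yields a component $C \subset \Int(\D)$. But the proposition quoted from \cite{La11} (Lemma 4.5, stated right after Proposition \ref{prop:lift2}) says precisely that for such a component $C$, \emph{every} $z \in \partial \D$ is $\mathcal{R}_u^u$-related to some $w \in C$. Hence every point of $\gamma$ is related to an interior point: your ``boundary-only'' regime is empty, the claim that the regime is of boundary type near $t=0$ and $t=1$ is false, and there are no regime changes at all, so the parity count produces nothing. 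Relatedly, the fallback implication ``no regime change $\Rightarrow \mathcal{W}(u) = \mathcal{R}_u^u(\partial \D) = \partial \D$ is connected'' is a non sequitur: $\mathcal{W}(u)$ consists of \emph{all} partners of boundary points, in particular it contains $C$, regardless of where your particular lift $\beta$ travels. (Your justification for regime-invariance across cutpoints has the same blind spot: the fold $\sigma$ pairs the two sides with boundary points, but says nothing about whether they are also related to interior points.)

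The hypothesis you never seriously use --- that $u$ is not a $J$-holomorphic sphere --- is where the actual work lies. The paper's proof takes $t_1$, the first parameter with $\gamma(t_1) \in \mathcal{C}(u,u)$, and shows it cannot be a cutpoint: if it were, one lifts $\gamma$ into the interior component $C$ (this is how disconnectedness really enters) and glues the two local ``folding'' half-disks furnished by the cutpoint structure at $\gamma(0)$ and at $\gamma(t_1)$ to a disk embedded around the lifted arc, producing a $J$-holomorphic disk $w$ for which $\mathcal{W}(w,u)$ is an embedded Jordan curve; Proposition \ref{prop:premiere condition sous laquelle une courbe est une sphere} then forces $u$ to be a sphere, a contradiction. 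Your parenthetical appeal to that proposition skips exactly this gluing construction, which is the heart of the argument. Finally, the second point comes from running the same argument from the $z_1$ end, and the two parameters are shown to be distinct by choosing $\gamma$ so that $\gamma(t)\,\mathcal{R}_u^u\,\gamma(1-t)$: a common value would be a fixed point of the fold, hence a cutpoint, contradicting the first step. None of these three mechanisms appears in your proposal, so the gap cannot be repaired merely by tightening the bookkeeping you describe.
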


\begin{proof}
The point $t_1$ is the smallest $t \in (0,1]$ such that $\gamma(t) \in \mathcal{C}(u,u)$. It is enough to show that $\gamma(t_1) \notin \Cut(u) $ since this implies $t_1 \neq 1$.
	
The idea is as follows. Assume that $\gamma(t_1)$ is a cutpoint. Pick a connected component $C \subset \mathcal{W}(u) \cap \Int(\D)$ and a lift $\tilde{\gamma}: [0,1] \to C$ such that $\tilde{\gamma}(t) \mathcal{R}_u^u \gamma(t)$. Since $\gamma(0) \in \Cut (u)$ (resp. $\gamma(1) \in \Cut(1)$), there is a $J$-holomorphic disk $w_0 : \Int(\D) \to M$ (resp. $w_1 : \Int(\D) \to M$) and an embedding $h_0 : (\D^+, \partial \D^+) \to (M, i(L))$ (resp. $h_1 : (\D^+, \partial \D^+) \to (M,i(L))$) such that $0$ is a dead-end of $\mathcal{W}(w_0, u \circ h_0)$ (resp. $\mathcal{W}(w_1, u \circ h_1)$). We also pick an embedding $h_2 : \Int(\D) \to \Int(\D)$ such that $h_2(-1,1) = \tilde{\gamma}(\varepsilon, t_0 - \varepsilon)$ and $0 \notin \mathcal{R}_{w_i}^{w_2} w_2(\D)$. Now we attach the three disks $w_0$, $w_1$ and $w_2$ using the relation $\mathcal{R}$ to obtain a disk $w$ such that $\mathcal{W}(w,u)$ is a Jordan arc. Therefore, $u$ is a $J$-holomorphic sphere, a contradiction.
	
Let $t_0$ be the largest $t \in (0, t_1]$ such that $\gamma(t) \in \mathcal{C}(u,u)$. We are done if we show that $t_0 \neq t_1$. 
	
Assume $t_0 = t_1$. Since $\gamma(0) \mathcal{R}_u^u \gamma(1)$, one can choose $\gamma $ such that $\gamma(t) \mathcal{R}_u^u \gamma(1-t)$. One can then check that this implies $t_0 \in \Cut(u)$, a contradiction.
\end{proof}

All of this allows us to show that some disks are equivalent to disks whose frame does not possess dead-ends.

\begin{prop}
\label{prop:undisquesansdeadends}
Let $u : (\D, \partial \D) \to (M,L)$ be a finite-energy $J$-holomorphic disk with boundary on $L$ such that $\partial S$ is open in $\mathcal{W}(u)$. The frame $\mathcal{W}(u)$ is not connected and which is not a $J$-holomorphic sphere.

There is a finite-energy $J$-holomorphic disk with corners $\tilde{u}$ such that 
\begin{enumerate}
	\item $\Im(u) = \Im  (\tilde{u}  )$,
	\item we have $\Cut(\tilde{u}) = \emptyset$,
	\item there is a surjection $\pi_0 \left (\mathcal{W}(u) \right) \to \pi_0 \left ( \mathcal{W}(\tilde{u})\right )  $.	
\end{enumerate}
\end{prop}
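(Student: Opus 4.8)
The plan is to remove cutpoints one at a time by an \emph{unfolding} (cut-and-reglue) operation performed at a cutpoint, and to iterate until none remain. First, if $\Cut(u) = \emptyset$ we simply take $\tilde u = u$: conclusions (1) and (3) are trivial and (2) holds by hypothesis. So assume $\Cut(u) \neq \emptyset$. Since $\mathcal W(u)$ is a finite $\mathcal C^1$-embedded graph (Proposition \ref{prop:theframeisagraph}), it has finitely many dead-ends, and the proof of Proposition \ref{prop:premi�repropdescutpoints}(1) shows each cutpoint sits at such a dead-end; hence $\Cut(u)$ is finite, and $\#\Cut(u)$ is a sensible complexity to induct on.

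Fix a cutpoint $z_0$. By Proposition \ref{prop:premi�repropdescutpoints}(2) it is not a corner point, and by (3) there is a boundary neighborhood $\omega \ni z_0$ and an involution $\sigma : \omega \to \omega$ fixing $z_0$ with $z \mathcal R_u^u \sigma(z)$. Thus the two boundary arcs issuing from $z_0$ fold onto each other: writing $\gamma_{\pm}$ for these arcs, we have $u \circ \gamma_+ = u \circ \gamma_-$. I would propagate this fold maximally along $\partial \D$ using the path-lifting property of the coincidence relation (Propositions \ref{prop:lift1} and \ref{prop:lift2}); the preceding proposition, whose hypotheses ($\mathcal W(u)$ disconnected and $u$ not a sphere) are exactly ours, guarantees that the two arcs terminate at points of $\mathcal C(u,u) \setminus \Cut(u)$ rather than closing up into a Jordan arc. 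This is precisely where the assumption that $u$ is not a sphere enters, since the closed-up case would force $u$ to be a sphere via Proposition \ref{prop:premiere condition sous laquelle une courbe est une sphere}.

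Next I would form the quotient $\D' := \D / (\gamma_+(t) \sim \gamma_-(t))$. Topologically this zips the folded boundary flap shut, so $\D'$ is again a closed disk in which $z_0$ has become an \emph{interior} point and the far endpoints of $\gamma_{\pm}$ are identified to a single (possibly new) corner point. Because $u \circ \gamma_+ = u \circ \gamma_-$, the map $u$ descends to a continuous $\tilde u : \D' \to M$ with $\Im(\tilde u) = \Im(u)$, giving (1). The identification of the two arcs is realized by the biholomorphism supplied by Proposition \ref{prop : biholomorphisms at the boundary }, so the charts glue holomorphically across the seam exactly as in the sphere construction of Proposition \ref{prop:premiere condition sous laquelle une courbe est une sphere}; this endows $\D'$ with a complex structure for which the projection $q : \D \to \D'$ is holomorphic and $\tilde u$ is a finite-energy $J$-holomorphic disk with corners and boundary on $L$. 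Since $q$ is a continuous surjection compatible with the coincidence relation, it carries $\mathcal W(u)$ onto $\mathcal W(\tilde u)$ and hence induces the surjection $\pi_0(\mathcal W(u)) \to \pi_0(\mathcal W(\tilde u))$ of (3).

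Finally I would check that the operation strictly decreases the complexity: the dead-end at $z_0$ is resolved (it is now an interior point of $\D'$), while the analysis of the endpoints — using that they lie in $\mathcal C(u,u) \setminus \Cut(u)$ — shows that no new cutpoint is created there, so $\#\Cut(\tilde u) < \#\Cut(u)$. Iterating finitely many times produces the desired $\tilde u$ with $\Cut(\tilde u) = \emptyset$, the surjections on $\pi_0$ composing to the required one. The main obstacle is exactly this last bookkeeping step together with the verification in the previous paragraph that $\D'$ is a genuine Riemann disk carrying an induced $J$-holomorphic map: one must run the Schwarz-reflection and chart-gluing argument across the seam (handling the corner endpoints) and confirm that the unfolding neither recreates $z_0$ as a cutpoint nor introduces cutpoints at the identified far endpoints, which is what makes the induction terminate.
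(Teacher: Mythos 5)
Your proposal is correct and follows essentially the same route as the paper's proof: fold the boundary of the disk along a cutpoint, quotient by the resulting identification of boundary arcs, endow the quotient (still a disk, with the cutpoint now interior and the far endpoints merged into a possibly new corner) with a complex structure via the chart-gluing arguments of Propositions \ref{prop : biholomorphisms at the boundary } and \ref{prop:premiere condition sous laquelle une courbe est une sphere}, and induct on the finite number of cutpoints. The only difference is cosmetic: the paper performs the fold on the whole $\mathcal{R}_u^u$-equivalence class $\{z_1,\ldots,z_N\}$ of the chosen cutpoint at once, identifying all the arcs $\gamma_{i,\pm}$ simultaneously, whereas you fold one pair of arcs at a time; both versions rely on the same lemmas (the cutpoint properties, path lifting, the termination of the folded arcs at points of $\mathcal{C}(u,u)\setminus\Cut(u)$) and the same bookkeeping $\#\Cut(\tilde{u}) \leqslant \#\Cut(u)-1$.
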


\begin{proof}
 The idea is to fold the boundary along the cutpoints. 
 
 More precisely, choose a a point $z_1 \in \Cut(u)$ and let $\{z_1, \ldots, z_N\}$ be the set $\mathcal{R}_u^u \{ z_1 \} \cap \partial \D$. Let $\tilde{z} \in \Int(\D)$ be a point such that $z_1 \mathcal{R}_u^u \tilde{z}$. There are injective paths $\gamma_2 : [0,1] \to \mathcal{W}(u)$ and $\gamma_{i,\pm} : [0,1] \to \partial \D$ such that
 \begin{enumerate}
 	\item $\tilde{\gamma}(t) \mathcal{R}_u^u \gamma_{i,\pm}(t)$ for $i \in \{1, \ldots, N\}$,
 	\item $\tilde{\gamma}(0) = \tilde{z}$, $\tilde{\gamma}(1) \in \mathcal{C}(u,u)$,	
 \end{enumerate}
Notice that the preceding proposition shows that the points $\gamma_{\pm,i}(1)$ are distinct for $i = 1 \ldots N$.

We let $S := \D / \sim$ be the quotient of $\D$ identifying $\gamma_{\pm,i}(t)$ with $\gamma_{\pm,j}(t)$ for $i,j \in \{ 1, \ldots, N \}$. Topologically, the surface $S$ is a disk. Then $u$ factors through a map $v : S \to M $. It remains to show that there is a complex structure on $S$ such that the quotient map $\pi : \D \to S$ is holomorphic.

As in Proposition \ref{prop:premiere condition sous laquelle une courbe est une sphere}, the idea is to build a chart around $\gamma_{i,+}(t)$ using as a chart a quotient of a small disk around the corresponding point $\tilde{\gamma}(t)$. For $\gamma_{i,+}(t)$ with $t \in [0,1)$ it is the same process as in Proposition \ref{prop:premiere condition sous laquelle une courbe est une sphere}.

Consider the points $\gamma_{i,+}(1)$ and $\gamma_{i,-}(1)$ and assume that $\tilde{\gamma}(1)$ is not a vertex of the graph $\mathcal{W}(u)$. Choose a small enough holomorphic embedding $\phi : \Int \D \to \D$ such that $\phi(0) = \tilde{\gamma}(1)$ and $\Im \phi \cap \mathcal{C}(u,u) = \tilde{\gamma(1)}$. The set $\phi^{-1} \left ( \mathcal{W}(u,u) \right )$ is divided in two arcs, one is simply $\phi^{-1}(\tilde{\gamma})$ and we call the other $\gamma'$. Choose an embedding $h' : (\D^+, \partial \D^+) \to (\D, \gamma')$. The graph $\mathcal{W}(u \circ \phi \circ h,u)$ consists of the boundary$(-1,1)$ and an arc going from $0$ to the outer boundary of the half-disk. We call this arc $\gamma ''$.

Using the proof of Proposition \ref{prop : biholomorphisms at the boundary }, we show that there are 4 embeddings $\tilde{h}_\pm : (\D^+, \partial \D^+) \to (\D^+, \partial \D^+ \cup \gamma'') $ and $h_\pm : (\D^+, \partial \D^+) \to (\D, \partial D)$ with $\tilde{h}_\pm(0) = 0$ and $h_\pm (0) = \gamma_\pm(1)$. These satisfy $\tilde{h}_\pm (z_1) \mathcal{R} h_\pm(z_2) $ if and only if $z_1 = z_2$. As before one can attach the two half-disks along their boundaries and identify the resulting surface with $\D^+$ through the disjoint union of the maps $\tilde{h}_\pm$.

Suppose that $\tilde{\gamma}(1)$ is a vertex of the graph $\mathcal{W}(u)$. Let $\phi : \Int \D \to \D$ be a small enough holomorphic embedding so that $\phi(0) = \tilde{\gamma}(1)$ and $\Im \phi \cap \mathcal{C}(u,u) = \tilde{\gamma(1)}$. The paths $\gamma_{\pm}(1-t)$ lift to two (not necessarily distinct) arcs $\gamma_1$ and $\gamma_2$ in $\mathcal{W}(u,u)$. Let $\tilde{h}: (\D^+, \partial \D^+) \to  \left (\Int \D, \phi^{-1}(\gamma_1 \cup \gamma_2) \right )$ be a holomorphic embedding with $\phi^{-1}(\tilde{\gamma}) \subset \Im(\tilde{h})$. We then proceed just as before!

The end product is a finite-energy $J$-holomorphic disk with corners $v$ such that $\# \Cut(v) \leqslant \# \Cut(u) -1$. We then repeat the process by induction to get the desired disk $\tilde{u}$.  
 \end{proof}
 
 After these results, we now return to the proof of Proposition \ref{prop: frame pas connexe implique sphère}.
  
 \begin{proof}[Proof of Proposition \ref{prop: frame pas connexe implique sphère}]
 	Assume by contradiction that $u$ is not a $J$-holomorphic sphere. Then by Propositions (\ref{prop:undisquesansdeadends}), there is a finite-energy $J$-holomorphic disk $\tilde{u}$ with corners and boundary on $L$ which satisfies the following.
 	\begin{enumerate}
 		\item The set $\partial \D$ is open in $\mathcal{W}(u)$.
 		\item The connected component $\Omega$ of $\D \mathcal{W}(\tilde{u})$ which contains $\partial \D$ is not simply connected. It is the unique connected connected component with this property.
 		\item The set of cutpoints is empty.
 		\item The map $\tilde{u}$ is not a $J$-holomorphic sphere.		
 	\end{enumerate}
 Call $\Omega_1$ the connected component of $\D \backslash \mathcal{W}(u)$ which contains $\partial \D$ and choose a map $h :(S, \partial S) \to (\Omega_1, \partial \Omega_1)$ which is a biholomorphism from $\Int(S)$ to $\Int(\Omega_1)$. The map $u_{\Omega_1} := u \circ h$ factors through a simple $J$-holomorphic map $v_{\Omega_1} : S_{\Omega_1}' \to M$. We show that this is a disk. 
 
 For $C$ a connected component of $\mathcal{W}(u)$, denote by $\Omega_C$ the connected component of $\D \backslash \mathcal{W}(u)$ with boundary $C$. It is simply connected, hence biholomorphic to a disk. Hence, the map $u_{\Omega_C} : = u_{\vert \Omega_C}$ factors through a simple disk $v_{\Omega_C}$.
 
 Let $z \in \partial S_{\Omega_1}'$ and pick a point $\tilde{z} \in p_{\Omega_1}^{-1}(z)$. There is $s \in C$ such that $h(\tilde{z}) \mathcal{R}_u^u s$. Then, either $h(\tilde{z}) \mathcal{R}_{u_{\Omega_1}}^{u_{\Omega_C}} s$ or $h(\tilde{z}) \mathcal{R}_{u_{\Omega_1}}^{u_{\Omega_1}} s$. In the first case, the $J$-holmorphic maps $v_{\Omega_1}$ and $v_{\Omega_C}$ satisfy $\mathcal{W}(v_{\Omega_1}, v_{\Omega_C}) = \partial \Omega_1 $ and $\mathcal{W}(v_{\Omega_C},v_{\Omega_1}) = \partial \Omega_C$. Hence, $v_{\Omega_1}$ and $v_{\Omega_C}$ are conjugate.
 
 If there is no connected component such that $h(\tilde{z}) \mathcal{R}_{u_{\Omega_1}}^{u_{\Omega_C}} s$, the surface $S_{\Omega_1} / \mathcal{R}_{u_{\Omega_1}}^{u_{\Omega_1}}$ has a unique connected component. Therefore, it is a disk
 
 Now choose a connected component $C$. We glue the disks $v_{\Omega_C}$ and $v_{\Omega}$ along their boundaries to get a $J$-holomorphic sphere $v : \C P^1 \to M $ such $\mathcal{R}_{\tilde{u}}^v(\C P^1) \neq \emptyset$. We readily conclude that $\tilde{u}$ is a sphere. This is a contradiction.
\end{proof}

\section{Consequences of the main theorem }

\subsection{Simplicity of curves for generic almost complex structures}
In this subsection, we give the proof of Corollary \ref{coro:Jholdisksaremultiplycovered}. The proof is basically contained in \cite[Theorem B]{La11} and \cite{BC07}. Here, we sum up the main arguments involved in the proof.

\subsubsection{Intersection points and indices of curves.} 
\label{subsubsection:Interstionpointsindices}

For each (ordered) double point $(p,q) \in R$ (with as usual $x = i(p) =i(q)$), denote by $\mathcal{G}(T_x M)$ the Lagrangian Grassmannian of $T_x M$. We choose once and for all a smooth path $\lambda_{(p,q)} : [0,1] \to \mathcal{G}(T_x M)$ such that $\lambda_{(p,q)}(0) = d i_p (T_x L)$ and $\lambda_{(p,q)}(1) = d i_q (T_x L)$. Moreover, we may assume that $\lambda_{(q,p)}$ is $\lambda_{(p,q)}$ parameterized in the reverse direction.

Now define a Maslov pair $(E,F)$ (we use the terminology of \cite[Appendix C.3]{McDS12}) as follows. We let $E$ be the trivial symplectic vector bundle over the closed Poincaré half-plane $\HP$ with fiber $T_x M $ equipped with the symplectic form $\omega_x$. Now consider a strictly increasing smooth function $f : \R \to [0,1]$ such that $f(t) = 0 $ for $t << 0$ and $f(t)  = 1$ for $t >> 0$. Then the Lagrangian boundary condition is given for $t \in \R$ by $F_t = \lambda_{(p,q)}(f(t))$.

We endow $\HP$ with the following strip-like end
\[\varepsilon : \begin{array}{ccc} ] - \infty, 0 ] \times [0,1] & \to & \HP \\ (s,t) & \mapsto & e^{- \pi (s+ it)} \end{array}  \]
and endow the bundle $\HP \times T_x M$ with the trivial symplectic connection. This satisfies the hypotheses of \cite[section 8h]{Sei08}, hence admits an associated Fredholm Cauchy-Riemann operator. We denote by $\Ind(p,q)$ the index of this operator.
 
Now, choose a compatible almost complex structure $J \in \mathcal{J}(M,\omega)$ and let 
\[(p_0,q_0), \ldots, (p_d,q_d) \in \R\]
be $d \in \N^*$ ordered self-intersection points of $i$.	

Let $A$ be a homotopy class of topological disks with corners\footnote{See Remark \ref{rk:defoftopologicalcurves} for the definition} and corner points given in cyclic order by $(p_0,q_0), \ldots , (p_d, q_d)$. 

Assume first $d \geqslant 2$. Recall that there is a universal family $\mathcal{S}^{d+1} \xrightarrow[]{\pi} \mathcal{R}^{d+1}$ of disks with $d+1$ marked points. Fix a universal choice of positive strip-like ends \footnote{See \cite[Section (9)]{Sei08} for the relevant definitions}. 

We define $\mathcal{M}(A, (p_0,q_0), \ldots, (p_d,q_d), J)$ to be the space of maps $u : \pi^{-1}(r) \to M $ for some $r \in \mathcal{R}^{d+1}$ satisfying the following conditions,
\begin{enumerate}
	\item $u$ is a finite-energy $J$-holomorphic disk with corners and boundary on $L$,
	\item the corner points of $u$ coincide with the limits of the strip-like ends and the switch condition at the $i$-th marked point is given by $(p_i,q_i)$,
	\item the homotopy class of $u$ is $A$.
\end{enumerate}

Each $u \in \mathcal{M}(A, (p_0,q_0), \ldots, (p_d,q_d), J)$ gives rise to the bundle pair $(u^{*} TM , u^{*} TL )$. The linearization of the Cauchy-Riemann equation at $u$ yields a Cauchy-Riemann operator between suitable Sobolev completions of the space of sections of this bundle pair
\[ D_u : W^{k,p} (u^{*} TM , u^{*} TL ) \to W^{k-1,p}(\Lambda^{0,1} u^{*} TM). \]

Fix such a $u : \pi^{-1}(r) \to M$ and denote by $x_0, \ldots , x_d$ the marked points in the domain. There is a natural compactification of $\pi^{-1}(r)$ given by the union of $\pi^{-1}(r)$ and $d+1$ copies of the interval $[0,1]$ topologized so that the positive (resp. negative) strip-like ends $\varepsilon_i : (0,+\infty[ \times [0,1] \to \pi^{-1}(r)$ (resp. $\varepsilon_i : ]-\infty,0) \times [0,1] \to \pi^{-1}(r)$) extend to homeomorphisms $\varepsilon_i : (0,+\infty] \times [0,1] \to \pi^{-1}(r)$ (resp. $\varepsilon_i : [-\infty, 0 ) \times [0,1] \to \pi^{-1}(r)$). We will call it $\overline{\pi^{-1}(r)}$.

The map $u$ admits a unique extension to a continuous map $\overline{u} : \overline{\pi^{-1}(r)} \to M$. This gives rise to a Maslov pair by setting the boundary condition to be $u^*TL$ over $\partial \pi^{-1}(r)$ and $\lambda_{(p,q)}$ over the added intervals. The index of this Maslov pair only depends on the homotopy class $A$ of the map $u$. We call it $\mu_A$.
 
The index of the operator $D_u$ is given by the following formula 
	\[ \Ind(D_u) = n + \mu_A - \sum_{i=0}^d \Ind(p_i,q_i). \]
The reader may find a proof in the paper of Akaho-Joyce \cite[Section 4.3, Proposition 4.6]{AJ10}. One can also deduce it from the exposition in Seidel's book \cite[Section (11)]{Sei08}.

For the case $ d = 1$, we consider the space of $J$-holomorphic strips with corners at $(p_0,q_0)$ and $(p_1,q_1)$. 	More precisely, define $Z = \R \times [0,1]$. We denote by $\tilde{\mathcal{M}}(A, (p_0,q_0), (p_1,q_1), J)$ the space of finite-energy $J$-holomorphic maps $u : Z \to M$ such that $u(0, \cdot)$ (resp. $u(1, \cdot)$) lifts to a map $\gamma_- : \R \to L$ (resp. $\gamma_+ : \R \to L$) with $\lim_{s \to + \infty} (\gamma_-(s), \gamma_+(s) ) = (p_1,q_1)$, $\lim_{s \to - \infty} (\gamma_-(s), \gamma_+(s)) = (p_0,q_0)$.
The index of such a curve is given by
\[ \Ind(D_u) =  \mu_A + \Ind(p_0,q_0) - \Ind(p_1,q_1) . \]

For the case $d= 0$, we consider the space of $J$-holomorphic teardrops with corner at $(p_0,q_0) $. More precisely, we denote by $\tilde{\mathcal{M}}(A, (p_0,q_0), J)$ the space of finite-energy $J$-holomorphic maps $u : \HP \to M$ such that $u_{\lvert \R}$ lifts to a map $\gamma : \R \to L$ with $\lim_{s \to - \infty} \gamma(s) = p_0$ and $\lim_{s \to + \infty} \gamma(s) = q_0$.

The index of an element $u$ of this moduli space is given by
\[ \Ind(D_u) = \mu_A + \Ind(p_0,q_0) . \]

\subsubsection{Generic transversality of moduli spaces}	
Assume that $d \geqslant 2$ and denote by $\mathcal{M}^*(A, (p_0,q_0), \ldots, (p_d,q_d), J) \subset \mathcal{M}(A, (p_0,q_0), \ldots, (p_d,q_d), J)$ the moduli space of simple curves with corners the $(p_i,q_i)$. The usual transversality arguments imply that there is a second category subset $\mathcal{J}_{reg}(M, \omega) \subset \mathcal{J}(M, \omega)$ such that for $J \in \mathcal{J}_{reg}(M, \omega)$ the space $\mathcal{M}^*(A, (p_0,q_0), \ldots, (p_d,q_d), J)$ is a manifold of dimension $\Ind(D_u) + d-2$ if not empty.

If $d \in \{0,1\}$, we quotient $\tilde{\mathcal{M}}^*(A, (p_0,q_0), \ldots, (p_d,q_d), J)$ by the space of conformal reparameterizations leaving the marked points fixed and denote the resulting space by $\mathcal{M}^*(A, (p_0,q_0), \ldots, (p_d,q_d), J)$. There is a second category subset $\mathcal{J}_{reg}(M, \omega) \subset \mathcal{J}(M, \omega)$ such that the space $\mathcal{M}^*(A, (p_0,q_0), \ldots, (p_d,q_d), J)$ is a manifold of dimension $\Ind(D_u) + d-2 $ if not empty.

Assume now that $A_1$ and $A_2$ are two homotopy classes of topological disks with corners at $(p_0,q_0), \ldots , (p_d,q_d)$ and $(\tilde{p}_0,\tilde{q}_0), \ldots , (\tilde{p}_m,\tilde{q}_m)$ respectively. We define 
\[\mathcal{M}^*(A_1,A_2,(p_0,q_0), \ldots , (p_d,q_d),(\tilde{p}_0,\tilde{q}_0), \ldots , (\tilde{p}_m,\tilde{q}_m),J  )\]
to be the set of pairs of simple disks $(u_1,u_2)$ such that $u_1(\D) \not \subset u_2(\D)$ and $u_2(\D) \not \subset u_1(\D)$. There is a second category subset $\mathcal{J}_{\text{reg}}(M,\omega)$
 such that for $J \in \mathcal{J}_{reg}(M,\omega)$ the space $\mathcal{M}^*(A_1,A_2,(p_0,q_0), \ldots (\tilde{p}_m,\tilde{q}_m),J  )$ is a smooth manifold of dimension $2n + \mu_{A_1} + \mu_{A_2} - \sum_i \Ind(p_i,q_i) - \sum_i \Ind(\tilde{p}_i, \tilde{q}_i) $.
 
 Now for $k \geqslant 0$ consider the moduli space of (parameterized) pairs of disks with marked points at the boundary
 \[ \mathcal{M}_k^*(A_1,A_2,(p_0,q_0), \ldots (\tilde{p}_m,\tilde{q}_m),J  ) : = \mathcal{M}^*(A_1,A_2,(p_0,q_0), \ldots (\tilde{p}_m,\tilde{q}_m),J  ) \times (\partial D)^{2k} . \]
 There is a smooth evaluation map
 \[  \ev_k : \begin{array}{ccc}\mathcal{M}_k^*(A_1,A_2,(p_0,q_0), \ldots (\tilde{p}_m,\tilde{q}_m),J  ) & \to & L^{2k} \\
 (u_1, u_2, z_1, \ldots z_k, x_1, \ldots , x_k) & \mapsto	 (u_1(z_1), u_2(x_1), \ldots , u_1(z_k), u_2(z_k)) 
 \end{array}. \]
 
Denote by $\Delta = \ens{(x,x)}{x \in L} \subset L \times L$ the diagonal. There is a second category subset $\mathcal{J}_{\text{reg}}(M, \omega)$ such that for every $J \in \mathcal{J}_{\text{reg}}(M, \omega)$ and $k \geqslant 1$, the evaluation map $\ev_k$ is transversal to the product $\Delta^k$. Hence, if not empty, the set $\ev_k^{-1}(\Delta^k)$ has the structure of a smooth manifold of dimension $2n + \mu_{A_1} +\mu_{A_2} - \sum \Ind(p_i,q_i) - \sum \Ind(\tilde{p}_i,\tilde{q_i})+ (2-n)k$.

Assume that $n \geqslant 3$, then for $k$ large enough, we have $2n + \mu_{A_1} +\mu_{A_2} - \sum \Ind(p_i,q_i) - \sum \Ind(\tilde{p}_i,\tilde{q_i})+ (2-n)k \leqslant 0$, so $\ev_k^{-1}(\Delta^k)$ is empty. We conclude that the following proposition holds.

\begin{prop}
	There is a second category subset $\mathcal{J}_{\text{reg}}(M,L,\omega) \subset \mathcal{J}(M,\omega)$ and a number $k_0$ such that if $J \in \mathcal{J}_{\text{reg}}(M,L,\omega)$, $u_1 \in \mathcal{M}^*(A_1, (p_0,q_0), \ldots, (p_d,q_d), J) $ and $u_2 \in \mathcal{M}^*(A_2, (\tilde{p}_0,\tilde{q}_0), \ldots, (\tilde{p}_m,\tilde{q}_m), J)$ satisfy $u_1(\D) \not \subset u_2(\D)$ and $u_2(\D) \not \subset u_1(\D)$, then the number of $z_1, z_2 \in \partial \D$ such that $u_1(z_1) = u_2(z_2)$ is finite. 
\end{prop}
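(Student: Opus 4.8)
The plan is to argue by contradiction, turning an infinite set of boundary coincidences into a nonempty $\ev_{k_0}^{-1}(\Delta^{k_0})$ for arbitrarily large $k_0$ and invoking the emptiness established just above. So suppose there were $J\in\mathcal{J}_{\text{reg}}(M,L,\omega)$ and simple disks $u_1,u_2$ in the classes $A_1,A_2$ with the prescribed corners, satisfying $u_1(\D)\not\subset u_2(\D)$ and $u_2(\D)\not\subset u_1(\D)$, for which
\[ I := \ens{(z_1,z_2)\in\partial\D\times\partial\D}{u_1(z_1)=u_2(z_2)} \]
is infinite. Fixing the homotopy classes and the corner data fixes all the indices, so I may choose $k_0$ once and for all so large that $2n+\mu_{A_1}+\mu_{A_2}-\sum_i\Ind(p_i,q_i)-\sum_i\Ind(\tilde p_i,\tilde q_i)+(2-n)k_0<0$; this is possible because $n\geq3$ forces $2-n\leq-1$.

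First I would pass to \emph{good} coincidences. Let $B_1\subset\partial\D$ consist of the corner points of $u_1$, the points of $\partial\D\cap\mathcal{C}(u_1,u_2)$, and the non-injective boundary points of $u_1$; define $B_2$ analogously for $u_2$. Since $u_1$ and $u_2$ have finitely many corner points, since $\mathcal{C}(u_1,u_2)$ and $\mathcal{C}(u_2,u_1)$ are finite (using Proposition~\ref{prop:asymptoticdevelopmentaroundacornerpoint}, which in particular rules out critical points in a punctured neighbourhood of a corner), and since the non-injective points of a simple curve form a finite set, both $B_1$ and $B_2$ are finite. Moreover, $u_2$ being simple, every point of $M$ has only finitely many boundary preimages under $u_2$, so each $z_1$ occurs in finitely many pairs of $I$, and symmetrically for $z_2$. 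Consequently the infiniteness of $I$ produces infinitely many pairs $(z_1,z_2)\in I$ with $z_1\notin B_1$ and $z_2\notin B_2$.

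Then I would extract from this infinite good subset $k_0$ pairs $(z_1^{(1)},z_2^{(1)}),\dots,(z_1^{(k_0)},z_2^{(k_0)})$ with the $z_1^{(j)}$ pairwise distinct and the $z_2^{(j)}$ pairwise distinct, which is possible by the finiteness of the fibres just noted. The resulting tuple $(u_1,u_2,z_1^{(1)},\dots,z_1^{(k_0)},z_2^{(1)},\dots,z_2^{(k_0)})$ lies in $\mathcal{M}_{k_0}^*(A_1,A_2,\dots,J)$, and because $u_1(z_1^{(j)})=u_2(z_2^{(j)})$ for every $j$ its $\ev_{k_0}$-image lands in $\Delta^{k_0}$; as the marked points are distinct injective points, this is exactly the kind of configuration to which the transversality of $\ev_{k_0}$ applies. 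Hence $\ev_{k_0}^{-1}(\Delta^{k_0})\neq\emptyset$, contradicting its emptiness for $k_0$ in our range. Thus $I$ is finite. Finally, the sets furnished by the preceding transversality results are residual (comeager), so their countable intersection over all relevant homotopy classes, corner tuples and $k\geq1$ is again residual, in particular of second category, by Baire; this intersection is the set $\mathcal{J}_{\text{reg}}(M,L,\omega)$.

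I expect the main obstacle to be the second step: verifying that the excluded loci---corner points, the sets $\mathcal{C}(u_1,u_2)$ and $\mathcal{C}(u_2,u_1)$, and the non-injective points---are genuinely finite, so that infinitely many coincidences still yield infinitely many admissible ones, and that feeding in only distinct injective marked points is exactly what makes the transversality of $\ev_{k_0}$ usable (the Sard--Smale perturbation of $J$ being effective precisely at injective points). The remaining dimension bookkeeping is routine.
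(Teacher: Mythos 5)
Your overall strategy is exactly the one the paper compresses into ``we conclude that the following proposition holds'': fix $k_0$ so that $2n+\mu_{A_1}+\mu_{A_2}-\sum_i\Ind(p_i,q_i)-\sum_i\Ind(\tilde{p}_i,\tilde{q}_i)+(2-n)k_0<0$, extract $k_0$ pairwise-distinct coincidence pairs lying outside the bad loci, and contradict the emptiness of $\ev_{k_0}^{-1}(\Delta^{k_0})$. Several of the details you supply are correct and genuinely needed: since $\mathcal{M}^*_{k}$ as defined allows repeated marked points, the transversality statement can only be meaningfully invoked at tuples of distinct points, so some extraction argument is unavoidable; the fibres $u_2^{-1}\left(u_1(z_1)\right)\cap\partial\D$ are indeed finite (this needs only non-constancy, the local expansions at boundary and corner points, and compactness); and discarding pairs in $\mathcal{C}(u_1,u_2)\times\mathcal{C}(u_2,u_1)$ is what guarantees that a coincidence $u_1(z_1)=u_2(z_2)$ in $M$ forces the two lifts to agree in $L$, so that the image really lands on $\Delta^{k_0}\subset L^{2k_0}$ (recall $\ev_{k_0}$ takes values in $L^{2k_0}$, not $M^{2k_0}$).

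The gap is your claim that ``the non-injective points of a simple curve form a finite set,'' which you need to conclude that $B_1$ and $B_2$ are finite. For \emph{boundary} points this is not an available fact, and it cannot be quoted here: simplicity of $u_1$ only says that $\mathcal{R}_{u_1}^{u_1}$ is trivial, i.e. no two boundary arcs have images agreeing on open sets; it does not prevent the boundary lift $\gamma_1:\partial\D\to L$ from having infinitely many self-crossings accumulating somewhere (unique continuation controls interior coincidences, not transverse-looking boundary crossings of two smooth arcs in $L$). Indeed, finiteness of boundary self-coincidences of a simple disk is precisely the \emph{next} proposition in the paper, itself established only for generic $J$ by the same dimension count, so invoking it as a general property of simple curves inside this proof is circular; as written, $B_1,B_2$ may be infinite and your extraction step collapses. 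The repair is simply to delete the non-injective points from $B_1,B_2$: transversality of $\ev_{k_0}$ along $\Delta^{k_0}$ does not require the marked points to be injective points, only pairwise distinct, non-corner, and outside $\mathcal{C}(u_1,u_2)$ resp. $\mathcal{C}(u_2,u_1)$. Injectivity enters the Sard--Smale argument not as a condition on the marked points but through the density of injective points of $u_1$ whose images avoid $u_2(\D)$ (this is where $u_1(\D)\not\subset u_2(\D)$ is used): a variation of $J$ supported near such a point affects only $u_1$, unique continuation then kills the would-be cokernel element away from the marked points, and it is the pairwise distinctness of the marked points that forces the remaining delta-type sources, hence the annihilating covectors, to vanish one by one. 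With that modification your argument is correct and coincides with the paper's intended proof.
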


The same argument for self intersections yields
\begin{prop}
There is a second category subset $\mathcal{J}_{\text{reg}}(M, L, \omega) \subset \mathcal{J}(M,\omega)$ such that if $u \in \mathcal{M}^*(A, (p_0,q_0), \ldots, (p_d,q_d), J)$, then the number of pairs $(z_1, z_2) \in \partial D \times \partial D$ such that $u(z_1) = u(z_2)$ is finite.
\end{prop}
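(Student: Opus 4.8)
The plan is to mimic the argument just given for pairs of disks, replacing the two distinct curves $u_1,u_2$ by a single simple curve $u$ together with its boundary self-intersections. Fix the homotopy class $A$ and the corner data $(p_0,q_0),\ldots,(p_d,q_d)$, and for each $k\geqslant 1$ form the parameterized moduli space with $2k$ boundary marked points
\[ \mathcal{M}^*_k(A,(p_0,q_0),\ldots,(p_d,q_d),J) := \mathcal{M}^*(A,(p_0,q_0),\ldots,(p_d,q_d),J)\times(\partial\D)^{2k}. \]
Away from the finite set of corner points the boundary of such a curve lifts canonically to $L$, so there is a smooth evaluation
\[ \ev_k(u,z_1,x_1,\ldots,z_k,x_k)=\big(u(z_1),u(x_1),\ldots,u(z_k),u(x_k)\big)\in L^{2k}, \]
and the goal is to arrange, for $J$ in a second category set, that $\ev_k$ be transverse to $\Delta^k\subset L^{2k}$, where $\Delta=\ens{(x,x)}{x\in L}$.

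The main step, and the place where this differs from the standard time-dependent setup, is the transversality of $\ev_k$ to $\Delta^k$. One works with the universal moduli space fibered over $\mathcal{J}(M,\omega)$, shows the universal evaluation is transverse to $\Delta^k$, and applies Sard--Smale to extract $\mathcal{J}_{\text{reg}}(M,L,\omega)$. The delicate point is that, at a genuine self-intersection $y=u(z_i)=u(x_i)$, a perturbation of the (time-independent) $J$ supported near $y$ acts on the curve near \emph{both} $z_i$ and $x_i$ at once; one cannot move $u(z_i)$ while keeping $u(x_i)$ fixed. The resolution is exactly the structural input of this paper: since $u$ is \emph{simple}, the relation $\mathcal{R}^u_u$ is trivial (Proposition \ref{prop:Characterizationofsimplecurves}), so the two local branches through $y$ indexed by $z_i\neq x_i$ are geometrically distinct and do not overlap on an open set. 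This is precisely the hypothesis that lets a localized variation of $J$ push $(u(z_i),u(x_i))$ in directions transverse to $\Delta$, so the universal operator surjects; the finitely many pairs whose common image lies in the finite set $i(R)$ of double points are excluded, since $u^{-1}(i(R))\cap\partial\D$ is finite and there the lift to $L$ is ambiguous.

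Granting transversality, the dimension count closes the argument as before. By the index formula $\Ind(D_u)=n+\mu_A-\sum_i\Ind(p_i,q_i)$ and the fact that $\mathcal{M}^*(A,\ldots,J)$ is a manifold of dimension $\Ind(D_u)+d-2$ (with the evident modifications when $d\in\{0,1\}$, where one quotients by automorphisms), the space $\mathcal{M}^*_k$ has dimension $\Ind(D_u)+d-2+2k$. Since $\Delta^k$ has codimension $nk$ in $L^{2k}$, the preimage $\ev_k^{-1}(\Delta^k)$, when nonempty, is a manifold of dimension $\Ind(D_u)+d-2+(2-n)k$. For $n\geqslant 3$ this is strictly negative once $k$ is large, so $\ev_k^{-1}(\Delta^k)=\emptyset$ for all $J\in\mathcal{J}_{\text{reg}}(M,L,\omega)$ and all such $k$.

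Finally, suppose for contradiction that $S=\ens{(z_1,z_2)\in\partial\D\times\partial\D}{z_1\neq z_2,\ u(z_1)=u(z_2)}$ is infinite. Discarding the finitely many pairs whose common image lies in $i(R)$, every remaining pair maps to a non-double point, so its lift lands in $\Delta$. If some value $y_0\notin i(R)$ has infinite preimage in $\partial\D$, choose $2k$ distinct such preimages and group them into $k$ pairs; otherwise there are infinitely many distinct self-intersection values and we select one pair over each of $k$ of them. In either case the $2k$ marked points are pairwise distinct and we obtain a point of $\ev_k^{-1}(\Delta^k)$ for every $k$, contradicting its emptiness for large $k$. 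Hence $S$ is finite; the trivial pairs with $z_1=z_2$ form the diagonal circle and are of course not counted.
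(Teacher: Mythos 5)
Your proposal is correct and is essentially the paper's own argument: the paper disposes of this proposition in one line (``the same argument for self intersections''), meaning exactly what you carry out — run the two-curve scheme on a single simple curve, with $2k$ boundary marked points, the evaluation $\ev_k$ into $L^{2k}$ (using the boundary lift, which is single-valued precisely away from $u^{-1}(i(R))$), generic transversality to $\Delta^k$, and the count $\dim \ev_k^{-1}(\Delta^k) = \Ind(D_u)+d-2+(2-n)k < 0$ for $k$ large when $n \geqslant 3$ — and your identification of simplicity (triviality of $\mathcal{R}_u^u$, Proposition \ref{prop:Characterizationofsimplecurves}) as what makes perturbations of a time-independent $J$ effective at a self-intersection is precisely the content the paper delegates to the citation of Lazzarini's Theorem B. The one imprecision, shared with the paper's own wording, is that transversality of $\ev_k$ to $\Delta^k$ can only be achieved on the locus where the $2k$ marked points are pairwise distinct: at a tuple containing a repeated pair, all repeated normal components of $d\ev_k$ share the same contribution $\xi(z)-\xi(x)$ and differ only by the at most two-dimensional point-variation terms, so surjectivity onto the $nk$-dimensional normal space fails for large $k$ no matter what $J$ is; this is harmless for your argument (and the paper's), since your concluding extraction explicitly produces pairwise distinct marked points lying over values outside $i(R)$.
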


The rest of the proof follows from \cite[Theorem B]{La11}.

\subsubsection{Generically teardrops and strips are simple}

A finite-energy $J$-holomorphic disk with corners and boundary on $L$ is a \emph{teardrop} if it has a unique corner point. An elementary argument gives the following consequence of Corollary \ref{coro:Jholdisksaremultiplycovered}.

\begin{coro}
\label{coro:generic strips are simple}
	Assume $n \geqslant 3$. There is a second category subset $\mathcal{J}_{\text{reg}}(M,L,\omega) \subset \mathcal{J}(M,\omega)$ such that the following holds. If $ J \in \mathcal{J}_{\text{reg}}(M,L,\omega)$, every finite-energy $J$-holomorphic teardrop with boundary on $L$ is simple.	
\end{coro}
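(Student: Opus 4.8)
The plan is to deduce this immediately from Corollary \ref{coro:Jholdisksaremultiplycovered} by a counting argument on corner points. First I would take $\mathcal{J}_{\text{reg}}(M,L,\omega)$ to be exactly the second category subset furnished by Corollary \ref{coro:Jholdisksaremultiplycovered}, fix $J$ in it, and let $u : (\D, \partial \D) \to (M, i(L))$ be a finite-energy $J$-holomorphic teardrop. By definition a teardrop has a unique corner point, hence is non-constant, so the corollary applies: there are a holomorphic map $p : (\D, \partial \D) \to (\D, \partial \D)$ with branch points only in $\Int(\D)$, restricting to a covering $p_{\vert \partial \D} : \partial \D \to \partial \D$ of some degree $d \geqslant 1$, and a simple $J$-holomorphic disk with corners $u'$, such that $u = u' \circ p$. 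The goal then reduces to showing $d = 1$.

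The key observation I would establish is that the corner points of $u$ are precisely the $p$-preimages of the corner points of $u'$. Writing $\gamma$ and $\gamma'$ for the boundary lifts to $L$ of $u$ and $u'$, away from corners we have $\gamma = \gamma' \circ p_{\vert \partial \D}$; since $p_{\vert \partial \D}$ is a local homeomorphism, $\gamma$ fails to extend continuously at a point $x \in \partial \D$ if and only if $\gamma'$ fails to extend at $p(x)$. Thus $x$ is a corner of $u$ exactly when $p(x)$ is a corner of $u'$. Because $u$ has a corner point, $u'$ must have at least one corner point $y$; and as $p_{\vert \partial \D}$ has degree $d$, the fiber $p^{-1}(y) \cap \partial \D$ consists of $d$ distinct points, each of which is then a corner point of $u$. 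Hence $u$ has at least $d$ corner points. Since a teardrop has exactly one, this forces $d = 1$.

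Finally I would observe that a proper holomorphic self-map of the disk of degree $1$ can have no branch point, since a branch point forces local degree at least $2$; so $p$ is a biholomorphism. Then $u = u' \circ p$ is a reparameterization of the simple curve $u'$ through a biholomorphism, and as the set of injective points is preserved under such a reparameterization, $u$ is simple.

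I do not expect a serious obstacle here; the one point needing care is the claim that the corner points of $u$ are exactly the $p$-preimages of those of $u'$, which relies on $p_{\vert \partial \D}$ being an honest (unbranched) covering. This is precisely guaranteed by the corollary's assertion that the branch points of $p$ lie in $\Int(\D)$. Everything else is the elementary degree count together with the rigidity of degree-one proper holomorphic maps.
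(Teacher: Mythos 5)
Your proposal is correct and follows essentially the same route as the paper: both take $\mathcal{J}_{\text{reg}}$ from Corollary \ref{coro:Jholdisksaremultiplycovered}, note that the corner of the teardrop pushes down to a corner of the simple disk $u'$, and use the fact that the boundary restriction of $p$ is an honest covering to conclude that a degree $d \geqslant 2$ would force at least $d$ corner points on $u$, contradicting uniqueness. Your write-up is in fact slightly more complete than the paper's, since you make explicit the correspondence between corners of $u$ and $p$-preimages of corners of $u'$ and the final step that a degree-one proper holomorphic self-map of the disk is a biholomorphism, hence $u$ is simple.
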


\begin{proof}
	We assume that $\mathcal{J}_{\text{reg}}(M,L,\omega)$ is the second category subset given in Corollary \ref{coro: multiply covered disks} (such that all the $J$-holomorphic curves are either simple or multiply covered for $J \in \mathcal{J}_{\text{reg}}(M,L,\omega)$).
	
	Assume that $u : (\D, \partial \D) \to (M, i(L))$ is a teardrop with finite energy. Denote by $z_1$ its corner point. There is a simple disk $v$ as well as a branched cover $p : (\D, \partial \D) \to (\D, \partial \D)$ which restricts to a cover on the boundary such that $u = v \circ p$. Notice that $v$ has a corner point at $p(z_1)$. Now if the the degree of $p$ is more than $2$, we see that $u$ has corner points at $p^{-1} \{ z_1 \}$. This set is of cardinality greater or equal to 2, a contradiction.	
\end{proof}

Now, let $i_1 : L_1 \to M$ and $i_2 : L_2 \to M $ be two Lagrangian immersions. These naturally give rise to a Lagrangian immersion $i : L_1 \sqcup L_2 \to M$ which we will assume to be generic (transverse double points and no triple points). 

Let $J \in \mathcal{J}(M,\omega)$ be a compatible almost complex structure, $x_-$ and $x_+$ be two intersection points between $L_1$ and $L_2$. As usual, a $J$-holomorphic strip between $L_1$ and $L_2$ from $x_-$ to $x_+$ is a $J$ holomorphic map $u : \R \times [0,1] \to M$ such that
\begin{enumerate}
 \item $\lim_{s \to +\infty } u(s,t) = x_- , \lim_{s \to + \infty} = x_+ $,
 \item $u(\cdot, 0 ) $ (resp. $u(\cdot, 1)$ admits a continuous lift $\gamma_1$ to $L_1$ (resp. a continuous lift $\gamma_2$ to $L_2$).
\end{enumerate}
In particular, if we precompose a $J$-holomorphic strip $u$ with a biholomorphism $\psi : \D \backslash \{ -1, 1 \} \to \R \times [0,1]$, the resulting map $u \circ \psi$ is a $J$-holomorphic disk with two corner points. The boundary lifts are given by $\tilde{\gamma}_1 = \gamma_1 \circ \psi$ and $\tilde{\gamma}_2 = \gamma_2 \circ \psi$. We now have the following proposition about the structure of such strips.

\begin{prop}
\label{prop:strips are simple}
There is a second category subset $\mathcal{J}(M, L_1, L_2, \omega) \subset \mathcal{J}(M,\omega)$ such that the following holds. If $J \in \mathcal{J}(M, L_1, L_2, \omega)$, every finite-energy $J$-holomorphic strip is simple.	
\end{prop}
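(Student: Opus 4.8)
The plan is to view a strip as a disk with exactly two corner points, feed it into Corollary \ref{coro:Jholdisksaremultiplycovered}, and then rule out the only non-simple factorization by a bookkeeping argument on the switch conditions at the two corners.

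Concretely, I take $\mathcal{J}(M, L_1, L_2, \omega)$ to be the second category set $\mathcal{J}_{\text{reg}}(M, \omega, L)$ furnished by Corollary \ref{coro:Jholdisksaremultiplycovered} for the immersion $i : L_1 \sqcup L_2 \looparrowright M$ (recall $n \geqslant 3$). Let $u : \R \times [0,1] \to M$ be a finite-energy $J$-holomorphic strip from $x_-$ to $x_+$; we may assume it is non-constant. Precomposing with a biholomorphism $\psi : \D \setminus \{-1,1\} \to \R \times [0,1]$ sending $-1$ to the $-\infty$ end and $1$ to the $+\infty$ end, I obtain a finite-energy $J$-holomorphic disk with corners $\hat u = u \circ \psi$ whose only corner points are $z_- = \psi^{-1}(-\infty)$ and $z_+ = \psi^{-1}(+\infty)$. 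The crucial observation, recorded here, is that the switch conditions at $z_-$ and $z_+$ are the two \emph{reversed} ordered double points: reading $\partial \D$ in the positive direction, the boundary lift of $\hat u$ switches from $L_2$ to $L_1$ at one of these corners and from $L_1$ to $L_2$ at the other.

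Now apply Corollary \ref{coro:Jholdisksaremultiplycovered} to $\hat u$: there is a holomorphic branched cover $p : (\D,\partial \D) \to (\D, \partial \D)$ with interior branch points, restricting to a degree-$d$ covering of $\partial \D$, and a simple $J$-holomorphic disk with corners $v$ such that $\hat u = v \circ p$. Since the branch points are interior, $p$ is a local biholomorphism along the boundary, so a boundary point $z$ is a corner of $\hat u$ if and only if $p(z)$ is a corner of $v$; as every point of $\partial \D$ has exactly $d$ preimages, a disk $v$ with $c$ corner points yields exactly $cd$ corner points for $\hat u$. Because $\hat u$ has precisely two corners, $cd = 2$, leaving only $(c,d) = (2,1)$ and $(c,d) = (1,2)$. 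In the first case $p$ is a biholomorphism and $\hat u$, hence $u$, is simple. It remains to exclude the second case. If $c = 1$ and $d = 2$, the single corner $w_0 = p(z_-) = p(z_+)$ of $v$ carries a well-defined switch condition, say from $L_a$ to $L_b$ in the positive direction of $\partial \D$. Since $p$ restricts near each of $z_-, z_+$ to an orientation-preserving local diffeomorphism of $\partial \D$ onto a neighborhood of $w_0$, it transports this switch condition unchanged, so $\hat u$ switches from $L_a$ to $L_b$ at \emph{both} $z_-$ and $z_+$. This contradicts the observation above, by which the two corners of $\hat u$ switch in opposite senses (and $L_1 \neq L_2$ as branches at an intersection point of $L_1$ with $L_2$). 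Hence $(c,d) = (1,2)$ is impossible, and $u$ is simple.

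The main obstacle is the orientation and switch-data bookkeeping of the last step: one must verify that the cover $p$, being holomorphic and unbranched along $\partial \D$, preserves the cyclic order of the boundary and therefore sends the switch condition of $v$ at $w_0$ to the \emph{same} switch condition at each of its two preimages. This is precisely what makes a single-cornered $v$ (a teardrop) incompatible with a strip, whose two corners switch in opposite directions — and it is the feature that distinguishes the strip case ($cd=2$, where pure corner-counting leaves the spurious possibility $(c,d)=(1,2)$) from the teardrop case of Corollary \ref{coro:generic strips are simple} (where $cd=1$ settles the matter immediately).
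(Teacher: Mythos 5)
Your proposal is correct and follows essentially the same route as the paper: view the strip as a disk with two corners, factor it through a simple disk via Corollary \ref{coro:Jholdisksaremultiplycovered}, and rule out a nontrivial cover by observing that all boundary preimages of a single corner of the simple disk would carry the \emph{same} switch condition, whereas the two ends of a strip switch between $L_1$ and $L_2$ in opposite senses. The paper phrases this as the claim $-1 \notin p^{-1}(p(1))$ under the assumption $\deg p \geqslant 2$, while you organize it as the case analysis $cd = 2$ with $(c,d) = (1,2)$ excluded; these are the same argument.
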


\begin{proof}
There is a second category subset $\mathcal{J}(M, L_1, L_2, \omega)$ such that for every $J \in \mathcal{J}(M, L_1, L_2, \omega)$, every $J$-holomorphic disk with boundary on $i : L_1 \sqcup L_2 \to M$ is multiply covered. Let us fix one such $J$.

Assume that $u : \R \times [0,1] \to M$ is a finite-energy $J$-holomorphic strip from $x_-$ to $x_+$ and $\psi : \D \backslash \{ -1,1 \} \to \R \times [0,1] $ a biholomorphism. Then $u \circ \psi$ is a finite-energy $J$-holomorphic strip with two corner points at $-1$ and $1$. It is therefore multiply covered. There is $v :\D \to M$ simple and $p : \D \to \D$ a covering such that $u = v \circ p$. Assume by contradiction that $p$ has degree more than $2$.

We claim that $-1 \notin p^{-1} (p(1))$. Indeed, for each $z \in p^{-1}(p(1))$, any conformal embedding $h : (\D^+, \partial \D^+_\R) \to (\D, \partial \D)$ such that $h(0) = z$ satisifies $u \circ h(\R^+) \subset L_2$ and $u \circ h(\R^-) \subset L_1$. For instance, this can be seen using local charts. However, a conformal local chart $h : (\D^+, \D^+_\R) \to (\D, \partial \D)$ such that $h(0) = -1$ satisfies $u \circ h(\R^-) \subset L_2$ and $u \circ h(\R^+) \subset L_1$.
\end{proof}

Now, we will need one last statement. For this, assume $d \geqslant 2$, and that $L_0, \ldots, L_d$ are $d+1$ embedded Lagrangian submanifolds in general position. For $i \in \{ 0, \ldots, d \}$, fix $x_i \in L_i \cap L_{i+1}$. A $J$-holomorphic polygon with boundary condition $L_1, \ldots, L_d $ is a $J$-holomorphic map $u : \pi^{-1}(r)  \to M $ with $r \in \mathcal{R}^{d+1}$ such that $\lim_{s \to + \infty} u \circ \varepsilon_i(s,t) = x_i$ and the image by $u$ of the arc between the $i$ and $i+1$ end is included in $L_i$.

\begin{prop}
	There is a second category subset $\mathcal{J}(M,L_0, \ldots, L_d ,\omega) \subset \mathcal{J}(M,\omega)$ satisfying the following property. If $J \in \mathcal{J}(M,L_0, \ldots, L_d ,\omega)$, every $J$-holomorphic polygon of finite energy with boundary condition $L_0, \ldots , L_d$ is simple.	
\end{prop}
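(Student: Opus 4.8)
The plan is to mimic the proof of Proposition~\ref{prop:strips are simple}, replacing the two-corner combinatorics of a strip by the cyclic combinatorics of the polygon's boundary labels. First I would take $\mathcal{J}(M,L_0,\ldots,L_d,\omega)$ to be a second category subset of $\mathcal{J}(M,\omega)$ for which every finite-energy $J$-holomorphic disk with corners and boundary on the immersion $i:L_0\sqcup\cdots\sqcup L_d\looparrowright M$ is multiply covered; this immersion is generic by the general position hypothesis on the $L_j$, so such a set exists by Corollary~\ref{coro:Jholdisksaremultiplycovered}. Fix such a $J$ and a finite-energy $J$-holomorphic polygon $u:\pi^{-1}(r)\to M$ with corners $x_0,\ldots,x_d$. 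Reparameterizing $\pi^{-1}(r)$ by a biholomorphism onto $(\D,\partial\D)$, I regard $u$ as a finite-energy $J$-holomorphic disk with $d+1$ corner points and boundary on $i$; the corners are genuine in the sense of Definition~\ref{defi:Curvewithboundary}, since adjacent arcs map to distinct branches.

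By Corollary~\ref{coro:Jholdisksaremultiplycovered} there are a simple $J$-holomorphic disk with corners $v$ and a branched cover $p:(\D,\partial\D)\to(\D,\partial\D)$, restricting to an honest cover $p:\partial\D\to\partial\D$ of some degree $k\geqslant 1$, such that $u=v\circ p$. The structural feature I would exploit is that the $d+1$ boundary arcs of $u$ carry pairwise distinct Lagrangian labels: the arc between the $i$-th and $(i+1)$-th corner maps into $L_i$, and $L_0,\ldots,L_d$ are distinct. I record the cyclic word of these labels as $L_0L_1\cdots L_d$, a cyclic word of length $d+1$ all of whose letters are distinct.

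The heart of the argument is to read off $u$'s label word from that of $v$. Since $p$ is a local homeomorphism along the boundary and $u=v\circ p$, the Lagrangian containing $u(z)$ equals the one containing $v(p(z))$ for each $z\in\partial\D$. Consequently the label-switch points of $u$, namely its corners, are exactly the $p$-preimages of the corners of $v$, and traversing $\partial\D$ once for $u$ corresponds to traversing $\partial\D$ exactly $k$ times for $v$; hence the cyclic arc-label word of $u$ is the $k$-th cyclic power of that of $v$. A cyclic word of length $d+1$ with pairwise distinct letters cannot be a nontrivial power: if it equalled $w^k$ with $k\geqslant 2$, then the positions $j$ and $j+\tfrac{d+1}{k}$ would carry the same letter, contradicting distinctness. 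Therefore $k=1$, so $p$ is a degree-one branched cover, i.e.\ a biholomorphism, and $u=v\circ p$ is simple.

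I expect the only genuinely delicate step to be the bookkeeping of the last paragraph: verifying carefully that the corners of $u$ correspond precisely to the $p$-preimages of the corners of $v$, and that the orientation-compatible traversal of $\partial\D$ realizes $u$'s label word as the $k$-fold repetition of $v$'s. Everything else is a direct invocation of Corollary~\ref{coro:Jholdisksaremultiplycovered} together with the elementary combinatorial fact that a word with pairwise distinct letters admits no nontrivial power, exactly as the switch-direction obstruction ruled out higher-degree covers in the strip case of Proposition~\ref{prop:strips are simple}.
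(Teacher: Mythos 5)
Your proposal is correct and follows essentially the same route as the paper: invoke the genericity corollary to factor the polygon as $u = v \circ p$ through a simple disk, then use the pairwise distinctness of the boundary Lagrangians to force $\deg p = 1$. The paper phrases the last step pointwise (two corners $y_i, y_j$ with $i \neq j$ cannot lie in the same fiber of $p$, since all points of a fiber over a corner of $v$ have the same pair of adjacent boundary labels), which is the same combinatorial fact as your observation that a cyclic word with distinct letters is not a nontrivial power.
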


\begin{proof}
As before, call $i : \bigsqcup L_i \to M$ the natural Lagrangian immersion. There is a second category subset $\mathcal{J}(M,L_0, \ldots, L_d ,\omega) \subset \mathcal{J}(M,\omega)$ such that any finite-energy $J$-holomorphic disk with boundary on $i$ is multiply covered. Fix an almost complex structure $J \in \mathcal{J}(M,L_0, \ldots, L_d ,\omega)$. 

Now let $u$ be a $J$-holomorphic polygon of finite energy and fix a biholomorphism $\psi : \D \to \pi^{-1} (r)$. Call $y_0, \ldots , y_d$ the preimages of the marked points of $\pi^{-1}(r)$ by $\psi$. Then $u \circ \psi$ is a finite-energy disk with boundary on $i$, hence is multiply covered. So there is $v: (\D, \partial \D) \to (M,i(L))$ simple and $p : (\D, \partial \D) \to (\D, \partial \D)$ a covering such that $u \circ \psi = v \circ p$.

 As before, we see that for $i \neq j$, $y_i \notin p^{-1}(p(y_j))$ since the image of any neighborhood of $z \in p^{-1}(p(y_j))$ in $\partial \D$ by $u$ intersects both $L_j$ and $L_{j+1}$. Hence, the cover $p$ is of degree $1$.
\end{proof}

\begin{rk}
	One should be aware that the set $\mathcal{J}(M, L_1, \ldots, L_d, \omega)$ depends on the submanifolds $L_0, \ldots , L_d$. In particular, the author does not know if generically any $J$-holomorphic polygon (without restriction on the boundaries) is simple. 	
\end{rk}

\subsection{Time-independent Floer homology}
As an easy application of the Main theorem, we show that Floer homology can be defined with time-independent complex structures. This is (as far as the author knows) new.

To do this, let us now assume that $(M,\omega)$ is closed and monotone. Let $[\omega] : H_2(M) \to \R $ be the morphism induced by symplectic area. There is $\lambda > 0$ such that $[\omega] = \lambda c_1(TM)$.

Let $L_1$ and $L_2$ be two embedded compact Lagrangian submanifolds and denote by $N_1 \geqslant 1$ and $N_2 \geqslant 1$ their minimal Maslov number. We assume that $N_p \geqslant 3$ for $p \in \{ 1,2\}$.

For $x$ and $y$ two distinct intersection points in $L_1 \cap L_2$, and $A$ a homotopy class of finite-energy strips from $x$ to $y$, we denote  
\begin{itemize}[label = $\bullet$]
\item by $\tilde{\mathcal{M}}(x,y,L_1,L_2,A,J)$ the set of $J$-holomorphic strips from $x$ to $y$ in the homotopy class $A$,  
\item by $\mathcal{M}(x,y,L_1,L_2,A,J)$ its quotient by the natural $\R$-action,
\item by $\tilde{\mathcal{M}}^*(x,y,A,L_1,L_2,J) \subset \tilde{\mathcal{M}}(x,y,A,L_1,L_2,J) $ the set of simple $J$-holomorphic strips,
\item by $\mathcal{M}^*(x,y,A,L_1,L_2,J)$ its quotient by the $\R$-action. 
\end{itemize}

From Proposition \ref{prop:strips are simple} and the standard transversality arguments (cf \cite{FHS95}), we immediately deduce the following.

\begin{prop}
	There is a second category subset $\mathcal{J}_{\text{reg}} (M,L_1,L_2,\omega) \subset \mathcal{J}(M,\omega) $ such that 
	\begin{enumerate}
	\item If $J \in\mathcal{J}_{\text{reg}} (M,L_1,L_2,\omega) $, for each homotopy class $A$, all $J$-holomorphic strips are simple :
	\[ \forall x,y \in L_1 \cap L_2, \mathcal{M}^*(x,y,L_1,L_2,A,J) = \mathcal{M}(x,y,L_1,L_2,A,J) . \]
	\item If $J \in \mathcal{J}_{\text{reg}} (M,L_1,L_2,\omega)$, then $\mathcal{M}(x,y,L_1,L_2,A,J)$ is a finite dimensional manifold for all $x,y \in L_1 \cap L_2 $. 
	\end{enumerate}	
\end{prop}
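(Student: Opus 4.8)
The plan is to combine the simplicity result of Proposition~\ref{prop:strips are simple} with the standard transversality machinery of \cite{FHS95}, the only novelty being that the needed perturbation of $J$ can be kept \emph{time-independent} precisely because simplicity has already been arranged. Write $\mathcal{J}_1 \subset \mathcal{J}(M,\omega)$ for the second category subset produced by Proposition~\ref{prop:strips are simple}, so that for $J \in \mathcal{J}_1$ every finite-energy $J$-holomorphic strip between $L_1$ and $L_2$ is simple. For such $J$ the inclusion $\tilde{\mathcal{M}}^*(x,y,A,L_1,L_2,J) \subset \tilde{\mathcal{M}}(x,y,A,L_1,L_2,J)$ is an equality, and passing to the quotient by the $\R$-action gives statement \textit{(1)} at once.

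For statement \textit{(2)} I would run the usual Sard--Smale argument for somewhere injective solutions. By definition a simple strip $u$ has a dense set of injective points in the interior of its domain; choose one such point $z_0$, so that $du(z_0) \neq 0$ and $u^{-1}(u(z_0)) = \{z_0\}$. The latter condition forces $u(z_0)$ to be disjoint from the boundary image, so a perturbation of $J$ supported in a small ball around $u(z_0)$ affects only the equation in the interior. Form the universal moduli space
\[ \tilde{\mathcal{M}}^*(x,y,A,L_1,L_2) := \bigcup_{J} \tilde{\mathcal{M}}^*(x,y,A,L_1,L_2,J), \]
where $J$ ranges over a Banach manifold of time-independent compatible almost complex structures of class $\mathcal{C}^\ell$. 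The key computation is that the universal linearized operator is surjective: a variation of $J$ supported near $u(z_0)$ already fills the cokernel of $D_u$, exactly because $z_0$ is an interior injective point and there is no constraint forcing the perturbation to depend on $t$. Hence $\tilde{\mathcal{M}}^*(x,y,A,L_1,L_2)$ is a Banach manifold and the projection onto the space of almost complex structures is Fredholm; Sard--Smale then furnishes a second category subset $\mathcal{J}_2$ of regular values for which $D_u$ is onto at every simple strip, so that $\mathcal{M}^*(x,y,A,L_1,L_2,J)$ is a finite-dimensional manifold of dimension $\Ind(D_u) - 1$. The standard Taubes argument upgrades this from $\mathcal{C}^\ell$ to smooth almost complex structures.

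Setting $\mathcal{J}_{\text{reg}}(M,L_1,L_2,\omega) := \mathcal{J}_1 \cap \mathcal{J}_2$, which is again second category as a finite intersection of such subsets, both conclusions hold simultaneously: all strips are simple, and the simple (hence full) moduli space is a manifold. The conceptual heart of the argument---and what distinguishes it from the classical situation of \cite{Oh93,FHS95}---is that simplicity has already been secured by the decomposition theorem, so the somewhere injective point lies in the interior and the perturbation of $J$ need not be made time-dependent. Once this is granted the transversality statement is entirely routine, which is why it follows immediately from Proposition~\ref{prop:strips are simple}; the only point deserving care is verifying that the injective point can be taken in the interior, away from the corners and the boundary, which is exactly what simplicity guarantees.
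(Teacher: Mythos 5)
Your proposal is correct and takes essentially the same approach as the paper: the paper deduces statement \textit{(1)} directly from Proposition~\ref{prop:strips are simple} and statement \textit{(2)} from the standard transversality arguments of \cite{FHS95}, which it cites rather than spells out. The only difference is that you write out that standard Sard--Smale argument in detail, correctly identifying the interior injective point (guaranteed by simplicity) as the reason the perturbation of $J$ may be kept time-independent.
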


Now for such a generic $J$ and $k \in \N$, denote by $\mathcal{M}(x,y,L_1,L_2,A,J)^k$ the $k$ dimensional component of $\mathcal{M}(x,y,L_1,L_2,A,J)$. 

By a standard Gromov compactness argument, the set $\mathcal{M}^0(x,y,A,J)$ is compact. Furthermore, there is a 1-dimensional manifold with boundary $\overline{\mathcal{M}}^1(x,y,A,J)$ whose interior is identified with $\mathcal{M}^1(x,y,A,J)$ and whose boundary is identified with
	\[ \bigsqcup_{z \in L_1 \cap L_2} \mathcal{M}^0(x,z,A,J) \times \mathcal{M}^0(z,y,A,J) .\]
	
Consider the Novikov ring of formal power series with coefficients in $\Z_2$ :
	\[ \Lambda_{\Z_2} = \ens{\sum_{\lambda_i \to + \infty, \lambda_i \geqslant 0}a_i T^{\lambda_i}}{ a_i \in \Z_2} .\]
As usual we define the Floer complex between $L_1$ and $L_2$ to be the $\Lambda_{\Z_2}$-module generated by the intersection points
\[ CF(L_1,L_2,J) = \bigoplus_{x \in L_1 \cap L_2} \Z_2 \cdot x. \]
The differential on this complex is given by a count of rigid $J$-holomorphic strips modulo 2
\[ \mathrm{d}: \begin{array}{ccc} 
 	CF(L_1,L_2,J) & \to & CF(L_1,L_2,J) \\
 	y & \mapsto & \sum_{y \in L_1 \cap L_2} \#_{\Z_2} \mathcal{M}^0(x,y,A,J) T^{\omega(A)} y .
 \end{array}
\]
It is immediate to see from the usual Gromov compactness argument that $\mathrm{d}^2 = 0$, so $(CF(L_1,L_2),\mathrm{d})$ is a well-defined differential complex.

Since any generic almost complex structure is, in particular, a generic time-dependent almost complex structure, the homology of this complex computes the usual Lagrangian intersection Floer homology. Hence we can conclude that the following theorem is true.

\begin{Theo}
	There is a second category subset $\mathcal{J}_{\text{reg}}(M, \omega,L_1,L_2) \subset \mathcal{J}(M,\omega)$ such that the Floer complex
	\[ (CF(L_1,L_2,J),d)  \]
	is well-defined (as a differential complex). Moreover, its homology computes the usual Lagrangian intersection Floer homology.
\end{Theo}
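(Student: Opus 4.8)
The plan is to show that the regularity already established for simple strips with a time-independent $J$ suffices to run the standard monotone Floer package, the only non-standard input being Proposition~\ref{prop:strips are simple}. Fix $J$ in the second category subset $\mathcal{J}_{\text{reg}}(M,L_1,L_2,\omega)$ produced just above, so that every finite-energy $J$-holomorphic strip between $L_1$ and $L_2$ is simple and each $\mathcal{M}(x,y,L_1,L_2,A,J)$ is a manifold of the expected dimension $\Ind(D_u)-1$, cut out transversally by the usual argument for simple curves (cf.~\cite{FHS95}). It then remains to check that the differential $\mathrm{d}$ is well-defined, that $\mathrm{d}^2=0$, and that the resulting homology agrees with the usual one.

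For well-definedness I would argue that each zero-dimensional space $\mathcal{M}^0(x,y,A,J)$ (strips with $\Ind(D_u)=1$) is compact, hence finite, so that the coefficient $\#_{\Z_2}\mathcal{M}^0(x,y,A,J)$ makes sense. Applying Gromov compactness to a sequence of such strips produces a limit consisting of a broken chain of strips together with disk and sphere bubbles. Breaking is excluded on dimension grounds, since each nonconstant strip has $\Ind(D_u)\geqslant 1$ and a genuine breaking would force total index $\geqslant 2$. Disk bubbling is excluded using monotonicity together with the Maslov hypothesis: by Corollary~\ref{coro:Jholdisksaremultiplycovered} each bubble factors through a simple disk, and a nonconstant simple disk with boundary on $L_p$ has Maslov index at least $N_p\geqslant 3$, so the surviving strip would have index $\leqslant 1-3<0$, which is empty for generic $J$. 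Sphere bubbles carry $2c_1>0$ by monotonicity of $M$ and occur in codimension two, so they do not arise in a one-parameter family either. Hence $\mathcal{M}^0(x,y,A,J)$ is compact.

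For $\mathrm{d}^2=0$ I would examine the compactified one-dimensional space $\overline{\mathcal{M}}^1(x,y,A,J)$ (strips with $\Ind(D_u)=2$). The same bookkeeping, now with $N_p\geqslant 3$, shows that disk bubbling occurs in codimension at least $N_p-1\geqslant 2$ and sphere bubbling in codimension two, so the only codimension-one degenerations are the once-broken strips. Thus $\overline{\mathcal{M}}^1(x,y,A,J)$ is a compact one-manifold whose boundary is identified with $\bigsqcup_{z\in L_1\cap L_2}\mathcal{M}^0(x,z,A,J)\times\mathcal{M}^0(z,y,A,J)$, and counting its boundary points modulo $2$ yields $\mathrm{d}^2=0$ in the standard way. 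This is precisely where the hypothesis $N_p\geqslant 3$ (rather than the weaker $N_p\geqslant 2$) is used, since for $N_p=2$ the Maslov-two disks would contribute in codimension one and obstruct $\mathrm{d}^2=0$.

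Finally, to identify the homology with the usual Lagrangian intersection Floer homology, I would observe that a time-independent $J$ is the constant time-dependent family $(J_t=J)_{t\in[0,1]}$ and that the moduli spaces of $J$-holomorphic strips coincide with those of this constant family. The transversality established above therefore exhibits $(J_t=J)$ as a \emph{regular} datum inside the space of time-dependent almost complex structures used in the standard construction, and invariance of Floer homology under change of regular datum — realized by continuation maps built from generic $s$-dependent interpolating families, whose compactness is again guaranteed by monotonicity and the bounds $N_p\geqslant 3$ — provides a canonical isomorphism between $H_*(CF(L_1,L_2,J),\mathrm{d})$ and the homology computed with any generic time-dependent datum, namely the usual Floer homology. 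The genuinely new ingredient is confined to Proposition~\ref{prop:strips are simple}: it is what makes the time-independent $J$ regular in the first place, exactly the step where one ordinarily must pass to time-dependent structures. The main obstacle is thus conceptual rather than technical, namely verifying that the non-standard choice of a time-independent $J$ legitimately fits into the invariance framework; once the strip moduli spaces are seen to coincide with those of the constant family, the comparison reduces to the standard continuation argument.
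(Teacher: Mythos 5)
Your proposal is correct and follows essentially the same route as the paper: transversality comes from Proposition~\ref{prop:strips are simple} making every strip simple, compactness of $\mathcal{M}^0$ and the boundary structure of $\overline{\mathcal{M}}^1$ follow from Gromov compactness with bubbling excluded via monotonicity and $N_p \geqslant 3$, and the identification with the usual Floer homology rests on the observation that a regular time-independent $J$ is a regular constant time-dependent datum. You simply spell out in more detail the bubbling bookkeeping and the continuation-map argument that the paper compresses into its appeals to the ``standard'' and ``usual'' Gromov compactness and invariance arguments.
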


\subsection{Work in progress}

\subsubsection{Framework}
In this section, we will describe some expected applications of the main theorems to the study of the surgery of two immersed Lagrangian submanifolds.

Let us consider a compact exact symplectic manifold $(M,\omega)$ with Liouville form $\lambda$, convex boundary and complex dimension $n \geqslant 3$. We denote by $\widehat{M}$ its completion.

\paragraph{\bf{Gradings:}} Assume that the first Chern class of $(M,\omega)$ satisfies $2 c_1(TM) =  0$ in $H^2(M,\Z)$. This implies that the complex line bundle $\Lambda^n T^*M \otimes \Lambda^n T^*M$ is trivial. Hence, it admits a non-vanishing section $\Omega$.

For each Lagrangian subspace $L \in \mathcal{G}(T_x M)$, choose a real basis $v_1, \ldots, v_n$ of $L$ and define
	\[ \deter^2_\Omega (L) = \frac{\Omega(v_1, \ldots, v_n)}{\norm{\Omega(v_1, \ldots, v_n)}} . \]
One can check that this does not depend on the choice of $v_1, \ldots, v_n$ and therefore defines a smooth function $\deter^2_\Omega : \mathcal{G}(TM) \to \R$. 	

An \emph{exact graded Lagrangian immersion} is a tuple $(L,i,f_L, \theta_L )$ with 	
\begin{itemize}[label = $\bullet$]
	\item $L$ a compact manifold,
	\item $i : L \looparrowright M$ a generic Lagrangian immersion,
	\item $f_L : L \to \R$ a smooth function such that $i^* \theta = d f_L$,
	\item $\theta_L : L \to \R$ a smooth function such that 
	\[ e^{2i\pi \theta_L} = \deter^2_\Omega \circ \iota  	 \]
	where $\iota : L \to \mathcal{G}(TM)$ is the map $x \mapsto \Im(di_x)$.	
\end{itemize}

Now assume that $L_1$ and $L_2$ are two transverse exact graded immersions which intersect transversally. Let $x$ be an intersection point of these. Fix an adapted almost complex structure $J$ and denote by $\alpha_1, \ldots, \alpha_n$ the Kähler angles of the pair $\left (T_x L_1, T_x L_2 \right )$. The \emph{index} of $x$ as an element of $CF(L_1,L_2)$ is the number
\[  \norm{x} = n + \theta_{L_2}(x) - \theta_{L_1}(x) - \frac{\alpha_1 + \ldots + \alpha_n}{\pi}.\]

Similarly, we can define the \emph{index} of a self intersection point $(p,q) \in R$ of $L_i$ for $i = 1,2$ :
\[ \norm{(p,q)} =  n + \theta_{L_2}(q) - \theta_{L_1}(p) - \frac{\alpha_1 + \ldots + \alpha_n}{\pi}, \]
where $\alpha_1, \ldots, \alpha_n$ are the Kähler angles of the pair $ \left (d i_p (T_pL), d i_q(T_q L) \right)$ with respect to $J$.

The reader may wonder if this is the same as the index defined in the subsection \ref{subsubsection:Interstionpointsindices}. There is a choice of path $\lambda_{(p,q)}$ such that $\Ind(p,q) = \norm{(p,q)}$ : it is explained in \cite[(11g)]{Sei08}. 
 
\paragraph{\bf{Lagrangian surgery:}} Following the presentation of Biran and Cornea (\cite[6.1]{BC13}), we will describe the surgery of $L_1$ and $L_2$ at an intersection point. 

First, for each intersection point $y$ between $L_1$ and $L_2$, fix a Darboux chart $\phi_y : B(0,r_y) \to (M,\omega)$ such that $\phi_y(0) = y$, $\phi_y(\R^n) \subset L_1$, $\phi_y(i \R^n) \subset L_2$ and whose image does not contain any other intersection point.

Now consider a smooth path $\gamma(t) : = (a(t), b(t)) \in \C$, with $t \in \R$, such that $\gamma(t) = (t,0)$ for $t < -1$, $\gamma(t) = (0,t)$ for $t > 1$ and $a'(t) , b'(t) > 0$ for $t \in (-1,1)$. For $\varepsilon > 0$, the set 
\[ H_\varepsilon : = \ens{(x_1 \gamma(t), \ldots, x_n \gamma(t))}{t \in \R, \ (x_1, \ldots, x_n) \in S^{n-1}} \] 
is a smooth Lagrangian submanifold of $\C^n$.

Now let $x \in L_1 \cap L_2$ be an intersection point. For $\varepsilon > 0$ small enough, there is a generic immersion $L_1 \#_{x,\varepsilon} L_2$ obtained by removing $\phi_x(\R^n \cup i\R^n)$ and replacing it by $\phi_x(H_\varepsilon)$. This has domain the connected sum of $L_1$ and $L_2$ and turns out to be exact. Moreover, if $\norm{x} = 1$, the gradings of $L_1$ and $L_2$ canonically induce a grading $\theta$ on $L_1 \#_{x,\varepsilon}L_2$ which agrees with $\theta_{L_1}$ and $\theta_{L_2}$ on $L_1 \sqcup L_2 \backslash \phi_x(B(0,r_x))$ (this is a result of Seidel \cite[Lemma 2.13]{Sei00}).

From now on, we will consider the case where $L_1$ and $L_2$ are actually embedded and we will fix an intersection point $x \in L_1 \cup L_2$ of degree 1.

Lastly, we will somewhat restrict the space of almost complex structures we consider. We denote by $\mathcal{J}_\phi(M,\omega)$ the set of adapted almost complex structures which agree with $(\phi_y)_* J_{\text{std}}$ on $\phi_y(B(0,r_y))$ for each intersection point $y$.

The proof of \ref{coro:Jholdisksaremultiplycovered} shows that there is a second category subset $\mathcal{J}_{\phi,\text{reg}}(M,\omega) \subset \mathcal{J}_\phi(M,\omega)$ such that 
\begin{enumerate}
	\item any non-constant $J$-holomorphic disk with corners and boundary on the immersion $L_1 \sqcup L_2 \looparrowright M$ is either simple or multiply covered,
	\item any simple $J$-holomorphic disk with corners and boundary on the immersion $L_1 \sqcup L_2 \looparrowright M$ is regular (meaning that the linearization of the Cauchy-Riemann operator is surjective).
\end{enumerate}

\subsubsection{Surgery and count of holomorphic disks} 
We expect that we can apply our work to prove the following theorem.

\begin{Theo}[$\star$] \footnote{We use the symbol $\star$ to indicate the results that are work in progress.} 
\label{Theo:bijection strips et teardrops}
Let $(\varepsilon_\nu)_{\nu \in \N}$ be a sequence of positive real numbers such that $\varepsilon_\nu \to 0$. There is a second category subset $\mathcal{J}_{\phi,\text{reg},2}(M,\omega) \subset \mathcal{J}_{\phi, \text{reg}}(M,\omega)$ and $\nu_0 \in \N$ such that the following holds.

For any $\nu \geqslant \nu_0$, $J \in \mathcal{J}_{\phi,\text{reg},2}(M,\omega) $ and $y \in L_1 \cap L_2$ with $\norm{y} = 2$, there is a bijection $\mathcal{M}(y, J, L_1 \#_{x,\varepsilon_\nu} L_2) \to \mathcal{M}(y,x,J,L_1,L_2)$ between the set of $J$-holomorphic teardrops with boundary on $L_1 \#_{x,\varepsilon_\nu} L_2$ and the set of strips between $L_1$ and $L_2$ from $y$ to $x$.   
\end{Theo}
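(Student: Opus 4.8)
The plan is to realize the asserted bijection as a \emph{surgery neck-stretching correspondence}: a strip from $y$ to $x$ between $L_1$ and $L_2$ should, after inserting the Lagrangian handle $H_{\varepsilon_\nu}$ near $x$, become a teardrop on $L_1 \#_{x,\varepsilon_\nu} L_2$ whose single corner is the surviving self-intersection $y$, the corner at $x$ of the strip being smoothed out by the surgery. I would first record the index bookkeeping. Since $\norm{y} = 2$ and $\norm{x} = 1$, the unparametrised strip space $\mathcal{M}(y,x,J,L_1,L_2)$ has dimension $\norm{y} - \norm{x} - 1 = 0$, and by additivity of the Maslov index together with the compatibility of gradings under surgery (the result of Seidel \cite[Lemma 2.13]{Sei00} quoted above, which is exactly what makes the index-$1$ corner at $x$ absorbable) the teardrop space $\mathcal{M}(y,J,L_1\#_{x,\varepsilon_\nu}L_2)$ is likewise rigid; both are therefore finite once compactness is known, so a bijection between them is meaningful. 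Here I also fix a second category set $\mathcal{J}_{\phi,\text{reg},2}$ refining $\mathcal{J}_{\phi,\text{reg}}$ so that in addition all the finitely many relevant strips are regular. Transversality survives the constraint $J = (\phi_y)_* J_{\text{std}}$ near the intersection points because a simple curve has, by the Main Theorem \ref{Theo:Maintheorem}, a dense set of interior injective points, and these may be chosen outside the fixed Darboux balls, leaving room for the usual perturbation of $J$.

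For the gluing direction I would take a rigid strip $v$ from $y$ to $x$ and, working in the chart $\phi_x$ where $J = J_{\text{std}}$, use the asymptotic development at the corner $x$ (Proposition \ref{prop:asymptoticdevelopmentaroundacornerpoint}) to splice $v$ with an explicit local model supported in the handle region $\phi_x(H_{\varepsilon_\nu})$. This produces a pre-glued approximate teardrop $u^{\mathrm{app}}_{\nu}$ on $L_1\#_{x,\varepsilon_\nu}L_2$ whose $\bar\partial_J$-error is $O(\varepsilon_\nu)$ in a weighted Sobolev norm adapted to the neck. Since $v$ is regular the linearised operator has a right inverse of uniformly bounded norm, so a Picard iteration corrects $u^{\mathrm{app}}_\nu$ to a genuine $J$-holomorphic teardrop $u_{v,\nu}$, unique in a small ball; this defines $v \mapsto u_{v,\nu}$ for all $\nu \geq \nu_0(v)$.

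Conversely, given any sequence of teardrops $u_\nu \in \mathcal{M}(y,J,L_1\#_{x,\varepsilon_\nu}L_2)$, monotonicity gives a uniform energy bound and Gromov compactness produces a limit. As $\varepsilon_\nu \to 0$ the surgeries converge to $L_1 \cup L_2$ in $C^\infty_{\mathrm{loc}}$ away from $x$, so the limit has boundary on $L_1\cup L_2$; genericity of $J \in \mathcal{J}_{\phi,\text{reg},2}$ (all simple disks regular, all spheres and disks of low index excluded, $n\geq 3$ as in Corollary \ref{coro:Jholdisksaremultiplycovered}) rules out sphere and disk bubbling, while the area of $H_{\varepsilon_\nu}$ tending to $0$ forces the energy concentrated near $x$ to reassemble into exactly one corner at $x$. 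The limit is thus a single rigid strip from $y$ to $x$. Combining this with the uniqueness in the gluing step shows, by the standard argument, that for $\nu$ large \emph{every} teardrop equals $u_{v,\nu}$ for a unique strip $v$ (otherwise a sequence of non-glued teardrops would Gromov-converge to a strip $v$ and then coincide with $u_{v,\nu}$ for large $\nu$, a contradiction); since distinct strips glue to teardrops $C^0$-close to distinct limits, the map is also injective. Taking $\nu_0$ to be the maximum of the finitely many $\nu_0(v)$ and of the threshold from the compactness argument, the two constructions are mutually inverse and the bijection follows.

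The hard part will be the gluing analysis near the handle: setting up weighted norms adapted to the degenerating neck $\phi_x(H_{\varepsilon_\nu})$, constructing the approximate solution with controlled error, and---most delicately---obtaining a right inverse for the linearised operator whose norm stays bounded as $\varepsilon_\nu\to 0$, so that the implicit function theorem applies uniformly in $\nu$. Establishing the surjectivity of gluing (that every teardrop arises this way for large $\nu$) hinges on combining this uniform estimate with the compactness statement, and is where the precise degeneration behaviour of $J$-holomorphic curves across the Lagrangian handle, via the exponential asymptotics used earlier (Robbin--Salamon), must be pinned down.
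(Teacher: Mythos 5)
Your overall architecture (glue rigid strips into teardrops, then use a compactness argument to show every teardrop arises this way) matches the paper's, but there is a genuine gap at the step you dispatch with ``the area of $H_{\varepsilon_\nu}$ tending to $0$ forces the energy concentrated near $x$ to reassemble into exactly one corner at $x$; the limit is thus a single rigid strip.'' This is precisely the hard point, and energy cannot settle it: a corner at $x$ costs no energy in the limit, so a sequence of teardrops may perfectly well Gromov-converge, as $\varepsilon_\nu \to 0$, to a \emph{broken strip} --- a tree of curves with corners, possibly multiply covered, with one incoming end at $y$ and an arbitrary odd number $2k-1$ of corners mapping to $x$ (alternating between the two switch types, $L_1 \to L_2$ and $L_2 \to L_1$), the teardrop having threaded the shrinking handle several times. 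Neither exactness, nor the absence of sphere/disk bubbling, nor genericity of $J$ in the sense of Corollary \ref{coro:Jholdisksaremultiplycovered} excludes such configurations; this is exactly why the paper introduces labeled tree domains and broken strips as the limit objects of its compactness statement, Proposition \ref{prop:Gromov compacity for handles}.

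The paper's exclusion of these configurations is not analytic but combinatorial, and rests on two ingredients absent from your proposal. First, the components of the limit broken strip need not be simple, so regularity cannot be invoked for them directly; the paper first replaces the broken strip by one with simple components, using the factorization through simple curves provided by the Main Theorem \ref{Theo:Maintheorem} (this is its second lemma, and it is where the whole structural theory of the paper enters). Second, for the resulting simple, regular components one runs an index count exploiting the asymmetry $\norm{x} = 1$ in $CF(L_1,L_2)$ versus $n-1$ in $CF(L_2,L_1)$, together with $n \geqslant 3$: summing the regularity inequalities over the tree gives $\norm{y} \geqslant V(T)+1$, hence $V(T)=1$; then for a single curve with $2k-1$ corners at $x$ regularity gives $n-2 \geqslant (n-2)k$, hence $k=1$. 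Only after this is the limit a genuine strip, and the contradiction with being $\alpha$-far from all strips can be drawn. Two further discrepancies: the paper does not carry out the gluing analysis you sketch, but instead quotes a theorem of Fukaya--Oh--Ohta--Ono (Corollary \ref{Coro: The theorem of FOOO}), whose application requires the corners of the rigid strips and teardrops to have multiplicity $1$ --- a genericity statement proved separately in the paper and omitted in your proposal; and note that the theorem carries the symbol $\star$ precisely because the degeneration/compactness input is itself unproven work in progress, so neither your argument nor the paper's is complete at that point.
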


Below, we list the main steps expected to lead to this result.

\subsubsection{Surgery and holomorphic disks}
The proof of theorem \ref{Theo:bijection strips et teardrops} relies on three results. The first is a result about the multiplicity \footnote{See Remark \ref{rk:definition of the multiplicity} for the definition of multiplicity.} of an isolated generic $J$-holomorphic strip at its corners. 

\begin{prop}
	\label{prop : multiplicity of $J$-holomorphic strips}
There is a second category subset $\mathcal{J}_{\phi,\text{reg},3}(M, L_1, L_2, \omega) \subset \mathcal{J}_{\phi, \text{reg}}(M,\omega)$ such that if $J \in \mathcal{J}_{\phi,\text{reg},3}(M, L_1, L_2, \omega) $, then every $J$-holomorphic strip of Fredholm index $1$ has multiplicity $1$ at his corners. 		
\end{prop}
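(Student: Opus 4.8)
The plan is to set up a \emph{constrained} universal moduli space of strips in which the constraint records a lower bound on the multiplicity at a corner, and then to run a Sard--Smale argument showing that this space is generically of negative dimension once the ambient Fredholm index equals $1$.

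First I would restrict to $\mathcal{J}_{\phi,\text{reg}}(M,\omega)$, inside which every finite-energy $J$-holomorphic strip is simple: the argument of Proposition~\ref{prop:strips are simple} applies verbatim, since there disks are simple or multiply covered and the corner structure of a strip forbids a cover of degree $\ge 2$. Thus the strips $u$ we must control are somewhere injective. Fix one of the two corners, say the one at $(p_0,q_0)$ with $x=i(p_0)=i(q_0)$ and K\"ahler angles $\alpha_1\le\cdots\le\alpha_n$. By Proposition~\ref{prop:asymptoticdevelopmentaroundacornerpoint} the strip decays along the attached strip-like end at an exponential rate $\lambda=\alpha_k+m\pi$, an eigenvalue of the asymptotic operator $A$ given by $i\partial_t v=\lambda v$ with $v(1)\in\R^n$ and $v(0)\in e^{i\alpha_1}\cdot\R\times\cdots\times e^{i\alpha_n}\cdot\R$; by Remark~\ref{rk:definition of the multiplicity} the multiplicity at the corner is $m+1$, so ``multiplicity $\ge 2$'' is exactly the condition $\lambda\ge\alpha_k+\pi$. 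A key simplification is that, since we work in $\mathcal{J}_\phi(M,\omega)$, the structure $J$ is the frozen standard one near every double point; hence $A$ and its positive spectrum $\{\alpha_j+l\pi:1\le j\le n,\ l\ge 0\}$ are fixed, independent of $J$.

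Next, for each corner, each double point type in the finite set $R$, each relevant homotopy class, and each integer $m\ge 1$, I would form the universal space of pairs $(u,J)$ where $u$ is a simple $J$-holomorphic strip whose decay rate at the chosen corner is $\ge\alpha_k+m\pi$. Analytically this is modeled on the linearized operator $D_u$ acting on exponentially weighted Sobolev spaces with weight $\delta$ just below $\alpha_k+m\pi$. The decisive dimension count is a spectral-flow computation: moving the weight from just below the smallest eigenvalue $\alpha_1$ (which defines the unconstrained problem) up to just below $\alpha_k+m\pi$ crosses all of the eigenvalues $\alpha_1,\dots,\alpha_n$, since each is $<\pi\le\alpha_k+\pi\le\lambda$. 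Their total multiplicity is $n$, so the Fredholm index of the constrained operator is at most $\Ind(D_u)-n=1-n<0$. Equivalently, imposing multiplicity $\ge 2$ forces the vanishing of the leading eigenmode coefficients for every eigenvalue below $\alpha_k+m\pi$, a condition of real codimension at least $n\ge 3$.

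Finally I would check that this universal space is a Banach manifold and that its projection to $\mathcal{J}_\phi$ is Fredholm of the index just computed, so that by Sard--Smale a second category set of $J$ are regular values, over which the fiber is a manifold of negative dimension and hence empty. Transversality is obtained by perturbing $J$ near an injective point of $u$; since $u$ is simple and the corner neighborhoods where $J$ is frozen form a proper subset, such injective points lie in the region where perturbations are allowed, and the usual unique continuation argument (cf.\ \cite{FHS95}) goes through unchanged---the constraint at infinity merely restricts the domain and target of $D_u$ without obstructing surjectivity. Intersecting the resulting second category sets over the two corners, the finitely many double point types, the relevant homotopy classes, and all $m\ge 1$ produces $\mathcal{J}_{\phi,\text{reg},3}(M,L_1,L_2,\omega)$. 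The principal obstacle is exactly this constrained transversality: recasting the multiplicity bound as a bona fide Fredholm condition in weighted spaces and verifying that surjectivity of $D_u$ can still be arranged by bulk perturbations, given that $J$ is not free to move near the corners; pinning down the spectral-flow sign and the precise index drop is also needed, although only the qualitative bound (positive codimension) enters the final conclusion.
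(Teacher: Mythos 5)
Your proposal is correct and its overall scaffolding coincides with the paper's: restrict to structures for which strips are simple, pass to a universal moduli space of pairs $(u,J)$, exhibit ``multiplicity $\geqslant 2$ at a corner'' as a condition of codimension $\geqslant n$, and kill it by Sard--Smale since the index-$1$ moduli space is $1$-dimensional and $1-n<0$. Where you genuinely differ is the device used to realize that codimension. The paper never introduces weighted Sobolev spaces or spectral flow: exploiting the fact that $J$ is frozen standard near the double points, it defines jet evaluation maps on the (unconstrained) universal moduli space,
\[ \ev_{x,\text{jet}}(u,J)=\lim_{s\to-\infty}e^{-\frac{\pi}{2}s}u(s,t), \qquad \ev_{y,\text{jet}}(u,J)=\lim_{s\to+\infty}e^{+\frac{\pi}{2}s}u(s,t), \]
which are smooth maps to $\R^n$ whose zero sets are exactly the strips of multiplicity $\geqslant 2$ at the corresponding corner; a variation of the McDuff--Salamon submersion argument shows these are submersions, so the bad sets are Banach submanifolds of codimension exactly $n$, and the Taubes trick plus Sard--Smale finishes. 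Your constrained weighted-space formulation is equivalent in content (the zero set of the jet evaluation is precisely your constrained space, and your spectral-flow index drop $n$ is its codimension), and you correctly isolate the one real difficulty, which both proofs share: transversality must be achieved by perturbations of $J$ supported outside the Darboux balls where it is frozen. Be aware that your justification for this is slightly incomplete as stated --- that the frozen region is a proper subset of $M$ does not by itself put injective points outside it; one must also rule out a strip whose \emph{image} lies entirely in the frozen region, which follows because each Darboux ball contains a single intersection point and exactness (or the maximum principle for the standard structure) then forces such a strip to be constant. As for what each approach buys: the paper's evaluation maps are more elementary, requiring only the asymptotic expansion of Proposition \ref{prop:asymptoticdevelopmentaroundacornerpoint} (indeed only Schwarz reflection, since the curve is honestly holomorphic near the corners) and no weighted Fredholm theory; your setup is more systematic and would extend to corners with general K\"ahler angles or to higher multiplicity thresholds. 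One simplification available in your route: there is no need to stratify over $m\geqslant 1$ and intersect countably many generic sets for that parameter --- a single weight just below $\alpha_1+\pi$ (here $3\pi/2$, as all angles equal $\pi/2$ in the frozen charts) captures all multiplicities $\geqslant 2$ at once, exactly as the paper's single evaluation map does.
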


\begin{proof}[Sketch of the proof]

Fix $x$ and $y$ such that $\norm{x} - \norm{y} = 1$. Consider the universal moduli space $\mathcal{M}^{*} \left (L_1,L_2,x,y,\mathcal{J}^l \right )$ of pairs $(u,J)$ with 
\begin{itemize}[label = $\bullet$]
	\item $J$ a $\mathcal{C}^l$ almost complex structure in $\mathcal{J}_\phi(M,\omega)$,
	\item $u$ a $J$-holomorphic strip between $L_1$ and $L_2$ from $x$ to $y$.	
\end{itemize}

The usual arguments (as in \cite[Chapter 3]{McDS12}) show that $\mathcal{M}^{*} \left (L_1,L_2,x,y,\mathcal{J}^l \right )$ admits the structure of a smooth separable Banach manifold.

Assume that $(u,J) \in \mathcal{M}^{*} \left (L_1,L_2,x,y,\mathcal{J}^l \right )$, then (see Proposition \ref{prop:asymptoticdevelopmentaroundacornerpoint})\footnote{However, in this setting there is no real necessity to use \cite{RS01}. Since the curves are holomorphic near the double points, we can use the Schwarz reflection principle twice. } the limits 
	\[ \ev_{\text{x,jet}} (u,J) : =  \lim_{s \to - \infty} e^{-\frac{\pi}{2} s } u(s,t) ,\]
	and
	\[ \ev_{\text{y,jet}} (u,J) : =  \lim_{s \to + \infty} e^{+\frac{\pi}{2} s } u(s,t)  \]
exist. This defines two smooth maps $\ev_{\text{x,jet}} : \mathcal{M}^{*} \left (x,y,L_1,L_2,\mathcal{J} \right ) \to \R^{n}$ and $\ev_{\text{y,jet}} : \mathcal{M}^{*} \left (x,y,L_1,L_2,\mathcal{J} \right ) \to \R^{n}$. Notice that $\ev_{\text{x,jet}}^{-1}(0)$ (resp. $\ev_{\text{y,jet}}^{-1}(0)$ ) is the set of $J$-holomorphic strip with multiplicity greater than $1$ at $x$ (resp. $y$).

A variation of the arguments of \cite[3.4]{McDS12} show that these are submersions. Hence the sets $\ev_{\text{x,jet}}^{-1}(0)$ and $\ev_{\text{y,jet}}^{-1}(0)$ are smooth submanifolds of codimension $n$.

Now, one can see from the Sard-Smale theorem and an argument due to Taubes (see \cite[3.2]{McDS12} or \cite[Section 5]{FHS95}) that there is a generic subset $\tilde{\mathcal{J}} \subset \mathcal{J,\phi}(M,\omega)$ satisfying the following. For each $J \in \tilde{\mathcal{J}}$, $\ev_{\text{x,jet}}^{-1}(0) \cap \mathcal{M}^*(x,y,L_1,L_2,J)$ is a submanifold of codimension $n$ in $\mathcal{M}^*(x,y,L_1,L_2,J)$ which has dimension $1$. It is therefore empty (since $n \geqslant 3$).

The conclusion follows since any $J$-holomorphic strip is simple (cf Corollary \ref{coro:generic strips are simple}).
\end{proof}

In complex dimension greater than $3$, the same conclusion holds for teardrops.

\begin{prop}
	\label{prop : multiplicity of $J$-holomorphic teardrops}
There is a second category subset $\mathcal{J}_{\phi,\text{reg},4}(M, L_1, L_2, \omega)$ such that if $ J \in \mathcal{J}_{\phi,\text{reg},4}(M, L_1, L_2, \omega) $, then every $J$-holomorphic teardrop of Fredholm index $2$ and boundary on $L_1 \#_{x,\varepsilon_\nu} L_2$ for $\nu \geqslant 0$ has multiplicity $1$ at its corner. 		
\end{prop}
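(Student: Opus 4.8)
The plan is to mimic the proof of Proposition \ref{prop : multiplicity of $J$-holomorphic strips} almost verbatim, replacing the strip moduli space by the teardrop moduli space and bumping the dimension count by one. Since the set $R$ of double points of the surgered immersion $L_1 \#_{x,\varepsilon_\nu} L_2$ is finite, the homotopy classes of teardrops form a countable set, and the sequence $(\varepsilon_\nu)$ is countable, it suffices to produce, for each fixed corner $(p,q)$, homotopy class $A$ and index $\nu$ giving a teardrop of Fredholm index $2$, a second category subset of $\mathcal{J}_{\phi,\text{reg}}(M,\omega)$ with the desired property, and then to take the countable intersection.

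First I would consider the universal moduli space $\mathcal{M}^*(A,(p,q),\mathcal{J}^l)$ of pairs $(u,J)$, where $J\in\mathcal{J}_\phi(M,\omega)$ is of class $\mathcal{C}^l$ and $u$ is a simple $J$-holomorphic teardrop with corner $(p,q)$ and boundary on $L_1 \#_{x,\varepsilon_\nu} L_2$ in the class $A$. As in \cite[Chapter 3]{McDS12} this is a separable Banach manifold. Because the branches at the relevant double point $y$ are $\phi_y(\R^n)$ and $\phi_y(i\R^n)$ and every $J \in \mathcal{J}_\phi$ coincides with $(\phi_y)_* J_{\text{std}}$ there, all Kähler angles at the corner equal $\tfrac{\pi}{2}$, the curve is genuinely holomorphic near the corner, and $V_{\pi/2}=\R^n$; hence Proposition \ref{prop:asymptoticdevelopmentaroundacornerpoint} (or, as noted in the strip case, a double application of Schwarz reflection) yields an expansion $u(z)=a\,z^{\frac12+m}+o(z^{\frac12+m})$ with $a\in\R^n$. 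The multiplicity at the corner is $m+1$, so multiplicity $\geqslant 2$ is equivalent to the vanishing of the coefficient of $z^{1/2}$.

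Next I would define the jet evaluation map
\[ \ev_{\mathrm{jet}} : \mathcal{M}^*(A,(p,q),\mathcal{J}^l) \to \R^n, \qquad \ev_{\mathrm{jet}}(u,J) = \lim_{s\to+\infty} e^{\frac{\pi}{2}s}\, u\circ\varepsilon(s,t), \]
so that $\ev_{\mathrm{jet}}^{-1}(0)$ is exactly the locus of teardrops of multiplicity $\geqslant 2$ at the corner. The crux is to show that $\ev_{\mathrm{jet}}$ is a submersion, using the variant of \cite[Section 3.4]{McDS12} already invoked for strips: since $u$ is simple it has injective interior points, and although $J$ is frozen to $(\phi_y)_*J_{\text{std}}$ near the corner and near all double points, one may still perturb $J$ freely near such an injective point to realize any prescribed first-order change of the leading coefficient. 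Hence $\ev_{\mathrm{jet}}^{-1}(0)$ is a submanifold of codimension $n$. The Sard--Smale theorem combined with Taubes' argument (\cite[Section 3.2]{McDS12}, \cite[Section 5]{FHS95}) then furnishes a second category subset $\tilde{\mathcal{J}}\subset\mathcal{J}_\phi(M,\omega)$ such that for $J\in\tilde{\mathcal{J}}$ the fiber $\ev_{\mathrm{jet}}^{-1}(0)\cap\mathcal{M}^*(A,(p,q),J)$ is a submanifold of codimension $n$ in $\mathcal{M}^*(A,(p,q),J)$, the latter having dimension equal to the Fredholm index $2$.

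It follows that $\ev_{\mathrm{jet}}^{-1}(0)\cap\mathcal{M}^*(A,(p,q),J)$ has dimension $2-n\leqslant -1<0$ when $n\geqslant 3$, hence is empty: no simple index-$2$ teardrop has multiplicity $\geqslant 2$ at its corner. Taking the countable intersection over $(p,q)$, $A$ and $\nu$ gives $\mathcal{J}_{\phi,\text{reg},4}(M,L_1,L_2,\omega)$, and since every $J$-holomorphic teardrop is simple for such $J$ by Corollary \ref{coro:generic strips are simple}, the conclusion passes from simple teardrops to all teardrops. The main obstacle is precisely the submersivity of $\ev_{\mathrm{jet}}$: one must verify that the standard perturbation argument survives the constraint that $J$ is prescribed near the corner and the double points, which is exactly where simplicity — and the resulting supply of injective interior points away from the frozen regions — is essential.
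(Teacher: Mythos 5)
Your proposal is correct and takes essentially the same approach as the paper: the paper's proof simply invokes, for each fixed $\nu$, the universal-moduli-space/jet-evaluation/Sard--Smale argument of Proposition \ref{prop : multiplicity of $J$-holomorphic strips} (with Fredholm index $2$ in place of $1$, giving the same negative dimension count $2-n<0$) and then takes the countable intersection over $\nu$. The details you add — the K\"ahler angles being $\tfrac{\pi}{2}$ at the corner, the countable intersection also running over corners and homotopy classes, and the caveat that the perturbations of $J$ must avoid the regions where $J$ is frozen — only flesh out what the paper leaves implicit in that reference.
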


\begin{proof}
Fix $\nu \in \N$, as in the proof of Proposition \ref{prop : multiplicity of $J$-holomorphic strips}, there is a second category subset $\mathcal{J}^\nu_{\phi,\text{reg}}(M,\omega)$ such that every $J$-holomorphic teardrop with boundary on $L_1 \#_{x,\varepsilon_\nu} L_2$ has multiplicity $1$ at its corner.

Now the countable intersection 
\[ \mathcal{J}_{\phi,\text{reg},3}(M, L_1, L_2, \omega) := \bigcap_{\nu \geqslant 0} \mathcal{J}^\nu_{\phi,\text{reg}}(M,\omega) \] is of second category and satisfies the conclusion of the theorem.  	
\end{proof}

Consider an $\alpha > 0$, a complex structure $J \in \mathcal{J}_{\phi,\text{reg},3}(M, L_1, L_2, \omega) \cap \mathcal{J}_{\phi,\text{reg},4}(M, L_1, L_2, \omega)$ and an intersection point $y \in L_1 \cap L_2$ with $\norm{y} = 2$. Let $\mathcal{M}(y,L_1\#_{\varepsilon_\nu,x} L_2, J,\alpha) \subset \mathcal{M}(y,L_1\#_{\varepsilon_\nu,x} L_2,J)$ be the set of elements of $\mathcal{M}(y,L_1\#_{\varepsilon_\nu,x} L_2,J)$ represented by a $u \in \tilde{\mathcal{M}}(y,L_1\#_{\varepsilon_\nu,x} L_2,J)$ such that there is a strip $w \in \tilde{\mathcal{M}}(y,x,L_1,L_2,J)$ with 
 \[ \sup_{z} d_J(u(z),w(z)) < \alpha. \]
Here, $d_J$ is the distance induced by the metric $g_J$.

By the propositions above, there is a second category subset $\mathcal{J}_{\text{reg},5}(M,\omega, L_1,L_2)$ such that for $J \in \mathcal{J}_{\text{reg},5}(M,\omega, L_1,L_2)$
\begin{itemize}[label= $\bullet$]
	\item Every $J$-holomorphic strip in $\mathcal{M}(y,x,L_1,L_2,J)$ for $y \in L_1 \cap L_2$ with $\norm{y} = 2$ is regular, simple and has corners of multiplicity $1$,
	\item every $J$-holomorphic teardrop	 in $\mathcal{M}(y,L_1 \#_{\varepsilon,\nu}L_2,J)$ for $\nu \geqslant 0$ is regular, simple and has a corner of multiplicity $1$. 
\end{itemize}

Since the Lagrangians $L_1$ and $L_2$ are exact, Gromov compactness for $J$-holomorphic strips and regularity imply that the space $\mathcal{M}(y,x,L_1,L_2,J)$ is compact.

Therefore, we can apply a result stated in \cite[Theorem 5.11]{FOOO07} to obtain the following corollary.

\begin{coro}[$\star$]
\label{Coro: The theorem of FOOO}
There is a second category subset $\mathcal{J}_{\phi,\text{reg},5}(M, L_1,L_2,\omega)$ such that the following holds.

For every $J \in \mathcal{J}_{\phi,\text{reg},5}(M,\omega, L_1, L_2)$, there exist $\alpha > 0$, $\nu_0 \geqslant 0$ such that for $\nu \geqslant \nu_0$ there is a bijection
	\[\mathcal{M}(y,L_1 \#_{\varepsilon_\nu,x} L_2,J,\alpha)  \to \mathcal{M}(y,x,L_1,L_2,J)  .\]
\end{coro}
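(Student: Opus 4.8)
The plan is to treat \cite[Theorem 5.11]{FOOO07} as a gluing-and-degeneration black box and to check that, for generic $J$, its hypotheses are satisfied by both moduli spaces, which will then consist of rigid, regular, simple curves with corners of multiplicity one. I would take $\mathcal{J}_{\phi,\text{reg},5}(M,L_1,L_2,\omega)$ to be the intersection of $\mathcal{J}_{\phi,\text{reg},3}(M,L_1,L_2,\omega)$, $\mathcal{J}_{\phi,\text{reg},4}(M,L_1,L_2,\omega)$ and the generic sets along which strips and teardrops are simple and regular. The first thing to record is the index bookkeeping from Subsection \ref{subsubsection:Interstionpointsindices}: since $\norm{y} = 2$ and $\norm{x} = 1$, a strip from $y$ to $x$ has Fredholm index $1$, hence is rigid after quotienting by the $\R$-action, while a teardrop with corner at $y$ has index $2$ and is rigid after quotienting by its group of reparameterizations. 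Thus both $\mathcal{M}(y,x,L_1,L_2,J)$ and $\mathcal{M}(y, L_1 \#_{x,\varepsilon_\nu} L_2, J, \alpha)$ are $0$-dimensional, and the statement is an identification of two finite sets.

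Next I would assemble the analytic inputs. For the chosen $J$, the properties established just above (regularity, simplicity, and multiplicity one of corners), together with Corollary \ref{coro:generic strips are simple} and the two multiplicity propositions, guarantee that every strip in $\mathcal{M}(y,x,L_1,L_2,J)$ and every teardrop in the relevant space is simple, regular, and has corners of multiplicity one; exactness of $L_1$ and $L_2$ together with Gromov compactness makes $\mathcal{M}(y,x,L_1,L_2,J)$ compact, hence finite. I would then build the two maps realizing the bijection. The map from teardrops to strips is obtained by degeneration: as $\varepsilon_\nu \to 0$ the surgery handle $\phi_x(H_{\varepsilon_\nu})$ converges to $\phi_x(\R^n \cup i\R^n)$, so a Gromov limit of teardrops $u_\nu \in \mathcal{M}(y, L_1 \#_{x,\varepsilon_\nu} L_2, J, \alpha)$ has boundary on $L_1 \cup L_2$; the energy bound, the index count, and the absence of disk and sphere bubbling (from exactness and $2 c_1(TM) = 0$) force this limit to be a single strip $w$ from $y$ to $x$. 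Conversely, the gluing half of \cite[Theorem 5.11]{FOOO07} attaches to each rigid strip $w$ the standard surgery-neck model and, using surjectivity of $D_w$, produces for all small $\varepsilon$ a unique teardrop on $L_1 \#_{x,\varepsilon} L_2$ within distance $\alpha$ of $w$. The multiplicity-one hypothesis at the corner is precisely what guarantees that the glued curve meets the neck in the expected way, so that the standard handle model applies and the gluing is unobstructed.

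The main obstacle I expect is the \emph{uniformity} of these constructions: producing a single $\alpha > 0$ and threshold $\nu_0$ valid for every rigid strip and every teardrop at once, and verifying that the degeneration and gluing maps are mutually inverse on the $\alpha$-neighborhood. Finiteness of the strip space, regularity, and the multiplicity-one condition are exactly the ingredients that let one run the implicit-function-theorem gluing of \cite[Theorem 5.11]{FOOO07} with estimates uniform over the finitely many strips; the compactness and bubbling analysis then close the argument by ruling out any other limit configuration. Granting these verifications, the asserted bijection is the content of \cite[Theorem 5.11]{FOOO07}.
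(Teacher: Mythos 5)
Your proposal follows essentially the same route as the paper: the paper also defines $\mathcal{J}_{\phi,\text{reg},5}$ by intersecting the generic sets from the two multiplicity propositions with those guaranteeing that strips and teardrops are simple and regular, observes that exactness together with Gromov compactness and regularity makes $\mathcal{M}(y,x,L_1,L_2,J)$ compact (hence finite), and then invokes \cite[Theorem 5.11]{FOOO07} as a black box to produce the bijection. The uniformity and gluing verifications you flag as the main obstacle are exactly what the paper defers to the cited result (the corollary is marked $\star$, i.e.\ work in progress), so your treatment is as complete as the paper's own.
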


\subsubsection{Gromov compactness} 

Last we need a version of Gromov compactness for $J$-holomorphic curves as the surgery parameter $\varepsilon_\nu$ goes to $0$. We emphasize that it is not (to our knowledge) proved in the literature and that it is the subject of future work.

A \emph{$d$-leafed tree} is a planar tree $T \subset \R^2$ with a choice of vertex $\alpha$ called the root, oriented so that the root has no incoming edge and with $d$ leaves (beware that it is \emph{not} the definition of \cite[(9d)]{Sei08} ). For each vertex $v$ of $T$, we denote by $\norm{v}$ its valency.

\begin{defi}
A labeled domain consists of
\begin{enumerate}[label=(\roman*)]
	\item a $d$-leafed tree $T$,
	\item for each vertex $v$, an element $r_v \in \mathcal{R}^{\norm{v}}$,
	\item for each vertex $v$, $k_v \in \N$ cyclically ordered marked points at the boundary that we will denote by $z_1, \ldots, z_{k_v}$,
	\item for each connected component $C$ of $\partial r_v \backslash \{ z_1, \ldots, z_{k_v} \}$, an element $L_C \in \{ L_1, L_2 \}$,	
\end{enumerate}
which satisfy the following conditions. 
\begin{enumerate}
	\item If $C_1$ and $C_2$ are two adjacent connected components, then the labels $L_{C_1}$ and $L_{C_2}$ should be different,
	\item for every leaf $v$, $k_v \geqslant 1$.
\end{enumerate}	
\end{defi}

\begin{figure}
\captionsetup{justification=centering,margin=2cm}

\begin{tikzpicture}
\draw[directed] (2,2) -- (1,1);
\draw[directed] (1,1) -- (0,0);
\draw[directed] (1,1) -- (1,0);
\draw[directed] (1,1) -- (2,0);
\draw[directed] (2,2) -- (4,0);

\draw (0,0) node {\tiny $\bullet$};
\draw (1,0) node {\tiny $\bullet$};
\draw (2,0) node {\tiny $\bullet$};
\draw (1,1) node {\tiny $\bullet$};
\draw (2,2) node {\tiny $\bullet$};
\draw (4,0) node {\tiny $\bullet$};

\draw (2,2) node[above] {\tiny $\alpha$};

\draw[blue] (9,2.5) arc (90:220:0.5);
\draw[red] (9,2.5) arc (90:-30:0.5);
\draw[red] (9,1.5) arc (-90:-150:0.5);
\draw[blue] (9,1.5) arc (-90:-30:0.5);

\draw[red] (8.57,1.75) arc (30:390:0.5);
\draw[blue] (8.57,1.75) arc (30:330:0.5);
\draw[red] (8.57,1.75) arc (30:270:0.5);
\draw[blue] (8.57,1.75) arc (30:210:0.5);

\draw[blue] (7.71,1.25) arc (30:210:0.25);
\draw[red] (7.71,1.25) arc (30:-150:0.25);

\draw[blue] (8.14,1) arc (90:-90:0.25);
\draw[red] (8.14,1) arc (90:270:0.25);

\draw[blue] (8.57,1.25) arc (150:330:0.25);
\draw[red] (8.57,1.25) arc (150:-30:0.25);

\draw[blue] (9.43,1.75) arc (150:330:0.5);
\draw[red] (9.43,1.75) arc (150:-30:0.5); 

\draw (9,2.5) node[above] {\tiny $y$};
\draw (9,1.5) node {\tiny $\bullet$};

\draw (9,1) node {\tiny $\bullet$};
\draw (8.14,0.5) node {\tiny $\bullet$};
\draw (7.28,1) node {\tiny $\bullet$};

\draw (10.29,1.25) node {\tiny $\bullet$};

\end{tikzpicture}

\caption{A labeled domain and its underlying tree \\
Red corresponds to a label $L_1$ and blue to $L_2$ \\
The dots are mapped to $x$}
\label{Figure: Labeled domain}	
\end{figure}
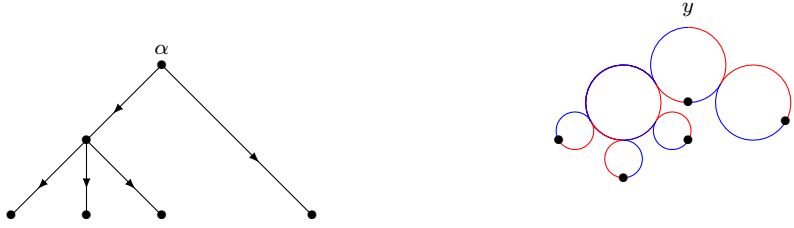

\begin{rk}
For each vertex $v$ one can number the outgoing edges as counterclockwise (remember that $T$ is embedded in $\R^2$). The number of an edge $e$ going from $v_1$ to $v_2$ will be denoted $n_e$. 	
\end{rk}
 
This leads us to the definition of the limit curves when the handle parameter goes to $0$.

\begin{defi}
A broken strip from $y$ to $x$ modeled on the labeled domain $T$ consists of 
\begin{enumerate}[label=(\roman*)]
	\item for each vertex $v$, $\norm{v} + 1$ intersection points $y_0^v, \ldots , y_{\norm{v}}^v \in L_1 \cap L_2$ such that if $e$ is an edge from $v_1$ to $v_2$ then $y^{v_1}_{n_e} = y^{v_2}_0$,
	\item for each vertex $v$ a $J$-holomorphic curve with corners $u_v : (r_v, \partial r_v) \to (M, L_1 \sqcup L_2)$,
\end{enumerate}
such that

\begin{enumerate}
	\item for each vertex $v$ and connected component $C$ of $\partial r_v \backslash \{z_1, \ldots, z_{k_v} \}$, we have $u_v(C) \subset L_C$,
	\item for each vertex $v$ and $i \in \{ 1, \ldots , k_v\}$, we have $v(z_i) = x$,
	\item for the root vertex $\alpha$, $u_\alpha$ converges to $y$ on the $0$-th strip-like end of $r_\alpha$,
	\item for each vertex $v$, the curve $u$ converges to $y^v_i$ on the $i$-th strip-like end of $r_v$. 	
\end{enumerate}
\end{defi}

We expect that the following proposition is an adaptation of the neck-stretching procedure (as it appears in \cite{BEHWZ} and \cite{CM05}) for curves with boundary on a Lagrangian manifold.

\begin{prop}[$\star$]
\label{prop:Gromov compacity for handles}
Fix an almost complex compatible structure $J \in \mathcal{J}_{\phi}(M,\omega)$.

Assume that $(u_\nu)_{\nu \in \N}$ is a sequence of $J$-holomorphic teardrops such that 
\[ u_\nu \in \mathcal{M}(y, L_1 \#_{x, \varepsilon_\nu} L_2, J ), \forall \nu \geqslant 0 .\]
There is a subsequence $(u_{\nu_k})_{k \geqslant 0}$ which Gromov converges to a broken strip from $y$ to $x$. 
\end{prop}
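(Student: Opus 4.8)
The plan is to run a neck-stretching / Gromov compactness argument adapted to the fact that the Lagrangian boundary condition $L_1 \#_{x,\varepsilon_\nu} L_2$ itself degenerates as $\varepsilon_\nu \to 0$, combining the SFT compactness schemes of \cite{BEHWZ} and \cite{CM05} with the structural results of Section \ref{section:Frameofacurve} and the boundary asymptotics of Proposition \ref{prop:asymptoticdevelopmentaroundacornerpoint}. The guiding picture is that outside the Darboux chart $\phi_x$ the surgered Lagrangian is literally $L_1 \sqcup L_2$, while inside it the handle $\phi_x(H_{\varepsilon_\nu})$ collapses in $\mathcal{C}^1_{\mathrm{loc}}$ onto the transverse cone $\phi_x(\R^n \cup i \R^n)$; the degeneration thus replaces the smooth passage of $\partial u_\nu$ through the handle by corners mapping to $x$, linked by long necks that break into Floer strips. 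First I would fix a uniform energy bound: since $L_1$ and $L_2$ are exact and the surgery is performed so that $L_1 \#_{x,\varepsilon_\nu} L_2$ stays exact with a primitive that converges as $\varepsilon_\nu \to 0$, Stokes' theorem expresses $\int u_\nu^* \omega$ in terms of the primitive evaluated at the corner and at the asymptotic limit $y$, which are bounded independently of $\nu$, supplying the a priori energy bound needed for compactness.

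\emph{Bubbling and outer limit.} With this bound, the standard compactness alternative applies on domains whose image stays a definite distance from $x$: either the gradients stay bounded, giving $\mathcal{C}^\infty_{\mathrm{loc}}$ convergence to a $J$-holomorphic curve with boundary on $L_1 \sqcup L_2$, or they blow up and one rescales. Exactness of $\omega$ rules out interior sphere bubbles, and exactness of each $L_i$ forces any disk bubble with boundary on a single branch to be constant (Stokes on its closed boundary). Hence the only non-trivial bubbles are teardrops and strips with corners at $x$, which are precisely the building blocks of a broken strip; the outer limit therefore produces $J$-holomorphic curves with boundary on $L_1 \sqcup L_2$ and corners at $x$, i.e. the candidate vertices $u_v$.

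\emph{Neck analysis and assembly.} The heart of the argument is the region near $x$. Because $J \in \mathcal{J}_\phi$ equals $J_{\mathrm{std}}$ on $\phi_x(B(0,r_x))$, there the curve is honestly $J_{\mathrm{std}}$-holomorphic and one may work in the model $(\C^n, H_{\varepsilon_\nu})$, using the Schwarz reflection principle in place of \cite{RS01} for boundary regularity. Rescaling the handle coordinate by $\varepsilon_\nu^{-1}$ sends $H_{\varepsilon_\nu}$ to the fixed model $H_1$ and, in the further limit, to $\R^n \cup i\R^n$; at each concentration point this produces finite-energy $J_{\mathrm{std}}$-holomorphic curves with corner at $x$ and boundary on $\R^n \cup i\R^n$, which by removal of singularities and Proposition \ref{prop:asymptoticdevelopmentaroundacornerpoint} are strips between $L_1$ and $L_2$. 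Tracking which branch each boundary arc lies on, the limiting intersection points on the strip-like ends, and the orientation of the neck, one organizes the pieces into a tree $T$ with labels in $\{ L_1, L_2 \}$ and matching conditions $y^{v_1}_{n_e} = y^{v_2}_0$ along edges, with the root converging to $y$. A quantization-of-energy argument (curves with corners carry energy bounded below, as in the closed case) then shows that no energy is lost, so the total energy is recovered by the tree and the limit is a genuine broken strip from $y$ to $x$.

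\emph{Main obstacle.} The principal difficulty, and the reason the statement is flagged as work in progress, is precisely the neck-stretching analysis with a boundary condition that moves with $\nu$: obtaining elliptic estimates uniform in $\varepsilon_\nu$ near the collapsing handle, proving a monotonicity / energy-quantization estimate uniform in $\varepsilon_\nu$ so that no energy escapes into the neck and no phantom components appear, and controlling the combinatorics of how the rescaled limits glue. Unlike the closed SFT compactness of \cite{BEHWZ} and \cite{CM05}, here the Lagrangian itself degenerates, so these estimates must be coupled with the boundary-regularity and frame-structure results of Section \ref{section:Frameofacurve}; this uniform-in-$\varepsilon_\nu$ analysis is the main new analytic input that the full proof will require.
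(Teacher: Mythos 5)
The paper contains no proof to compare against: Proposition \ref{prop:Gromov compacity for handles} carries the symbol $\star$, which the paper reserves for results that are work in progress, and the text immediately preceding it states that this compactness statement is not, to the author's knowledge, proved in the literature and is the subject of future work. Your proposal follows exactly the route the paper envisions, namely an adaptation of the neck-stretching schemes of \cite{BEHWZ} and \cite{CM05} to a Lagrangian boundary condition $L_1 \#_{x,\varepsilon_\nu} L_2$ that degenerates onto the transverse cone $\phi_x(\R^n \cup i\R^n)$, so on the level of strategy you and the paper agree.

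As a proof, however, the proposal has a genuine gap, which you yourself concede in the final paragraph: every analytically hard step is asserted rather than established. Concretely: (1) rescaling by $\varepsilon_\nu^{-1}$ produces limits with boundary on the fixed handle $H_1$, and passing from there to the cone $\R^n \cup i\R^n$ is a second, separate limit; energy can a priori escape into the annular regions at scales intermediate between $\varepsilon_\nu$ and $1$, so one needs a neck lemma (long thin regions of small energy converge to intersection points) whose constants are \emph{uniform} in $\varepsilon_\nu$, and no such estimate is supplied. (2) The claim that ``curves with corners carry energy bounded below, as in the closed case'' is precisely what must be proved here: monotonicity-type lower bounds for curves with Lagrangian boundary degenerate as the Lagrangian acquires a transverse double point, so a quantization constant independent of $\nu$ has to be extracted (plausibly from exactness and the convergence of the primitives $f_{L_1 \#_{x,\varepsilon_\nu} L_2}$, but this is not carried out). (3) Nothing is said about why the limiting pieces satisfy the matching conditions $y^{v_1}_{n_e} = y^{v_2}_0$ of a broken strip and why no phantom components or lost corners appear; this is the soft-rescaling analysis of \cite{CM05}, which must be redone with a boundary condition moving with $\nu$. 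Since these three points are exactly where such an argument could fail, your text is a credible plan consistent with the paper's stated intent, but it is not a proof --- which is also the status the paper itself assigns to the statement.
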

    
\subsubsection{Proof of Theorem \ref{Theo:bijection strips et teardrops} ($\star$)}
We now prove Theorem \ref{Theo:bijection strips et teardrops} assuming that Proposition \ref{prop:Gromov compacity for handles} is true. For this fix $J \in \mathcal{J}_{\phi, \text{reg},5}(M,\omega)$.

Given the conclusion of Corollary \ref{Coro: The theorem of FOOO}, it only remains to check that there is $\nu_0$ such that for all $\nu \geqslant \nu_0$, any teardrop $v \in \tilde{\mathcal{M}}(y,J,L_1 \#_{x,\varepsilon_\nu} L_2)$ is $\alpha$ close to a $J$-holomorphic strip $v \in \tilde{\mathcal{M}}(y,x,J,L_1,L_2)$.

Assume by contradiction that there is a strictly increasing sequence $(\nu_k)$, and a sequence of teardrops $u_{\nu_k} \in \tilde{\mathcal{M}}(y,J,L_1 \#_{x,\varepsilon_{\nu_k}} L_2)$ such that
\[ \forall v \in  \tilde{\mathcal{M}}(y,x,J,L_1,L_2), \ \sup_{z \in \D} d_J (u(z),v(z)) \geqslant \alpha . \]

By Proposition \ref{prop:Gromov compacity for handles}, there is a subsequence of $(u_{\nu_k})$ which converges in the sense of Gromov to a broken strip $w$.

To conclude, it remains to see that $v$ is an actual teardrop. This is immediate from the two lemmas below.

\begin{lemma}
Assume $w = (u_v)_{v \in T}$ is a broken strip with underlying tree $T$ such that all $u_v$ are simple. Then the tree $T$ consists of one vertex and $w$ is a strip from $y$ to $x$.	
\end{lemma}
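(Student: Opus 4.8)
The plan is to extract the whole statement from a single dimension count over the tree $T$, which rests on two standing inputs. First, since each $u_v$ is simple and $J$ is generic, the transversality results established above apply: $u_v$ is regular, so the stratum containing it is a smooth manifold of its expected dimension $\dim\mathcal{M}(u_v)$, and as this stratum is nonempty we get $\dim\mathcal{M}(u_v)\geq 0$. Second, because $n\geq 3$ the minimal Maslov numbers are at least $3$ and the $L_i$ are exact, so no sphere or disk bubble can form in the limit; hence the Gromov limit really is the tree of polygons $(u_v)_{v\in T}$, with no hidden components to account for.

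Next I would set up the grading bookkeeping, using $\norm{x}=1$ and $\norm{y}=2$. Orient the edges of $T$ away from the root; each vertex $v$ then carries one outgoing corner $y_0^v$ (the $0$-th end, equal to $y$ at the root), $c_v$ incoming child-ends $y_1^v,\dots,y_{c_v}^v$, and $k_v$ corners mapped to $x$. The standard expected-dimension formula for a polygon with prescribed corners (as in the index computations of Section~\ref{subsubsection:Interstionpointsindices}) gives
\[
\dim\mathcal{M}(u_v)=\norm{y_0^v}-\sum_{i=1}^{c_v}\norm{y_i^v}-k_v\norm{x}+(c_v+k_v)-2=\norm{y_0^v}-\sum_{i=1}^{c_v}\norm{y_i^v}+c_v-2,
\]
where the last equality uses $\norm{x}=1$: a corner at the degree-one point $x$ contributes nothing to the expected dimension. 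Summing over $v$, each internal node appears once as an outgoing corner and once as an incoming corner and so cancels; only the root output $\norm{y}$ survives, and $\sum_v c_v=\lvert V\rvert-1$. Hence
\[
\sum_{v\in V}\dim\mathcal{M}(u_v)=\norm{y}+(\lvert V\rvert-1)-2\lvert V\rvert=1-\lvert V\rvert.
\]
Combined with $\dim\mathcal{M}(u_v)\geq 0$ for every $v$, this forces $0\leq 1-\lvert V\rvert$, i.e. $\lvert V\rvert=1$. This count is the heart of the argument, and it already yields that the tree is a single vertex $\alpha$, at once root and leaf, with $w=u_\alpha$ a simple disk of expected dimension $0$ carrying one corner at $y$ and $k_\alpha\geq 1$ corners at $x$.

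What remains is to prove $k_\alpha=1$, so that $u_\alpha$ is genuinely a strip from $y$ to $x$; this is where I expect the real difficulty, since the count above is blind to $k_\alpha$ (degree-one corners are free), and the alternation of the boundary labels $L_1,L_2$ only forces $k_\alpha$ to be odd. To pin it down I would work locally at $x$: as $J$ is standard and $L_1,L_2$ equal $\R^n, i\R^n$ there, every corner of $u_\alpha$ at $x$ has all K\"ahler angles $\pi/2$, so by Proposition~\ref{prop:asymptoticdevelopmentaroundacornerpoint} it is modeled on $z^{1/2+m}$ with multiplicity $m+1$, which is $1$ for generic $J$ by Proposition~\ref{prop : multiplicity of $J$-holomorphic teardrops}. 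I would then translate each corner at $x$ into one passage of the approximating teardrops $u_\nu$ through the collapsing neck $\phi_x(H_{\varepsilon_\nu})$, and show that a simple index-$2$ teardrop (simple by Corollary~\ref{coro:generic strips are simple}) cannot cross the neck more than once compatibly with the multiplicity-one estimate. This gives $k_\alpha=1$, whence $u_\alpha$ has exactly the two corners $y$ and $x$ and is the desired strip.
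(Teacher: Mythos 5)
Your dimension count contains a substantive error that ends up costing you the second half of the lemma. You treat every corner mapping to $x$ as contributing zero to the expected dimension, "since $\norm{x}=1$". But the degree of a corner at $x$ depends on the direction in which the boundary traverses it: a corner going from $L_1$ to $L_2$ has degree $\norm{x}=1$ (contribution $0$), whereas a corner going from $L_2$ to $L_1$ has degree $n-\norm{x}=n-1$ as a generator of $CF(L_2,L_1)$, hence contributes $1-(n-1)=2-n\leqslant -1$. This is the first line of the paper's proof. Your displayed formula for $\dim\mathcal{M}(u_v)$ is therefore not an equality but only an upper bound (the true dimension carries an extra term $-(n-2)k_{2,v}$, where $k_{2,v}$ counts the reverse corners). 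For the conclusion $\lvert V\rvert=1$ this is harmless, since dropping nonpositive terms preserves the inequality $\geqslant 0$, and your summation over the tree is then the same as the paper's; so the first half survives, modulo restating the formula as an inequality.

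The genuine gap is in your treatment of $k_\alpha=1$. Because your count is blind to the number of $x$-corners, you are forced into an entirely different argument: reverting to the approximating teardrops, the collapsing neck $\phi_x(H_{\varepsilon_\nu})$, and a multiplicity-one statement (itself a starred, work-in-progress result in the paper) to exclude multiple neck crossings. This is only a sketch -- you do not prove that a corner of the limit corresponds to a single passage through the neck, and the lemma should in any case be provable from the broken strip alone, without re-invoking Gromov convergence. The paper closes this case with one more application of the very same index count, using precisely the term you discarded: boundary alternation gives $2k-1$ corners at $x$, of which $k$ go from $L_1$ to $L_2$ (degree $1$) and $k-1$ go from $L_2$ to $L_1$ (degree $n-1$); regularity of the single remaining curve then gives $\norm{y}-k-(n-1)(k-1)+2k-3\geqslant 0$, i.e. $n-2\geqslant (n-2)k$, and since $n\geqslant 3$ this forces $k=1$. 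So the correction to your degree bookkeeping is not a technicality: it is exactly the mechanism that pins down the number of corners at $x$, and with it no neck analysis is needed at all.
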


\begin{proof}
	This is a simple combinatorial argument which uses regularity and simplicity of the underlying holomorphic curves.
	
First, notice that the index of $x$ as an element of $CF(L_2,L_1)$ is $n-1$ which is greater than $1$ since $n \geqslant 3$.
	
For $v \in T$ different from the root, call $y_v$ the incoming limit point and $x_1, \ldots x_p$ the outgoing limit points. Moreover, assume that there are $k_{1,v}$ marked points mapping to $x$ going from $L_1$ to $L_2$ and $k_{2,v}$ marked points mapping to $x$ going from $L_2$ to $L_1$. Since the curve $u_v$ is regular, we have
\[ \norm{y_v} - \sum_{i=1}^p \norm{x_i} - k_{1,v} - k_{2,v}(n-1) + k_{1,v} + k_{2,v} + \norm{v} - 3 \geqslant 0 ,\]
so
\[ \norm{y_v} - \sum_{i=1}^p \norm{x_i} + \norm{v} - 3 \geqslant 0 . \]

Similarly, if $v$ is the root, we get
\[ \norm{y} - \sum_{i=1}^p \norm{x_i} - k_{1,v} - k_{2,v}(n-1) + k_{1,v} + k_{2,v} + \norm{v} + 1 - 3 \geqslant 0, \]
so
\[ \norm{y} - \sum_{i=1}^p \norm{x_i} + \norm{v} - 2 \geqslant 0. \]  

Adding these equalities for $v \in T$, we obtain
\[ 	\norm{y} + \sum_{v \in T} \norm{v} - 3 V(T) + 1 \geqslant 0 ,\]
where $V(T)$ is the number of vertices of $T$. Now notice that $\sum_{v \in T} \norm{v}$ is twice the number of edges of $T$ and therefore equal to $2 V(T) - 2$. So
\[ 2 = \norm{y} \geqslant 1 + V(T). \]
Hence $V(T) = 1$. Therefore we have a single curve $w$ with one corner at $y$ and the others at $x$.

Now since $y$ is an incoming point from $L_1$ to $L_2$, there are $2k-1$ other corners mapping to $x$ (with $k$ an integer greater than $1$). Among them, $k$ are outgoing points from $L_1$ to $L_2$ and $k-1$ are outgoing points from $L_2$ to $L_1$. Since $w$ is regular, we get
\[ \norm{y} -k -(n-1)(k-1) + 2k -3 \geqslant 0, \]
so
\[ \norm{y} - (n-2)k + n-4 \geqslant 0, \]
hence (since $\norm{y}  = 2$)
\[ n-2 \geqslant (n-2)k. \]
Since $n-2 \geqslant 1$, we readily conclude that $1 \geqslant k$ hence $k = 1$.
\end{proof}

\begin{lemma}
Assume $w = (u_v)_{v \in T}$ is a broken strip with underlying tree $T$. There is a tree $T_1$ and a broken strip $w_1 = (u_{v,1})_{v \in T}$ with underlying tree $T_1$ such that the following assertions hold.
\begin{enumerate}
	\item For each $v \in T_1$, the curve $u_{v,1}$ is simple.
	\item There is an injective tree morphism $f : T_1 \to T$ mapping the root of $T_1$ to the root of $T$ satisfying the following. If $v \in T_1$, the underlying simple curve of $u_{f(v)}$ is $u_{v,1}$.
	\item If $V(T) \geqslant 2$, then $V(T_1) \geqslant 2$. 	
\end{enumerate} 	
\end{lemma}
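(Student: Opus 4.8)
The plan is to replace every vertex curve by its underlying simple curve and then prune $T$ so that the broken-strip axioms survive. First I would apply Corollary \ref{coro: multiply covered disks} to each $u_v$: there is a branched cover $p_v:(r_v,\partial r_v)\to(r_v',\partial r_v')$ restricting to a genuine cover on the boundary, together with a simple curve $\underline{u}_v:r_v'\to M$ with $u_v=\underline{u}_v\circ p_v$, and by Proposition \ref{prop:les pi�ces de la d�composition sont des disques} each $r_v'$ is again a disk. Since $u_v=\underline{u}_v\circ p_v$, the map $p_v$ carries every corner of $u_v$ to a corner of $\underline{u}_v$ with the same value in $i(R)$, hence with the same limit point, and carries corners over $x$ to corners over $x$. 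Being holomorphic, $p_v$ is orientation preserving on $\partial r_v$, so it preserves the switch condition (the ordered double point) at each corner; since at a genuine double point the orderings $(p,q)$ and $(q,p)$ are distinct, and an incoming and an outgoing corner always carry opposite orderings, no corner of $\underline{u}_v$ is simultaneously covered by an incoming and by an outgoing end of $u_v$. Thus the incoming end of $u_v$ covers a well-defined corner of $\underline{u}_v$, which I call its incoming end, each outgoing end of $\underline{u}_v$ is covered by at least one outgoing end of $u_v$, and all ends of $u_v$ over a fixed end of $\underline{u}_v$ share the same limit point.

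Second, I would construct $(T_1,f,w_1)$ recursively from the root. Set the root $\alpha_1$ of $T_1$ with $f(\alpha_1)=\alpha$ and $u_{\alpha_1,1}=\underline{u}_\alpha$; its incoming end converges to $y$ because that of $u_\alpha$ does. Given $v_1\in T_1$ with $v=f(v_1)$, I run over the outgoing ends $\eta$ of $\underline{u}_v$: the $p_v$-preimage of $\eta$ is a set of outgoing ends of $u_v$, and whenever at least one of them carries a $T$-edge to a child of $v$, I choose one such child $w_\eta$, add to $v_1$ a child $v_1'$ with $f(v_1')=w_\eta$ and $u_{v_1',1}=\underline{u}_{w_\eta}$, and recurse. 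Distinct ends of $\underline{u}_v$ have disjoint preimages and distinct recursion branches land in disjoint subtrees of $T$, so $f$ is an injective tree morphism sending root to root, and the procedure terminates because $T$ is finite. The labels $L_C$ and the marked points over $x$ of $w_1$ are those transported through the $p_v$, and the labeled-domain conditions (alternation of the labels, and $k_v\geqslant 1$ at leaves) descend from $w$ because $p_v$ preserves the local boundary picture.

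Third, I would verify the matching conditions and conclude. Along an edge of $T_1$ from $v_1$ to $v_1'$, the outgoing limit of $\underline{u}_{f(v_1)}$ at $\eta$ is the common limit of the ends of $u_{f(v_1)}$ over $\eta$, which equals $y^{f(v_1)}_{n_e}=y^{w_\eta}_0$ by the matching in $T$, i.e.\ the incoming limit of $\underline{u}_{w_\eta}$; hence $w_1$ is a genuine broken strip, all of whose curves are simple, giving $(1)$ and $(2)$. For the leaf condition, if $v_1$ is a leaf of $T_1$ then $f(v_1)$ has no child lying over an outgoing end of its simple curve; but by the orientation argument every child of $f(v_1)$ in $T$ is attached over an outgoing end, so $f(v_1)$ is a leaf of $T$, hence carries an $x$-corner, which survives under $p_{f(v_1)}$, so $k_{v_1}\geqslant 1$. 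Finally, if $V(T)\geqslant 2$ the root has a child in $T$, attached over an outgoing end of $\underline{u}_\alpha$ (the incoming corner cannot be involved, again by the orientation argument), so the recursion attaches a child to $\alpha_1$ and $V(T_1)\geqslant 2$, which is $(3)$. The main obstacle is the content of the first paragraph: making precise that the boundary branched cover induces a faithful, type- and limit-preserving correspondence of strip-like ends, since this is simultaneously what forces the matching conditions and what guarantees that the root retains a descendant; the remaining steps are careful but routine bookkeeping of the broken-strip axioms.
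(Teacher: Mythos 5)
Your proof is correct and takes essentially the same approach as the paper's: a top-down recursion from the root in which each vertex curve is replaced by its underlying simple curve via Corollary \ref{coro: multiply covered disks}, and children in $T_1$ are obtained by lifting outgoing ends of the simple curve back to ends of the original curve carrying edges of $T$. Your switch/orientation argument in fact makes explicit the end-classification claims (that the incoming end stays incoming, that children attach only over outgoing ends, and that leaves retain an $x$-corner) which the paper's much terser proof leaves implicit under ``it is easy that the end-product is a broken strip satisfying the hypotheses.''
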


\begin{proof}
The simple curve is built by an induction process.

Start with the root $v_0$. The curve $u_{v_0}$ is multiply covered by the choice of almost complex structure $J$. Let $u_{v_0,1}$ be the underlying simple curve : there is a branched cover $\pi$ such that $u_{v_0} =  u_{v_0,1} \circ \pi $. We associate the curve $u_{v_0,1}$ to the root of $T_1$.

The domain of $u_{v_0,1}$ has one incoming strip-like end (the image of the incoming strip-like end of $r_{v_0}$ by $\pi$) and $m_v \in \N$ outgoing strip-like ends. Call $\zeta_1, \ldots, \zeta_{m_v}$ their asymptotic points. For each $\zeta_i$ we put an outgoing edge $e_{\zeta_i}$. Call $v_{\zeta_i}$ the outgoing end of $e_{\zeta_i}$.

For each $i\in \{1, \ldots , m_v \}$, choose a point $\tilde{\zeta}_i \in r_{v_0}$ such that $\pi (\tilde{\zeta}_i) = \zeta_i$. Each $\tilde{\zeta}_i$ is the limit of an outgoing strip-like ends and 	corresponds to an edge in $T$ with endpoint $v_{\tilde{\zeta}_i}$. The curve $u_{v_{\zeta_i}}$ is the simple curve underlying $u_{v_{\tilde{\zeta}_i}}$. 

If we repeat this process by induction, it is easy that the end-product is a broken strip satisfying the hypotheses.   
\end{proof}

\bibliographystyle{alpha}
\bibliography{Bibliographie}

\end{document}